\documentclass[11pt]{article}
\usepackage[letterpaper, margin=1in]{geometry}
\usepackage{graphicx}
\usepackage{amsthm,amsmath,amssymb}
\usepackage{mathabx}
\usepackage{geometry}
\usepackage{xspace}
\usepackage{xcolor}
\usepackage{thmtools}
\usepackage{algorithm}
\usepackage[noend]{algpseudocode}
\usepackage{thm-restate}
\usepackage{hyperref}
\usepackage{cleveref}
\usepackage[T1]{fontenc}
\usepackage[utf8]{inputenc} 
\usepackage{lmodern}

\newtheorem{theorem}{Theorem}[section]
\newtheorem{corollary}[theorem]{Corollary}
\newtheorem{lemma}[theorem]{Lemma}
\newtheorem{proposition}[theorem]{Proposition}
\newtheorem{observation}[theorem]{Obervation}
\newtheorem{claim}{Claim}[theorem]

\newcommand*{\claimproofs}{Proof of the Claim.}
\newenvironment{claimproof}[1][\claimproofs]{\begin{proof}[#1]}{\end{proof}}

\definecolor{dullblue}{HTML}{6496C8}
\newcommand{\f}{\mathcal{F}}
\newcommand{\cf}{\mathcal{C}_{\mathcal{F}}}
\newcommand{\q}{\mathcal{Q}}
\newcommand{\p}{\mathcal{P}}

\newcommand{\tw}{tw\xspace}

\newcommand{\fcov}{\mathrm{fcov}_{\mathcal{F}}}

\title{Induced Minors and Coarse Tree Decompositions}
\author{Maria Chudnovsky\thanks{Princeton University, Princeton, NJ, USA. Supported by NSF Grants DMS-2348219 and CCF-2505100,  AFOSR grant FA9550-25-1-0275, and a Guggenheim Fellowship.} \and
Julien Codsi\thanks{Princeton University, Princeton, NJ, USA. Supported by NSF Grant DMS-2348219 and by the Fonds de recherche du Québec through the doctoral research scholarship 321124.} \and
Ajaykrishnan E~S\thanks{Department of Computer Science, University of California Santa Barbara, Santa Barbara, CA, USA. Supported by NSF Grant CCF-2505099.} \and
Daniel Lokshtanov$^{\ddagger}$
}
\date{}

\begin{document}

\maketitle

\begin{abstract}
Let $G$ be a graph, $S \subseteq V(G)$ be a vertex set in $G$ and $r$ be a positive integer. The {\em distance $r$-independence number} of $S$ is the size of the largest subset $I \subseteq S$ such that no pair $u$, $v$ of vertices in $I$ has a path on at most $r$ edges between them in $G$.
It has been conjectured [Chudnovsky et al., arXiv, 2025]
% \cite{chudnovsky2025treewidth}~
that for every positive integer $t$ there exist positive integers $c$, $d$ such that every graph $G$ that excludes both the complete bipartite graph $K_{t,t}$ and the grid $\boxplus_t$ as an induced minor has a tree decomposition in which every bag has (distance $1$) independence number at most $c(\log n)^d$. 
We prove a weaker version of this conjecture where every bag of the tree decomposition has distance $16(\log n + 1)$-independence number at most $c(\log n)^d$.
Along the way, we also prove a version of the conjecture where every bag of the decomposition has distance $8$-independence number at most $2^{c (\log n)^{1-(1/d)}}$.
\end{abstract}

\section{Introduction}
All graphs in this paper are finite and all logarithms are base $2$.
For standard graph theory terminology that is not defined here we refer the reader to Diestel's textbook~\cite{diestel2025graph}.
Let $G = (V(G),E(G))$ be a graph. For a set $X \subseteq V(G),$ we denote by $G[X]$ the subgraph of $G$ induced by $X$, and by $G - X$ the subgraph of $G$ induced by $V(G) \setminus X$. 
%In this paper, we use induced subgraphs and their vertex sets interchangeably. 
%
%For subsets $X,Y \subseteq V(G)$ we say that $X$ is {\em anticomplete} to $Y$ if $X$ and $Y$ are disjoint and every vertex of $X$ is non-adjacent to every vertex of $Y$.
For graphs $G$ and $H$, we say that $H$ is a {\em minor} of $G$, or that $G$ {\em contains} $H$ as a minor, if there exist disjoint connected induced subgraphs $\{X_v\}_{v \in V(H)}$ of $G$ 
such that for every pair of distinct vertices $u$, $v$ of $H$, if $uv$ is an edge of $H$ then there exists an edge from $X_u$ to $X_v$ in $G$. 
If, for every pair of distinct vertices $u$, $v$ of $H$, $uv$ is an edge of $H$ if and only if there exists an edge from $X_u$ to $X_v$ in $G$, then $H$ is an {\em induced minor} of $G$.

%In this case we say that $G$ {\em contains} $H$  an $H$-induced-minor}.
%if and only if $uv$ is an edge in $H$.
%s anticomplete to $X_v$ if and only if $u$ is non-adjacent to $v$ in $H$; in this case we say that $G$ {\em contains an $H$-induced-minor}.
Given a family $\mathcal{H}$ of graphs, we say that a graph $G$ is {\em $\mathcal{H}$-induced-minor-free} if no induced minor of $G$ is isomorphic to a member of $\mathcal{H}$.
The {\em distance} between two vertices $u$ and $v$ in $G$ is the minimum number of edges of a $u$-$v$ path in $G$. An {\em independent} set in $G$ is a set $I$ of vertices of pairwise distance at least two. For a positive integer $r$, a {\em distance} $r$-{\em independent set} is a set $I$ of vertices of pairwise distance strictly greater than $r$. Thus, a set is independent if and only if it is $1$-independent.
The {\em distance} $r$-{\em independence number} of a vertex set $S$ is denoted by $\alpha_r(S)$ and defined as the size of the largest distance $r$-independent subset of $S$. The {\em independence number} of a set $S$ is $\alpha(S) = \alpha_1(S)$.

For a graph $G$, a \emph{tree decomposition} $(T, \chi)$ of $G$ consists of a tree $T$ and a map $\chi\colon V(T) \to 2^{V(G)}$ with the following properties:
\begin{enumerate}
\itemsep -.2em
    \item For every $v \in V(G)$, there exists $t \in V(T)$ such that $v \in \chi(t)$.

    \item For every $v_1v_2 \in E(G)$, there exists $t \in V(T)$ such that $v_1, v_2 \in \chi(t)$.

    \item For every $v \in V(G)$, the subgraph of $T$ induced by $\{t \in V(T) \mid v \in \chi(t)\}$ is connected.
\end{enumerate}

For each $t\in V(T)$, we refer to $\chi(t)$ as a \textit{bag of} $(T, \chi)$.  The \emph{width} of a tree decomposition $(T, \chi)$, denoted by $width(T, \chi)$, is $\max_{t \in V(T)} |\chi(t)|-1$. 
The {\em independence} number of a tree decomposition $(T, \chi)$ is the maximum independence number $\alpha(\chi(t))$ over all bags $\chi(t)$ for $t \in V(T)$.
The {\em distance} $r$-{\em independence} number of a tree decomposition $(T, \chi)$ is the maximum distance $r$-independence number $\alpha_r(\chi(t))$ over all bags $\chi(t)$ for $t \in V(T)$.
The \emph{treewidth}, {\em tree independence} and {\em distance} $r$-{\em tree independence} number of $G$, are
the minimum width, independence number, and distance $r$-independence number of a tree decomposition of $G$, respectively. 
We will say that a {\em class} of graphs has a bounded graph parameter (such as treewidth or tree-independence) if there exists a constant $c$ such that the parameter is upper bounded by $c$ for every graph in the class. 

Treewidth has been extensively studied, as graphs whose treewidth is upper bounded by a constant have favorable structural~\cite{bodlaender1998partial, GMV} and algorithmic~\cite{cygan2015parameterized} properties. Some of the applications (such as polynomial time solvability of a number of graph problems) carry over to graphs of bounded tree-independence~\cite{dallard2024treewidth,DBLP:conf/esa/LimaMMORS24}. 
This motivates a systematic study of which classes have bounded treewidth or tree-independence.
%~\cite{abrishami2024tree,dallard2021treewidth,dallard2024treewidth}.\todo[inline]{DL: Is this enough? Maria: any papers in the treewidth and induced subgraph series we should highlight here?\\ Maria: That sentence is very broad; there are thousands of papers on treewidth.... Maybe just limit it to tree alpha and cite all the tree alpha series?  }

In terms of minors, treewidth is well understood: there exists a fixed constant $c$ such that, if  $t$ is the largest integer such that $G$ contains the $t$ times $t$ grid $\boxplus_{t}$ as a minor, then the treewidth of $G$ is at least $t$~\cite{diestel2025graph} and at most $t^9(\log t)^c$ for some fixed constant $c$~\cite{chekuri2016polynomial,chuzhoy2021towards}. 
On the other hand, recent results suggest~\cite{sintiari2021theta} (see also~\cite{DBLP:journals/corr/abs-2502-14775}) that there does not exist a ``nice'' characterization of bounded treewidth graphs in terms of forbidden induced subgraphs. For example, for every sufficiently large $n$ there exists an $n$-vertex graph which excludes a triangle $K_3$ as an induced subgraph, $K_{3,3}$ and $\boxplus_{5}$ as an induced minor (indeed even smaller graphs), and yet has treewidth (and also tree independence number) at least $c \cdot \log n$ for a constant $c > 0$~\cite{sintiari2021theta}.
%\todo[inline]{DL: i changed the paragraph above because the flat wall theorem implies that every graph of large treewidth either has a large clique or a large wall as an induced minor.}

%
Quite often, problems that are solvable in polynomial time when the treewidth or tree-independence number of the input graph is a constant can be solved in {\em quasi-polynomial} ($O(n^{(\log n)^d})$ for some constant $d$) time when the treewidth or tree-independence number of the input graph is upper bounded by a poly-logarithmic (i.e. $O((\log n)^c)$ for some constant $c$) function in the number of vertices \cite{lokshtanov2026findinglargesparseinduced}. 
Together with the above-mentioned difficulties of characterizing the graphs of bounded treewidth and tree independence in terms of induced minors, this motivates trying to characterize the graphs whose treewidth (or tree independence) is bounded poly-logarithmically in the number of vertices (see for example~\cite{tw3,treeAlpha4}). 

It was conjectured \cite{treeAlpha5} that, for every integer $t$ there exist integers $c, d$ such that every graph $G$ either contains $K_{t,t}$ or $\boxplus_t$ as an induced minor or has tree-independence at most $(c \log n)^{d}$.
A subset of the authors recently showed~\cite{chudnovsky2025treewidth} that characterizing the graphs with poly-logarithmic tree-independence is essentially equivalent to characterizing the graphs with poly-logarithmic treewidth and no clique on $c \cdot \log n$ vertices for a sufficiently large constant $c$. 

The main results of this paper are the following three theorems.
\begin{restatable}{theorem}{CoarseTreewidthOurClass}\label{thm:coarse_treewidth_our_class}
    Let $t$ be a positive integer and let $G$ be a $K_{t,t}$-induced-minor-free graph. Then there exists constants $c(t)$, $d(t)$ such that $G$ either has $\, \boxplus_t$ as an induced minor or has a tree decomposition $(T,\chi)$ such that for every $x\in V(T)$, the distance $16\log 4n$-independence number of the bag $\chi(x)$ is at most $115000 \cdot \log^3 4n \cdot f \log 4f$, where $f = (1000000\cdot d\cdot \log^5 4n)^{c}$.
\end{restatable}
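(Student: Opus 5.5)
The plan is to combine a coarse (distance-based) balanced-separator argument with the standard translation from balanced separators to tree decompositions. The final bound has a recognizable shape: one factor of $\log 4n$ from recursion depth, a $\log^2 4n$ factor from building each separator, and a factor $f\log 4f$ with $f=(10^6 d \log^5 4n)^c$ from a structural lemma. That shape suggests the following decomposition of the argument.

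\textbf{Step 1 (separators to decomposition).} First we show that it suffices to prove a separator statement. Suppose that for every $Z\subseteq V(G)$ there is a set $S$ which is a balanced separator for $Z$: every component of $G-S$ contains at most $|Z|/2$ vertices of $Z$. Suppose further that $S$ is covered by at most $k$ balls of radius $r$ in $G$, with $r\approx 2\log 4n$. We then run the usual recursive construction (as in Robertson--Seymour), in which each bag is $Z\cup S$ and the boundary set $Z$ is kept small in the coarse sense. Distances are always measured in the whole graph $G$, so the ball-cover property is preserved when we pass to subgraphs. There are $O(\log n)$ levels of recursion, so each bag is covered by $O(k\log n)$ balls of radius $r$. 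A set covered by $m$ balls of radius $r$ has distance-$2r$-independence number at most $m$. This is the source of the distance $16\log 4n$: the ball radius will be a constant multiple of $\log 4n$, coming from shortest-path-type arguments.

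\textbf{Step 2 (coarse balanced separators).} For a weight function or set $Z$, we look for a balanced separator covered by few balls of radius $O(\log n)$. The natural tool is a layering, or low-diameter decomposition, argument. Pick a vertex, grow BFS balls, and choose a radius $\rho\le \log 4n$ at which the boundary layer is small relative to the ball. This is the standard ``region growing'' idea, and it is the source of the $\log 4n$ radii and of several $\log n$ factors. Region growing either finds a cheap cut, or it shows that a large part of $Z$ lies inside a ball of radius $O(\log n)$. In the second case we are reduced to separating sets that sit inside bounded-radius balls.

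\textbf{Step 3 (structure via excluded induced minors).} The core is to show that, in a $K_{t,t}$-induced-minor-free graph with no $\boxplus_t$ induced minor, a well-linked family of balls cannot be too large. I expect this step to be the main obstacle. Contract each ball of radius $O(\log n)$ in the current candidate family to a single vertex; these are connected sets. This yields an auxiliary graph $H$ on the ball-centres, whose edges record nearby or touching balls. Any induced minor of $H$ built from pairwise far-apart balls lifts to an induced minor of $G$. Hence $H$ excludes $K_{t,t}$ and $\boxplus_t$ as induced minors, at least after passing to a sparse distance-independent subfamily.

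Next we apply an earlier result of the paper. The constants $c(t)$, $d(t)$ presumably come from a theorem giving polynomial grid-minor-type bounds for $K_{t,t}$-induced-minor-free graphs with bounded degeneracy or polylogarithmic parameters. The required input is that $H$ has degree or clique size at most polylogarithmic, which holds because the balls are chosen distance-independent. That theorem gives $H$ treewidth, or separation number, at most $f=(10^6 d\log^5 4n)^c$. We then obtain a balanced separator of $H$ of size about $f\log 4f$, using the standard conversion of treewidth into weighted separators with a logarithmic loss. Finally we lift it back to $G$ as a union of about $f\log f$ balls, and multiply by the $O(\log^2 n)$ overhead from Step 2.

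The delicate points lie in this step. First, the contracted graph must be a genuine induced minor, which requires disjoint and pairwise non-adjacent balls; this is why the radius must double from $8\log$ to $16\log$. Second, the degree of $H$ must be controlled, which is where the $\log^5 4n$ inside $f$ arises. Third, the constants must be tracked so that they combine to the stated bound $115000\cdot\log^3 4n\cdot f\log 4f$.
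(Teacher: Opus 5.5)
Your proposal has a genuine gap: Steps~2 and~3 contain no working mechanism, and what is missing is the heart of the proof. Step~1 is broadly in line with the paper's recursion (Theorem~\ref{thm:F_treewidth}), provided you balance with respect to ball centres rather than vertices.

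A $K_{t,t}$-induced-minor-free graph can contain arbitrarily large cliques. So the bound $\mathrm{tw}\le \Delta^{c(t)}$ for $\{K_{t,t},\boxplus_t\}$-induced-minor-free graphs only helps if you exhibit an induced subgraph of $G$ (or of an induced minor of $G$) that \emph{simultaneously} has polylogarithmic maximum degree and no balanced separator coverable by few sets. Nothing in your plan produces one:
\begin{itemize}
\item BFS region growing controls the number of vertices on a boundary layer, not the number of $O(\log n)$-balls needed to cover it.
\item The fallback ``a large part of $Z$ lies in a ball of radius $O(\log n)$'' is no progress, since that ball may be all of $G$.
\item Your auxiliary graph on ball centres is not an induced minor of $G$ when balls overlap or when edges record ``nearby'' balls. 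If you keep only pairwise far-apart balls, it has no edges.
\item The claim that its degree is polylogarithmic because the balls are distance-independent is unjustified.
\item A small separator of such an $H$ would not lift to a separator of $G$, since $H$ does not record the connectivity of $G$.
\end{itemize}
Also, the passage from radius $8\log 4n$ to distance $16\log 4n$ is just the pigeonhole fact you state in Step~1; it has nothing to do with disjointness.

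Two ideas are missing.

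\emph{Reduction to bounded degeneracy.} Theorem~\ref{thm:ktt_free_implies_bounded_radius_partition} partitions $V(G)$ into at most $t\cdot 2^t$ parts such that every component of every part lies within a ball of radius $4$. Contracting these components gives an induced minor $G'$ whose chromatic number, hence clique number, is at most $t\cdot 2^t$. So $G'$ has degeneracy at most $d=d(t\cdot 2^t)$ by Proposition~\ref{prop:bounded_degeneracy}. This is where $d$ comes from. It is also where the factor $8$ comes from: two vertices of one piece are joined by a path on at most $8$ vertices.

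\emph{An LP whose rounding and dual both respect this sparsity.} Peeling vertices of degree at most $4d$ gives a layered family of thickness $k\le\log 4n$. Each set lies within distance $\log 4n$ of its centre and satisfies $|N[v]\setminus F|\le 4d$ for all $v\in F$ (Lemma~\ref{lem:degeneracy_implies_layered_family}). Now consider the balanced-separator LP over this family.
\begin{itemize}
\item If its optimum is below $f$, Leighton--Rao-type rounding in the LP metric (Theorem~\ref{thm:balanced_separator_rounding}) followed by greedy set cover yields a separator covered by $O(kf\log^2 n\log f)$ sets. This is the source of $115000\log^3 4n\cdot f\log 4f$: thickness times rounding loss times set-cover loss, not recursion depth.
\item Otherwise, sampling \emph{upward-minimal} paths from the dual (Theorem~\ref{thm:balanced_separator_dual_rounding}) gives an induced subgraph $H$ of $G'$ in which each layered set keeps only $O(k^2\log^2 n\log f)$ vertices. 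Hence $\Delta(H)<10^6 d\log^5 4n$. Meanwhile every $(X,\frac12)$-balanced separator of $H$ needs at least $f=(10^6 d\log^5 4n)^{c}$ sets, so $\mathrm{tw}(H)\ge f>\Delta(H)^{c}$, contradicting Proposition~\ref{prop:our_class_deg_tw_bound}.
\end{itemize}
Without these two ingredients, your Step~3 has no way to obtain the degree bound on which the whole argument rests.
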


\begin{restatable}{theorem}{CoarseSubpolyTreewidthOurClass}\label{thm:coarse_subpoly_treewidth_our_class}
    Let $t$ be a positive integer and let $G$ be a graph that excludes $K_{t,t}$ and $\boxplus_t$ as induced minors. Then there exists a positive integer $c(t)$ and $\epsilon(t)\in(0,1]$, such that $G$ has a tree decomposition $(T,\chi)$ such that for every $x\in V(T)$, the distance $8$-independence number of $\chi(x)$ is at most $2^{c(t) \log^{1-\epsilon(t)} n}$.
\end{restatable}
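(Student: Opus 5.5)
We plan to deduce Theorem~\ref{thm:coarse_subpoly_treewidth_our_class} from the same machinery that gives Theorem~\ref{thm:coarse_treewidth_our_class}, but replacing the $\log n$-scale distance (coming from repeated halving via balanced separators) by a constant distance $8$, at the price of a subpolynomial rather than polylogarithmic bound. The overall scheme is the standard one: it suffices to show that for every vertex weighting (or every set $W$), $G$ has a balanced separator $X$ that is ``coarsely small'', i.e.\ $X$ is covered by the closed $r$-neighbourhoods ($r$ a small constant, say $r=2$ or $4$) of a set $Y$ with $|Y| \le 2^{c\log^{1-\epsilon} n}$. A standard argument (in the style of Robertson--Seymour's separator-to-decomposition lemma, applied to balls instead of vertices) then converts such separators into a tree decomposition whose bags are unions of $O(\log n)$ such covered sets, hence each bag is covered by at most $O(\log n)\cdot 2^{c\log^{1-\epsilon}n}$ balls of radius $4$. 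Since any distance-$8$-independent set meets each radius-$4$ ball in at most one vertex, the distance $8$-independence number of each bag is bounded by the number of balls, which is absorbed into $2^{c'\log^{1-\epsilon'} n}$.

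The key step is producing the coarse balanced separators. First, using that $G$ excludes $K_{t,t}$ as an induced minor, we will use the known fact that such graphs have balanced separators dominated by few vertices when there is no large structure, and more precisely we aim to show: if $G$ has no balanced separator covered by $k$ radius-$2$ balls, then $G$ contains a large ``coarse'' well-linked system, namely many pairwise far-apart connected sets (a $K_{s}$ or grid as a fat/coarse minor). The plan is to take a minimal counterexample weighting, greedily pick far-apart vertices whose balls cannot be separated, and use Menger-type arguments on the ball-contracted graph (contract each radius-$2$ ball; distances in $G$ and the contracted graph are comparable up to factor $\sim 4$, explaining the constant $8$). This yields a large minor in the quotient graph, which we then lift to an induced-minor structure in $G$.

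The main obstacle is the lifting: turning a large grid/clique structure at coarse scale into either $\boxplus_t$ or $K_{t,t}$ as an induced minor with only constant distance loss. Here the subpolynomial bound enters: we would iterate a density-increment argument, where at each of $1/\epsilon$ rounds either we find an induced grid/$K_{t,t}$, or the number of balls needed drops by a factor $2^{\log^{1-\epsilon}n}$-type amount, with $\epsilon$ depending on $t$ through the Ramsey-type bounds for $K_{t,t}$-induced-minor-free graphs (polynomial $\chi$-boundedness / Erd\H{o}s--Hajnal-type bounds in these classes). I expect getting the recursion to close with exponent $1-\epsilon(t)$, rather than losing a $\log n$ in the distance as in Theorem~\ref{thm:coarse_treewidth_our_class}, to be the hardest part; the first attempt would be to bound recursion depth by a function of $t$ alone, so that distances grow only by a constant factor.
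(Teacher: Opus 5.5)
\textbf{There is a genuine gap.} The step that is supposed to produce balanced separators covered by $2^{c\log^{1-\epsilon}n}$ small balls is never carried out.

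First, your claim that the absence of such separators forces a large coarse clique or grid minor is essentially a coarse grid theorem. No such tool is available, and closely related coarse statements, such as the coarse Menger conjecture, are false in general. Second, even granting it, a large minor in a contracted graph does not give $\boxplus_t$ or $K_{t,t}$ as an \emph{induced} minor. Controlling the extra edges between branch sets is exactly the hard part, and large treewidth alone does not force these induced minors. Third, the ``density increment over $1/\epsilon$ rounds'' supplies no mechanism from which the exponent $1-\epsilon(t)$ would come. Finally, ``contract each radius-$2$ ball'' is not well defined, because balls overlap.

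The missing idea is a reduction to a known theorem, with no separator construction or recursion at all:
\begin{enumerate}
\item By Theorem~\ref{thm:ktt_free_implies_bounded_radius_partition}, $V(G)$ can be partitioned into at most $t\cdot 2^t$ parts so that every connected component of every part is $(1,4)$-coverable.
\item Contract each such component to get an induced minor $G'$ of $G$. It still excludes $K_{t,t}$ and $\boxplus_t$ as induced minors.
\item Each part becomes an independent set in $G'$, so $G'$ has chromatic number at most $t\cdot 2^t$. Hence $G'$ has no $K_{t2^t+1}$ subgraph.
\item Proposition~\ref{prop:subpoly_treewidth} (Chudnovsky--Codsi--Fischer--Lokshtanov) then gives $\mathrm{tw}(G')\le 2^{c\log^{1-\epsilon}n}$ directly, using $|V(G')|\le n$.
\item Lift the decomposition by replacing each $v_C$ in a bag with the component $C$.
\item Any two vertices of one component are joined by a path on at most eight vertices. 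So a distance-$8$-independent set meets each component at most once, and its size in a lifted bag is at most the size of the corresponding bag of $G'$.
\end{enumerate}
Note that the constant $8$ comes from the bounded radius of the partition pieces, not from any distortion under ball contraction. The subpolynomial bound is inherited wholesale from Proposition~\ref{prop:subpoly_treewidth}, which you should not expect to reprove through a density-increment sketch.
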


%Theorems~\ref{thm:coarse_treewidth_our_class} %and~\ref{thm:coarse_subpoly_treewidth_our_class} 

Our results fit into a recent line of work concerning ``coarse graph theory''~\cite{Agelos, nguyen2025asymptoticstructureicoarse, gartland2023inducedversionsmengerstheorem, Hendrey_2024, Albrechtsen_2024, nguyen2025counterexamplecoarsemengerconjecture}.
In this field tree decompositions where each bag can be covered by 
``not too many'' balls of ``small'' radius are of interest. Another line of inquiry within coarse graph theory is to understand what versions of the classic Menger's Theorem~\cite{Menger} hold true when disjoint paths are replaced by paths that are sufficiently far apart, and bounded size separators are replaced by separators that can be covered by ``not too many'' balls of ``small'' radius.  Note that by
Lemma~6.2 of \cite{subpolynomialtreewidth}, the existence of  a tree decomposition as above implies the Menger theorem analogue. 
Theorems~\ref{thm:coarse_treewidth_our_class} and~\ref{thm:coarse_induced_menger_our_class} fit into this framework
if ``small'' means $O(\log n)$, and ``not too many'' means $O((\log n)^c)$ for some constant $c$, while in 
Theorem~\ref{thm:coarse_subpoly_treewidth_our_class} 
``small'' is $O(1)$ and ``not too many'' is $O(2^{(\log n)^{1-\epsilon}})$ for some positive constant $\epsilon$.  
%
%Theorem~\ref{thm:coarse_induced_menger_our_class} below is the Menger's theorem analogue of Theorem~\ref{thm:coarse_treewidth_our_class}.

% 
%While Theorem~\ref{thm:coarse_induced_menger_our_class} stated below, is a ``coarse Menger's theorem'' for graphs that exclude a $K_{t,t}$ as an induced minor, which we prove along the way.

% \todo[inline]{We already say that Theorem 1.1 is a poly-log coarse version, so we should not say that Theorem 1.3 is the only one that fits into the coarse world. We should rephrase emphasizing the type of bounds on the radius} 
% \todo[inline]{(Ajay) Gave it a shot}
% \todo[inline]{(Maria) I'm happy with this now (I changed it a bit more)}
% Seconded

\begin{restatable}{theorem}{InducedMengerOurClass}\label{thm:coarse_induced_menger_our_class}
    Let $t$ be a positive integer, and let $G$ be a $K_{t,t}$-induced-minor-free graph. Let $A,B \subseteq V(G)$, and let $k$ be a positive integer. Then there exist constants $d(t)$ and $\mu(t)$ such that one of the following holds:
    \begin{enumerate}
        \item $G$ has an $A$--$B$ separator $S \subseteq V(G)$ whose distance $16\log 4n$-independence number is at most $100k \cdot g \cdot \log^3 4n$, where $g := \bigl(24\log^2 4n + 4d(t) + 1\bigr)^{\mu(t)} \cdot \bigl(2\mu(t)+1\bigr)^{4\mu(t)^2+1}$.
        \item $G$ contains a collection of $k$ vertex-disjoint $A$--$B$ paths that are pairwise anticomplete.
    \end{enumerate}
\end{restatable}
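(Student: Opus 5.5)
We argue by a greedy, augmenting-path style procedure in the spirit of the coarse Menger results of \cite{Hendrey_2024,nguyen2025asymptoticstructureicoarse}, using $K_{t,t}$-induced-minor-freeness to control how far paths can interact. First we reduce to the case where $A$ and $B$ are far apart. If some pair $a\in A$, $b\in B$ are at distance at most $16\log 4n$, we record this locally. More precisely, we consider a shortest $A$--$B$ path $P$. Either every $A$--$B$ path passes within distance $O(\log n)$ of $P$, in which case $P$ itself, or a short piece of it, is a separator of small distance-$16\log 4n$ independence number. Or we can iterate with $P$ and its neighbourhood deleted. Formally, we run the following loop, at most $k$ times. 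We maintain pairwise anticomplete $A$--$B$ paths $P_1,\dots,P_j$, each chosen as a shortest $A$--$B$ path in $G - N[P_1\cup\dots\cup P_{j-1}]$. If $j$ reaches $k$ we are in outcome~2. Otherwise $N[P_1\cup \dots\cup P_j]$ separates $A$ from $B$.

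The issue is that $N[P_i]$ may have large distance-$r$ independence number, since the paths may be long. So the main work is to replace each $N[P_i]$ by a cheaper separator. The idea is to exploit that $P_i$ is geodesic in the graph where it was chosen. The ball of radius $r=16\log 4n$ around a geodesic has independence number comparable to its length divided by $r$ only if the path is short, so instead of bounding the whole neighbourhood we rechoose paths. Rather than fixing arbitrary shortest paths, we select a minimal family and use an exchange argument. Suppose that after $j<k$ steps the union of neighbourhoods cannot be shrunk to a separator of the required size. Then we find, via a Menger-type argument in an auxiliary graph whose vertices are the $r$-balls, an ``augmenting'' route. We route $k$ paths through a sequence of $O(\log n)$ levels: distance layers from $A$ of width $r/16\approx \log n$. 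In each level we either find a small cut or the paths can be rerouted. The $\log^3 4n$ factor arises from $\log n$ levels, $\log n$ ball-radius rescaling, and a halving argument on path lengths.

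The key structural input, and the main obstacle, is this. In a $K_{t,t}$-induced-minor-free graph, a set of vertex-disjoint paths that are \emph{not} pairwise anticomplete but pairwise touch only in a bounded number of ``clusters'' can be cleaned into anticomplete ones, losing only a polylogarithmic factor. Here we need a lemma stating the following. If $X$ is a set of $g$ vertex-disjoint connected subgraphs and $Y$ another such set with every member of $X$ adjacent to many members of $Y$, then we find $K_{t,t}$ as an induced minor. The quantity $g=(24\log^2 4n+4d+1)^{\mu}(2\mu+1)^{4\mu^2+1}$ suggests a Ramsey/Kővári--Sós--Turán style bound on the bipartite ``touching graph'' between path segments. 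This graph is $K_{t,t}$-free in the induced-minor sense, hence has degeneracy polynomial in its parameters. A degeneracy argument then extracts from $100k\,g\log^3 4n$ mutually touching segments a subfamily of $k$ pairwise anticomplete ones. I expect this extraction step to be hardest: one must ensure the extracted segments still link into full $A$--$B$ paths. I would handle it by cutting paths at the layer boundaries and applying the degeneracy bound within each layer separately, union-bounding over the $O(\log n)$ layers.

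Finally, combining the pieces, we obtain the dichotomy. If extraction fails at some layer, the union of $r$-balls around the at most $100k\,g\log^3 4n$ offending segment centres forms an $A$--$B$ separator $S$. Then $\alpha_{16\log 4n}(S)$ is at most the number of balls, which is outcome~1. Otherwise the extracted segments concatenate to $k$ pairwise anticomplete $A$--$B$ paths, which is outcome~2.
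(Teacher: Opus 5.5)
Your proposal has a genuine gap. The one concrete structural fact it relies on is false, and the step that turns failure into a separator is asserted rather than proved.

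\textbf{The degeneracy claim is false.} A $K_{t,t}$-induced-minor-free graph need not have bounded degeneracy. For $t\ge 2$ every complete graph is $K_{t,t}$-induced-minor-free, since all its induced minors are complete. Taking singletons in $K_n$ also refutes your lemma that two families $X,Y$ of disjoint connected subgraphs, with every member of $X$ touching many members of $Y$, force $K_{t,t}$ as an induced minor. For the same reason, the touching graph of your path segments can be complete. Degeneracy bounds such as Proposition~\ref{prop:bounded_degeneracy} need a clique excluded as well.

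Obtaining that exclusion is the first step of the paper's proof, and you are missing it:
\begin{enumerate}
\item By Theorem~\ref{thm:ktt_free_implies_bounded_radius_partition}, $V(G)$ splits into at most $t2^t$ parts whose connected components are $(1,4)$-coverable.
\item Contracting these components gives an induced minor $G'$ that is still $K_{t,t}$-induced-minor-free and has chromatic number at most $t2^t$. Hence it has bounded clique number and degeneracy at most $d$.
\item Anticomplete path systems lift from $G'$ to $G$, and separators lift with distances blown up by a factor $8$.
\end{enumerate}
The radius $16\log 4n$ is $2\cdot 8\cdot \log 4n$. It comes from balls of radius $\log 4n$ in $G'$, not from a choice of layer width.

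\textbf{Your loop does not close, even with bounded degeneracy.} The greedy separator $N[P_1\cup\dots\cup P_j]$ has unbounded distance-$r$ independence number, because the $P_i$ can be long. So neither $P$ nor a short piece of it need be a cheap separator. The repair you sketch has no working mechanism:
\begin{itemize}
\item Segments extracted independently in different distance layers belong to different paths, so they need not concatenate into $A$--$B$ paths.
\item Segments in adjacent layers may touch each other.
\item Nothing shows that failure of extraction in some layer yields a separator covered by few balls, and that implication is the whole content of the theorem.
\end{itemize}

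\textbf{How the paper gets the dichotomy.} It uses LP duality over the layered family $\mathcal{F}$ of $G'$ from Lemma~\ref{lem:degeneracy_implies_layered_family}. This family has thickness at most $\log 4n$, so every $F\in\mathcal{F}$ lies in a ball of radius $\log 4n$ around its center, and $|N[v]\setminus F|\le 4d$ whenever $v\in F$.
\begin{itemize}
\item If the fractional $A$--$B$ separator LP over $\mathcal{F}$ has value at most $12kg+1$, Theorem~\ref{thm:ab_separator} rounds it at a cost of $8\log 4n\cdot\log 2n$. Proposition~\ref{prop:packing_covering_LP} costs one more logarithm, which gives the $\log^3 4n$.
\item Otherwise, sampling upward-minimal paths from the dual (Theorem~\ref{thm:ab_path_packing}, via Theorem~\ref{thm:coarse_induced_menger_aux_class}) gives an induced subgraph $H'$ of maximum degree $\Delta\le 24\log^2 4n+4d$ with no $A$--$B$ separator of size at most $kg$.
\item The star edge partition of Proposition~\ref{prop:our_class_initial_edge_coloring} is then cleaned one part at a time (Lemma~\ref{lem:induced_menger_cleaning_step}), each of the $\mu$ parts losing a factor $\Delta+1$, down to maximum degree $2\mu$.
\item At maximum degree $2\mu$, the bounded-degree induced Menger theorem (Proposition~\ref{prop:sparse_induced_menger}) yields $k$ pairwise anticomplete paths.
\end{itemize}
That chain is exactly what $g=(\Delta+1)^{\mu}(2\mu+1)^{4\mu^2+1}$ encodes. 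A Ramsey-type reading of $g$ does not correspond to any step you can actually carry out.
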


\paragraph{Proof Outline.}

% Note for the the purpose of this outline, whenever we say that a quantity is constant or bounded we mean that it is upper bounded by a fixed function of $t$.

The proofs of all main theorems begin with a reduction from the class of all $K_{t,t}$-induced-minor-free graphs to the subclass of such graphs with degeneracy bounded by a constant.
To obtain this reduction, we show that every $K_{t,t}$-induced-minor-free graph $G$ admits a partition of $V(G)$ into a constant number of parts such that each connected component of every part is contained in a ball of radius four in $G$ (this is Theorem~\ref{thm:ktt_free_implies_bounded_radius_partition}). Contracting each such component yields an induced minor $G'$ of $G$ with bounded chromatic number, and hence bounded clique number. Since $G'$ remains $K_{t,t}$-induced-minor-free, by a theorem in~\cite{MatijaPolyDegen} or~\cite{10.1093/imrn/rnaf025}, we conclude that $G'$ has bounded degeneracy.
% 
% \todo[inline]{Mention \cite{10.1093/imrn/rnaf025} as well}
% Done
% 
By construction, separators consisting of a bounded number of balls, as well as collections of vertex-disjoint and pairwise anticomplete paths in $G'$, can be lifted to corresponding structures in $G$. Therefore, for the remaining two theorems, it suffices to prove them for the subclass of $K_{t,t}$-induced-minor-free graphs whose degeneracy is bounded by a constant.

%\todo[inline]{Let us not pluralize "work", but rephrase}

A number of recent 
% \todo{this is an example of plural "works" that I objected to. There are several others}
% works
manuscripts
% \todo{(Ajay) Edited}
consider the problem of efficiently finding (or proving the existence of) separators that can be covered by a small number of sets from a prescribed family $\f$, by rounding the corresponding linear programming relaxation. In general, these linear programs may have large integrality gaps. However, positive results are known when $\f$ has additional structure -- for example when the sets are cliques~\cite{KorchemnaL0S024} or when they have poly-logarithmic independence number~\cite{chudnovsky2025treewidth}, where poly-logarithmic upper bounds on the integrality gaps can be obtained. We identify another type of $\mathcal{F}$, that we call layered families, which is amenable to the techniques used in these papers.
% \todo{(Ajay) Edited}
% previous works.

% \todo[inline]{"another type of $\mathcal{F}$" instead of "a family"}
% Done

Let $G$ be a graph and $\Pi$ be an ordered partition of $V(G)$. We define the layered family of $G$ corresponding to $\Pi$ as follows. First, remove all edges whose endpoints lie in the same part of $\Pi$. Next, orient each remaining edge from the vertex in the part with higher index to the vertex in the part with lower index. For each source, that is, each vertex of indegree zero in the resulting orientation, include in the family the set of all vertices reachable from it by a directed path. In graphs of bounded degeneracy, there exists an ordered partition $\Pi$ for which the layered family satisfies the following useful properties: (1) the integrality gap of the linear programming relaxation for separators using sets of this family is at most poly-logarithmic, (2) for every vertex $v$ in the graph, $N[v]$ can be covered by a constant number of sets from $\f$, and (3) every set in $\f$ is contained in a ball of radius $O(\log n)$. The main technical contribution of our work is to prove property (1). After this, we follow the
% framework of the previous work and round the linear programming relaxation, as follows. 
established
% \todo{(Ajay) Edited} 
framework of rounding the linear programming relaxation, as follows.
% 
% After this, we follow the framework of \cite{KorchemnaL0S024} and \cite{chudnovsky2025treewidth} and round the linear programming relaxation, as follows. 
% 
% \todo[inline]{Let us not pluralize "work", but rephrase}
% \todo[inline]{(Ajay) Unsure what you meant.}
 
For Theorem~\ref{thm:coarse_treewidth_our_class}, when the optimum objective value is ``low'', we can round the linear program and obtain balanced separators that can be covered by a poly-logarithmic number of sets from the family $\f$ and when the objective value is ``high'', we use the linear program to guide us in sampling an induced subgraph $H$ with treewidth which is large compared to its poly-logarithmic maximum degree. This forces $H$, and hence $G$, to contain $\boxplus_t$ as an induced minor, since graphs excluding both $K_{t,t}$ and $\boxplus_t$ as induced minors have treewidth polynomially bounded in their maximum degree, which in $H$ is only poly-logarithmic (and therefore small).

For Theorem~\ref{thm:coarse_subpoly_treewidth_our_class}, we apply a theorem from~\cite{subpolynomialtreewidth} which asserts that graphs that exclude $K_{t,t}$ and $\boxplus_t$ as induced minors and $K_t$ as a subgraph have sub-polynomial treewidth.
%and use Theorem~\ref{thm:ktt_free_implies_bounded_radius_partition}.

Finally, for Theorem~\ref{thm:coarse_induced_menger_our_class}, a linear program with ``low'' objective value can be rounded to obtain $A$--$B$ separators that can be covered by a poly-logarithmic number of sets from $\f$. When the value is high, we sample an induced subgraph $H$ of poly-logarithmic maximum degree containing many vertex-disjoint $A$--$B$ paths. As $H$ remains $K_{t,t}$-induced-minor-free, we can apply a theorem from~\cite{bonnet2023treewidth} to obtain an edge partition $\Pi$ in which every connected component induced by a part is a star. But now, as $H$ admits $\p$, $A$ and $B$ cannot be separated using a small number of vertices, and as maximum degree is poly-logarithmic, this property also holds for the induced minor of $H$ obtained by contracting connected components of a fixed part in $\Pi$. This observation helps us find an induced subgraph $H'$ of $H$ that preserves a reasonable fraction of the $A$--$B$ paths in $\p$, but has constant maximum degree. But, as $H'$ has constant degree and admits a large number of vertex-disjoint $A$--$B$ paths, by a theorem from~\cite{gartland2023inducedversionsmengerstheorem}, $H'$ and consequently $G$ also admit a lot of vertex-disjoint and pairwise anticomplete $A$--$B$ paths.

% \todo[inline]{Say something about Theorem 1.2}
% \todo[inline]{(Ajay) Gave it a shot}
% \todo[inline] {I like it. I only added a reference to Theorem 7.1 because I think it's a really good result too}
% Awesome

\section{Preliminaries}
When clear from context, we denote by $n$ the number of vertices of the considered graph $G$. We use $[k]$ to denote the set $\{1, 2, \dots, k\}$ for a positive integer $k$. Unless the base is specified, logarithms are in base $2$. For a set $A$ and a subset $B \subseteq A$, we write $B \subsetneq A$ to denote that $B$ is a proper subset of $A$.
A {\em clique} in a graph $G$ is a set of pairwise adjacent vertices. For a positive integer $t$, $K_t$ denotes a clique on $t$ vertices. A \emph{complete bipartite graph} is a graph $G$ with vertex set $A\cup B$ such that a pair of vertices $(u,v)$ form an edge if and only if $\{u,v\}\cap A$ contains a single vertex. For a positive integer $t$, $K_{t,t}$ denotes a complete bipartite graph where $A$ and $B$ contain exactly $t$ vertices each. For every positive integer $t$, a \emph{$t$ times $t$ grid} denoted as $\boxplus_t$ is the graph with vertex set $[t] \times [t]$ where every pair $(i, j)$, $(i', j')$ of vertices are adjacent if and only if $|i-i'| + |j-j'| = 1$.

For graphs $G$ and $H$, we say that $H$ is a {\em minor} of $G$, or that $G$ {\em contains} $H$ as a minor, if there exist disjoint connected induced subgraphs $\{X_v\}_{v \in V(H)}$ of $G$ such that for every pair of distinct vertices $u$, $v$ of $H$, if $(u,v)$ is an edge of $H$ then there exists an edge from $X_u$ to $X_v$ in $G$. If, for every pair of distinct vertices $u$, $v$ of $H$, $(u,v)$ is an edge of $H$ if and only if there exists an edge from $X_u$ to $X_v$ in $G$, then $H$ is an {\em induced minor} of $G$.

%\todo[inline]{The previous paragraph and the few lines after this are a repeat from the Intro. Is this intentional?}
%\todo[inline]{(Ajay) I wanted to have all the definitions in one place so that the reader can find it in a single place.}

Given a vertex subset $X\subseteq V(G)$ the subgraph of $G$ induced by $X$ is the graph $G[X]$ with vertex set $X$ and edge set $\{(u,v)\in E(G) \mid u,v\in X\}$. We use $G-X$ to denote the graph induced by $V(G)\setminus X$.
We use $N(v)$ to denote the set of vertices that are adjacent to $v$ and $N[v]$ for $N(v)\cup\{v\}$. Similarly, $N(X)$ denotes $\{u\in V(G)\setminus X\ |\ u\in N(v) \text{ for some } v\in X\}$ and $N[X]$ denotes $N(X)\cup X$. The \emph{degree} of $v$ is $|N(v)|$ and the \emph{maximum degree} of $G$, denoted $\Delta(G)$, is $\max_{v \in V(G)} \deg(v)$.
% 
% \todo[inline]{$u \in V(G) \setminus X$, right?}
% Yes
% 
A \emph{walk} in a graph is a sequence of vertices in which each vertex is adjacent to the next, while a \emph{path} is a walk in which every vertex is distinct. The \emph{length} of a path is equal to the number of vertices on it. Given vertex subsets $A,B\subseteq V(G)$, an \emph{$A$--$B$ walk} (resp., \emph{$A$--$B$ path}) is a walk (resp., path) starting in $A$ and ending in $B$. Likewise, for vertices $u,v\in V(G)$, a \emph{$u$--$v$ walk} or \emph{$u$--$v$ path} starts at $u$ and ends at $v$. A path $P$ in $G$ is \emph{induced} if $G[P]$ has exactly $|V(P)|-1$ edges. 

For positive integers $k$ and $r$, we say that a subset $S$ of the vertices of graph $G$ is \emph{$(k,r)$--coverable} if there exists a vertex subset $C\subseteq V(G)$ of size at most $k$ such that every vertex of $S$ has a path on at most $r$ vertices to at least one vertex of $C$. A {\em distance} $r$-{\em independent set} is a set $I$ of vertices such that no pair of vertices in $I$ has a path between them on at most $r$ vertices. 
We remark that the two notions are closely related. Suppose first that $S \subseteq V(G)$ is such that the largest distance-$r$-independent set $I \subseteq S$ in $G$ has size $k$. Then $S$ is $(k,r)$–coverable in $G$. Indeed, if there exists a vertex $v \in S$ that is not within distance $r$ of any vertex of $I$, then $I \cup \{v\}$ is again distance-$r$-independent in $G$, contradicting the maximality of $I$.
Conversely, suppose that $S$ is $(k,r)$–coverable in $G$, and let $C \subseteq V(G)$ be a set of size at most $k$ witnessing this. We claim that every distance $2r$-independent set $I \subseteq S$ has size at most $k$. Indeed, if $|I| > k$, then by the pigeonhole principle there exist distinct vertices $u,v \in I$ that are both within distance at most $r$ of the same vertex of $C$. But this contradicts the assumption that $I$ is distance $2r$-independent.

For a positive real number $\phi$, an \emph{$(X,\phi)$-balanced separator} is a vertex subset $S$ such that every connected component of $G-S$ contains at most $\phi \cdot |X|$ vertices from $X$. If $A$, $B$ are vertex subsets, then an \emph{$A$--$B$ separator} is a vertex subset $S$ such that every $A$--$B$ path in $G$ contains at least one vertex from $S$. Let $\f$ be a family of vertex subsets of $G$. A \emph{fractional $(A,B)$-separator} using $\f$ in $G$ is an assignment $\{x_F\}_{F\in\f}$ of non-negative real numbers to elements of $\f$ such that $\sum_{\substack{F\in\f\\ F\cap P\neq\emptyset}}x_F$ is at least one for every $A$--$B$ path $P$. 
A \emph{fractional cover} of $X$ using $\f$ is an assignment $\{x_F\}_{F \in \f}$ of non-negative real numbers to elements of $\f$ such that $\sum_{\substack{F \in \f \\ v \in F}} x_F \geq 1$ for every $v \in X$. The \emph{fractional cover number} of $X$ with respect to $\f$ denoted $\mathrm{fcov}_{\f}(X)$ is the minimum of $\sum_{F\in\f}x_F$ over all fractional covers $\{x_F\}_{F \in \f}$ of $X$. A subset $\mathcal{F'}$ of $\f$ is a \emph{cover} of $X$ using $\f$ if every vertex of $X$ is contained in at least one element of $\f'$. The \emph{cover number} of $X$ using $\f$, denoted by $\mathrm{cov}_{\f}(X)$ is the size of the smallest subfamily $\f'$ of $\f$ such that every vertex of $X$ is contained in some element of $\f'$. 

A \emph{Bernoulli random variable} with success probability $p$, is one which takes the value $1$ with probability $p$, and $0$ with probability $1-p$. 
We recall a version of the Chernoff bound. A stronger version of this result, along with its full proof, is available in \cite{mitzenmacher2017probability}.

\begin{proposition}[\cite{hagerup1990guided}; \cite{mitzenmacher2017probability}]\label{thm:chernoff}
    Let $X_1, X_2, \dots, X_\ell$ be independent Bernoulli random variables, and let $X = \sum_{i \in [\ell]} X_i$. If $R \geq 6\,\mathbb{E}[X]$, then
    $$
    \mathbb{P}[X \geq R] \leq 2^{-R}.
    $$
\end{proposition}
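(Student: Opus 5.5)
The plan is the standard moment-generating-function (Chernoff) argument, with the threshold $R \geq 6\mu$ absorbing all constants. Write $p_i = \mathbb{P}[X_i=1]$ and $\mu = \mathbb{E}[X] = \sum_i p_i$. If $\mu = 0$, every $X_i$ is almost surely zero. Then $\mathbb{P}[X \geq R] = 0$ for $R > 0$, and the case $R = 0$ is trivial since $2^{0}=1$. So assume $\mu>0$.

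For any $\lambda>0$, Markov's inequality applied to $e^{\lambda X}$ together with independence gives
\[
\mathbb{P}[X\geq R]\leq e^{-\lambda R}\prod_{i}\mathbb{E}[e^{\lambda X_i}] = e^{-\lambda R}\prod_i\bigl(1+p_i(e^\lambda-1)\bigr)\leq e^{-\lambda R}\, e^{\mu(e^\lambda-1)},
\]
using $1+x\leq e^x$. Set $\delta = R/\mu - 1 \geq 5$ and choose $\lambda = \ln(1+\delta)$. This yields the familiar bound
\[
\mathbb{P}[X\geq R]\leq \left(\frac{e^{\delta}}{(1+\delta)^{1+\delta}}\right)^{\mu} \leq \left(\frac{e}{1+\delta}\right)^{(1+\delta)\mu} = \left(\frac{e\mu}{R}\right)^{R}.
\]

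Since $R\geq 6\mu$, we have $e\mu/R \leq e/6 < 1/2$, and therefore $\mathbb{P}[X\geq R]\leq 2^{-R}$.

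No step is a real obstacle. The only point needing care is the choice of $\lambda$ and the simplification $e^{\delta}\leq e^{1+\delta}$, which produces the clean form $(e\mu/R)^R$. The numerical fact $e/6<1/2$ is exactly why the constant $6$ appears in the hypothesis.
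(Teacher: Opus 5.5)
Your proof is correct and is essentially the standard moment-generating-function argument from the cited source (Mitzenmacher--Upfal), which the paper invokes without reproving: the chain $\bigl(e^{\delta}/(1+\delta)^{1+\delta}\bigr)^{\mu} \le (e\mu/R)^{R} \le (e/6)^{R} \le 2^{-R}$ is exactly how the constant $6$ arises there. Your separate treatment of the degenerate case $\mu = 0$, where $R$ may be $0$, is a small but welcome addition.
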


\section{Layered Family}

Let $G$ be a graph, and let $\Pi := (L_1,\dots,L_k)$ be an ordered partition of $V(G)$. We define the \emph{layer graph} $\vec{G}$ of $(G,\Pi)$ as the directed graph with $V(\vec{G}) := V(G)$ and $(u,v) \in E(\vec{G})$ if and only if $\{u,v\} \in E(G)$ and $u \in L_j$, $v \in L_i$ for some $i<j$. For vertices $u,v \in V(G)$, we say that $v$ is a \emph{descendant} of $u$, and $u$ is an \emph{ancestor} of $v$, if there exists a directed path from $u$ to $v$ in $\vec{G}$. Note that a vertex $u$ is always an ancestor of itself. Similarly, $v$ is a \emph{child} of $u$, and $u$ is a \emph{parent} of $v$, if $(u,v)$ is an arc of $\vec{G}$. 
% \todo{(Julien) make it clear that a vertex is its own ancestor\\[5pt] (Ajay) Added this line}

The \emph{layered family} $\f$ corresponding to $(G,\Pi)$ is defined as 
% follows. Initialize $\f := \emptyset$. For every vertex $u \in V(G)$ that has no parent, add to $\f$ the set $F$ that contains $u$ and all of its descendants; the vertex $u$ used to define such a set $F$ is called the \emph{center} of $F$.
% \[\left\{\{v \text{ | is a descendant of u}\}\big| \text{ u has no parent }\right\}.\] 
\[\left\{\{v\in V(G) \text{ | $v$ is a descendant of $u$}\}\big| \text{ $u\in V(G)$ has no parent }\right\}.\] The vertex $u$ used to define such a set $F$ is called the \emph{center} of $F$.
A family $\f$ of vertex subsets of $G$ is called a \emph{layered family} of $G$ if there exists an ordered partition $\Pi$ of $V(G)$ such that $\f$ is the layered family corresponding to $(G,\Pi)$. If $k$ is the smallest positive integer such that every $F\in\f$ intersects at most $k$ parts of $\Pi$, then we say that $\f$ has \emph{thickness} $k$.
% 
% \todo[inline]{Can we say "thinkness $k$" instad? $k$ layers feels too much like the number of sets in the partition}
% Done
% 
We use $\cf$ to denote the set of centers of elements of $\f$. If $u\in \cf$, then we use $F_u$ to denote the element of $\f$ that has $u$ as a center. Similarly, $\f_S$ denotes the subfamily $\{F_u \in \f\,|\, u\in S\}$, where $S\subseteq \cf$. It follows from the definition that $\f$ covers $V(G)$ and that the center of each set in $\f$ is unique to that set. The following lemma is immediate from the above definitions.

\begin{lemma}\label{lem:ancestral_path}
    Let $G$ be a graph and $k$ be a positive integer. If $\f$ is a layered family of $G$ with thickness at most $k$, then for every $F \in \f$ and every $u,v \in F$, there exists a path in $G[F]$ from $u$ to $v$ that contains at most $2k-1$ vertices, all of which are ancestors of $u$ or $v$.
\end{lemma}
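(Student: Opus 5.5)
Let $c$ be the center of $F$, so that $F$ is the set of descendants of $c$ in the layer graph $\vec{G}$. Both $u$ and $v$ are descendants of $c$, so we fix directed paths $P_u$ from $c$ to $u$ and $P_v$ from $c$ to $v$ in $\vec{G}$. The plan is to show that each of these has at most $k$ vertices, and then to concatenate them at $c$. This gives a $u$--$v$ walk in $G$ with at most $2k-1$ vertices, since $c$ is shared. The walk can then be shortcut to a path whose vertices form a subset of the walk, so it still has at most $2k-1$ vertices and still has all the properties checked below.

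\textbf{Bounding the length of $P_u$.} Every arc $(x,y)$ of $\vec{G}$ goes from a part $L_j$ to a part $L_i$ with $i<j$. So along a directed path the part index strictly decreases, and the vertices of the path lie in pairwise distinct parts of $\Pi$. Every vertex of $P_u$ is a descendant of $c$: a prefix of $P_u$ is a directed path from $c$, and $c$ is its own ancestor. Hence all vertices of $P_u$ lie in $F$. Since $F$ meets at most $k$ parts of $\Pi$ by the thickness assumption, $P_u$ has at most $k$ vertices. The same holds for $P_v$.

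\textbf{Verifying the remaining properties.} Every vertex of the combined walk is a descendant of $c$, so it lies in $F$, and the walk lies in $G[F]$. Each arc of $\vec{G}$ is an edge of $G$, so the walk is a genuine walk in $G$. For the ancestor condition, take any vertex $w$ of $P_u$. The suffix of $P_u$ from $w$ to $u$ is a directed path, so $w$ is an ancestor of $u$. Likewise every vertex of $P_v$ is an ancestor of $v$, and $c$ is an ancestor of both.

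\textbf{Shortcutting.} If the walk repeats vertices, for example when $P_u$ and $P_v$ meet somewhere other than $c$, we remove the cycles between repeated occurrences. This leaves a $u$--$v$ path whose vertex set is contained in that of the walk, so it keeps all the properties above. If $u=v$, the single-vertex path suffices.

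I expect no real obstacle here. The only point needing care is the observation that the part indices strictly decrease along directed paths, which is what turns the thickness bound into a bound on path length.
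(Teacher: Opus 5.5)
Your proof is correct and follows the paper's argument: you take directed paths from the center to $u$ and to $v$, bound each by $k$ vertices because part indices strictly decrease along arcs and $F$ meets at most $k$ parts, and glue them at the center into a walk on at most $2k-1$ ancestor vertices. You then shortcut this walk to a path, which plays the same role as the paper's choice of a shortest $u$--$v$ path in $G[Q]$.
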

\begin{proof}
    Let $\Pi$ be the ordered partition of $V(G)$ such that $\f$ is the layered family corresponding to $(G,\Pi)$. Let $F \in \f$ be a set with center $w$, and let $u \in F$. By definition, there exists a directed path from $w$ to $u$ in the layer graph of $(G,\Pi)$, contained within $F$. Let $P = (v_1,\dots,v_\ell)$ be such a path, where $v_1 = w$ and $v_\ell = u$. Each vertex $v_i$ is a child of $v_{i-1}$, and a parent of a vertex must lie in a part of $\Pi$ with a strictly larger index; hence the vertices of $P$ all lie in distinct parts of $\Pi$. Since $\f$ has thickness at most $k$, the path $P$ uses at most $k$ parts of $\Pi$, and therefore $\ell \leq k$. Thus, $P$ is a path from $w$ to $u$ with at most $k$ vertices, all of which are ancestors of $u$.
    
    Now let $u,v$ be two vertices in $F$ and let $P_u$ and $P_v$ be the paths in $G[F]$ from $u$ and $v$ to the center of $F$, of length at most $k$, consisting solely of ancestors of $u$ and $v$, respectively. These two paths imply the existence of a $u$--$v$ walk $Q$ of length at most $2k-1$ contained within the set of ancestors of $u$ or $v$. Hence, the shortest $u$--$v$ path $P$ in $G[Q]$ has length at most $2k-1$ and lies within the same set. This completes the proof of the lemma.
\end{proof}

Now, for a vertex subset $S \subseteq V(G)$, we define its \emph{upward} and \emph{downward closures} by
\begin{align*}
    \widehat{S} \; &:=\;
        \{\, u \in V(G) \mid \exists v \in S \text{ such that } u \text{ is an ancestor of } v \,\}, \\[4pt]
    \widecheck{S} \; &:=\;
        \{\, u \in V(G) \mid \exists v \in S \text{ such that } u \text{ is a descendant of } v \,\}.
\end{align*}
We say that a vertex set is \emph{upward} (resp., \emph{downward}) \emph{closed} if the upward (resp., downward) closure of the set is itself. Observe that the elements of $\f$ are downward closed by definition. We now prove some lemmas on the interactions between layered families and downward closed sets that will be useful in the next sections.

\begin{lemma}\label{lem:layered_family_minus_downward_closed}
    Let $G$ be a graph with a layered family $\f$ of thickness at most $k$. If $S\subseteq V(G)$ is a downward closed set, then $\f' := \{F \setminus S \mid F \in \f \text{ and } F\setminus S \neq \emptyset\}$ is a layered family of $G - S$ with thickness at most $k$.
\end{lemma}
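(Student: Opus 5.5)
The natural choice is to restrict the ordered partition $\Pi=(L_1,\dots,L_m)$ that witnesses $\f$ to $G-S$, setting $\Pi' := (L_1\setminus S,\dots,L_m\setminus S)$ and discarding empty parts. We then show that the layered family $\f''$ of $(G-S,\Pi')$ is exactly $\f'$. Let $\vec{G}$ and $\vec{G}'$ be the layer graphs of $(G,\Pi)$ and $(G-S,\Pi')$. By definition, $\vec{G}'$ is precisely the induced subdigraph $\vec{G}-S$. The key fact is that, because $S$ is downward closed, no arc of $\vec{G}$ goes from $S$ to $V(G)\setminus S$: a child of a vertex of $S$ is a descendant of it, hence lies in $S$. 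Equivalently, every parent in $\vec{G}$ of a vertex $v\notin S$ is either outside $S$ or... and here is the one subtlety. A parent $u\in S$ of $v\notin S$ is impossible, since $v$ would then be a descendant of $u\in S$ and so lie in $S$. Hence all parents of $v\notin S$ lie outside $S$. This gives two consequences.

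\textbf{(i)} A vertex $v\notin S$ has no parent in $\vec{G}'$ if and only if it has no parent in $\vec{G}$. So the centers of $\f''$ are exactly the centers $u\in\cf\setminus S$.

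\textbf{(ii)} For $u\notin S$, the descendants of $u$ in $\vec{G}'$ are exactly the descendants of $u$ in $\vec{G}$ that avoid $S$. For the nontrivial inclusion, take a directed path in $\vec{G}$ from $u$ to some $v\notin S$. If it met $S$, then $v$ would be a descendant of a vertex of $S$ and hence in $S$. So the whole path avoids $S$ and is a path in $\vec{G}'$.

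It then remains to match $\f''$ with $\f'$. By (i) and (ii), $\f''=\{F_u\setminus S \mid u\in\cf\setminus S\}$, and each such set is nonempty because it contains $u$. Conversely, take $F_u\in\f$ with $F_u\setminus S\neq\emptyset$. Then $u\notin S$, since otherwise all of $F_u$ (the descendants of $u$) would lie in $S$. So $F_u\setminus S\in\f''$, and therefore $\f'=\f''$ as sets of sets.

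Finally, the thickness bound is immediate. Each $F\setminus S$ intersects only parts $L_i\setminus S$ for which $F$ meets $L_i$, so it meets at most $k$ parts of $\Pi'$. I expect no real obstacle. The only point requiring care is the repeated use of downward closure to show that paths to vertices outside $S$ never pass through $S$; this single observation gives both (i) and (ii).
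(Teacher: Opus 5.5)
Your proof is correct and follows essentially the same route as the paper: you restrict $\Pi$ to $V(G)\setminus S$ and use downward closure twice. First, a parent lying in $S$ would force its child into $S$, so the new centers are exactly $\cf\setminus S$. Second, every directed path ending outside $S$ avoids $S$, so $F_u\setminus S$ is the new set with center $u$, and the thickness bound is inherited from $\Pi$. You should still delete the dangling half-sentence ``either outside $S$ or\ldots'' from your write-up.
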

\begin{proof}
    Let $\Pi := (L_1,\dots,L_k)$ be an ordered partition of $V(G)$ such that $\f$ is the layered family corresponding to $(G,\Pi)$, and set $\Pi' := (L_1\setminus S,\dots,L_k\setminus S)$. Let $\f''$ denote the layered family of $(G - S,\Pi')$; we show $\f' = \f''$, which gives both conclusions of the lemma since each $F\in \f$ intersects at most $k$ parts of $\Pi$ and consequently $F\setminus S$ intersects at most $k$ parts of $\Pi'$.
    
    We first show that $\mathcal{C}_{\f''} = \mathcal{C}_{\f'}$. Let $w \in V(G)\setminus S$. We claim that $w$ is a center of $\f''$ if and only if $w \in \cf \setminus S$. We focus on the ``if'' direction, since the ``only if'' direction is immediate: any $w \in \cf$ has no parent in the layer graph of $(G,\Pi)$, and in particular none in $(G - S,\Pi')$. Suppose then that $w \notin \cf$, so $w$ has a parent $w'$ in the layer graph of $(G,\Pi)$. If $w' \in S$, downward closure of $S$ would force the descendant $w$ into $S$, contradicting $w \in V(G) \setminus S$. Hence $w' \in V(G) \setminus S$, which implies that $w'$ is a parent of $w$ in $(G - S,\Pi')$. Combined with the observation that for $w \in \cf$, downward closure of $S$ gives $F_w \setminus S \neq \emptyset$ if and only if $w \notin S$, we obtain $\mathcal{C}_{\f''} = \cf \setminus S = \mathcal{C}_{\f'}$.
    
    Fix $w\in \mathcal{C}_{\f''}$ and let $F\in\f$ be the element with center $w$. Any directed path from $w$ in the layer graph of $(G - S,\Pi')$ is also one in the layer graph of $(G,\Pi)$, so the element of $\f''$ with center $w$ is contained in $F\setminus S$. Conversely, let $u$ be a vertex of $F\setminus S$, and consider any directed path $(w, v_1, \dots, v_{\ell-1}, u)$ from $w$ to $u$ in the layer graph of $(G,\Pi)$. We claim that every $v_i$ lies in $V(G)\setminus S$. Indeed, $u$ is a descendant of $v_i$; if $v_i\in S$, downward closure would force $u\in S$, contradicting $u\in F\setminus S$. Hence the entire path lies in $V(G)\setminus S$, and is therefore a directed path in the layer graph of $(G - S,\Pi')$, witnessing $u\in F''$. The two elements with center $w$ therefore coincide. Thus $\f' = \f''$, which completes the proof of the lemma. 
\end{proof}

\begin{lemma}\label{lem:layered_family_restricted_to_component}
    Let $G$ be a graph with a layered family $\f$ of thickness at most $k$. If $C$ is a connected component of $G$, then $\f' := \{F \cap C \mid F \in \f \text{ and } F\cap C \neq \emptyset\}$ is a layered family of $G[C]$ with thickness at most $k$.
\end{lemma}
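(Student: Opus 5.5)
The plan is to mirror the proof of Lemma~\ref{lem:layered_family_minus_downward_closed}. Let $\Pi := (L_1,\dots,L_k)$ be an ordered partition of $V(G)$ such that $\f$ is the layered family corresponding to $(G,\Pi)$. Set $\Pi' := (L_1\cap C,\dots,L_k\cap C)$, discarding empty parts if desired, since this does not change the layer graph. Let $\f''$ be the layered family of $(G[C],\Pi')$. I will show $\f' = \f''$. The thickness bound then follows immediately, because $F\cap C$ meets at most as many parts of $\Pi'$ as $F$ meets parts of $\Pi$.

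The key observation is that the layer graph of $(G[C],\Pi')$ is exactly the subgraph of the layer graph $\vec{G}$ of $(G,\Pi)$ induced by $C$. It is moreover a union of weak components of $\vec{G}$: every arc of $\vec{G}$ comes from an edge of $G$, and no edge of $G$ leaves $C$. Consequently, for $u\in C$, the parents of $u$ in $\vec{G}$ all lie in $C$ and coincide with its parents in the new layer graph. Likewise, every directed path in $\vec{G}$ starting at a vertex of $C$ stays inside $C$. This gives two facts. First, the centers of $\f''$ are exactly $\cf\cap C$. Second, for $w\in\cf\cap C$, the descendants of $w$ in the new layer graph are exactly $F_w$, and $F_w\subseteq C$.

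It remains to match up the families. Consider $F\in\f$ with center $w$. If $w\notin C$, then every descendant of $w$ is reached by a directed path avoiding $C$, so $F\cap C=\emptyset$ and $F$ contributes nothing to $\f'$. If $w\in C$, then $F\cap C=F$, and this is precisely the element of $\f''$ with center $w$. Hence $\f'=\{F_w : w\in\cf\cap C\}=\f''$.

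There is no real obstacle here. The only point requiring care is the claim that directed paths cannot cross between components, which holds because each arc of the layer graph corresponds to an edge of $G$. As a minor point, the definition of an ordered partition may need the empty parts removed, which is harmless.
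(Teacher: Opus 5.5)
Your proof is correct, and its key observation (every arc of the layer graph comes from an edge of $G$, so directed paths never leave a connected component) is exactly the one the paper uses. The paper packages it more economically: it notes that this makes $V(G)\setminus C$ downward closed and invokes Lemma~\ref{lem:layered_family_minus_downward_closed} with $S := V(G)\setminus C$, so that $G-S=G[C]$ and $F\setminus S=F\cap C$, whereas you re-derive that lemma's argument inline for this special case (and get, as a by-product, that each nonempty $F\cap C$ equals $F$).
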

\begin{proof}
    Let $S := V(G)\setminus C$. We claim that $S$ is downward closed. Indeed, suppose $v\in S$ and let $u$ be a descendant of $v$ in the layer graph of $(G,\Pi)$, where $\Pi$ is any ordered partition of $V(G)$ such that $\f$ is the layered family corresponding to $(G,\Pi)$. Then there exists a directed path from $v$ to $u$ whose underlying edges all lie in $G$, so $u$ and $v$ lie in the same connected component of $G$. As $v\notin C$, neither does $u$, and hence $u\in S$.
    Applying Lemma~\ref{lem:layered_family_minus_downward_closed} to $S$ now yields the result, since $G - S = G[C]$ and $F\setminus S = F\cap C$ for every $F\in\f$.
\end{proof}

\begin{lemma}\label{lem:component_is_downward_closure}
    Let $G$ be a graph with a layered family $\f$, and let $S \subseteq \cf$. Let $C$ be a connected component of $G - \widecheck{S}$, and let $Z := (\cf \cap C)$. Then $C \cup \widecheck{S} \;=\; \widecheck{S\cup Z}.$
\end{lemma}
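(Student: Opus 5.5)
We prove the two inclusions separately. Note first that $\widecheck{S\cup Z} = \widecheck{S} \cup \widecheck{Z}$, since downward closure distributes over unions. Also $Z \subseteq C$, so $Z$ is disjoint from $\widecheck{S}$.

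\textbf{Inclusion $\supseteq$.} Clearly $\widecheck{S} \subseteq \widecheck{S\cup Z}$, so it remains to show $C \subseteq \widecheck{Z}$. Take $v \in C$ and let $u$ be a source ancestor of $v$, i.e.\ a center with $v \in F_u$. Such a $u$ exists: follow parents upward from $v$; indices of parts strictly increase, so the process terminates at a vertex with no parent. Let $P = (u=v_1,\dots,v_\ell=v)$ be the directed path from $u$ to $v$.

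We claim no $v_i$ lies in $\widecheck{S}$. Indeed, $\widecheck{S}$ is downward closed, so $v_i \in \widecheck{S}$ would force its descendant $v$ into $\widecheck{S}$, contradicting $v \in C$. Hence the whole path $P$ lies in $G-\widecheck{S}$. Its underlying edges are edges of $G$, so $P$ lies in the component of $v$, namely $C$. Thus $u \in \cf \cap C = Z$ and $v \in \widecheck{Z}$.

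\textbf{Inclusion $\subseteq$.} Clearly $\widecheck{S} \subseteq C \cup \widecheck{S}$, so it suffices to show $\widecheck{Z} \subseteq C \cup \widecheck{S}$. Let $z \in Z$ and let $w$ be a descendant of $z$ with $w \notin \widecheck{S}$. Take a directed path from $z$ to $w$.

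Every vertex on this path is an ancestor of $w$. If any such vertex lay in $\widecheck{S}$, then by downward closure so would $w$, a contradiction. So the path avoids $\widecheck{S}$. It is therefore a path in $G - \widecheck{S}$ starting at $z \in C$, and hence $w \in C$.

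Both inclusions rest on the same step: a directed path whose endpoint lies outside the downward closed set $\widecheck{S}$ avoids $\widecheck{S}$ entirely. This is the argument already used in Lemma~\ref{lem:layered_family_minus_downward_closed}. The only remaining point is the existence of a source ancestor, which follows from the strict increase of part indices along parent arcs.
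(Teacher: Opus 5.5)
Your proposal is correct and follows the paper's proof essentially step for step. Both reduce to showing $\widecheck{Z} \cup \widecheck{S} = C \cup \widecheck{S}$. Both rest on the observation that a directed path ending outside the downward closed set $\widecheck{S}$ avoids $\widecheck{S}$ and so stays in one component. Your explicit argument that a source ancestor exists is just a justification of the fact, cited in the paper, that $\f$ covers $V(G)$.
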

\begin{proof}
    Let $\Pi := (L_1,\dots,L_k)$ be an ordered partition of $V(G)$ such that $\f$ is the layered family corresponding to $(G,\Pi)$. Since $\widecheck{S \cup Z} = \widecheck{S} \cup \widecheck{Z}$, it suffices to show that $\widecheck{Z} \cup \widecheck{S} = C \cup \widecheck{S}$. We first observe that for every directed path $(v_1, \dots, v_\ell)$ in the layer graph of $(G,\Pi)$ with $v_\ell \notin \widecheck{S}$, the vertex set $\{v_1, \dots, v_\ell\}$ is contained in $V(G) \setminus \widecheck{S}$, and hence in a single connected component of $G - \widecheck{S}$. Indeed, if $v_i \in \widecheck{S}$ for some $i\in [\ell]$, downward closure of $\widecheck{S}$ would force $v_\ell \in \widecheck{S}$, contradicting $v_\ell \notin \widecheck{S}$.
    
    Now, for the inclusion $\widecheck{Z} \cup \widecheck{S} \subseteq C \cup \widecheck{S}$, let $u \in \widecheck{Z} \setminus \widecheck{S}$. By definition, $u$ is a descendant of some $w \in Z$ in the layer graph of $(G,\Pi)$, and the directed path from $w$ to $u$ ends outside $\widecheck{S}$. Hence by our observation, this path lies in the connected component of $G - \widecheck{S}$ containing $w$, namely $C$, giving $u \in C$.
    For the reverse inclusion, let $u \in C$. Since $\f$ covers $V(G)$, there exists $w \in \cf$ with $u \in F_w$, i.e., $u$ is a descendant of $w$ in the layer graph of $(G,\Pi)$. As $u \notin \widecheck{S}$, the directed path from $w$ to $u$ avoids $\widecheck{S}$ and lies in the connected component of $G - \widecheck{S}$ containing $u$, namely $C$. In particular $w \in C \cap \cf$ which implies $u \in \widecheck{Z}$. This completes the proof of the lemma.
\end{proof}

\begin{lemma}\label{lem:layered_family_subfamily}
    Let $G$ be a graph with a layered family $\f$ of thickness at most $k$. If $C\subseteq \cf$, then $\f' := \{F_w \mid w\in C\}$ is a layered family of $G[\widecheck{C}]$ with thickness at most $k$.
\end{lemma}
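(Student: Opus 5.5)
The plan is to reduce the statement to Lemma~\ref{lem:layered_family_minus_downward_closed} by removing the complement of $\widecheck{C}$ and checking that this removal is harmless. Let $\Pi$ be an ordered partition such that $\f$ is the layered family of $(G,\Pi)$, and set $S := V(G)\setminus \widecheck{C}$. The hope is that $S$ is downward closed; however, this is false in general, since a vertex outside $\widecheck{C}$ may have a descendant inside $\widecheck{C}$. So I would not route through that lemma directly. Instead I will argue directly with the induced ordered partition $\Pi' := (L_1\cap \widecheck{C},\dots,L_k\cap\widecheck{C})$ of $V(G[\widecheck{C}])$, and show that the layered family $\f''$ of $(G[\widecheck{C}],\Pi')$ equals $\f'$.

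\textbf{Step 1: the centers agree.} The first claim is that each $w\in C$ has no parent in the layer graph of $(G[\widecheck{C}],\Pi')$. This holds because $w$ has no parent in $(G,\Pi)$, and the new layer graph is the subgraph of the old one induced on $\widecheck{C}$.

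The converse is the main point: every $u\in\widecheck{C}$ with no parent in the new layer graph lies in $C$. Take any such $u$. Since $u \in \widecheck{C}$, there is some $w\in C$ and a directed path from $w$ to $u$ in $\vec G$. Every vertex on that path is a descendant of $w$, so the whole path lies in $\widecheck{C}$. If $u\neq w$, the predecessor of $u$ on this path is therefore a parent of $u$ in the new layer graph, a contradiction. Hence $u=w\in C$, and the set of centers of $\f''$ is exactly $C$.

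\textbf{Step 2: the sets agree.} Fix $w\in C$. Directed paths in the new layer graph are directed paths in $\vec G$, so the new set with center $w$ is contained in $F_w$. Conversely, every directed path from $w$ in $\vec G$ consists of descendants of $w$ and thus stays inside $\widecheck{C}$. It is therefore a directed path in the new layer graph, so $F_w$ is contained in the new set. Thus $\f''=\{F_w \mid w\in C\}=\f'$.

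\textbf{Step 3: thickness.} Each $F_w$ meets at most $k$ parts of $\Pi$, hence at most $k$ parts of $\Pi'$; empty parts of $\Pi'$ can be discarded without affecting anything.

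I expect no real obstacle here. The only subtle point is the converse direction in Step 1, namely ruling out spurious new centers, and it is handled by the observation that descendants of $C$ are closed under taking intermediate vertices of directed paths from $C$.
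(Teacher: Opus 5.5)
Your proposal is correct and follows the paper's proof essentially step for step: the paper also works directly with the restricted partition $\Pi' = (L_1\cap\widecheck{C},\dots,L_k\cap\widecheck{C})$, shows the centers of the resulting layered family are exactly $C$ by taking the predecessor of $u$ on a directed path from some $w\in C$, and shows the set with center $w$ equals $F_w$ because every directed path from $w$ stays inside $\widecheck{C}$. Your remark that $V(G)\setminus\widecheck{C}$ need not be downward closed, so Lemma~\ref{lem:layered_family_minus_downward_closed} cannot be applied directly, is also accurate.
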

\begin{proof}
    Let $\Pi := (L_1,\dots,L_k)$ be an ordered partition of $V(G)$ such that $\f$ is the layered family corresponding to $(G,\Pi)$, and set $\Pi' := (L_1\cap\widecheck{C},\dots,L_k\cap\widecheck{C})$. Let $\f''$ denote the layered family of $(G[\widecheck{C}],\Pi')$; we show $\f' = \f''$, which gives both conclusions of the lemma since each $F_w \in \f'$ already intersects at most $k$ parts of $\Pi$, and hence at most $k$ parts of $\Pi'$.

    Let $u \in \widecheck{C}$. We claim that $u$ is a center of $\f''$ if and only if $u \in C$. We focus on the ``if'' direction, since the ``only if'' direction is immediate: any $u \in C$ is a center of $\f$, hence has no parent in the layer graph of $(G,\Pi)$, and in particular none in $(G[\widecheck{C}],\Pi')$. Suppose then that $u \notin C$. By definition of $\widecheck{C}$, there exists $w \in C$ such that $u$ is a descendant of $w$ in the layer graph of $(G,\Pi)$. As $u \neq w$, the predecessor $u'$ of $u$ on a directed path from $w$ to $u$ is also a descendant of $w$, so $u' \in F_w \subseteq \widecheck{C}$, and $u'$ is a parent of $u$ in $(G[\widecheck{C}],\Pi')$. Hence $\mathcal{C}_{\f''} = C$.
    
    Now fix $w\in C$ and let $F''_w$ denote the element of $\f''$ with center $w$. Any descendant of $w$ in the layer graph of $(G[\widecheck{C}],\Pi')$ is a descendant in the layer graph of $(G,\Pi)$ as well, so $F''_w \subseteq F_w$. Conversely, let $u\in F_w$ and let $P$ be a directed path from $w$ to $u$ in the layer graph of $(G,\Pi)$. Every vertex on $P$ is a descendant of $w\in C$, hence lies in $\widecheck{C}$, so $P$ is a path in the layer graph of $(G[\widecheck{C}],\Pi')$, witnessing $u\in F''_w$. Therefore $F_w = F''_w$, and $\f' = \f''$.
\end{proof}

We remark that Lemmas~\ref{lem:layered_family_minus_downward_closed} and~\ref{lem:layered_family_restricted_to_component} are used in Section~\ref{sec:rounding_balanced_sep} and Lemmas~\ref{lem:component_is_downward_closure} and~\ref{lem:layered_family_subfamily} are applied in Section~\ref{sec:weird_class}. 
Finally, for vertices $u,v \in V(G)$, a $u$--$v$ path $P$ is said to be \emph{upward minimal} if there does not exist a $u$--$v$ path $P'$ such that $\widehat{P}' \subsetneq \widehat{P}$, or $\widehat{P}' = \widehat{P}$ and $|V(P')| < |V(P)|$. A path is \emph{upward non-minimal} if it is not upward minimal. We need the following property of upward minimal paths.

\begin{lemma}\label{lem:minimal_paths_are_short}
    Let $G$ be a graph and $k$ be a positive integer. If $\f$ is a layered family of $G$ with thickness at most $k$, then for every $F \in \f$ and every upward minimal path $P$ in $G$, it holds that $|F \cap P| \;\leq\; 2k - 1$. 
    % \todo{(Julien) Don't we get $2k-2$?\\[5pt] (Ajay) We should be getting the same number as Lemma~\ref{lem:ancestral_path} and there we only get $2k-1$.}
\end{lemma}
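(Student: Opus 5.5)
Suppose for contradiction that $|F \cap P| \geq 2k$ for some $F \in \f$ and some upward minimal $u$--$v$ path $P$. The plan is to exhibit a shortcut $P'$ through $F$ that contradicts upward minimality. Let $a$ and $b$ be the first and last vertices of $P$ (in order from $u$ to $v$) that lie in $F$. By Lemma~\ref{lem:ancestral_path}, $G[F]$ contains an $a$--$b$ path $Q$ with at most $2k-1$ vertices, all of which are ancestors of $a$ or $b$, so $V(Q) \subseteq \widehat{\{a,b\}} \subseteq \widehat{P}$. Let $W$ be the walk obtained from $P$ by replacing the subpath $P[a,b]$ with $Q$, that is, $P[u,a]$ followed by $Q$ followed by $P[b,v]$.

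The key step is to compare upward closures. Every vertex of $W$ lies either on $P$ or on $Q$, and $V(Q) \subseteq \widehat{P}$. Since the ancestor relation is transitive, $\widehat{W} \subseteq \widehat{P}$. Now take any $u$--$v$ path $P'$ inside $G[V(W)]$, for instance a shortest one. Then $\widehat{P'} \subseteq \widehat{W} \subseteq \widehat{P}$.

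Upward minimality therefore forces $\widehat{P'} = \widehat{P}$ and $|V(P')| \geq |V(P)|$. We count vertices to rule this out. The walk $W$ uses the vertices of $P$ outside $P[a,b]$ together with at most $2k-1$ vertices of $Q$, and $a,b$ are shared. The subpath $P[a,b]$ contains all of $F \cap P$, hence at least $2k$ vertices. Consequently
\[
|V(W)| \leq |V(P)| - |V(P[a,b])| + |V(Q)| \leq |V(P)| - 2k + 2k - 1 < |V(P)|,
\]
and since $V(P') \subseteq V(W)$ we get $|V(P')| < |V(P)|$, a contradiction.

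The main obstacle is the bookkeeping in this last step. One must make sure that $P'$ is a genuine path, which is handled by shortcutting inside $G[V(W)]$, and that its vertex set really shrinks. The latter follows from $V(P') \subseteq V(W)$, where the vertices of $W$ are those of $P$ outside the interior of $P[a,b]$ together with those of $Q$. A degenerate case is $a=b$, but then $|F\cap P|=1<2k$. If the direct count ever fell short, the fallback is to use the strict-inclusion clause of upward minimality when $\widehat{P'} \subsetneq \widehat{P}$. In the case of equality the count above already suffices.
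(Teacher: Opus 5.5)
Your proposal is correct and follows essentially the same route as the paper. The paper also replaces the segment of $P$ between the first and last vertices of $F$ by the short ancestral path from Lemma~\ref{lem:ancestral_path}. It then notes that the upward closure does not grow and extracts a strictly shorter $u$--$v$ path from the resulting walk; your vertex count is, if anything, more careful than the paper's about the shared endpoints $a$ and $b$.
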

\begin{proof}
    Suppose for contradiction that there exists $F$ and $P$ such that $|F \cap P| \geq 2 k$. Let $w$ denote the center of $F$ and let $P = (v_1, \dots, v_i, \dots, v_{i'}, \dots, v_p)$, where $v_i$ and $v_{i'}$ are the first and last vertices of $F$ along $P$. Let $P_{v_i,v_{i'}}$ be the $v_i$--$v_{i'}$ path whose existence is asserted by Lemma~\ref{lem:ancestral_path}. We construct a $v_1$--$v_p$ walk $Q$ by replacing the $v_i$--$v_{i'}$ subpath of $P$ with $P_{v_i,v_{i'}}$. 
    The walk $Q$ is strictly shorter than $P$, since we removed at least $2k$ vertices and replaced them with at most $2k-1$. Furthermore, $\widehat{Q}\ =\ \widehat{P}_{v_1,v_i} \cup \widehat{P}_{v_i,v_{i'}} \cup \widehat{P}_{v_{i'},v_p}$. Note that $\widehat{P}_{v_1,v_i} \subseteq \widehat{P}$ and $\widehat{P}_{v_{i'},v_p} \subseteq \widehat{P}$ since $P_{v_1,v_i}$ and $P_{v_{i'},v_p}$ are subpaths of $P$. Moreover, $\widehat{P}_{v_i,v_{i'}} \subseteq \widehat{P}$ as well, because $P_{v_i,v_{i'}}$ consists entirely of ancestors of $v_i$ or $v_{i'}$, and is therefore contained in $\widehat{P}$. Consequently, $\widehat{Q} \subseteq \widehat{P}$. Hence, if $P'$ is the shortest induced $v_1$--$v_p$ path in $G[Q]$, then $|V(P')| < |V(P)|$ and $\widehat{P}' \subseteq \widehat{P}$, contradicting the upward minimality of $P$. Therefore, every $F \in \mathcal{F}$ and every upward minimal path $P$ satisfy $|F \cap P| \leq 2k-1$. 
\end{proof}

\section{Rounding A--B Separator LP}\label{sec:rounding_ab_sep}
Let $A$ and $B$ be vertex subsets of a graph $G$. Let $\p$ denote the set of all induced $A$--$B$ paths. The linear program corresponding to an $A$--$B$ separator using the elements of $\f$ is defined as follows, with nonnegative real variables $x_F$ for every $F \in \f$.
\begin{align}\label{lp:ab_separator_using_f}
    \text{Minimize :}&\quad \sum\limits_{F\in \f} x_F\\
    \nonumber \text{Subject To :}&\quad \sum_{\substack{F \in \f \\ F \cap P \neq \emptyset}} x_F \geq 1 && \forall P \in \p
\end{align}

The main theorem that we prove in this section is the following:

\begin{restatable}{theorem}{ABSeparatorRounding}\label{thm:ab_separator}
    Let $G$ be an $n$-vertex graph, $k$ be a positive integer and $\f$ be a layered family of $G$ with thickness at most $k$. Let $A,B \subseteq V(G)$ be vertex subsets of $G$ and suppose $\{x_F\}_{F \in \f}$ is an optimal solution to LP~\ref{lp:ab_separator_using_f} corresponding to $(G,\f,A,B)$. Then there exists an $A$--$B$ separator $S$ in $G$ such that
    $$
    \fcov(S) \;\le\; 8k  \log(2n) \cdot \sum_{F \in \f} x_F.
    $$
\end{restatable}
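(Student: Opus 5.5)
We use region growing (ball growing), the standard technique for rounding multicut/separator LPs, adapted to layered families. Set $W := \sum_F x_F$. We turn the LP solution into a length function on vertices: each vertex $v$ gets
\[
\ell(v) := \sum_{F \ni v} x_F ,
\]
which is its fractional cover weight. The obstacle is that an $A$--$B$ path may meet a single $F$ in many vertices, so its $\ell$-length can overcount $x_F$. Lemma~\ref{lem:minimal_paths_are_short} controls this. We restrict to upward minimal paths; every $A$--$B$ connection in a given vertex set contains one whose upward closure is contained in the original. Each $F$ meets such a path in at most $2k-1$ vertices. Hence an upward minimal $A$--$B$ path $P$ has $\ell(P)\le(2k-1)\sum_{F\cap P\ne\emptyset}x_F$. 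Conversely the LP constraint gives $\ell(P)\ge \sum_{F\cap P\neq\emptyset}x_F\ge 1$ for every induced $A$--$B$ path. Therefore the $\ell$-distance from $A$ to $B$ is at least $1$, up to the distortion factor $2k$ coming from repeated hits.

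Next we define distances so that the level-set separators are controlled by $\f$ rather than by individual vertices. Define $d(v)$ as the minimum over $A$--$v$ paths $P$ of $\sum_{F\cap P\neq\emptyset} x_F$; this counts sets, not vertices, so every induced $A$--$B$ path satisfies $d\ge 1$ at its $B$ end. For each $F$, the vertices of $F$ visited along any path have $d$-values within an interval of length $x_F$ plus some slack. The cleanest route: for a threshold $r\in(0,1)$, let $S_r$ be the set of vertices $v$ with $d(v)-\ell'(v)< r\le d(v)$, i.e.\ the boundary vertices. Every $A$--$B$ path crosses $r$, so $S_r$ is an $A$--$B$ separator. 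Choose $r$ by ball growing with geometric volume. Let
\[
V(r):=\frac{W}{n}+\sum_F x_F\cdot(\text{fraction of }F\text{ inside radius }r),
\]
and pick $r$ with $\fcov(S_r)\le c\,k\,\ln(V(1)/V(0))\le c\,k\log(2n)\,W$ by the usual averaging/integration argument. To apply that argument, we fractionally cover $S_r$ by scaling $x_F$ by a factor of order $k/dr$ for the sets $F$ crossing the sphere.

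The main obstacle is justifying that a set $F$ crossing radius $r$ is "cut" over an interval of $r$ of length $O(k\,x_F)$, not more. Lemma~\ref{lem:ancestral_path} supplies this: any two vertices of $F$ are joined inside $G[F]$ by a path through ancestors, and this path only touches sets containing those ancestors. For a downward closed layered family, the sets containing an ancestor of $u$ also contain $u$, so this costs at most the weight already counted for $u$ or $v$, plus $x_F$. To make this rigorous, we measure $d$ along upward-closure-minimal paths, so that $d$ is monotone with respect to ancestry. The sets touching the ancestors are then already paid for, and $d$ varies by at most $(2k-1)x_F$ over $F$. With this Lipschitz-type bound, the integration $\int_0^1 \fcov(S_r)\,dr \le 2k\,W$ together with the logarithmic volume-growth argument yields some $r$ with $\fcov(S_r)\le 8k\log(2n)\,W$. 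The constant $8$ absorbs the factor $2k$ from the bound $2k-1$ and the factor $2\cdot 2$ from the $\log(2n)$ ball-growing estimate.
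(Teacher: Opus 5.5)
There is a genuine gap, at exactly the step you flag as the main obstacle. The ``Lipschitz-type'' claim is false. You assert that $d$ varies by at most $(2k-1)x_F$ over $F$, so that $F$ meets $S_r$ only for $r$ in a window of length $O(kx_F)$. Here is a counterexample.

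Take $V(G)=\{w_1,w_2,v\}$ with edges $w_1v$, $w_2v$ and $\Pi=(\{v\},\{w_1,w_2\})$. Then $\f=\{F_1,F_2\}$ with $F_j=\{w_j,v\}$, and the thickness is $2$. For $A=\{w_1\}$, $B=\{w_2\}$, the assignment $x_{F_1}=\epsilon$, $x_{F_2}=1-\epsilon$ is optimal for any $\epsilon\in(0,1)$. Here $d(w_1)=\epsilon$ but $d(v)=1$. Moreover $v\in S_r$ for every $r\in(0,1]$, so $F_1$ meets $S_r$ at every radius although $x_{F_1}=\epsilon$.

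Your ancestor observation is correct, but for $u,v\in F$ it actually gives $d(v)\le d(u)+\ell(v)$, where $\ell(v)=\sum_{F'\ni v}x_{F'}$ can be far larger than $x_F$. This breaks the region-growing step as sketched, for the following reasons.
\begin{enumerate}
\item A vertex $v\in S_r$ receives only $\ell(v)$ coverage from the LP. Any fractional cover of a fixed $S_r$ built from $\{x_F\}$ must therefore amplify by about $1/\ell(v)$; ``scaling by $k/dr$'' does not define a cover.
\item Inside a single $F$, the range of $r$ on which $F$ meets $S_r$ is governed by the large $\ell$-values in $F$. The required amplification is governed by the small ones. A volume function that charges $x_F$ per crossing set controls neither.
\item If $\int_0^1\fcov(S_r)\,dr\le 2kW$ held, plain averaging would give the theorem with no logarithm at all. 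So your sketch never identifies where $\log n$ enters.
\end{enumerate}
Upward minimality (Lemma~\ref{lem:minimal_paths_are_short}) plays no role here; the paper uses it only on the dual, path-packing side.

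The missing idea is a dyadic, scale-by-scale analysis of the amplification.
\begin{enumerate}
\item The paper first replaces $x$ by $y$ with $y_F\in\{0\}\cup[1/n,1]$, at a factor-$2$ loss (Lemma~\ref{lem:x_F_behaves_nicely}). As a result, only $\lfloor\log n\rfloor+1$ scales of $y_v=\sum_{F\ni v}y_F$ matter.
\item It uses the same $S_r$ as you, and your separator argument is fine. It covers $S_r$ by $y_F\cdot\max(\{1\}\cup\{1/y_v : v\in F\cap S_r\})$.
\item The expected multiplier is bounded scale by scale by $\sum_i 2^{i+1}\Pr_r[S_r\cap F_i\neq\emptyset]$, where $F_i=\{v\in F: y_v\le 2^{-i}\}$.
\item Ancestors of $v$ have $y$-value at most $y_v$. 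Hence the path of Lemma~\ref{lem:ancestral_path} between two vertices of $F_i$ stays inside $F_i$. So all intervals $I_v$ with $v\in F_i$ lie in a window of length at most $2k\cdot 2^{-i}$. (Your bound $d(v)\le d(u)+\ell(v)$ would even give $2\cdot 2^{-i}$ here.)
\item Each scale then contributes $4k$, which yields $4k\log(2n)\sum_F y_F\le 8k\log(2n)\sum_F x_F$.
\end{enumerate}
The logarithm is the number of scales, not a volume ratio.
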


We begin with the following lemma.

\begin{lemma}\label{lem:x_F_behaves_nicely}
    Let $G$ be an $n$-vertex graph, $\f$ be a family of vertex subsets, $A, B \subseteq V(G)$ be vertex subsets and $\{x_F\}_{F\in\f}$ be an optimal solution to LP~\ref{lp:ab_separator_using_f}. Then, there exists another solution, $\{y_F\}_{F \in \f}$ that satisfies $1 \geq y_F \geq \frac{1}{|\f|}$ for every $F \in \f$ with $y_F > 0$, and $\sum_{F \in \f} y_F \leq 2 \cdot \sum_{F \in \f} x_F$.
\end{lemma}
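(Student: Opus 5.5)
We construct $\{y_F\}_{F\in\f}$ from $\{x_F\}_{F\in\f}$ in two steps: cap the large values, then either drop or raise the small values.

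\textbf{Capping.} We first note that capping at $1$ does not affect feasibility. Set $x'_F := \min(x_F,1)$. Every constraint of LP~\ref{lp:ab_separator_using_f} only asks that the sum over $F$ meeting a path $P$ be at least $1$. If some $F$ meeting $P$ had $x_F>1$, then after capping it still contributes exactly $1$, so the constraint for $P$ still holds. Otherwise nothing relevant to $P$ changes. Capping only decreases values, so $\sum_F x'_F \le \sum_F x_F$. (By optimality no value exceeds $1$ anyway, but we do not need this.)

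\textbf{Handling small values.} Let $\mathrm{OPT} := \sum_F x_F$ and assume $\mathrm{OPT}>0$. If $\mathrm{OPT}=0$, then $\p$ is empty and the all-zero solution already satisfies the statement vacuously. Define
$$
y_F := \begin{cases} 1, & x'_F \ge 1/2,\\ 0, & x'_F < 1/(2|\f|)\cdot \mathrm{OPT}\text{-scaled as below},\\ 2x'_F, & \text{otherwise,}\end{cases}
$$
with the precise rule being the following. Put $y_F := \min(1, 2x'_F)$ whenever $x'_F \ge \frac{1}{2|\f|}$, and $y_F := 0$ otherwise.

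Every positive $y_F$ then satisfies $y_F \ge \min\bigl(1, 2\cdot \tfrac{1}{2|\f|}\bigr) = \tfrac{1}{|\f|}$, and $y_F\le 1$ by construction. For the cost, $y_F \le 2x'_F$ in every case, so $\sum_F y_F \le 2\,\mathrm{OPT}$.

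\textbf{Feasibility.} Fix $P\in\p$. If some $F$ meeting $P$ has $2x'_F\ge 1$, then $y_F=1$ and the constraint holds. Otherwise, the sets meeting $P$ that were zeroed out have total $x'$-weight less than $|\f|\cdot\frac{1}{2|\f|}=\frac12$. Hence the surviving sets meeting $P$ carry $x'$-weight greater than $\frac12$. These survivors were all doubled (none was capped, since we are in the case $2x'_F<1$), so they contribute more than $1$ to the constraint for $P$.

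This last computation is the only real point of the argument. The threshold $\frac{1}{2|\f|}$ must be chosen so that the discarded mass is at most half, which is exactly what the doubling then compensates for. Both the lower bound $\frac{1}{|\f|}$ and the factor $2$ come out of this single threshold choice.
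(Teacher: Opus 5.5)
Your proof is correct and is essentially the paper's argument. You use the same rule, $y_F := \min\{1,2x_F\}$ if $x_F \ge \frac{1}{2|\f|}$ and $y_F := 0$ otherwise, and feasibility comes, as in the paper, from the discarded sets meeting a path carrying total weight below $\frac12$, which the doubling compensates. The preliminary capping to $x'_F$ is redundant, since $\min\{1,2x'_F\}=\min\{1,2x_F\}$ and the thresholds coincide. You should also delete the garbled first displayed case definition, which your ``precise rule'' supersedes.
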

\begin{proof}
    Define $\{y_F\}_{F \in \f}$ as follows,
    $$
    y_F := 
    \begin{cases}
    0 & \text{if } x_F < \frac{1}{2|\f|}, \\
    \min\{1, 2x_F\} & \text{otherwise}.
    \end{cases}
    $$
    and observe that $\sum_{F \in \f} y_F \leq 2 \cdot \sum_{F \in \f} x_F$. Furthermore, by construction, we have $1 \geq y_F \geq \frac{1}{|\f|}$ for every $F \in \f$ with $y_F > 0$. Finally, for every $A$--$B$ path $P$ we have
    $$
    \sum_{\substack{F \in \f\\ F \cap P \neq \emptyset}} y_F \quad
    \geq \quad \min\{1, \sum_{\substack{ F \in \f \\ F \cap P \neq \emptyset \\ x_F \geq \frac{1}{2|\f|}}} 2x_F \} \quad
    \geq \quad \min\left\{1, 2\left(1 - |\f| \cdot \frac{1}{2|\f|}\right)\right\}\quad
    = \quad  1,
    $$
    where the second to last transition uses that $\{x_F\}_{F\in\f}$ is a solution to LP~\ref{lp:ab_separator_using_f} and that there are at most $|\f|$ elements in $\f$ of value at most $\frac{1}{2|\f|}$.
    % \todo[inline]{(Julien) : Even if the path has at most $n$ vertices, it can intersect more than $n$ things in $\mathcal{F}$ with small weight. (as $\mathcal{F}$ is not a partition). I think that in our context, we might just add $|\mathcal{F}|\leq n$ as a condition.\\[5pt] (Ajay) Edited}
\end{proof}

\begin{lemma}\label{lem:ab_separator_with_conditions}
    Let $G$ be a graph, $k$ be a positive integer and $\f$ be a layered family of $G$ with thickness at most $k$. Let $A,B \subseteq V(G)$ be vertex subsets of $G$ and suppose $\{y_F\}_{F \in \f}$ is a solution to LP~\ref{lp:ab_separator_using_f} corresponding to $(G,\f,A,B)$ that satisfies $y_F \ge \frac{1}{n}$ whenever $y_F > 0$. Then, there exists an $A$--$B$ separator $S$ in $G$ such that $\fcov(S)$ is at most $4k \cdot \log(2n) \cdot \sum_{F \in \f} y_F$.
\end{lemma}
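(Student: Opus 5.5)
The plan is to adapt the Garg--Vazirani--Yannakakis region-growing argument. The rounding is done in the metric induced by $\{y_F\}_{F\in\f}$, and the layered structure (through Lemma~\ref{lem:minimal_paths_are_short}) replaces the bounded-multiplicity property that makes region growing work for ordinary vertex weights.

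\textbf{Step 1: distance labels.} Write $Y := \sum_{F \in \f} y_F$. For a walk $P$ let $\ell(P) := \sum_{F\cap P\neq\emptyset} y_F$. For a vertex $v$ let
\[
d(v) := \min \ell(P), \qquad d^-(v) := \min \sum_{F\cap P\neq\emptyset,\ v\notin F} y_F,
\]
both minima taken over $A$--$v$ paths $P$. By the LP constraints, $d(b)\ge 1$ for every $b\in B$, while $d^-(a)=0$ for $a\in A$. If $u$ precedes $v$ on a path, then appending $v$ to a path realising $d(u)$ shows $d^-(v)\le d(u)$.

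For $\theta\in(0,1]$ set $S_\theta := \{v : d^-(v) < \theta \le d(v)\}$. Along any $A$--$B$ path $P$, take the first vertex $v$ with $d(v)\ge\theta$. Either $v\in A$, in which case $d^-(v)=0<\theta$, or its predecessor $u$ satisfies $d^-(v)\le d(u)<\theta$. In both cases $v\in S_\theta$, so every $S_\theta$ is an $A$--$B$ separator.

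\textbf{Step 2: charging $S_\theta$ to $\f$.} I would restrict attention to minimisers that are upward minimal. This does not change $\ell$ monotonically, but any path can be replaced by an upward-minimal one with no larger upward closure, and $\ell$ only depends on which sets $F$ are hit. Because each $F$ is downward closed, a path hits $F$ iff its upward closure does, so $\ell$ is monotone under inclusion of upward closures.

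By Lemma~\ref{lem:minimal_paths_are_short}, every $F$ meets such a path in at most $2k-1$ vertices. Moreover, by Lemma~\ref{lem:ancestral_path}, the labels $d$ and $d^-$ of the vertices of a single $F$ are "controlled": any two of them are joined by a path of at most $2k-1$ vertices inside their ancestors. From this I want to derive the key estimate: for each $F$, the set $\Theta_F$ of thresholds $\theta$ for which $F$ contains a vertex of $S_\theta$ whose own $F$-mass is needed to cover it has measure comparable to $2k\cdot y_F$ relative to the growth of the region.

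Concretely, I would define the volume
\[
V(\theta) := \frac{Y}{n} + \sum_{F} y_F \cdot \lambda_F(\theta),
\]
where $\lambda_F(\theta)\in[0,1]$ is the fraction of $F$'s "extent" below $\theta$. I then aim for the inequality $\frac{d}{d\theta}V(\theta) \ge \frac{1}{2k}\,\mathrm{fcov}(S_\theta)$, where the fractional cover of $S_\theta$ assigns to each crossing $F$ weight $y_F$ divided by the local gap.

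\textbf{Step 3: choosing $\theta$.} The standard averaging argument over $\theta\in(0,1]$ then applies. We have $V(0)\ge Y/n$ and $V(1)\le Y+Y/n$; here the hypothesis $y_F\ge 1/n$ guarantees there are at most $nY$ positive-weight sets, so the additive term $Y/n$ is harmless. Hence some $\theta$ satisfies
\[
\mathrm{fcov}(S_\theta)\le 2k\ln\!\frac{V(1)}{V(0)}\cdot V(\theta)\le 4k\log(2n)\,Y,
\]
which is the claimed bound.

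\textbf{Main obstacle.} The hard step is the derivative inequality in Step~2, i.e.\ showing that covering $S_\theta$ fractionally costs at most $O(k)$ times the rate at which $y$-mass is swallowed. The difficulty is that a vertex $v\in S_\theta$ may have small gap $d(v)-d^-(v)$ while lying in many sets. Here I would use that $F\ni v$ are all downward closures of centers that are ancestors of $v$, so that along an upward-minimal geodesic each $F$ is entered only once and left within $2k-1$ steps. This yields the $2k$ multiplicity (the factor $4k = 2\cdot 2k$, the extra $2$ absorbing $\log$ versus $\ln$ and the $Y/n$ term). If this direct charging fails, I would fall back to a random $\theta$ and bound $\mathbb{E}[\mathrm{fcov}(S_\theta)]$ via the same per-$F$ interval-length estimate, accepting the $\log(2n)$ from scale-grouping the values $y_F\in[1/n,1]$.
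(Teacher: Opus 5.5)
Your Step~1 is correct and matches the paper. Since every $A$--$v$ path meets every set containing $v$, you have $d^-(v)=d(v)-y_v$, so $S_\theta$ is exactly the paper's $S_r=\{v:\theta\in(d_v-y_v,d_v]\}$, and your separator argument is the paper's. The gap is Step~2, which is the entire content of the lemma and is never carried out. $\lambda_F$, the ``extent'' of $F$ and the ``local gap'' are undefined, and the inequality $V'(\theta)\ge \mathrm{fcov}_{\mathcal{F}}(S_\theta)/(2k)$ is only announced. If it held, it would integrate directly to $\int_0^1\mathrm{fcov}_{\mathcal{F}}(S_\theta)\,d\theta\le 2k(V(1)-V(0))\le 2kY$. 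That bound has no logarithm at all, is stronger than the lemma, and nothing in your argument supports it. The region-growing detour through $\ln(V(1)/V(0))$ is a multicut template that does no work for a single $A$--$B$ pair. Upward-minimal paths and Lemma~\ref{lem:minimal_paths_are_short} are also irrelevant here; they belong to the dual, path-sampling side. The hypothesis $y_F\ge 1/n$ is not about seeding $V(0)$ either.

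The missing idea is an explicit cover plus a per-scale estimate. Put $y_v:=\sum_{F\ni v}y_F$ and $\hat y_F:=y_F\cdot m_F$, where $m_F:=\max(\{1\}\cup\{1/y_v : v\in F\cap S_\theta\})$, and $m_F:=0$ if $F\cap S_\theta=\emptyset$. This is a fractional cover of $S_\theta$, since $\sum_{F\ni u}\hat y_F\ge y_u\cdot(1/y_u)=1$ for $u\in S_\theta$. The hypothesis gives $y_v\ge 1/n$ whenever $y_v>0$, so $m_F\le n$ and $\mathbb{E}_\theta[m_F]\le\sum_{i=0}^{\lfloor\log n\rfloor}2^{i+1}\Pr[m_F\ge 2^i]$. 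For $i\ge 1$, the event $m_F\ge 2^i$ says that $F_i:=\{v\in F: y_v\le 2^{-i}\}$ meets $S_\theta$. The scales are therefore vertex loads $y_v$, not the values $y_F$ as in your fallback.

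The key fact your proposal lacks is that loads are monotone along the layer order. If $\bar v$ is an ancestor of $v$, every set containing $\bar v$ contains $v$, so $y_{\bar v}\le y_v$. Hence the ancestral path of Lemma~\ref{lem:ancestral_path} between two vertices of $F_i$ stays inside $F_i$. It follows that all $d$-values on $F_i$ lie within $(2k-1)2^{-i}$ of each other, and $\bigcup_{v\in F_i}(d_v-y_v,d_v]$ lies in an interval of length $2k\cdot 2^{-i}$. So $\Pr[m_F\ge 2^i]\le 2k\cdot 2^{-i}$ for every $i$ (trivially for $i=0$), and each scale contributes at most $4k\,y_F$. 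Summing over the $\lfloor\log n\rfloor+1$ scales and over $F$ gives $\mathbb{E}_\theta[\mathrm{fcov}_{\mathcal{F}}(S_\theta)]\le 4k\log(2n)\,Y$.

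Your remark that vertices of $F$ are joined by short ancestral paths gives no quantitative control by itself. The cost of such a path depends on the loads of its vertices, and that is exactly what the restriction to $F_i$ together with load monotonicity handles.
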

\begin{proof}
    For every vertex $v \in V(G)$, let $d_v = \min\,\{\, \sum_{\substack{F \in \f \\ F \cap P \neq \emptyset}} y_F \,|\, P \text{ is an $A$--$v$ path} \,\}$, let $y_v = \sum_{\substack{F \in \f \\ F \ni v}} y_F$, and let $I_v = (d_v - y_v,\ d_v]$. 
    Furthermore $d_v \leq \sum_{\substack{F \in \f \\ F \cap Q \neq \emptyset}} y_F$ for every $A$--$v$ walk $Q$, since there exists an $A$--$v$ path in $G[Q]$. 
    % 
    % Hence, if $(u,v)$ is an edge in $G$, then $d_v \leq d_u + y_v$, since the $A$--$v$ walk obtained by appending $v$ to the $A$--$u$ path that realizes $d_u$ satisfies $\sum_{\substack{F \in \f \\ F \cap Q \neq \emptyset}} y_F \leq d_u + y_v$. This implies that $d_v - y_v \leq d_u$, and by a symmetric argument, $d_u - y_u \leq d_v$. Consequently, we have $I_u \cap I_v \neq \emptyset$ for every edge $(u, v) \in E(G)$.

    Now, for every $r \in (0,1]$, we define $S_r = \{ v \in V(G) \mid r \in I_v \}$. 
    \begin{claim}\label{claim:S_r_is_A-B_sep}
        $S_r$ forms an $A$--$B$ separator for every $r\in (0,1]$.
    \end{claim}
    \begin{claimproof}
        Let $r\in (0,1]$ and $P = (v_1, v_2, \dots, v_p)$ be an $A$--$B$ path in $G$. Let $v_i$ be the first vertex on $P$ such that $d_{v_i} \geq r$. Such a vertex exists, since $v_p \in B$ and therefore $d_{v_p} \geq 1$, as $\{y_F\}_{F \in \f}$ is a solution to LP~\ref{lp:ab_separator_using_f}. If $i = 1$, then since $v_1 \in A$, we have $d_{v_1} = y_{v_1}$ which implies $v_i = v_1 \in S_r$. Otherwise, consider $v_{i-1}$. As the $A$--$v_i$ walk $Q$ obtained by appending $v_i$ to the $A$--$v_{i-1}$ path that realizes $d_{v_{i-1}}$ satisfies $\sum_{\substack{F \in \f \\ F \cap Q \neq \emptyset}} y_F \leq d_{v_{i-1}} + y_{v_i}$, we must have $d_{v_i} - y_{v_i} \leq d_{v_{i-1}} < r$. Consequently $r \in I_{v_i}$ which implies that $v_i \in S_r$, and hence $S_r$ is an $A$--$B$ separator.
    \end{claimproof}

    Consider the random process where an $r\in (0,1]$ is sampled uniformly at random.

    \begin{claim}\label{claim:S_r_has_small_cover}
        $\mathbb{E}_r[\fcov(S_r)]\ \leq\ 4k \cdot \log(2n) \cdot \sum_{F \in \f} y_F$.
    \end{claim}
    
    % \todo{(Julien) Maybe we should specify the random process?}
    
    \begin{claimproof}
        For each $F\in\f$ and $r\in(0,1]$ let us define $m^r_F = 0$ if $F \cap S_r = \emptyset$ and $\max\left( \{1\} \cup \{ \frac{1}{y_v} \, |\, v \in F \cap S_r \} \right)$ otherwise. Note that this quantity is well defined, since if $y_v = 0$, then $I_v = \emptyset$, and hence $r \notin I_v$, which implies $v \notin S_r$. Moreover, $m^r_F$ is at most $n$, because $y_F \geq \frac{1}{n}$ whenever $y_F \neq 0$, and consequently $y_v \geq \frac{1}{n}$ whenever $y_v \neq 0$. Also, let $\hat{y}_F^r = y_F \cdot m^r_F$. 
        We next show that $\{\hat{y}_F^r\}_{F \in \f}$ forms a fractional cover of $S_r$ for every $r \in (0, 1]$.
        Note that if $u \in S_r$, then
        $$
        \sum_{\substack{F \in \f\\ F \ni u}} \hat{y}_F^r \quad  
        \geq \quad \sum_{\substack{F \in \f\\ F \ni u}} y_F \cdot \max\left( \{1\} \cup \{ \frac{1}{y_v} \, |\, v \in F \cap S_r \} \right) \quad 
        \geq \quad y_u \cdot \max\, \{1, \frac{1}{y_u}\} \quad 
        \geq \quad 1.
        $$ 
        Therefore, 
        \begin{align}\label{eq:expectaion_A-B_separator}
            \nonumber \mathbb{E}_r[\fcov(S_r)] \quad
            &\leq \quad \mathbb{E}_r\left[\sum_{F \in \f} \hat{y}_F^r\right]\quad = \quad \sum_{F \in \f} y_F \cdot \mathbb{E}_r[m_F^r]\\
            \nonumber &\leq \quad \sum_{F \in \f} y_F \left[ \sum_{i = 0}^{\infty} 2^{i+1} \cdot \mathbb{P}_r[2^i \leq m_F^r < 2^{i+1}] \right]\\
            &\leq \quad \sum_{F \in \f} y_F \left[ \sum_{i = 0}^{\lfloor \log n \rfloor} 2^{i+1} \cdot \mathbb{P}_r[2^i \leq m_F^r] \right].
        \end{align} 
        
        Now, in order to bound $\mathbb{P}_r[2^i \leq m_F^r]$, for each $0 \leq i \leq \lfloor \log n \rfloor$ and $F \in \f$, we define $F_i := \{v\in F\ |\ y_v\leq 2^{-i}\}$. Let $u, v \in F_i$ and $P$ be the $u$--$v$ path in $G[F]$ of length at most $2k-1$ that uses only ancestors of $u$ or $v$, whose existence is asserted by Lemma~\ref{lem:ancestral_path}. Let $\Bar{v}$ be a vertex of this path, and without loss of generality, let $\Bar{v}$ be an ancestor of $v$. Since $v$ belongs to every set in $\f$ that contains $\Bar{v}$, we have $y_{\Bar{v}} \leq y_v$. Hence, $\Bar{v} \in F_i$ and consequently $P$ is a path in $G[F_i]$.
        % 
        % \todo{(Julien) $\Bar{v}$ is a slightly confusing variable name, especially since we assume it is an ancestor of $u$\\[5pt] (Ajay) Changed to $v$. Is that good?} 
        % 
        Now, we define $I_{F_i} := \bigcup_{v \in F_i} I_v$, let $u$ be the vertex in $F_i$ with the smallest value of $d_u - y_u$, and let $v$ be the vertex in $F_i$ with the largest value of $d_v$. Concatenating the $u$--$v$ path of length at most $2k-1$ in $G[F_i]$ with an $A$--$u$ path that realizes $d_u$ yields an $A$--$v$ walk. Since this $A$--$v$ walk contains an $A$--$v$ path, we conclude that $d_v \le d_u + (2k-1) \cdot 2^{-i}$.
        Thus, 
        $$
        \mathbb{P}_r[2^i \leq m_F^r] 
        \quad = \quad \mathbb{P}_r\left[\exists v \in F_i\ :\ r \in I_v \right] 
        \quad = \quad \mathbb{P}_r\left[r \in I_{F_i} \right] 
        \quad \leq \quad d_v - (d_u-2^{-i}) 
        \quad \leq \quad 2k \cdot 2^{-i}.
        $$
        
        Combining this with Equation \ref{eq:expectaion_A-B_separator}, we conclude that 
        $$
        \mathbb{E}_r[\fcov(S_r)] \quad
        \leq \quad \sum_{F \in \f} y_F \left[ \sum_{i = 0}^{\lfloor \log n \rfloor} 2^{i+1} \cdot 2k \cdot 2^{-i} \right] \quad
        \leq \quad 4k \cdot \log(2n) \cdot \sum_{F \in \f} y_F. 
        $$
    \end{claimproof}
    
    Now, as $\mathbb{E}_r[\fcov(S_r)] \leq 4k \cdot \log(2n) \cdot \sum_{F \in \f} y_F$ by Claim~\ref{claim:S_r_has_small_cover}, there must exist some $r_o\in(0,1]$ such that % the fractional cover of $S_{r_o}$ using sets from $\f$ must have value at most 
    $\fcov(S_{r_o}) \leq 4k \log(2n) \cdot \sum_{F \in \f} y_F$. Furthermore, $S_{r_o}$ is an $A$--$B$ separator by Claim~\ref{claim:S_r_is_A-B_sep}, which concludes our proof.
\end{proof}

Now we are ready to prove Theorem~\ref{thm:ab_separator}.

\ABSeparatorRounding*
\begin{proof}
    Since $\f$ is a layered family of $G$, we have $|\f| \leq n$. We invoke Lemma~\ref{lem:x_F_behaves_nicely} on the 5-tuple $(G, \f, A, B, \{x_F\}_{F \in \f})$ to obtain $\{y_F\}_{F \in \f}$, another solution to LP~\ref{lp:ab_separator_using_f} that satisfies $y_F \geq \frac{1}{|\f|} \geq \frac{1}{n}$ whenever $y_F > 0$, and $\sum_{F \in \f} y_F \leq 2 \cdot \sum_{F \in \f} x_F$. Since $(G,\f,A,B,\{y_F\}_{F \in \f})$ satisfies the requirements of Lemma~\ref{lem:ab_separator_with_conditions}, we are guaranteed the existence of an $A$--$B$ separator $S$ in $G$ such that $\fcov(S) \leq 4k \cdot \log(2n) \cdot \sum_{F \in \f} y_F \leq 8k \cdot \log(2n) \cdot \sum_{F \in \f} x_F$, which proves the theorem.
\end{proof}

\section{Using the Dual of A--B Separator LP}\label{sec:rounding_dual_ab_sep}
The main theorem that we prove in this section is the following:

\begin{theorem}\label{thm:ab_path_packing}
    Let $G$ be a graph, $k$ be a positive integer and $\f$ be a layered family of $G$ with thickness at most $k$. Let $A,B \subseteq V(G)$ be vertex subsets of $G$. 
    Then, for every $\ell \ge \frac{1}{6}\log(4|\f|)$, there exists a multiset $\q$ of induced $A$--$B$ paths in $G$ of cardinality at least $\ell f$, such that 
    \begin{itemize}
        \item For every $F \in \f$, the number of paths in $\q$ that intersect $F$ is at most $6(\ell+1)$.
        \item Each path in $\q$ intersects $F$ at most $2k-1$ times.
    \end{itemize}
    where $f$ denotes the optimum value of LP~\ref{lp:ab_separator_using_f} corresponding to $(G,\f,A,B)$.
\end{theorem}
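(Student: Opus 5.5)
We will use LP duality together with randomized rounding of the dual via the Chernoff bound (Proposition~\ref{thm:chernoff}). To control how often a single path meets a set, we first restrict attention to upward minimal paths, which makes the second bullet immediate from Lemma~\ref{lem:minimal_paths_are_short}.

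\textbf{Reduction to upward minimal paths.} Let $\p_{\min}$ denote the set of upward minimal $A$--$B$ paths. Every upward minimal path is induced. Indeed, a chord would yield a shorter path $P'$ with $V(P') \subsetneq V(P)$, hence $\widehat{P'} \subseteq \widehat{P}$, contradicting minimality.

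We claim that restricting the constraints of LP~\ref{lp:ab_separator_using_f} to $\p_{\min}$ does not change its optimum $f$. Take any $A$--$B$ path $P$ and choose an upward minimal $A$--$B$ path $P'$ with $\widehat{P'} \subseteq \widehat{P}$; one exists because minimality is taken in a finite well-founded order. Any $F$ hitting $P'$ contains some $v \in P'$, and $v$ has an ancestor $u \in P$. The relation "$F \ni v$ and $u$ ancestor of $v$" does not directly give $u \in F$, since $F$ is downward closed, not upward closed. So we argue the other direction instead. Since $v \in \widehat{P}$, some $u \in P$ is a descendant of $v$, and because $F$ is downward closed, $u \in F$. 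Hence $F$ meets $P$, so every constraint for $P$ is implied by the constraint for some $P' \in \p_{\min}$. The restricted LP therefore has optimum $f$.

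By LP duality, there is a distribution given by weights $z_P \ge 0$ on $P \in \p_{\min}$ with $\sum_P z_P = f$ and $\sum_{P \ni F} z_P \le 1$ for every $F \in \f$. Here $P \ni F$ abbreviates $P \cap F \neq \emptyset$.

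\textbf{Sampling.} Normalize to the probability distribution $\pi(P) = z_P/f$ (if $f = 0$ there is nothing to prove). Draw $N := \lceil \ell f \rceil$ independent samples from $\pi$ and let $\q$ be the resulting multiset; its cardinality is at least $\ell f$. Each sample lies in $\p_{\min}$, so it is induced and meets every $F$ in at most $2k-1$ vertices by Lemma~\ref{lem:minimal_paths_are_short}.

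For a fixed $F$, the number $X_F$ of sampled paths meeting $F$ is a sum of $N$ independent Bernoulli variables, each with success probability $\sum_{P \ni F} z_P / f \le 1/f$. Hence
\[
\mathbb{E}[X_F] \le N/f \le \ell + 1/f.
\]
If $f \ge 1$ this is at most $\ell + 1$. If $f < 1$, we use $N = \lceil \ell f \rceil \le \ell + 1$ directly, so $X_F \le N \le \ell + 1$ deterministically.

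Set $R := 6(\ell + 1) \ge 6\,\mathbb{E}[X_F]$. Proposition~\ref{thm:chernoff} gives $\mathbb{P}[X_F \ge R] \le 2^{-6(\ell+1)}$. A union bound over the $|\f|$ sets gives failure probability at most $|\f| \cdot 2^{-6\ell - 6}$. Since $\ell \ge \frac{1}{6}\log(4|\f|)$, we have $2^{-6\ell} \le 1/(4|\f|)$, so the failure probability is below $1$. Some outcome therefore satisfies both bullets. The Chernoff event bounds $X_F < 6(\ell+1)$, which is consistent with the "at most" in the first bullet.

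\textbf{Main obstacle.} The delicate step is the reduction to upward minimal paths, specifically checking that the minimal path $P'$ hits only sets that $P$ also hits. The argument relies on each $F \in \f$ being downward closed and on the ancestor/descendant direction in the definition of $\widehat{\cdot}$. It must be checked carefully against the convention that $\widehat{S}$ collects ancestors of vertices of $S$. With that convention, $v \in \widehat{P}$ means $v$ is an ancestor of some $u \in P$, that is, $u$ is a descendant of $v$. This orientation is exactly what the argument needs.
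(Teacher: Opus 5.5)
Your proof is correct and is essentially the paper's argument. Both rest on the fact that an upward minimal $P'$ with $\widehat{P'}\subseteq\widehat{P}$ meets only sets of the family that $P$ also meets, by downward closedness. The paper uses this in an exchange argument that moves dual weight from a non-minimal path onto a minimal witness, whereas you use it to discard the non-minimal primal constraints before dualizing; after that, the sampling of $\lceil \ell f\rceil$ paths, the Chernoff bound and the union bound coincide. Two minor points: the first sentence of your dominance step wrongly says $v$ has an ancestor in $P$ (it is a descendant, as you then argue correctly), and your separate treatment of $0<f<1$ is unnecessary, since any $A$--$B$ path constraint forces $f\ge 1$.
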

\begin{proof}
    Since the theorem holds trivially if $f = 0$, let us assume that $f$ is positive. Note that due to the constraint in LP~\ref{lp:ab_separator_using_f}, $f$ has to be at least 1. Let $\p$ denote the set of all induced $A$--$B$ paths in $G$. We describe the dual of LP~\ref{lp:ab_separator_using_f} using non-negative real variables $\{y_P\}_{P\in\p}$. 
    \begin{align}\label{lp:ab_path_packing}
        \text{Maximize :}&\quad \sum\limits_{P\in\p} y_{P}\\
        \nonumber \text{Subject To :}&\quad \sum\limits_{\substack{P\in \p\\ P\cap F\neq \emptyset}} y_{P} \leq 1 && \forall F\in \f
    \end{align}
    Observe that, by strong duality~\cite{doi:10.1137/1025101}, the dual LP also has the same optimum value $f$ as the primal. 
    
    \begin{claim}
        There exists an optimal solution to LP~\ref{lp:ab_path_packing} where, for every $P\in\p$ we have $y_P > 0$ only if $P$ is upward minimal.
    \end{claim}
    \begin{claimproof}
        Consider the optimal solution $\{y_P\}_{P\in\p}$ to LP~\ref{lp:ab_path_packing} with the smallest number of variables that have a non-zero assignment to upward non-minimal paths. For the sake of contradiction, let $P'$ be an upward non-minimal path with $y_{P'} > 0$ and let $P_o$ be any upward minimal path that witnesses its upward non-minimality, i.e., let $P_o$ be upward minimal and satisfy $\widehat{P}_o \subsetneq \widehat{P}'$, or $\widehat{P}_o = \widehat{P}'$ and $|V(P_o)| < |V(P')|$. %\todo{(julien) do we need that $P_o$ to be upward minimal? (as $y_{P_o}$ might be $0$) }\todo{(Ajay) Yes (the contradiction needs it), should we justify why such a $P_o$ exists or is it okay to leave it as is?}\todo{(julien): I think it's fine}
        We claim that the assignment $\{y'_P\}_{P \in \p}$, defined by
        $$
        y'_P = 
        \begin{cases}
        y_{P_o} + y_{P'}, & \text{if } P = P_o, \\
        0, & \text{if } P = P',\\
        y_P, & \text{otherwise}, \\
        \end{cases}
        $$
        is a feasible solution. This contradicts the definition of $\{y_P\}_{P \in \p}$, since $\{y'_P\}_{P \in \p}$ attains the same (optimal) objective function value but has one fewer nonzero variable that corresponds to an upward non-minimal path.

        To this end, observe that any constraint of the LP that does not involve $y'_{P_o}$ remains satisfied, since the values of all other variables in $\{y'_P\}_{P \in \p}$ do not exceed those in $\{y_P\}_{P \in \p}$. Now consider a constraint that involves $y'_{P_o}$ and let $F$ be the element of $\f$ it corresponds to. If $P_o \cap F \neq \emptyset$, then $\widehat{P}_o \cap F \neq \emptyset$, and consequently $\widehat{P}' \cap F \neq \emptyset$ as well. Moreover, if $\widehat{P}' \cap F \neq \emptyset$, then as elements of $\f$ are downward closed, $F$ must contain a vertex of $P'$, and hence $F \cap P' \neq \emptyset$. Therefore, any constraint that contains $y'_{P_o}$ also contains $y'_{P'}$, and since $y'_{P_o} + y'_{P'} = y_{P'} + y_{P_o}$, these constraints also remain satisfied. Thus, all constraints remain satisfied under $\{y'_P\}_{P \in \p}$, which concludes the proof of the claim.
    \end{claimproof}
    
    Let $\{y_P\}_{P\in\p}$ be an optimal solution to LP~\ref{lp:ab_path_packing}, satisfying $y_P > 0$ only if $P$ is upward minimal. Observe that $\mathcal{D} := \{\,y_P/f\,\}_{P\in\p}$ is a probability distribution over $\p$, as every $y_P$ takes non-negative values and satisfy $\sum_{P\in\p}y_P = f$. Let $\q := \{Q_i\}_{i\in[\,f \cdot \ell\,]}$ be $\lceil f \cdot \ell\rceil$ independent samples drawn from the distribution $\mathcal{D}$ over the set $\p$. Let $F \in \f$, and let $\chi_F$ denote the number of paths in $\q$ that intersect $F$. For every $i\in[\,f \cdot \ell\,]$, we have that,
    $$
    \displaystyle \mathop{\mathbb{P}}\limits_{\substack{Q_i\sim\mathcal{D}}}\left[\,Q_i \cap F \neq \emptyset\,\right] \quad
    = \quad \sum_{\substack{P\in\p\\ P\cap F\neq \emptyset}}
    \displaystyle \mathop{\mathbb{P}}\limits_{\substack{Q_i\sim\mathcal{D}}}\left[\, Q_i = P\, \right] \quad
    = \quad \sum_{\substack{P\in\p\\ P\cap F\neq \emptyset}} \frac{y_{P}}{f} \quad
    \leq \quad \frac{1}{f}.
    $$
    By linearity of expectation, combined with the fact that $f \geq 1$, it follows that the expected value of $\chi_F$ is at most $\ell + 1$. Applying union bound over all $F\in\f$ and the Chernoff bound from Theorem \ref{thm:chernoff} to $\chi_F$ for every $F\in\f$, we get:
    $$
    \mathbb{P}\left[\,\max_{F\in\f}\{\chi_F\}\; <\; 6 (\ell+1)\,\right] \quad
    \geq \quad 1\ -\ \sum_{F\in\f}\mathbb{P}\left[\,\chi_F\; \geq\; 6 (\ell+1)\,\right] \quad
    \geq \quad 1\ -\ |\f| \cdot 2^{-6 (\ell+1)} \quad
    \geq \quad \frac{3}{4}.
    $$
    Hence, with probability at least $\frac{3}{4}$, the sampled multiset has cardinality at least $f \cdot \ell$, consists of induced $A$--$B$ paths in $G$, and satisfies that for every $F \in \f$, at most $6(\ell+1)$ of these paths intersect $F$. Therefore, there must exist some multiset $\q$ with these properties. Finally, observe that $\q$ can only contain upward minimal paths since $\mathcal{D}$ assigns non-zero probability only to them. This, combined with Lemma~\ref{lem:minimal_paths_are_short}, implies the statement of the theorem.
\end{proof}

\section{Rounding Balanced Separator LP}\label{sec:rounding_balanced_sep}

We begin this section by defining the \emph{balanced separator LP}. Let $G$ be a graph, $\f$ be a layered family of $G$, $\cf$ be the set of centers of elements in $\f$ and $X \subseteq \cf$. For each pair $u, v \in X$, let $\p_{u,v}$ denote the set of all induced paths from $u$ to $v$ in $G$. We describe the balanced separator LP corresponding to the instance $(G, X, \f)$, using non-negative real variables $x_F$ and $d_{u,v}$, defined for every $F \in \f$ and $u, v \in X$.
\begin{align}\label{lp:container_cover}
    \text{Minimize:} \quad & \sum_{F \in \f} x_F \\
    \nonumber
    \text{Subject to:} \quad 
    & \sum_{v \in X} d_{u,v} \geq \frac{|X|}{10} && \forall u \in X \\
    \nonumber
    & d_{u,v} \leq \sum_{\substack{F \in \f \\ F \cap P \neq \emptyset}} x_F && \forall u,v \in X,\ P \in \p_{u,v} \\
    \nonumber
    & d_{u,v} \leq 1 && \forall u,v \in X
\end{align}
\newline
The linear program above is feasible and bounded: setting $x_F = 1$ for every $F \in \mathcal{F}$ and $d_{u,v} = 1$ for every $u, v \in X$ is a feasible assignment, since $V(G) \subseteq \bigcup_{F \in \mathcal{F}} F$ ensures that every induced path from $u$ to $v$ intersects some $F \in \mathcal{F}$. We also remark that it is a variant of the one used in the Leighton–Rao rounding scheme~\cite{leighton1999multicommodity}, adapted to operate on vertex sets rather than individual vertices, as in~\cite{KorchemnaL0S024,chudnovsky2025treewidth}. We are now ready to state the main theorem proved in this section:

\begin{restatable}{theorem}{LPoptVSintegral}\label{thm:balanced_separator_rounding}
Let $G$ be a graph, $k$ be a positive integer and $\f$ be a layered family of $G$ with thickness at most $k$. Then for every subset $X \subseteq \cf$, where $\cf$ denotes the set of centers of the elements of $\f$, there exists an $(X, \tfrac{95}{100})$-balanced separator $S$ in $G$ such that $S$ is downward closed and
$$
\mathrm{fcov}_{\f}(S)\ \leq\ 5000kf  \log(2n) \log (f+4),
$$
whenever $f$, which denotes the optimum value of the balanced separator LP corresponding to $(G, X, \mathcal{F})$, is positive.
\end{restatable}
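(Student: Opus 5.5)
We follow the Leighton--Rao region-growing scheme, run on the pseudometric defined by the LP, with the key ball-growing estimate supplied by the layered structure exactly as in Claim~\ref{claim:S_r_has_small_cover}. Fix an optimal solution $\{x_F\},\{d_{u,v}\}$ of LP~\ref{lp:container_cover} with value $f>0$. As in Lemma~\ref{lem:x_F_behaves_nicely}, we first discard the tiny values and double the rest, so that $x_F\ge 1/(2|\f|)\ge 1/(2n)$ whenever $x_F>0$. This loses a factor $2$ and keeps every path constraint valid up to an additive $1/2$. To absorb that, we replace the threshold $|X|/10$ by $|X|/20$ in the distance constraints.

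For a root $u\in X$ we define the shortest-path quantity $\delta_u(v)=\min_P \sum_{F\cap P\neq\emptyset} x_F$ over $u$--$v$ paths $P$, and the interval $I_v=(\delta_u(v)-x_v,\delta_u(v)]$ with $x_v=\sum_{F\ni v}x_F$, exactly as in Lemma~\ref{lem:ab_separator_with_conditions}. The sphere $S_r=\{v: r\in I_v\}$ separates the ball $\{v:\delta_u(v)<r\}$ from the rest. The same dyadic argument, which uses Lemma~\ref{lem:ancestral_path} and the monotonicity $x_{\bar v}\le x_v$ for ancestors $\bar v$ of $v$, shows that the set of $r$ with $F\cap S_r$ containing a vertex of weight at most $2^{-i}$ has measure at most $2k\,2^{-i}$. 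This is what makes the fractional cover of a sphere cheap on average.

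The main obstacle is obtaining the $\log(f+4)$ factor, which a single uniform choice of $r$ cannot give. Instead we use the volume-based region growing of Leighton--Rao. We define the volume $V(r)=f_0/|X|\cdot(\text{stuff})+\sum_F x_F\cdot(\text{fraction of }F\text{ inside the ball})$, seeded with $V(0)=f/n$-type mass, and show that $\frac{d}{dr}V(r)$ dominates the expected cover cost of $S_r$ divided by $k\log(2n)$. Integrating $dV/V$ over $r\in(0,1/40)$ then yields a radius $r$ with $\fcov(S_r)\le O(k\log(2n)\log(f+4))\cdot V(r)$, where the $\log(f+4)$ comes from $\log(V_{\max}/V_{\min})$. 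We expect this charging step to be delicate: the sets $F$ overlap, so the volume must count each $F$ once, weighted by the maximum $1/x_v$ as in $m_F^r$.

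The assembly then follows the standard scheme. If some $u\in X$ has a ball of radius $1/40$ containing at most $95\%$ of $X$, the constraint $\sum_v d_{u,v}\ge|X|/20$ shows the ball is not too large. We iterate: we repeatedly carve balls $B_i$ around remaining points of $X$ with the chosen radius, delete $\widecheck{S_{r_i}}$, and pass to the remaining graph, using Lemmas~\ref{lem:layered_family_minus_downward_closed} and~\ref{lem:layered_family_restricted_to_component} to keep a layered family of thickness $k$ so that the estimates persist. The charging terms $V(r_i)$ are disjoint pieces of the total LP volume, so the union $S=\bigcup_i \widecheck{S_{r_i}}$ has $\fcov(S)\le 5000kf\log(2n)\log(f+4)$. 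Taking downward closures costs nothing in $\fcov$, since the elements of $\f$ are downward closed, which gives that $S$ is downward closed. We stop once the carved regions contain more than $5\%$ of $X$; because every carved ball has diameter less than $1/20$ while most pairs are far apart, no component of $G-S$ then contains more than $95\%$ of $X$. Otherwise every ball of radius $1/40$ is heavy, and the distance constraint gives a contradiction.
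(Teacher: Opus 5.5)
Your plan has a genuine gap at exactly the step you call delicate, and as sketched it cannot give the stated bound.

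\textbf{Where the $\log(f+4)$ comes from.} With a seed of order $f/n$, or $f/|X|$, $\log(V_{\max}/V_{\min})$ is of order $\log n$. The seed cannot be larger: you may carve about $|X|/20$ balls and pay the seed for each. So the argument yields $O(kf\log^2(2n))$, which is weaker than $O(kf\log(2n)\log(f+4))$ whenever $f\ll n$, for instance when $f$ is bounded.

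The paper needs no seed. It sets $\varepsilon=1/(500\log(f+4))$ and puts the downward closed set $Z_0=\{v : x_v\ge\varepsilon/(2k)\}$ into the separator, at cost $\mathrm{fcov}_{\mathcal{F}}(Z_0)\le 2kf/\varepsilon$. After that, the ball of radius $3\varepsilon$ around a terminal of the big component already has measure at least $2\varepsilon$ (Claim~\ref{claim:inner_ball_has_non_zero_measure}). The reason is that a path leaving the ball collects weight at least $3\varepsilon-\varepsilon/(2k)$ from sets meeting it. Doubling from $2\varepsilon$ up to $f$ then takes only $O(\log((f+4)/\varepsilon))$ shells of width $3\varepsilon$, and these fit inside radius $1/20$ (Lemmas~\ref{lem:good_layer_many_outside} and~\ref{lem:good_layer_cost_and_gain}).

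\textbf{Disjointness of the charges.} Your claim that the charges $V(r_i)$ are disjoint pieces of the LP volume is unsupported. Take a set $F$ whose center lies beyond the sphere $S_r$ but which reaches into the ball's interior. It survives the deletion of $\widecheck{S_r}$, because a center is a descendant only of itself, so it is charged again at full weight $x_F$ in later balls. Also, without an upper bound on $x_v$, a single set can span a large $d$-distance.

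Moreover, the dyadic bound of Claim~\ref{claim:S_r_has_small_cover} only controls $\int \mathrm{fcov}_{\mathcal{F}}(S_r)\,dr$. Any volume with $dV/dr\ge \mathrm{fcov}_{\mathcal{F}}(S_r)/(c\,k\log(2n))$ is therefore essentially that integral, and you are back to needing disjointness.

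The paper's remedy is Lemma~\ref{lem:F_is_local}: outside $Z_0$, no $F$ contains two vertices at $d$-distance at least $\varepsilon$. Combined with the $\varepsilon$-buffers in $\delta_C(\bar u,r)=B_C(\bar u,r+3\varepsilon)\setminus B_C(\bar u,r+\varepsilon)$, this makes ball and shell measures additive. It also ensures that sets meeting the inner ball never meet the far side, so costs telescope to $\mu(C)\le f$ (Lemma~\ref{lem:cutoff_procedure}).

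This also spares the paper from merging the dyadic argument into region growing. Once a shell with $\mu(\delta)\le\mu(B)$ is found, scaling the weights of sets meeting it by $1/\varepsilon$ gives a fractional separator between $B_C(\bar u,r_\ell+\varepsilon)$ and $C\setminus B_C(\bar u,r_{\ell+1})$ in $G[C]$ (Lemma~\ref{lem:good_layer_frac_separator}). Theorem~\ref{thm:ab_separator} is then applied as a black box.

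\textbf{Preprocessing.} Your preprocessing is wrong as stated. Discarding values below $1/(2|\mathcal{F}|)$ can lower a path sum by $1/2$. Since $d_{u,v}\le 1$, this can destroy $\sum_v d_{u,v}\ge|X|/10$ completely, and relaxing to $|X|/20$ does not absorb it. A threshold such as $1/(40|\mathcal{F}|)$ would be needed.
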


For the remainder of this section, we fix a graph $G$, the layered family $\f$ and the set $X\subseteq \cf$. We also fix an optimal solution to LP~\ref{lp:container_cover} corresponding to $(G, X, \f)$ of value $f>0$. Let $x_F$ and $d_{u,v}$, for every $F \in \f$ and $u,v\in X$, denote the values assigned to the corresponding variables in this solution.

For $U \subseteq V(G)$, we define $\mu(U) := \sum_{\substack{F \in \f \\ F \cap U \neq \emptyset}} x_F$. 
For every $v \in V(G)$, we define $x_v := \sum_{\substack{F \in \f \\ F \ni v}} x_F$. Also, for every $u, v \in V(G)$, we define $d(u,v) := \min\,\{\, \sum_{\substack{F \in \f \\ F \cap P \neq \emptyset}} x_F \,|\, P \in \p_{u,v}\, \}$.%\todo{does $d(v,v)=x_v$? Seems like it, as it is used implicitly at the end of Lemma 6.7 and in 6.6.1. In any case, the edge case should be clearer.}\todo{(Ajay) Yes, Done} 
Note that $d(\cdot,\cdot)$ is a symmetric function, and that $d(v,v)=x_v$. We now make some more observations about the function $d$.

% \todo[inline]{(Julien): often $\bar{u}$ is used instead of a simple $u$ and I don't know why. I think it would make the notation better to drop the bar.}\todo[inline]{(Ajay): $\bar{u}$ is used (consistently, will check once more) to represent the center of the ball. Did this so I have $u$ available to use for arbitrary vertices.}
\begin{observation}\label{obs:distance_defintions_match}
    If $u, v \in X$, then $\min\{d(\bar{u}, v),1\} \geq d_{u,v}$.
\end{observation}

\begin{observation}\label{obs:triangle_inequality}
    If $u,v,w \in V(G)$, then $d(u,w) \leq d(u,v) + d(v,w)$. In particular, if $(v,w) \in E(G)$ and $u \in V(G)$, then $d(u,w) \leq d(u,v) + x_w$. 
\end{observation}

Observation~\ref{obs:distance_defintions_match} follows immediately from the LP constraints: \emph{$d_{u,v} \leq \sum_{\substack{F\in \f\\ F\cap P \neq\emptyset}} x_F$ and $d_{u,v} \leq 1$ for every $u,v\in X$ and $P\in\p_{u,v}$}. Furthermore, note that $d(u,v) \leq \sum_{\substack{F \in \f \\ F \cap Q \neq \emptyset}} x_F$ for any $u$--$v$ walk $Q$ in $G$, as we can always find an induced $u$--$v$ path in $G[Q]$. Hence, Observation~\ref{obs:triangle_inequality} follows from the fact that if $P_1$ and $P_2$ are induced $u$--$v$ and $v$--$w$ paths realizing $d(u,v)$ and $d(v,w)$ respectively, then appending $P_2$ to $P_1$ yields a $u$--$w$ walk $Q$ satisfying $\sum_{\substack{F \in \f \\ F \cap Q \neq \emptyset}} x_F \leq d(u,v) + d(v,w)$. Similarly, if $(v,w) \in E(G)$, then appending $w$ to the $u$--$v$ path realizing $d(u,v)$ produces a $u$--$w$ walk satisfying $\sum_{\substack{F \in \mathcal{F} \\ F \cap Q \neq \emptyset}} x_F \leq d(u,v) + x_w$.\\

Let $\varepsilon := \frac{1}{500 \log (f + 4)}$, and for every $u \in V(G)$, $C\subseteq V(G)$ and positive real number $r$, define 
$$
B_C(u, r) := \{v \in C \mid d(u, v) \leq r\}
\quad \text{and} \quad
\delta_C(u, r) := B_C(u, r + 3\varepsilon) \setminus B_C(u, r + \varepsilon).
$$
Let $\ell_{\max} = \lceil \log \frac{ f + 4}{\varepsilon}\rceil$ and define $r_{i} := 3i \varepsilon$, where $i$ is a positive integer. Let $Z_0 := \{\, v \in V(G) \mid x_v \ge \tfrac{\varepsilon}{2k} \,\}$.
We note that every $F \in \f$ that contains a vertex $u\in Z_0$ also contains every descendant of $u$. This implies that $x_v \geq x_u$ and consequently $v\in Z_0$ for every descendant $v$ of $u$. Hence $Z_0$ is downward closed.

% \todo[inline]{say we use the following lemma many times}

\begin{lemma}\label{lem:F_is_local}
    Let $u$ and $v$ be vertices in $V(G)\setminus Z_0$ that satisfy $d(u,v) \geq \varepsilon$. Then there does not exist $F\in \f$ that contains both $u$ and $v$.
\end{lemma}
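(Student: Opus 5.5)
Suppose for contradiction that some $F\in\f$ contains both $u$ and $v$, where $u,v\notin Z_0$. The plan is to build a cheap $u$--$v$ path inside $F$ and contradict $d(u,v)\ge\varepsilon$.

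First, invoke Lemma~\ref{lem:ancestral_path}. Since $\f$ has thickness at most $k$, it gives a $u$--$v$ path $P$ in $G[F]$ with at most $2k-1$ vertices, each an ancestor of $u$ or of $v$.

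Next, bound $x_w$ for every vertex $w$ of $P$. Say $w$ is an ancestor of $u$. Every member of $\f$ is downward closed, so every $F'\in\f$ containing $w$ also contains $u$. Hence $x_w\le x_u<\frac{\varepsilon}{2k}$, using $u\notin Z_0$. The same holds for ancestors of $v$. This is exactly the monotonicity already used to show that $Z_0$ is downward closed.

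Finally, bound the cost of $P$. Every $F'$ meeting $P$ contains some vertex $w$ of $P$, so
\[
\sum_{\substack{F'\in\f\\ F'\cap P\neq\emptyset}} x_{F'} \;\le\; \sum_{w\in V(P)} x_w \;<\; (2k-1)\cdot\frac{\varepsilon}{2k} \;<\; \varepsilon .
\]
The function $d$ is defined as a minimum over induced paths. An induced $u$--$v$ path inside $G[V(P)]$ has at most this cost, since its vertex set is contained in $V(P)$. This is the same remark the paper makes for walks. Therefore $d(u,v)<\varepsilon$, contradicting the hypothesis.

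None of these steps looks like a real obstacle. The only point to check is the case $u=v$: then $d(u,u)=x_u<\varepsilon$, and the same contradiction applies directly.
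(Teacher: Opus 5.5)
Your proof is correct and follows the paper's argument essentially step for step. The paper also takes the short ancestral $u$--$v$ path from Lemma~\ref{lem:ancestral_path}, uses downward closure of the sets in $\f$ to get $x_w \le \max\{x_u,x_v\} < \frac{\varepsilon}{2k}$ for every vertex $w$ on it, and sums over at most $2k-1$ vertices to conclude $d(u,v) < \varepsilon$. Your explicit remarks on passing to an induced subpath (since $d$ is a minimum over induced paths) and on the case $u=v$ are details the paper leaves implicit.
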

\begin{proof}
    For the sake of contradiction, suppose there exists $F\in\f$ that contains both $u$ and $v$. Let $P$ be the $u$--$v$ path in $G[F]$ of length at most $2k-1$ that uses only ancestors of $u$ or $v$, whose existence is asserted by Lemma~\ref{lem:ancestral_path}. Let $\Bar{v}$ be a vertex of this path and without loss of generality let $\Bar{v}$ be an ancestor of $v$. Since $v$ belongs to every set in $\f$ that contains $\Bar{v}$, we have $x_{\Bar{v}} \leq x_v \leq \max\,\{x_u,x_v\}$. Hence $\Bar{v} \in V(G) \setminus Z_0$, and consequently $P$ is a path in $G-Z_0$. Thus, we have
    % \todo{(Julien) $x_u$?\\[5pt] (Ajay) Edited}
    $$
    d(u, v) \ \
    \leq \ \ \sum_{\substack{F \in \f \\ F \cap P \neq \emptyset}} x_F \ \
    < \ \ (2k-1)\cdot \frac{\varepsilon}{2k} \ \
    < \ \ \varepsilon,
    $$
    which gives the desired contradiction. 
\end{proof}

Now we state and prove three properties in Lemma~\ref{lem:good_layer_many_outside},~\ref{lem:good_layer_cost_and_gain} and~\ref{lem:good_layer_frac_separator} that we require to implement a version of the Leighton-Rao rounding scheme~\cite{leighton1999multicommodity} and prove Theorem~\ref{thm:balanced_separator_rounding}.

% \todo{We should probably add a reference\\[5pt] (Ajay)\\ Done}

\begin{lemma}\label{lem:good_layer_many_outside}
    Let $Z \subseteq V(G)$ be a superset of $Z_0$. Suppose that $G - Z$ contains a connected component $C$ such that $|C \cap X| > \frac{95|X|}{100}$, and let $\Bar{u} \in X \cap C$ and $\ell \in [\,\ell_{\max}\,]$. Then we have,
    $$
    \left| X \setminus B_C(\Bar{u}, r_{\ell+1}) \right| \geq \frac{5|X|}{100}.
    $$
\end{lemma}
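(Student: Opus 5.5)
The plan is to use only the first family of constraints of LP~\ref{lp:container_cover} together with Observation~\ref{obs:distance_defintions_match}, and to check that the radius $r_{\ell+1}$ is small compared to $1$. The hypotheses on $Z$ and $C$ should play no essential role: the bound comes from the LP alone, since $B_C(\Bar{u},r)\subseteq B_{V(G)}(\Bar{u},r)$.

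\textbf{Step 1: split the LP constraint.} Since $\Bar{u}\in X$, the first constraint gives $\sum_{v\in X} d_{\Bar{u},v}\ge |X|/10$. Write $B:=B_C(\Bar{u},r_{\ell+1})$ and split the sum over $v\in X\cap B$ and $v\in X\setminus B$.
\begin{itemize}
\item For $v\in X\cap B$, Observation~\ref{obs:distance_defintions_match} gives $d_{\Bar{u},v}\le d(\Bar{u},v)\le r_{\ell+1}$.
\item For $v\in X\setminus B$, we use the constraint $d_{\Bar{u},v}\le 1$.
\end{itemize}
Hence
\[
\frac{|X|}{10}\le r_{\ell+1}|X| + |X\setminus B| .
\]
So it suffices to show $r_{\ell+1}\le \tfrac{5}{100}$, which yields $|X\setminus B|\ge \tfrac{5|X|}{100}$.

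\textbf{Step 2: bound the radius.} Since $\ell\le \ell_{\max}$, we have $r_{\ell+1}\le 3(\ell_{\max}+1)\varepsilon$. Put $L:=\log(f+4)\ge 2$, so $\varepsilon=1/(500L)$. Then
\[
\ell_{\max}+1\le \log\frac{f+4}{\varepsilon}+2 = L+\log 500+\log L+2 .
\]
Using $\log 500<9$, $\log L\le L$ and $L\ge 2$, this is at most $L+\tfrac{9}{2}L+L+L=7.5L$. Therefore
\[
r_{\ell+1}\le \frac{3\cdot 7.5L}{500L}=0.045<0.05 ,
\]
which finishes the argument.

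\textbf{Main obstacle.} The only delicate point is this arithmetic: confirming that the choice of $\ell_{\max}$ and $\varepsilon$ keeps $3(\ell_{\max}+1)\varepsilon$ below $1/20$ uniformly for all $f>0$. The worst case is $f$ small, where $L$ is close to $2$, and this is exactly why the lower bound $L\ge 2$ is used to absorb the additive constants. If the constants were tighter, I would instead use $\log L\le L/2$ for $L\ge 2$ to gain slack.
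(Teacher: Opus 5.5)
Your proof is correct and takes essentially the same approach as the paper. Both bound $r_{\ell+1}\le r_{\ell_{\max}+1}\le \tfrac{1}{20}$ from the choice of $\varepsilon$ and $\ell_{\max}$, and both split the LP constraint $\sum_{v\in X} d_{\Bar{u},v}\ge |X|/10$ over $X\cap B_C(\Bar{u},r_{\ell+1})$ and its complement, using Observation~\ref{obs:distance_defintions_match} and $d_{u,v}\le 1$; the paper argues by contradiction with $\min\{d(\Bar{u},v),1\}$ instead of your direct inequality, which is a cosmetic difference, and your arithmetic ($r_{\ell+1}\le 0.045$) checks out.
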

% \todo[inline]{(Julien) I don't think the assumption about $Z$ is used}
% \todo[inline]{(Ajay) Actually I don't think we use the property of $C$ either. I kept all three lemmas uniform because then Lemmas~\ref{lem:good_layer_many_outside},~\ref{lem:good_layer_cost_and_gain} and~\ref{lem:good_layer_frac_separator} all look the same. So someone who's reading don't have to context-switch while moving from one lemma to the other and can save RAM.}
\begin{proof}
    Since $\ell \in [\,\ell_{\max}\,]$, we have
    $$
    r_{\ell+1}
    \ \ \le \ \ r_{\ell_{\max}+1}
    \ \ = \ \ 3\varepsilon\bigl(\lceil \log \tfrac{f+4}{\varepsilon}\rceil + 1\bigr)
    \ \ \le \ \ \frac{3\log \bigl(2000 \cdot (f+4) \cdot \log (f+4)\bigr)}{500 \log (f+4)}
    \ \ \le \ \ \frac{1}{20},
    $$
    where the third inequality upper bounds $\lceil \log \tfrac{f+4}{\varepsilon}\rceil + 1$ by $\log \tfrac{4(f+4)}{\varepsilon}$ and substitutes the value of $\varepsilon$ and the final inequality follows from the facts that for $x \ge 4$ we have $\log x \ge 2$ and $\frac{\log\log x}{\log x} \le \tfrac{3}{4}$. Now, assume, for the sake of contradiction, that $\left| X \setminus B_C(\Bar{u}, r_{\ell+1}) \right| < \frac{5|X|}{100}$.
    Then we obtain:
    $$
    \begin{aligned}
    \sum_{v \in X} \min\{d(\bar{u}, v),1\} \quad
    &=\quad \sum_{v \in X \cap B_C(\bar{u}, r_{\ell+1})} \min\{d(\bar{u}, v),1\} 
      \quad + \sum_{v \in X \setminus B_C(\bar{u}, r_{\ell+1})} \min\{d(\bar{u}, v),1\} \\
    &\leq \quad\left(|X| - |X\setminus B_C(\Bar{u}, r_{\ell+1})|\right)\cdot r_{\ell+1} + |X\setminus B_C(\Bar{u}, r_{\ell+1})|\cdot 1 \\
    &<\quad \frac{95|X| \cdot r_{\ell+1}}{100}\ +\ \frac{5|X| \cdot 1}{100} \\
    &<\quad \frac{|X|}{10}.
    \end{aligned}
    $$
    % \todo{i think this sum should be min of $d(u,v)$ and $1$}
    Here the second inequality follows from the fact that $r_{\ell+1} < 1$ and consequently maximizing $\left(|X| - |X\setminus B_C(\Bar{u}, r_{\ell+1})|\right)\cdot r_{\ell+1} + |X\setminus B_C(\Bar{u}, r_{\ell+1})|\cdot 1$ is equivalent to maximizing $|X\setminus B_C(\Bar{u}, r_{\ell+1})|$.
    But, Observation~\ref{obs:distance_defintions_match} implies the inequality $\sum_{v \in X} \min\{d(\bar{u}, v),1\} \geq \sum_{v \in X} d_{\Bar{u},v}$ which contradicts the constraint $\sum_{v \in X} d_{\Bar{u}, v} \geq \frac{|X|}{10}$ of LP~\ref{lp:container_cover}. Thus, it follows that $|X \setminus B_C(\Bar{u}, r_{\ell+1})| \geq \frac{5|X|}{100}$.
\end{proof}

% \todo[inline]{say: we will show in the next lemma that the amount we pay is dependent on the boundary, and that this lemma says that there exists a good layer where the cost of the boundary is related to the cost of the thing we peel away}

\begin{lemma}\label{lem:good_layer_cost_and_gain}
    Let $Z \subseteq V(G)$ be a superset of $Z_0$. Suppose that $G - Z$ contains a connected component $C$ such that $|C \cap X| > \frac{95|X|}{100}$, and let $\Bar{u} \in X \cap C$. Then there exists $\ell \in [\,\ell_{\max}\,]$ such that $\mu(\delta_C(\Bar{u}, r_\ell)) \leq \mu(B_C(\Bar{u}, r_\ell))$.
\end{lemma}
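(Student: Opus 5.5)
The plan is a region-growing argument by contradiction. Suppose that for every $\ell \in [\,\ell_{\max}\,]$ we have $\mu(\delta_C(\bar u, r_\ell)) > \mu(B_C(\bar u, r_\ell))$. I will show that $\mu(B_C(\bar u,r_\ell))$ then at least doubles at each step, so it exceeds $f$, which is impossible.

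\emph{Step 1 (nesting).} Since $r_\ell + 3\varepsilon = r_{\ell+1}$, we have $\delta_C(\bar u, r_\ell) \subseteq B_C(\bar u, r_{\ell+1}) \setminus B_C(\bar u, r_\ell)$. Hence both $B_C(\bar u, r_\ell)$ and $\delta_C(\bar u, r_\ell)$ lie inside $B_C(\bar u, r_{\ell+1})$.

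\emph{Step 2 (no set hits both).} The measure $\mu$ is not additive, since a single $F$ may meet both $B_C(\bar u,r_\ell)$ and $\delta_C(\bar u,r_\ell)$; I rule this out. Take $v \in B_C(\bar u, r_\ell)$ and $w \in \delta_C(\bar u, r_\ell)$. Observation~\ref{obs:triangle_inequality} gives
\[
d(v,w) \;\ge\; d(\bar u,w) - d(\bar u,v) \;>\; (r_\ell+\varepsilon) - r_\ell \;=\; \varepsilon .
\]
Both $v$ and $w$ lie in $C \subseteq V(G)\setminus Z \subseteq V(G)\setminus Z_0$. So Lemma~\ref{lem:F_is_local} shows that no $F\in\f$ contains both. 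Therefore the sets counted in $\mu(B_C(\bar u,r_\ell))$ and those counted in $\mu(\delta_C(\bar u,r_\ell))$ are disjoint. Combined with Step 1 and the contradiction hypothesis,
\[
\mu(B_C(\bar u, r_{\ell+1})) \;\ge\; \mu(B_C(\bar u, r_\ell)) + \mu(\delta_C(\bar u, r_\ell)) \;>\; 2\,\mu(B_C(\bar u, r_\ell)).
\]
Iterating over $\ell = 1,\dots,\ell_{\max}$ gives $f \ge \mu(B_C(\bar u, r_{\ell_{\max}+1})) > 2^{\ell_{\max}}\mu(B_C(\bar u, r_1))$. The first inequality holds because $\mu(V(G)) = \sum_F x_F = f$.

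\emph{Step 3 (base case).} I need a lower bound $\mu(B_C(\bar u, r_1)) \ge 2\varepsilon$. I expect this to be the main obstacle, because $x_{\bar u}$ itself may be tiny.
\begin{itemize}
\item By Lemma~\ref{lem:good_layer_many_outside}, and since $|X \cap C|>\frac{95|X|}{100}$, some $v\in X\cap C$ has $d(\bar u,v) > r_2 > r_1$.
\item Take a path $P$ from $\bar u$ to $v$ inside $C$, which exists because $C$ is connected. Walk along $P$ and stop at the last vertex $w$ of the initial segment that stays within $B_C(\bar u, r_1)$, so that its successor $w'$ has $d(\bar u,w') > r_1$. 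One should first check, via Observation~\ref{obs:triangle_inequality}, that the whole prefix up to $w$ indeed lies in the ball.
\item Every vertex of $C$ lies outside $Z_0$, so $x_{w'} < \varepsilon/(2k)$. Observation~\ref{obs:triangle_inequality} then gives $d(\bar u,w) \ge d(\bar u,w') - x_{w'} > r_1 - \varepsilon = 2\varepsilon$.
\item The prefix of $P$ from $\bar u$ to $w$ is a walk contained in $B_C(\bar u,r_1)$. Every $F$ meeting this walk is counted in $\mu(B_C(\bar u,r_1))$. Hence $\mu(B_C(\bar u,r_1)) \ge d(\bar u,w) \ge 2\varepsilon$, using that $d$ lower-bounds the cost of any walk.
\end{itemize}

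\emph{Step 4 (contradiction).} Combining the steps gives $f > 2^{\ell_{\max}}\cdot 2\varepsilon \ge \frac{f+4}{\varepsilon}\cdot 2\varepsilon > f$, since $\ell_{\max}=\lceil \log\frac{f+4}{\varepsilon}\rceil$. This contradiction shows that some $\ell\in[\,\ell_{\max}\,]$ satisfies $\mu(\delta_C(\bar u, r_\ell)) \le \mu(B_C(\bar u, r_\ell))$.
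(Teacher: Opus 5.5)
Your proposal is correct and follows the paper's route. Both establish the base bound $\mu(B_C(\bar u,r_1))\ge 2\varepsilon$ by walking from $\bar u$ toward a far vertex of $X\cap C$ until the walk first leaves the ball, then prove $\mu(B_C(\bar u,r_{\ell+1}))\ge\mu(B_C(\bar u,r_\ell))+\mu(\delta_C(\bar u,r_\ell))$ via Lemma~\ref{lem:F_is_local}, and finish with the doubling contradiction against $\mu\le f$. Taking the far vertex outside $B_C(\bar u,r_2)$ rather than $B_C(\bar u,r_{\ell_{\max}+1})$, and iterating up to $r_{\ell_{\max}+1}$, are cosmetic differences; the only fact the ``check'' in Step~3 actually needs is $\bar u\in B_C(\bar u,r_1)$, which holds because $d(\bar u,\bar u)=x_{\bar u}<\varepsilon/(2k)$.
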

\begin{proof}
    For this, we need the following claims:
    
    \begin{claim}\label{claim:inner_ball_has_non_zero_measure}
        $\mu(B_C(\Bar{u}, r_1)) \geq 2\varepsilon$.
    \end{claim}
    \begin{claimproof}
        Consider a path $P$ in $G[C]$ from $\Bar{u}$ to a vertex $w$ such that $w \in X\cap (C\setminus B_C(\Bar{u}, r_{\ell_{\max} + 1}))$. Note that since $| X \setminus B_C(\Bar{u}, r_{\ell+1}) | \geq \frac{5|X|}{100}$ by Lemma~\ref{lem:good_layer_many_outside} and since $|X \setminus C| < \frac{5|X|}{100}$ by assumption, such a vertex exists. %\todo{Why does it follow from 6.5?}\todo{added explanation}
        Let $P' := (\Bar{u},v_1,\dots,v_p)$ be the subpath of $P$ such that the successor of $v_p$ in $P$ (say $v_{p+1}$) is the first vertex in $P$ that does not belong to $B_C(\Bar{u}, r_1)$. $P'$ is well defined, since $w$ does not belong to $B_C(\Bar{u}, r_1)$ and is non-empty since $\Bar{u} \in B_C(\Bar{u}, r_1)$. %\todo{why?}\todo{(Ajay): @Julien Is the confusion about why $\Bar{u} \in B_C(\Bar{u}, r_1)$ is true? Maybe I can add $d(\bar{u},\bar{u}) = x_{\bar{u}} \leq \varepsilon < r_1$?}
        Hence, by Observation~\ref{obs:triangle_inequality}, we get:
        $$
        \mu(B_C(\Bar{u}, r_1)) 
        % \ \ \geq \ \ \sum_{\substack{F \in \f \\ F \cap B_C(\Bar{u}, r_1) \neq \emptyset}} x_F 
        \ \ \geq \ \ \sum_{\substack{F \in \f \\ F \cap P' \neq \emptyset}} x_F 
        \ \ \geq \ \ d(\Bar{u}, v_p) 
        \ \ \geq \ \ d(\Bar{u}, v_{p+1}) - x_{v_{p+1}}
        \ \ \geq \ \ 3\varepsilon - \frac{\varepsilon}{2k}
        \ \ > \ \ 2\varepsilon.
        $$ 
    \end{claimproof}
    
    \begin{claim}\label{claim:meause_of_ball_and_boundary_is_additive}
        $\mu(B_C(\Bar{u}, r_{\ell+1})) \geq \mu(B_C(\Bar{u}, r_\ell)) + \mu(\delta_C(\Bar{u}, r_\ell))$ for every $\ell \in [\,\ell_{\max}\,]$.
    \end{claim}
    \begin{claimproof}
        Let $v \in \delta_C(\bar{u}, r_\ell)$, and let $F \in \f$ be a set containing $v$, with $w$ denoting the center of $F$. Observe that $d(\bar{u},v) > r_\ell + \varepsilon$ while $d(\bar{u},v') \leq r_\ell$ for every $v'\in B_C(\Bar{u}, r_\ell)$. Hence by Observation~\ref{obs:triangle_inequality}, we get $d(v',v) \geq \varepsilon$, and by Lemma~\ref{lem:F_is_local} we conclude that there does not exist an $F\in\f$ that contains both $v$ and $v'$. As every set $F \in \f$ that intersects $\delta_C(\bar{u}, r_\ell)$ has an empty intersection with $B_C(\bar{u}, r_\ell)$, it follows that no set in $\f$ contributes to both $\mu(\delta_C(\bar{u}, r_\ell))$ and $\mu(B_C(\bar{u}, r_\ell))$. Moreover, from the definitions it follows that every set $F \in \f$ contributing to $\mu(B_C(\bar{u}, r_\ell)) + \mu(\delta_C(\bar{u}, r_\ell))$ also contributes to $\mu(B_C(\bar{u}, r_{\ell+1}))$. These two observations together imply the claim.
    \end{claimproof}

    Now, combining Claims~\ref{claim:inner_ball_has_non_zero_measure} and~\ref{claim:meause_of_ball_and_boundary_is_additive}, we conclude that if $\mu(\delta_C(\Bar{u}, r_\ell)) > \mu(B_C(\Bar{u}, r_\ell))$ for every $\ell \in [\,\ell_{\max}\,]$, then $\mu(B_C(\Bar{u}, r_{\ell_{\max}})) > 2^{\ell_{\max} - 1} \cdot \mu(B_C(\Bar{u}, r_1)) \geq f$, while $\mu(G) \leq f$, which yields a contradiction. Hence we conclude that there exists some $\ell \in [\,\ell_{\max}\,]$ for which we have $\mu(\delta_C(\Bar{u}, r_\ell)) \leq \mu(B_C(\Bar{u}, r_\ell))$, which proves Lemma~\ref{lem:good_layer_cost_and_gain}.
\end{proof}

% \todo[inline]{say: this lemma relates the amount we pay to the size of the boundary, but in order to apply ab separator theorem, we need to state size in terms of the induced layered family}
 
\begin{lemma}\label{lem:good_layer_frac_separator}
    Let $Z \subseteq V(G)$ be a downward closed superset of $Z_0$. Suppose that $G - Z$ contains a connected component $C$ such that $|C \cap X| > \frac{95|X|}{100}$, and let $\Bar{u} \in X \cap C$ and $\ell \in [\,\ell_{\max}\,]$. If $A := B_C(\Bar{u}, r_\ell + \varepsilon)$, $B := C \setminus B_C(\Bar{u}, r_{\ell+1})$ and $\f' := \{\,F \cap C \mid F \in \f \text{ and } F \cap C \neq \emptyset\,\}$, then there exists a fractional $(A,B)$-separator using $\f'$ in $G[C]$ of weight at most $\frac{1}{\varepsilon}\cdot\mu(\delta_C(\Bar{u}, r_\ell))$.
\end{lemma}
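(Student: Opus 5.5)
The natural candidate is to put weight on the sets of $\f'$ that meet the boundary shell $\delta_C(\Bar{u}, r_\ell)$. Concretely, for every $F \in \f$ with $F \cap \delta_C(\Bar{u}, r_\ell) \neq \emptyset$ we assign weight $x_F/\varepsilon$ to $F \cap C \in \f'$. All other sets get weight zero; if several $F$ give the same $F\cap C$, the weights add. The total weight is then at most $\frac{1}{\varepsilon}\sum_{F \cap \delta_C(\Bar{u},r_\ell)\neq\emptyset} x_F = \frac{1}{\varepsilon}\mu(\delta_C(\Bar{u}, r_\ell))$. It remains to check that every $A$--$B$ path $P$ in $G[C]$ meets sets of total weight at least one.

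Let $P = (v_1,\dots,v_p)$ be an $A$--$B$ path in $G[C]$, so $d(\Bar{u},v_1)\le r_\ell+\varepsilon$ and $d(\Bar{u},v_p) > r_\ell+3\varepsilon$. Let $i$ be the last index with $d(\Bar{u},v_i)\le r_\ell+\varepsilon$, and $j>i$ the first index after $i$ with $d(\Bar{u},v_j)>r_\ell+3\varepsilon$. By the choice of $i$ and $j$, the vertices $v_{i+1},\dots,v_{j-1}$ all lie in $\delta_C(\Bar{u}, r_\ell)$.

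We then apply Observation~\ref{obs:triangle_inequality} along the subpath. Since $C$ lies in $G-Z \subseteq G - Z_0$, every vertex $w$ of $C$ has $x_w<\varepsilon/(2k)$. Concatenating a path realizing $d(\Bar{u},v_i)$ with the subpath $(v_i,\dots,v_j)$ gives a walk, so
\[
d(\Bar{u},v_j) \le d(\Bar{u},v_i) + \sum_{\substack{F\in\f\\ F\cap\{v_{i+1},\dots,v_{j-1}\}\neq\emptyset}} x_F + x_{v_j}.
\]
Rearranging yields
\[
\sum_{\substack{F \cap \{v_{i+1},\dots,v_{j-1}\}\neq\emptyset}} x_F \ \ge\ 2\varepsilon - \tfrac{\varepsilon}{2k} \ \ge\ \varepsilon.
\]
(This also shows $j \ge i+2$, so the middle segment is nonempty.) Every $F$ counted in this sum meets $\delta_C(\Bar{u}, r_\ell) \subseteq C$, so it carries weight $x_F/\varepsilon$ on $F\cap C$, and $F \cap C$ meets $P$. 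Hence the weight of the sets of $\f'$ meeting $P$ is at least $\frac{1}{\varepsilon}\cdot\varepsilon = 1$.

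The main subtlety, rather than an obstacle, is bookkeeping. The distance $d$ is defined in $G$ and not in $G[C]$, but that is harmless: we only use upper bounds on $d$ obtained from walks, and walks in $G[C]$ are walks in $G$. The collapsing of distinct $F$ to the same $F\cap C$ only merges weights and does not affect the bound. The hypotheses that $Z$ is downward closed and $|C\cap X|$ is large are not needed here beyond $C\cap Z_0=\emptyset$. They presumably matter later, when Lemma~\ref{lem:layered_family_minus_downward_closed} and Lemma~\ref{lem:layered_family_restricted_to_component} are used to see that $\f'$ is a layered family so that Theorem~\ref{thm:ab_separator} can be applied.
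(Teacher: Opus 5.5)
Your proposal is correct and takes essentially the same route as the paper: weight $x_F/\varepsilon$ on the sets meeting the shell $\delta_C(\bar{u}, r_\ell)$, then a triangle-inequality argument along the part of the path that lies in the shell. Your choice of the last index in $A$ followed by the first later index in $B$ plays the role of the paper's minimal $A$--$B$ subpath; the single walk inequality from $\bar{u}$ (losing only one $\tfrac{\varepsilon}{2k}$ term) and the explicit summing of weights when distinct $F$ give the same $F\cap C$ are minor refinements.
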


\begin{proof}
    Consider the assignment $\{x'_{F'}\}_{F' \in \f'}$ defined as follows. Let $F \in \f$ and let $F'\in\f'$ be the set corresponding to $F\cap C$. If $F \cap \delta_C(\bar{u}, r_\ell)$ is nonempty, define $x'_{F'} := \frac{x_F}{\varepsilon}$; otherwise, define $x'_{F'} := 0$. We claim that $\{x'_{F'}\}_{F' \in \f'}$ is a fractional $(A, B)$-separator using $\f'$ in $G[C]$.

    Let $P= (v_0,v_1, \dots, v_p,v_{p+1})$ be a minimal (under subpaths) $A$--$B$ path in $G[C]$. We have that $P\cap A = \{v_0\}$ and $P\cap B = \{v_{p+1}\}$. Let $P^*=(v_1, \dots, v_p)$ be the interior of $P$. By definition, we have $V(P^*) \subseteq \delta_C(\bar{u}, r_\ell)$. Moreover, $P^*$ is non-empty; otherwise, applying Observation~\ref{obs:triangle_inequality} to $\{\Bar{u}, v_1, v_{p+1}\}$ yields $d(\Bar{u}, v_{p+1}) \leq d(\Bar{u}, v_{1}) + x_{v_{p+1}} \leq r_\ell + \varepsilon + \frac{\varepsilon}{2k}$, which contradicts the assumption that $d(\Bar{u}, v_{p+1}) > r_\ell + 3\varepsilon$. 
    $$
    \begin{aligned}
        \sum_{\substack{F' \in \f'\\ F'\cap P\neq \emptyset}} x'_{F'} \quad 
        & \ge \quad \sum_{\substack{F \in \f \\ F \cap P^* \neq \emptyset}} \frac{x_{F}}{\varepsilon}
        \quad \ge \quad \frac{1}{\varepsilon}\cdot d(v_1,v_{p}) \\[2pt]
        & \ge\ \quad \frac{1}{\varepsilon}\cdot \bigl(d(\Bar{u},v_{p})-d(\Bar{u},v_1)\bigr) \\
        & \ge\ \quad \frac{1}{\varepsilon} \left((r_\ell + 3\varepsilon - \frac{\varepsilon}{2k}) - (r_\ell + \varepsilon + \frac{\varepsilon}{2k})\right)
        \quad \ge \quad 1,
    \end{aligned}
    $$
    where the first inequality follows from the fact that any set in $\f$ that intersects $P^*$ must intersect $\delta_C(\bar{u}, r_\ell)$. The second line follows from applying Observation~\ref{obs:triangle_inequality} to the triple $\{\Bar{u}, v_1, v_{p}\}$. The last applies Observation~\ref{obs:triangle_inequality} to the triples $\{\Bar{u}, v_{p}, v_{p+1}\}$ and $\{\Bar{u},v_0,v_1\}$ and uses $v_{1}, v_{p}\not\in Z_0$. The claimed upper bound on the weight of the fractional separator follows from the fact that $\sum_{F \in \f'} x'_{F}  = \frac{1}{\varepsilon}\cdot\mu(\delta_C(\Bar{u}, r_\ell))$.
\end{proof}

With this, we prove the key lemma used in the proof of Theorem~\ref{thm:balanced_separator_rounding}.

% \todo[inline]{say: this is one iteration of the peeling away process.}

\begin{lemma}\label{lem:cutoff_procedure}
     Let $Z\subseteq V(G)$ be a downward closed superset of $Z_0$. If $G-Z$ has a connected component $C$ such that $|C\cap X| > \frac{95|X|}{100}$ then there exists a partition $A\cup S\cup B$ of $C$ such that:
    \begin{enumerate}
        \item $S$ is downward closed.
        \item $S$ is an $A$--$B$ separator in $G-Z$.
        \item $B \subsetneq C$.
        \item $|A \cap X| \leq \frac{95|X|}{100}$.
        \item $\mathrm{fcov}_{\f}(S) \leq 4000k \cdot \log(2n) \cdot \log (f + 4) \cdot (\mu(C) - \mu(B))$.
    \end{enumerate}
\end{lemma}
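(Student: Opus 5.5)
We take $A,B$ to be the two sides of a rounded fractional cut around a ball centred at a vertex of $X\cap C$. The downward closure of that cut is $S$, and the cost is charged to the mass of the ball, which is lost from $\mu$ when passing from $C$ to $B$.

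\emph{Choosing the cut.} Fix any $\bar u\in X\cap C$. By Lemma~\ref{lem:good_layer_cost_and_gain} there is $\ell\in[\,\ell_{\max}\,]$ with $\mu(\delta_C(\bar u,r_\ell))\le \mu(B_C(\bar u,r_\ell))$. Set $A' := B_C(\bar u,r_\ell+\varepsilon)$, $B' := C\setminus B_C(\bar u,r_{\ell+1})$ and $\f' := \{F\cap C \mid F\in\f,\ F\cap C\neq\emptyset\}$. Since $Z$ is downward closed, Lemmas~\ref{lem:layered_family_minus_downward_closed} and~\ref{lem:layered_family_restricted_to_component} show that $\f'$ is a layered family of $G[C]$ with thickness at most $k$.

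\emph{Rounding.} By Lemma~\ref{lem:good_layer_frac_separator}, the optimum of LP~\ref{lp:ab_separator_using_f} for $(G[C],\f',A',B')$ is at most $\frac1\varepsilon\mu(\delta_C(\bar u,r_\ell))$. Theorem~\ref{thm:ab_separator} then gives an $A'$--$B'$ separator $S'$ in $G[C]$ with
\[
\mathrm{fcov}_{\f'}(S')\le \frac{8k\log(2n)}{\varepsilon}\,\mu(\delta_C(\bar u,r_\ell)).
\]
Since $8/\varepsilon=4000\log(f+4)$, this already has the constant in item~5.

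\emph{Making the cut downward closed.} Let $S := \widecheck{S'}\cap C$. It is downward closed in $G$: a descendant of a vertex of $C$ outside $C$ lies in $Z$, and a descendant of a vertex of $Z$ stays in $Z$. Hence $S\cup Z$ is downward closed and $S$ is the part of it in $C$, which is what item~1 needs. For the cover, each $F'=F\cap C$ is downward closed within $C$ (the descendants of $F'$ that lie in $C$ are in $F\cap C$). So a fractional cover of $S'$ by $\f'$ also covers $S$, and replacing each $F'$ by its parent $F$ gives $\mathrm{fcov}_\f(S)\le \mathrm{fcov}_{\f'}(S')$. Since $S\supseteq S'$, $S$ still separates $A'$ from $B'$ in $G[C]$, and hence in $G-Z$ because $C$ is a component.

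\emph{Defining the partition and checking items 2--4.} Let $A$ be the union of the components of $G[C]-S$ meeting $A'$, and let $B := C\setminus(A\cup S)$. Then $S$ separates $A$ from $B$ in $G-Z$, and $A\subseteq C\setminus B'=B_C(\bar u,r_{\ell+1})$. Lemma~\ref{lem:good_layer_many_outside} gives $|X\setminus B_C(\bar u,r_{\ell+1})|\ge \frac{5|X|}{100}$, so $|A\cap X|\le\frac{95|X|}{100}$. Since $d(\bar u,\bar u)=x_{\bar u}<\varepsilon/2k$, we have $\bar u\in A'$, so $\bar u\notin B$ and $B\subsetneq C$.

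\emph{The cost bound (main obstacle).} The delicate step is item~5, namely $\mu(\delta_C(\bar u,r_\ell))\le \mu(C)-\mu(B)$. Every vertex $w\in B$ lies outside $A'$, so $d(\bar u,w)>r_\ell+\varepsilon$. For any $v\in B_C(\bar u,r_\ell)$, Observation~\ref{obs:triangle_inequality} then gives $d(v,w)\ge\varepsilon$. Both $v$ and $w$ lie in $C\subseteq V(G)\setminus Z_0$, so Lemma~\ref{lem:F_is_local} forbids any $F\in\f$ containing both. Therefore the sets counted in $\mu(B_C(\bar u,r_\ell))$ are disjoint from $B$, and all of them are counted in $\mu(C)$. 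This yields
\[
\mu(C)-\mu(B)\ \ge\ \mu(B_C(\bar u,r_\ell))\ \ge\ \mu(\delta_C(\bar u,r_\ell)),
\]
which combined with the rounding bound proves item~5.
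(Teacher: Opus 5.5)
Your proof is correct and follows the paper's argument step for step. It uses the same $\bar u$ and good layer $\ell$, the same rounding of the fractional $(A',B')$-separator via Theorem~\ref{thm:ab_separator}, the downward closure of $S'$, and the same use of Lemma~\ref{lem:F_is_local} to get $\mu(B_C(\bar u,r_\ell))\le\mu(C)-\mu(B)$; your $A$/$B$ split is the harmless mirror image of the paper's, and deriving item~4 from $A\subseteq B_C(\bar u,r_{\ell+1})$ is slightly more precise than the paper's wording. One wording fix: $\widecheck{S'}\cap C$ need not be downward closed in $G$ itself, since a vertex of $S'$ can have a child in $Z$. What you actually prove, and what Claim~6.1.1 uses, is that $S\cup Z$ is downward closed; the paper's $S:=\widecheck{S'}$, read in $G$, is downward closed but may leave $C$.
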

\begin{proof}
    Observe that by Lemma~\ref{lem:layered_family_minus_downward_closed}, $ \f'' := \{F\setminus Z\ |\ F\in\f \text{ and } F\setminus Z\neq\emptyset\}$ is a layered family of $G-Z$ with thickness at most $k$. Furthermore, by Lemma~\ref{lem:layered_family_restricted_to_component} applied to the component $C$ in $G-Z$ and layered family $\f''$, we conclude that 
    $$\f'\ \ :=\ \ \{F'' \cap C\ |\ F''\in\f'' \text{ and } F''\cap C\neq\emptyset\}\ \ =\ \ \{F\cap C\ |\ F\in\f \text{ and } F\cap C\neq\emptyset\},$$ 
    is a layered family of $G[C]$ with thickness at most $k$.
    Let $\Bar{u}\in C\cap X$. We apply Lemma~\ref{lem:good_layer_cost_and_gain} to obtain some $\ell \in [\,\ell_{\max}\,]$ such that $\mu(\delta_C(\Bar{u}, r_\ell)) \leq \mu(B_C(\Bar{u}, r_\ell))$. Now, let $A' := B_C(\Bar{u}, r_\ell + \varepsilon)$ and $B' := C \setminus B_C(\Bar{u}, r_{\ell+1})$ .
    We also know, by Lemma~\ref{lem:good_layer_frac_separator}, that there exists a fractional $(A',B')$-separator using $\f'$ in $G[C]$ of weight at most $\frac{1}{\varepsilon}\cdot\mu(\delta_C(\Bar{u}, r_\ell))$. Therefore, we can apply Theorem~\ref{thm:ab_separator} to the tuple $(G[C], A', B', \f')$ to obtain an $A'$--$B'$ separator $S'$ in $G[C]$.
    
    Define $S$ to be equal to $\widecheck{S'}$, $B$ to be the set of vertices in connected components of $G[C \setminus S]$ that have a non-empty intersection with $B'$, and let $A := C \setminus (B \cup S)$. Observe that, by definition, $S$ is both an $A$--$B'$ separator and an $A'$--$B$ separator in $G[C]$, and consequently also in $G - Z$. This establishes two facts. First, we have $|A \cap X| \le |X| - |B' \cap X|$. By Lemma~\ref{lem:good_layer_many_outside}, $|B' \cap X| \ge \tfrac{5|X|}{100}$, which implies $|A \cap X| \le \tfrac{95|X|}{100}$. Second, we obtain that $B \subsetneq C$ as $\bar{u}\in A'$ and consequently $\bar{u}\in C\setminus B$.
    %\todo{why?}\todo{Added: $\bar{u}\in A'$ and $S$ is an $A'$--$B$ separator.}
    % \todo{add this notation to prelims} 
    Now, we bound the fractional cover number of $S$ using $\f$ as follows:
    $$
    \mathrm{fcov}_{\f}(S)
    \ \ \leq \ \ \mathrm{fcov}_{\f'}(S')
    \ \ \leq \ \ 8k \cdot \log(2n) \cdot \frac{1}{\varepsilon} \mu(\delta_C(\Bar{u}, r_\ell)) 
    \ \ \leq \ \ 4000k \cdot \log(2n) \cdot \log (f + 4) \cdot \mu(B_C(\Bar{u}, r_\ell)).
    $$
    Here, the first inequality follows from two observations. First, any fractional cover of $S'$ using $\f'$ also covers $S$: every set containing a vertex of $S'$ also contains all of its descendants, which implies that if a vertex of $S'$ is covered, then all of its descendants are as well. Second, any set covered by $\f'$ can also be covered by $\f$, since $\f'$ is the restriction of $\f$ to a subset of $V(G)$. The second inequality follows from a property of $S$ guaranteed by Theorem~\ref{thm:ab_separator}, while the last follows from our choice of $\ell$ and definition of $\varepsilon$.
    
    To conclude the proof, it remains to show that $\mu(B_C(\bar{u}, r_\ell)) \le \mu(C) - \mu(B)$. To this end, let $F \in \f$ be a set intersecting $B$, and let $v \in F \cap B$. Since $v \in B$ implies $v \notin A'$, we have $d(\bar{u}, v) > r_\ell + \varepsilon$, while $d(\bar{u}, v') \le r_\ell$ for every $v' \in B_C(\bar{u}, r_\ell)$. Hence, by Observation~\ref{obs:triangle_inequality}, it follows that $d(v', v) \ge \varepsilon$. By Lemma~\ref{lem:F_is_local}, no set $F \in \f$ can contain both $v$ and $v'$. Therefore, no set in $\f$ contributes to both $\mu(B)$ and $\mu(B_C(\bar{u}, r_\ell))$. Consequently $\mu(B_C(\bar{u}, r_\ell)) \le \mu(C) - \mu(B)$, which completes the proof.
\end{proof}

Now we are ready to prove Theorem~\ref{thm:balanced_separator_rounding}.

\LPoptVSintegral*
\begin{proof}
    We prove the following claim:\\[-4pt]
    
    \noindent\textbf{Claim~6.1.1.}\label{claim:balanced_sep_cutoff_step}
        \emph{Let $Z\subseteq V(G)$ be a downward closed superset of $Z_0$, and let $C$ be the connected component of $G - Z$ such that $|C \cap X| > \frac{95|X|}{100}$, or let $C = \emptyset$ if no such component exists. Then there exists an $(X, \frac{95}{100})$-balanced separator $S$ in $G$ such that $S$ is downward closed and $\mathrm{fcov}_{\f}(S) \leq \mathrm{fcov}_{\f}(Z) + 4000k \cdot \log(2n) \cdot \log (f + 4) \cdot \mu(C)$.}
    % We prove the following claim:
    % \begin{claim}\label{claim:balanced_sep_cutoff_step}
    %     Let $Z\subseteq V(G)$ be a downward closed superset of $Z_0$, and let $C$ be the connected component of $G - Z$ such that $|C \cap X| > \frac{95|X|}{100}$, or let $C = \emptyset$ if no such component exists. Then there exists an $(X, \frac{95}{100})$-balanced separator $S$ in $G$ such that $S$ is downward closed and $\mathrm{fcov}_{\f}(S) \leq \mathrm{fcov}_{\f}(Z) + 4000k \cdot \log(2n) \cdot \log (f + 4) \cdot \mu(C)$.
    % \end{claim}
    \begin{claimproof}
        We proceed by induction on $|C|$. Firstly, if $C = \emptyset$, then we can let $S = Z$ and the lemma holds true. Otherwise, let $A'\cup S'\cup B'$ be the partition of $C$ whose existence is implied by Lemma~\ref{lem:cutoff_procedure}. Observe that $Z\cup S'$ is downward closed, as both $Z$ and $S'$ are. Now let us consider the graph $G-(Z\cup S')$ and distinguish two cases. First, if there exists a component $C'$ of $G - (Z \cup S')$ such that $|C' \cap X| > \frac{95|X|}{100}$, then $C' \subseteq B'$ as $S'$ is an $A'$--$B'$ separator in $G - Z$ and $|A' \cap X| \leq \frac{95|X|}{100}$. Since $B' \subsetneq C$, we conclude that $C' \subsetneq C$. Second, no such component exists; hence we set $C'=\emptyset$, and again $C' \subsetneq C$. In either case, by applying the inductive hypothesis to the pair $(G, Z \cup S')$, we obtain an $(X, \frac{95}{100})$-balanced separator $S$ in $G$ such that $S$ is downward closed and
        \begin{align*}
            \mathrm{fcov}_{\f}(S) \quad
            &\leq \quad \mathrm{fcov}_{\f}(Z \cup S')\ +\ 4000k \cdot \log(2n) \cdot \log (f + 4) \cdot \mu(C') \\
            &\leq \quad \mathrm{fcov}_{\f}(Z)\ +\ \mathrm{fcov}_{\f}(S')\ +\ 4000k \cdot \log(2n) \cdot \log (f + 4) \cdot \mu(C') \\
            &\leq \quad \mathrm{fcov}_{\f}(Z)\ +\ 4000k \cdot \log(2n) \cdot \log (f + 4) \cdot \left(\mu(C)\ - \mu(B') +\ \mu(C')\right) \\
            &\leq \quad \mathrm{fcov}_{\f}(Z)\ +\ 4000k \cdot \log(2n) \cdot \log (f + 4) \cdot \mu(C)
        \end{align*}
        where the last transition follows from the facts that $C' \subseteq B'$ which implies $\mu(C') \leq \mu(B')$.
    \end{claimproof}
    Now, we apply Claim~\hyperref[claim:balanced_sep_cutoff_step]{6.1.1} to the pair $(G, Z_0)$ and obtain an $(X, \frac{95}{100})$-balanced separator $S$ in $G$ that is downward closed. Furthermore, since $\sum_{\substack{F\in\f\\ F\ni v}}x_F \geq \frac{\varepsilon}{2k}$ for every $v\in Z_0$, it follows that $\left\{x_F \cdot \frac{2k}{\varepsilon}\right\}_{F \in \f}$ is a fractional cover for $Z_0$. Consequently $\mathrm{fcov}_{\f}(Z_0) \leq \frac{2k}{\varepsilon} \cdot f$.

    Thus, 
    \begin{align*}
        \mathrm{fcov}_{\f}(S) \quad
        &\leq \quad \mathrm{fcov}_{\f}(Z_0)\ +\ 4000k \cdot \log(2n) \cdot \log (f + 4) \cdot \mu(C) \\
        &\leq \quad 1000k \cdot \log (f + 4) \cdot f\ +\ 4000k \cdot \log(2n) \cdot \log (f + 4) \cdot f \\
        &\leq \quad 5000k \cdot \log(2n) \cdot \log (f + 4) \cdot f.
    \end{align*}
    which concludes the proof of Theorem~\ref{thm:balanced_separator_rounding}.
\end{proof}

\section{Using the Dual of Balanced Separator LP}\label{sec:rounding_dual_balanced_sep}

Let $G$ be a graph, let $\f$ be a layered family of $G$, and let $\cf$ be the set of centers of elements in $\f$. Let $X \subseteq \cf$ and for each pair $u, v \in X$, let $\p_{u,v}$ denote the set of all induced paths from $u$ to $v$ in $G$. We now describe the dual of LP~\ref{lp:container_cover}, corresponding to the instance $(G, X, \mathcal{F})$. It uses non-negative real variables $\rho_u$, $\eta_{u,v}$, and $\gamma_{u,v,P}$, defined for every $u,v \in X$ and path $P \in \p_{u,v}$.
\begin{align}\label{lp:balanced_path_packing}
    \text{Maximize :}&\quad \frac{|X|}{10}\sum\limits_{u\in X} \rho_u - \sum\limits_{u,v \in X}\eta_{u,v}\\
    \nonumber \text{Subject To :}&\quad \rho_u - \eta_{u,v} - \sum\limits_{P\in\p_{u,v}}\gamma_{u,v,P} \leq 0 && \forall u,v \in X\\
    \nonumber &\quad \sum\limits_{\substack{u,v \in X\\ P\in \p_{u,v}\\ P\cap F\neq \emptyset}} \gamma_{u,v,P} \leq 1 && \forall F\in \mathcal{F}
\end{align}
It is easy to verify that the above is indeed the dual of LP~\ref{lp:container_cover}, which by strong duality~\cite{doi:10.1137/1025101} immediately implies that the linear program is feasible and has bounded optimal objective function value. Having established this, we are now ready to state the main theorem proved in this section. We remark that the general framework of this section closely follows that of Section~7 in~\cite{chudnovsky2025treewidth}. In that work, whenever the optimum value of LP~\ref{lp:balanced_path_packing} is high, one can sub-sample a set of paths that induce a subgraph of large treewidth. The main difference in this section is that we show it is possible to sample \emph{upward minimal} paths that induce a subgraph of large treewidth. The upward minimality additionally guarantees that, for every sampled path $P$ and every set $F \in \f$, the intersection $P \cap F$ has cardinality at most $2k-1$, which is a property we require for our main results.

\begin{restatable}{theorem}{BalancedSeparatorDualRounding}\label{thm:balanced_separator_dual_rounding}
    Let $G$ be a graph, $k$ be a positive integer and $\f$ be a layered family of $G$ with thickness at most $k$. Then for every subset $X \subseteq \cf$, where $\cf$ denotes the set of centers of the elements of $\f$, and positive integer $\ell$ such that $\ell\ \geq\ 7 \cdot (\,f \cdot \log n\, +\, |X|\, +\, 2\,)$, there exists an induced subgraph $H \subseteq G$ with $X \subseteq V(H)$ that satisfies the following properties:
    \begin{itemize}
        \item For every $(X,\frac{1}{2})$-balanced separator $S$ in $H$, we have that $\mathrm{cov}_{\mathcal{F}}(S) \geq f$.
        \item For every $F\in\f$, we have that $|F\cap V(H)|\leq 1 + \frac{3\, \ell}{5\, f}\cdot (2k-1)$.
    \end{itemize}
    whenever $f$, which denotes the optimum value of the dual of the balanced separator LP corresponding to $(G, X, \mathcal{F})$, is positive.
\end{restatable}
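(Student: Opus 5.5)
We work with the dual LP~\ref{lp:balanced_path_packing} and mimic the sampling argument of Section~\ref{sec:rounding_dual_ab_sep}, following the framework of Section~7 of~\cite{chudnovsky2025treewidth}.

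\textbf{Step 1: reduce to upward minimal paths.} Fix an optimal solution $(\rho,\eta,\gamma)$ of value $f$. As in the claim inside the proof of Theorem~\ref{thm:ab_path_packing}, we reroute weight: whenever $\gamma_{u,v,P}>0$ for an upward non-minimal $P\in\p_{u,v}$, we move it to an upward minimal $P_o\in\p_{u,v}$ witnessing non-minimality. This keeps the first family of constraints, since it only depends on $\sum_P \gamma_{u,v,P}$. It also keeps the $F$-constraints, because $F$ is downward closed, so $P_o\cap F\neq\emptyset$ implies $P\cap F\neq\emptyset$. After this, by Lemma~\ref{lem:minimal_paths_are_short}, every path in the support meets every $F$ in at most $2k-1$ vertices. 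We may also normalize so that $\eta_{u,v}=\max\{0,\rho_u-\sum_P\gamma_{u,v,P}\}$.

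\textbf{Step 2: sample.} Let $\Gamma:=\sum_{u,v,P}\gamma_{u,v,P}$. We sample about $\ell$ paths independently, each with probability proportional to $\gamma_{u,v,P}$, and let $H := G[X \cup \bigcup_{\text{sampled}} V(P)]$. The key point is a lower bound on $\Gamma$ in terms of $f$. Summing the first constraint over $v$ gives
\[
\sum_{v}\sum_P\gamma_{u,v,P}\ \ge\ |X|\rho_u-\sum_v\eta_{u,v},
\]
and combining this with the objective shows that $\Gamma \gtrsim \frac{10}{|X|}\cdot f\cdot$const. Hence each $F$ is hit by a single sample with probability at most $1/\Gamma\lesssim 1/f$, up to the normalization. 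By Proposition~\ref{thm:chernoff} and a union bound over the at most $n$ sets $F$, every $F$ is hit by at most $\frac{3\ell}{5f}$ sampled paths. Together with Step~1, this gives $|F\cap V(H)|\le 1+\frac{3\ell}{5f}(2k-1)$, where the $+1$ accounts for the center of $F$, which may lie in $X$. The hypothesis $\ell\ge 7(f\log n+|X|+2)$ is what makes the Chernoff and union-bound thresholds work.

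\textbf{Step 3: the separator property.} We argue by contradiction. Suppose $S$ is an $(X,\tfrac12)$-balanced separator in $H$ with $\mathrm{cov}_\f(S)<f$, covered by a subfamily $\f_S$ of fewer than $f$ sets. A sampled path is destroyed only if it meets some set of $\f_S$, and each such set kills at most $\frac{3\ell}{5f}$ paths, so fewer than $\frac{3\ell}{5}$ sampled paths are hit. On the other hand, balancedness forces many pairs $u,v\in X$ to lie in different components of $H-S$. The LP weight $\sum_v\sum_P\gamma_{u,v,P}\ge |X|\rho_u-\sum_v\eta_{u,v}$, summed over such separated pairs, should imply that a constant fraction (more than $3/5$) of the samples are $u$--$v$ paths for separated pairs. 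Each such path must meet $S$, which gives the contradiction.

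\textbf{Main obstacle.} The difficulty is that $S$ depends on the sample, so we cannot union bound over all possible $S$. We will handle this as in~\cite{chudnovsky2025treewidth}: the union bound is taken over subfamilies of $\f$ of size less than $f$, of which there are at most $n^{f}$; this is where the term $f\log n$ in $\ell$ comes from. For each fixed subfamily, we bound the probability that more than $\frac{3\ell}{5}$ samples avoid it while it still separates $X$ in a balanced way. The $|X|$ term in $\ell$ should be used to control the pairs of $X$ (a union bound over the $2^{|X|}$ possible sides). The constants $\tfrac{1}{10}$, $\tfrac12$ and $\tfrac35$ must be tuned carefully for this counting to close.
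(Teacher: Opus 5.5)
Your Step 1 is the paper's Lemma~\ref{lem:upward_minimal_is_support}. The load bound in Step 2 also works, although the estimate you need is $\Gamma\ge|X|\rho-\sum_{u,v}\eta_{u,v}=f+\tfrac{9}{10}|X|\rho\ge 10f$, with no factor $1/|X|$; this gives hit probability at most $\tfrac{1}{10f}$ per sample. The genuine gap is Step~3, and it comes from sampling proportionally to $\gamma_{u,v,P}$.

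The dual only forces $\gamma_{u,v}\ge\rho_u-\eta_{u,v}$, and $\gamma$ does not enter the objective. So an optimal solution, even after your Step~1 and $\eta$-normalization, may put arbitrary extra weight on pairs that are useless for separation. Take $X$ to be the leaves of a star and $\f$ the singletons (one layer). Every crossing path uses the centre, so the total crossing weight is at most $1$. The leaf constraints, however, have total slack at least $|X|-2$, and this slack can be filled by the one-vertex (upward minimal) paths $(u)\in\p_{u,u}$. Then $\Gamma\approx|X|$, and a sample crosses a given partition with probability $O(1/|X|)$. Your claimed constant fraction of crossing samples therefore fails, and no union bound over $2^{|X|}$ partitions can close.

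Independently, the deterministic $\tfrac35$ count cannot work even with a good distribution. The max-load bound only says that $\f_S$ hits fewer than $f\cdot\tfrac{3\ell}{5f}=\tfrac{3\ell}{5}$ samples. But the fraction of samples crossing a $\tfrac13$--$\tfrac23$ partition is in general a constant below $\tfrac35$; for example it is $\tfrac49$ when weight is spread evenly over pairs.

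The missing idea is the paper's distribution $\mathcal{D}$:
\begin{itemize}
\item pick $u$ with probability $\rho_u/\rho$;
\item pick $v\in X$ uniformly;
\item output $\emptyset$ with probability $\eta_{u,v}/(\eta_{u,v}+\gamma_{u,v})$, and otherwise output $P$ with probability $\gamma_{u,v,P}/\gamma_{u,v}$.
\end{itemize}
This distribution gives three bounds:
\begin{itemize}
\item Uniformity of $v$ gives $\Pr[|\{u,v\}\cap X_1|\ne1]\le\tfrac23$.
\item Optimality (zeroing $\rho_u$ and all $\eta_{u,\cdot}$ cannot increase the objective) gives $\sum_v\eta_{u,v}\le\tfrac{|X|}{10}\rho_u$, hence $\Pr[P=\emptyset]\le\tfrac1{10}$.
\item The constraint $\rho_u\le\eta_{u,v}+\gamma_{u,v}$ gives $\Pr[P\cap F\ne\emptyset]\le\tfrac{1}{\rho|X|}\le\tfrac1{10f}$.
\end{itemize}

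Now fix a triple $(\f',X_1,X_2)$ with $|\f'|\le f$ and both sides of size at most $\tfrac{2|X|}{3}$. You only need one sample that is an $X_1$--$X_2$ path missing $\bigcup\f'$. Union-bound over $F\in\f'$ using these expectations, not the max load. Then each sample succeeds with probability at least $1-\tfrac23-\tfrac1{10}-\tfrac1{10}\ge\tfrac1{10}$. So the triple fails with probability at most $(\tfrac9{10})^\ell$, and $\ell\ge7(f\log n+|X|+2)$ makes this small against the roughly $2^{|X|}n^{f}$ triples. Lemma~\ref{lem:separation_vs_separators} then converts any $(X,\tfrac12)$-balanced separator of $H$, which may contain vertices of $X$, into such a partition.

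Alternatively, you can keep $\gamma$-proportional sampling if you first scale each $\gamma_{u,v,\cdot}$ down so that $\gamma_{u,v}\le\rho_u$. This preserves feasibility and the objective. Crossing pairs then carry weight at least $(\tfrac13-\tfrac1{10})|X|\rho$ out of $\Gamma\le|X|\rho$, so each sample succeeds with probability at least $\tfrac{2}{15}$. You would still need the one-surviving-path union bound instead of the $\tfrac35$ count.
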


For the remainder of this section, we fix a graph $G$, the layered family $\f$ and the set $X\subseteq \cf$. We begin with the following lemma:

\begin{lemma}\label{lem:upward_minimal_is_support}
    There exists an optimal solution to LP~\ref{lp:balanced_path_packing} where, for every $u,v\in X$ and $P\in \p_{u,v}$, we have $\gamma_{u,v,P} > 0$ only if $P$ is upward minimal.
\end{lemma}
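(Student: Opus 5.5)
We follow the exchange argument from the claim inside the proof of Theorem~\ref{thm:ab_path_packing}. Among all optimal solutions $(\rho,\eta,\gamma)$ to LP~\ref{lp:balanced_path_packing}, we take one minimizing the number of triples $(u,v,P)$ with $\gamma_{u,v,P}>0$ and $P$ upward non-minimal. Such a minimizer exists because the LP has a finite optimum attained at some solution, and the count takes values in the nonnegative integers. Suppose some $\gamma_{u,v,P'}>0$ with $P'\in\p_{u,v}$ upward non-minimal. Upward non-minimality is witnessed by a strict decrease in the pair $(\widehat{P},|V(P)|)$ under the partial order used in the definition. Since $\p_{u,v}$ is finite, we can therefore pick an upward minimal $u$--$v$ path $P_o$ with $\widehat{P}_o\subseteq \widehat{P}'$, where either the inclusion is strict or equality holds with $|V(P_o)|<|V(P')|$. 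We may take $P_o$ induced: replace it by a shortest $u$--$v$ path in $G[P_o]$, which only shrinks the upward closure and the length. Hence $P_o\in\p_{u,v}$. The endpoints are unchanged, which is essential, because the first family of constraints is indexed by the pair $(u,v)$.

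Define $\gamma'$ by $\gamma'_{u,v,P_o}:=\gamma_{u,v,P_o}+\gamma_{u,v,P'}$ and $\gamma'_{u,v,P'}:=0$, keep all other $\gamma$ unchanged, and keep $\rho,\eta$ unchanged. The objective does not involve $\gamma$, so it stays optimal. For the constraint indexed by $(u,v)$, the sum $\sum_{P\in\p_{u,v}}\gamma'_{u,v,P}$ is unchanged. The other constraints of the first family do not involve these variables.

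For the packing constraint of a given $F\in\f$, we must show that $P_o\cap F\neq\emptyset$ implies $P'\cap F\neq\emptyset$. Given this, every $F$-constraint containing $\gamma'_{u,v,P_o}$ also contained $\gamma_{u,v,P'}$, so its left-hand side does not increase. Constraints not containing $P_o$ only lose mass. To prove the implication, suppose $P_o\cap F\neq\emptyset$. Then $\widehat{P}_o\cap F\neq\emptyset$, so some $w\in \widehat{P}'\cap F$. This $w$ is an ancestor of a vertex $z\in P'$. Because $F$ is downward closed, $z\in F$, so $P'\cap F\neq\emptyset$.

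The new solution is feasible and optimal, and it has one fewer positive variable on an upward non-minimal path, since $P_o$ is upward minimal. This contradicts the choice of the solution. The only delicate point is making sure the replacement path is an induced path with the same endpoints $u,v$ that is upward minimal. Finiteness of $\p_{u,v}$ and passing to a shortest path in $G[P_o]$ handle this.
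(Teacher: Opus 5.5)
Your proof is correct and is essentially the paper's argument. Take an optimal solution minimizing the number of positive $\gamma$'s on upward non-minimal paths, shift the weight of $P'$ onto an upward minimal witness $P_o$ with the same endpoints, and use downward closedness of each $F$ to show that every packing constraint meeting $P_o$ already met $P'$. The extra care you take over the existence of $P_o$ (by finiteness) and its being induced is fine. The inducedness step is automatic, though: a chord in an upward minimal path would yield a shorter $u$--$v$ path whose upward closure is no larger.
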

\begin{proof}
    Let $\rho_u$, $\eta_{u,v}$, and $\gamma_{u,v,P}$, for every $u,v \in X$ and $P \in \p_{u,v}$, denote the values assigned to the corresponding variables in an optimal solution to LP~\ref{lp:balanced_path_packing} that, among all optimal solutions, achieves the minimum number of nonzero $\gamma$ variables corresponding to upward non-minimal paths. If for some $u',v'\in X$, there exists an upward non-minimal $u'$--$v'$ path $P'$ with $\gamma_{u',v',P'} > 0$, then let $P_o$ be any upward minimal path that witnesses its upward non-minimality, i.e., let $P_o$ be upward minimal and satisfy $\widehat{P}_o \subsetneq \widehat{P}'$, or $\widehat{P}_o = \widehat{P}'$ and $|V(P_o)| < |V(P')|$. 
    We claim that the assignment obtained from the above by setting, for all $u, v \in X$, $\rho'_u = \rho_u$ and $\eta'_{u,v} = \eta_{u,v}$, and by modifying the $\gamma_{u,v,P}$ variables as follows:
    $$
    \gamma'_{u,v,P} = 
    \begin{cases}
    \gamma_{u,v,P_o} + \gamma_{u,v,P'}, & \text{if }u = u', v = v', P = P_o, \\
    0, & \text{if } u = u', v = v', P = P',\\
    \gamma_{u,v,P}, & \text{otherwise}, \\
    \end{cases}
    $$
    is a feasible solution. This leads to a contradiction as the new solution attains the same (optimal) objective function value but has one fewer non-zero variable corresponding to an upward non-minimal path. 

    To this end, let us consider the constraint in LP~\ref{lp:balanced_path_packing}. For ease of presentation, we refer to the constraints corresponding to vertex pairs in $X$ as \emph{type-one} constraints, and those corresponding to sets in $\f$ as \emph{type-two} constraints.
    Any constraint of type-one that does not correspond to the pair $u'$ and $v'$ contains neither $\gamma'_{u',v',P_o}$ nor $\gamma'_{u',v',P'}$ and therefore remains unchanged. The constraint of type-one corresponding to $u'$ and $v'$ contains both $\gamma'_{u',v',P_o}$ and $\gamma'_{u',v',P'}$, and since $\gamma'_{u',v',P_o} + \gamma'_{u',v',P'} = \gamma_{u',v',P_o} + \gamma_{u',v',P'}$ its left-hand side remains unchanged. Consequently, this constraint also remains satisfied, which completes the case of type-one constraints. 
    Now considering type-two, we note that any type-two constraint that does not involve $\gamma'_{u',v',P_o}$ remains satisfied, since the values of all other $\gamma'$ variables do not exceed those of $\gamma$ variables. So consider a constraint that corresponds to element $F$ in $\f$ that involves $\gamma'_{u',v',P_o}$, i.e., $P_o \cap F \neq \emptyset$. This implies that $\widehat{P}_o \cap F$ is non-empty, and consequently $\widehat{P}' \cap F$ is non-empty as well. Moreover, if $\widehat{P}' \cap F\neq\emptyset$, then by definition of upward closure there exists a vertex of $P'$ whose ancestor lies in $F$. But as elements of $\f$ are downward closed, $F$ contains a vertex of $P'$, and hence $F \cap P'$ is non-empty. Therefore, any constraint that contains $\gamma'_{u',v',P_o}$ also contains $\gamma'_{u',v',P'}$, and since $\gamma'_{u',v',P_o} + \gamma'_{u',v',P'} = \gamma_{u',v',P_o} + \gamma_{u',v',P'}$ its left-hand side is unchanged. Consequently, this constraint remains satisfied as well, and therefore the new assignment is feasible, completing the proof of the lemma by contradiction.
\end{proof}

Let us fix an optimal solution to LP~\ref{lp:balanced_path_packing}, corresponding to $(G, X, \f)$, that satisfies the conclusion of Lemma~\ref{lem:upward_minimal_is_support}. Let $\rho_u$, $\eta_{u,v}$ and $\gamma_{u,v,P}$, for every $u,v \in X$ and $P \in \p_{u,v}$, denote the values assigned to the corresponding variables in this solution. Let $f$ be the value of the objective function in this solution. For every $u, v \in X$, define $\gamma_{u,v} := \sum_{P \in \p_{u,v}} \gamma_{u,v,P}$, and let $\rho := \sum_{u \in X} \rho_u$.

\begin{lemma}\label{lem:properties_of_dual_lp}
    The optimal solution has the following properties:
    \begin{enumerate}
        \item $\sum_{v\in X}\eta_{u,v} \leq \frac{|X|}{10}\rho_u$ for every $u\in X$.
        \item $10\cdot f \leq \rho|X|$.
    \end{enumerate}
\end{lemma}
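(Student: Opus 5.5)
Both properties come from exploiting optimality of the fixed solution, using only feasibility and the objective of LP~\ref{lp:balanced_path_packing}. For property 1, I would argue by a local modification. Suppose for some $u \in X$ we had $\sum_{v\in X}\eta_{u,v} > \frac{|X|}{10}\rho_u$. Replace $\rho_u$ by $0$ and every $\eta_{u,v}$ ($v\in X$) by $0$, leaving all other variables unchanged. The type-two constraints do not involve $\rho$ or $\eta$, so they still hold. Among the type-one constraints, only those for pairs $(u,v)$ change, and they become $-\sum_{P}\gamma_{u,v,P}\le 0$, which is true by nonnegativity. The objective changes by $-\frac{|X|}{10}\rho_u + \sum_{v}\eta_{u,v} > 0$, which contradicts optimality. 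Hence $\sum_{v\in X}\eta_{u,v}\le \frac{|X|}{10}\rho_u$ for every $u$.

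A concern here is that $\eta_{u,v}$ for fixed $u$ also appears in the constraint indexed by the ordered pair $(u,v)$. Because the constraints are indexed by ordered pairs, zeroing $\rho_u$ and $\eta_{u,\cdot}$ touches only the constraints whose first index is $u$. I would check this indexing convention; under it the argument goes through cleanly. If the pairs were unordered, $\eta_{u,v}=\eta_{v,u}$ would sit in the constraint for $v$ as well. One would then instead lower each $\eta_{u,v}$ only by the amount by which the constraint for pair $(u,v)$ loses slack. A cleaner alternative is to observe that at an optimum $\eta_{u,v}=\max\{0,\rho_u-\gamma_{u,v}\}$, which leads to the same conclusion.

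For property 2, I would use property 1 together with $\eta\ge 0$ and the definition of the objective:
\[
f=\frac{|X|}{10}\rho-\sum_{u,v}\eta_{u,v}\le \frac{|X|}{10}\rho,
\]
which gives $10f\le\rho|X|$ immediately. Property 1 is therefore not even needed for property 2; nonnegativity of $\eta$ suffices.

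The only step needing care is the feasibility check in property 1, namely confirming exactly which constraints contain $\rho_u$ and $\eta_{u,v}$. Everything else is a one-line comparison of objective values.
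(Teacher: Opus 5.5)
Your proof is correct and follows the paper's argument: set $\rho_u$ and every $\eta_{u,v}$ to zero, note that only the type-one constraints indexed by the ordered pairs $(u,v)$ change (each becomes $-\gamma_{u,v}\le 0$), invoke optimality, and get property 2 from $\eta\ge 0$ alone. The pairs are ordered, since the constraint $\rho_u-\eta_{u,v}-\gamma_{u,v}\le 0$ and the primal's $\sum_{v} d_{u,v}$ are asymmetric in $u,v$, so your hedge for the unordered case is unnecessary.
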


\begin{proof}
    Given any feasible solution to the LP, we can always obtain a new assignment by choosing a $u_0$ in $X$ and setting $\rho_{u_0} = 0$ and $\eta_{u_0,v} = 0$ for every $v\in X$. This preserves feasibility, since the only constraints involving these variables are of the form $\rho_{u_0} - \eta_{u_0,v} - \gamma_{u_0,v} \leq 0$, which remain satisfied even in the new assignment. Furthermore, the value of the objective function decreases by $\frac{|X|}{10}\rho_{u_0} - \sum_{v\in X}\eta_{u_0,v}$. But, if we start with the optimal solution, this modification should not strictly increase the objective function value. Therefore we get $\sum_{v\in X}\eta_{u_0,v} \leq \frac{|X|}{10}\rho_{u_0}$. Furthermore,
    $$
    10\,f \quad
    = \quad 10\left(\frac{|X|}{10} \sum_{u \in X} \rho_u - \sum_{u,v \in X} \eta_{u,v}\right) \quad
    \leq \quad 10\left(\frac{|X|}{10} \sum_{u \in X} \rho_u\right) \quad
    = \quad \rho|X|.
    $$
\end{proof}

We define $\p = \bigcup_{u,v \in X} \p_{u,v}$, and introduce a probability distribution $\mathcal{D}$ over the domain $X\times X\times (\p \cup \{\emptyset\})$. The distribution $\mathcal{D}$ is defined using the following random process which has three steps: (Step~1) Select a vertex $u \in X$ with probability $\rho_u / \rho$. (Step~2) Select a vertex $v \in X$ uniformly at random.  (Step~3) With probability $\eta_{u,v} / (\eta_{u,v} + \gamma_{u,v})$, output the triple $(u, v, \emptyset)$. Otherwise, choose a path $P \in \mathcal{P}_{u,v}$ with probability $\gamma_{u,v,P} / \gamma_{u,v}$ and output the triple $(u, v, P)$. 

We now argue that the process is well defined.
First, observe that $f > 0$ by assumption and $10\cdot f \leq \rho |X|$ by Lemma~\ref{lem:properties_of_dual_lp}, together imply $\rho > 0$. Hence $\{\, \rho_u / \rho \,\}_{u \in X}$ forms a probability distribution: each $\rho_u$ is non-negative and $\sum_{u \in X} \rho_u = \rho$. Therefore, Step~1 is well defined.
We next justify Step~3, as Step~2 is immediate.
Let $u$ be a vertex selected in Step~1. Then $\rho_u > 0$, which implies that $\eta_{u,v} + \gamma_{u,v} > 0$ for every $v \in X$, and in particular for the vertex $v$ selected in Step~2; otherwise, the constraint $\rho_u - \eta_{u,v} - \gamma_{u,v} \leq 0$ of LP~\ref{lp:balanced_path_packing} would be violated. If $\gamma_{u,v} = 0$, then necessarily $\eta_{u,v} / (\eta_{u,v} + \gamma_{u,v}) = 1$, and the process outputs $(u,v,\emptyset)$. Thus, the second case of Step~3 occurs only when $\gamma_{u,v} > 0$. In this case, $\{\, \gamma_{u,v,P} / \gamma_{u,v} \,\}_{P \in \mathcal{P}_{u,v}}$ forms a probability distribution, since each $\gamma_{u,v,P}$ is non-negative and $\sum_{P \in \mathcal{P}_{u,v}} \gamma_{u,v,P} = \gamma_{u,v}$.
Consequently, Step~3, and hence the entire process, is well defined.

% We define $\p = \bigcup_{u,v \in X} \p_{u,v}$, and introduce a probability distribution $\mathcal{D}$ over the domain $X\times X\times (\p \cup \{\emptyset\})$. The distribution $\mathcal{D}$ is defined using the following random process: Select a vertex $u \in X$ with probability $\rho_u / \rho$. Select a vertex $v \in X$ uniformly at random. With probability $\eta_{u,v} / (\eta_{u,v} + \gamma_{u,v})$, output the triple $(u, v, \emptyset)$. Otherwise, choose a path $P \in \p_{u,v}$ with probability $\gamma_{u,v,P} / \gamma_{u,v}$ and output the triple $(u, v, P)$. This process is well defined since both $\{\, \rho_u /\rho \,\}_{u \in X}$ and $\{\, \gamma_{u,v,P}/\gamma_{u,v} \,\}_{P \in \p_{u,v}}$ are probability distributions, as every $\rho_u$ and $\gamma_{u,v,P}$ take non-negative values and satisfy $\rho = \sum_{u \in X} \rho_u$ and $\gamma_{u,v} = \sum_{P\in\p_{u,v}}\gamma_{u,v,P}$. 

\begin{lemma}\label{lem:probability_bounds_1}
    $\mathcal{D}$ satisfies $\displaystyle \mathop{\mathbb{P}}\limits_{(u,v,P)\sim\mathcal{D}}\left[P = \emptyset\right] \leq \frac{1}{10}$.
\end{lemma}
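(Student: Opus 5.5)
We compute the probability directly from the three-step description of $\mathcal{D}$ and then use the first property of Lemma~\ref{lem:properties_of_dual_lp}. Conditioned on Step~1 selecting $u$ and Step~2 selecting $v$, Step~3 outputs $\emptyset$ with probability $\eta_{u,v}/(\eta_{u,v}+\gamma_{u,v})$. Hence
\begin{align*}
\mathop{\mathbb{P}}_{(u,v,P)\sim\mathcal{D}}[P=\emptyset] \;=\; \sum_{u\in X}\frac{\rho_u}{\rho}\cdot\frac{1}{|X|}\sum_{v\in X}\frac{\eta_{u,v}}{\eta_{u,v}+\gamma_{u,v}},
\end{align*}
where only terms with $\rho_u>0$ appear. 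For those terms the denominators are positive, as argued in the well-definedness discussion.

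The key pointwise estimate comes from the type-one constraint $\rho_u \le \eta_{u,v}+\gamma_{u,v}$ of LP~\ref{lp:balanced_path_packing}. Whenever $\rho_u>0$, it gives
\[
\frac{\eta_{u,v}}{\eta_{u,v}+\gamma_{u,v}} \le \frac{\eta_{u,v}}{\rho_u}.
\]
Substituting this bound, the factor $\rho_u$ cancels, and we obtain
\[
\mathbb{P}[P=\emptyset] \;\le\; \frac{1}{\rho|X|}\sum_{u\in X:\,\rho_u>0}\ \sum_{v\in X}\eta_{u,v}.
\]
Now we apply the first item of Lemma~\ref{lem:properties_of_dual_lp}, namely $\sum_{v}\eta_{u,v}\le \frac{|X|}{10}\rho_u$, to each $u$. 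This bounds the double sum by $\frac{|X|}{10}\sum_u\rho_u=\frac{|X|\rho}{10}$. The total is therefore at most $\frac{1}{10}$.

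There is no real obstacle here. The only points needing care are the zero cases: restricting to $\rho_u>0$ so that the division by $\rho_u$ is legitimate, and noting that $\rho>0$ (which follows from $f>0$ and the second item of Lemma~\ref{lem:properties_of_dual_lp}).
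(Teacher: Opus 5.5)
Your proposal is correct and follows essentially the same route as the paper. You condition on the Step~1--2 choices restricted to $\rho_u>0$, bound $\eta_{u,v}/(\eta_{u,v}+\gamma_{u,v})$ by $\eta_{u,v}/\rho_u$ via the type-one constraint, and finish with item~1 of Lemma~\ref{lem:properties_of_dual_lp}, which is exactly the paper's chain of inequalities.
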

\begin{proof}
    Using Lemma~\ref{lem:properties_of_dual_lp}, together with the fact that the constraint $\rho_u - \eta_{u,v} - \gamma_{u,v} \leq 0$ holds for every $u, v \in X$, we obtain:
    \begin{align*}
        \displaystyle \mathop{\mathbb{P}}\limits_{(u,v,P)\sim\mathcal{D}}\left[P = \emptyset\right] \ 
        &= \ \sum_{\substack{ u' \in X\\ \rho_{u'} > 0}} \sum_{v' \in X} \displaystyle \mathop{\mathbb{P}}\limits_{(u,v,P)\sim\mathcal{D}}\left[P = \emptyset\,\mid\, u = u',\, v = v'\right] \cdot \mathop{\mathbb{P}}\limits_{(u,v,P)\sim\mathcal{D}}\left[v = v'\right] \cdot \mathop{\mathbb{P}}\limits_{(u,v,P)\sim\mathcal{D}}\left[u = u'\right] \\
        % &= \ \sum_{u' \in X} \sum_{v' \in X} \frac{\eta_{u',v'}}{\eta_{u',v'} + \gamma_{u',v'}} \cdot \frac{1}{|X|} \cdot \frac{\rho_{u'}}{\rho} \\
        % &\leq \ \frac{1}{|X|} \sum_{u' \in X} \frac{\rho_{u'}}{\rho} \sum_{v' \in X} \frac{\eta_{u',v'}}{\rho_{u'}} \quad
        &\leq \ \frac{1}{|X|} \sum_{\substack{ u' \in X\\ \rho_{u'} > 0}} \frac{\rho_{u'}}{\rho} \sum_{v' \in X} \frac{\eta_{u',v'}}{\eta_{u',v'} + \gamma_{u',v'}} \quad
        \leq \quad \frac{1}{|X|} \sum_{\substack{ u' \in X\\ \rho_{u'} > 0}} \frac{1}{\rho} \cdot \frac{|X|}{10} \rho_{u'} \quad
        = \quad \frac{1}{10}.
    \end{align*}
\end{proof}

\begin{lemma}\label{lem:probability_bounds_2}
    If $F\in \mathcal{F}$, then $\displaystyle \mathop{\mathbb{P}}\limits_{\substack{(u,v,P)\sim\mathcal{D}}}\left[P \cap F \neq \emptyset\right] \leq \frac{1}{10\,f}$.
\end{lemma}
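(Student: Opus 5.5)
We compute the probability directly by conditioning on the three steps of the random process defining $\mathcal{D}$, as in the proof of Lemma~\ref{lem:probability_bounds_1}. The event $P \cap F \neq \emptyset$ can only occur in the second branch of Step~3, so it requires $\rho_u > 0$ and $\gamma_{u,v} > 0$. Summing over the choices of $u'$ and $v'$, we get
$$
\mathop{\mathbb{P}}_{(u,v,P)\sim\mathcal{D}}\left[P \cap F \neq \emptyset\right] \;=\; \sum_{\substack{u'\in X\\ \rho_{u'}>0}} \frac{\rho_{u'}}{\rho}\cdot \frac{1}{|X|} \sum_{v'\in X} \frac{\gamma_{u',v'}}{\eta_{u',v'}+\gamma_{u',v'}}\cdot \sum_{\substack{P'\in\p_{u',v'}\\ P'\cap F\neq\emptyset}} \frac{\gamma_{u',v',P'}}{\gamma_{u',v'}}.
$$
Terms with $\gamma_{u',v'}=0$ contribute nothing and may be dropped.

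The factors $\gamma_{u',v'}$ cancel. The first constraint of LP~\ref{lp:balanced_path_packing} gives $\eta_{u',v'}+\gamma_{u',v'} \geq \rho_{u'}$, which also cancels the factor $\rho_{u'}$. The expression is therefore at most
$$
\frac{1}{\rho|X|}\sum_{u',v'\in X}\ \sum_{\substack{P'\in\p_{u',v'}\\ P'\cap F\neq\emptyset}} \gamma_{u',v',P'} \;\leq\; \frac{1}{\rho|X|},
$$
where the last inequality is exactly the second (per-$F$) constraint of LP~\ref{lp:balanced_path_packing}.

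Finally, the second item of Lemma~\ref{lem:properties_of_dual_lp} gives $\rho|X| \geq 10f$, so the probability is at most $\frac{1}{10f}$, as claimed. The only point needing care is justifying the division: the sum is restricted to $\rho_{u'}>0$, so $\eta_{u',v'}+\gamma_{u',v'}\geq\rho_{u'}>0$, and we need $f>0$. There is no real obstacle; the argument is a direct bound.
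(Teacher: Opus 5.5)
Your proof is correct and follows the paper's route: bound each triple's probability by $\gamma_{u',v',P'}/(\rho|X|)$ using the first constraint $\eta_{u',v'}+\gamma_{u',v'}\ge\rho_{u'}$, sum over paths meeting $F$ via the per-$F$ constraint, and finish with $\rho|X|\ge 10f$. The only difference is cosmetic: you write the probability as one nested sum over $u'$ and $v'$, whereas the paper first bounds each triple separately and then sums.
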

\begin{proof}
    Let $u',v'\in X$ and $P'\in\p_{u',v'}$. Observe that if $\rho_{u'} = 0$ or $\gamma_{u',v'} = 0$, then it follows from the definition of the process that,
    $$
    \mathop{\mathbb{P}}\limits_{\substack{(u,v,P)\sim\mathcal{D}}}\left[(u,v,P) = (u',v',P')\right] = 0.
    $$
    Hence, let us consider the case where $\rho_{u'} > 0$ and $\gamma_{u',v'} > 0$. Using the fact that the constraint $\rho_u - \eta_{u,v} - \gamma_{u,v} \leq 0$ holds for every $u, v \in X$, we obtain:
    \begin{align*}
        \displaystyle \mathop{\mathbb{P}}\limits_{\substack{(u,v,P)\sim\mathcal{D}}}\left[(u,v,P) = (u',v',P')\right] \ 
        % &= \ \mathbb{P}_{\substack{(u,v,P)\sim\mathcal{D}}}\left[P = P' \mid u = u'\right] \cdot \mathbb{P}_{\substack{(u,v,P)\sim\mathcal{D}}}\left[u = u'\right] \\
        &= \ \displaystyle \mathop{\mathbb{P}}\limits_{\substack{(u,v,P)\sim\mathcal{D}}}\left[P = P' \mid v = v', u = u'\right] \cdot \mathop{\mathbb{P}}\limits_{\substack{(u,v,P)\sim\mathcal{D}}}\left[v = v'\right] \cdot \mathop{\mathbb{P}}\limits_{\substack{(u,v,P)\sim\mathcal{D}}}\left[u = u'\right] \\
        &= \ \frac{\gamma_{u',v',P'}}{\gamma_{u',v'}} \cdot \frac{\gamma_{u',v'}}{\gamma_{u',v'} + \eta_{u',v'}} \cdot \frac{1}{|X|} \cdot \frac{\rho_{u'}}{\rho} \quad 
        % &= \ \frac{\gamma_{u',v',P'}}{\rho |X|} \cdot \frac{\rho_{u'}}{\gamma_{u',v'} + \eta_{u',v'}} \\
        \leq \quad \frac{\gamma_{u',v',P'}}{\rho |X|}.
    \end{align*}
    Now, let us fix $F \in \mathcal{F}$ and consider the probability that the sampled set $P$ has a non-empty intersection with $F$.
    \begin{align*}
        \displaystyle \mathop{\mathbb{P}}\limits_{\substack{(u,v,P)\sim\mathcal{D}}}\left[\,P \cap F \neq \emptyset\,\right] \quad
        &= \quad \sum_{u',v'\in X} \sum_{\substack{P' \in \p_{u',v'} \\ P' \cap F \neq \emptyset}} 
        \displaystyle \mathop{\mathbb{P}}\limits_{\substack{(u,v,P)\sim\mathcal{D}}}\left[\, (u,v,P) = (u',v',P')\, \right] \\
        &\leq \quad \frac{1}{\rho |X|}\sum_{\substack{u',v'\in X\\ P' \in \mathcal{P}_{u',v'} \\ \rho_{u'} > 0\\ \gamma_{u',v'} > 0 \\ P' \cap F \neq \emptyset}} \gamma_{u',v',P'} \quad 
        \leq \quad \frac{1}{\rho |X|} \quad
        \leq \quad \frac{1}{10\,f}.
    \end{align*}
    Here, the last inequality follows from Lemma~\ref{lem:properties_of_dual_lp}, and the second-to-last follows from the fact that the constraint, $\sum_{\substack{u,v \in X\\ P\in \p_{u,v}\\ P\cap F\neq \emptyset}} \gamma_{u,v,P} \leq 1$, holds for every $F \in \mathcal{F}$.
\end{proof}

Finally, we require the following lemma, which relates balanced separators of the set $A$ to $A_1$--$A_2$ separators for a specific partition $A_1$, $A_2$ of $A$.

\begin{lemma}\label{lem:separation_vs_separators}
    Let $G$ be a graph and $A\subseteq V(G)$ be a vertex subset. If $G$ has an $(A,\frac{1}{2})$-balanced separator $S$, then there exists a partition $A_1$, $A_2$ of $A$ such that $\max\{|A_1|, |A_2|\} \leq \frac{2|A|}{3}$ and $S$ is an $A_1$--$A_2$ separator.
\end{lemma}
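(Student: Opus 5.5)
The plan is a greedy bin-packing argument over the connected components of $G - S$. Let $C_1,\dots,C_m$ be the components of $G-S$, and write $a_i := |C_i \cap A|$. Since $S$ is an $(A,\frac12)$-balanced separator, every $a_i \le \frac{|A|}{2}$. The vertices of $A \cap S$ still need to be placed, and they are the free elements that let us fix imbalances.

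The idea is to build $A_1$ and $A_2$ so that $A_1 \cap (V(G)\setminus S)$ and $A_2 \cap (V(G)\setminus S)$ are unions of whole components. Then any $A_1$--$A_2$ path either meets $S$ or stays inside one component of $G-S$. The latter is impossible, because that component's $A$-vertices all lie on a single side. Vertices of $A\cap S$ may be put on either side freely, since any path starting or ending at them meets $S$ trivially. Hence $S$ is an $A_1$--$A_2$ separator however $A\cap S$ is split.

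For the sizes, I would first sort the components so that $a_1 \ge a_2 \ge \dots$. I then add components to $A_1$ one at a time until $|A_1| \ge \frac{|A|}{3}$ or the components run out.
\begin{itemize}
\item If the very first component already reaches the threshold, then $|A_1| = a_1 \le \frac{|A|}{2} \le \frac{2|A|}{3}$.
\item Otherwise, every component added has $a_i < \frac{|A|}{3}$. The step that crosses $\frac{|A|}{3}$ therefore lands below $\frac{|A|}{3} + \frac{|A|}{3} = \frac{2|A|}{3}$.
\end{itemize}
In either case $\frac{|A|}{3} \le |A_1 \cap (V\setminus S)| \le \frac{2|A|}{3}$, and all remaining components go to $A_2$.

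It remains to distribute $A\cap S$ so that both sides stay at most $\frac{2|A|}{3}$. I would add its vertices one by one to whichever side is currently smaller. If a side ever exceeds $\frac{2|A|}{3}$ after such an addition, then just before, it was the smaller side and had size at least $\frac{2|A|}{3} - 1 + 1$ in total-rounding terms. This would force the total to exceed $|A|$, which is a contradiction up to integrality.

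If the components run out before $A_1$ reaches $\frac{|A|}{3}$, the same balancing step applied to $A\cap S$ handles it. All non-$S$ $A$-vertices sit on one side with size below $\frac{|A|}{3}$, and the $S$-vertices fill in greedily.

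The one delicate point is the integrality edge case in the greedy filling when $|A|$ is tiny. Placing each vertex on the smaller side gives sizes differing by at most one at the moment they cross, so each side is at most $\lceil |A|/2 \rceil$ or stays at its prior value. I would check small cases ($|A|\le 2$) separately, allowing an empty part. The rest is routine.
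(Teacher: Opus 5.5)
Your proposal is correct and is essentially the paper's proof: sort the sets $A\cap C_i$ in decreasing order and take a prefix until it first reaches $|A|/3$, which keeps it at most $2|A|/3$, with $S$ separating because each component's $A$-vertices lie on one side. The only difference is that you place the vertices of $A\cap S$ one at a time on the smaller side, whereas the paper splits on whether $|A\cap S|\ge |A|/2$ and otherwise treats $A\cap S$ as one more set in the sorted list.

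One caveat: the case $|A|=1$ cannot be ``checked separately'', because there $A\subseteq S$ and every partition has a part of size $1>\frac{2}{3}$, so the statement itself fails. The paper's proof, which selects a subset of size $|A|/2$, overlooks this degenerate case too.
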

\begin{proof}
    If $S\cap A$ contains at least $\frac{|A|}{2}$ elements, then selecting any subset $A_1 \subseteq S\cap A$ of size $\frac{|A|}{2}$ and setting $A_2 := A \setminus A_1$ suffices to prove the lemma. Otherwise, let $C_1, \dots, C_r$ be connected components of $G-S$ that have a non-empty intersection with $A$. Let $A'_i := A \cap C_i$ for every $i \in [r]$, and let $A'_{r+1} := A \cap S$. By relabeling the sets if necessary, assume that $|A'_i| \geq |A'_{i+1}|$ for all $i \in [r]$. Let $q$ be the smallest integer in $[r+1]$ such that $\sum_{i=1}^{q}|A'_i| \geq \frac{|A|}{3}$. We claim that $\sum_{i=1}^{q}|A'_i| \leq \frac{2|A|}{3}$. If $q = 1$, the claim holds trivially, as $S$ is an $(A,\frac{1}{2})$-balanced separator and consequently $|A'_1|\leq \frac{|A|}{2}$. Otherwise, observe that
    $$
    \sum_{i=1}^{q}|A'_i| \quad
    = \quad |A'_{q}|\ +\ \sum_{i=1}^{q-1}|A'_i| \quad
    \leq \quad |A'_{q-1}|\ +\ \sum_{i=1}^{q-1}|A'_i| \quad
    \leq \quad 2\sum_{i=1}^{q-1}|A'_i| \quad
    \leq \quad \frac{2|A|}{3}.
    $$
    Define $A_1 := \cup_{i=1}^{q} A'_i$ and $A_2 := A\setminus A_1$. Since the sets in $\{A'_i\}_{i\in [r+1]}$ are pairwise disjoint, we have that $|A_1| = \sum_{i=1}^{q}|A'_i|$. Hence, $|A_1| \leq \frac{2|A|}{3}$ and since $|A_1| \geq \frac{|A|}{3}$, we have $|A_2| \leq \frac{2|A|}{3}$. Furthermore, it follows from our construction of $A_1$, $A_2$ that $S$ is an $A_1$--$A_2$ separator, which concludes the proof of the lemma.
\end{proof}

Now we are ready to prove Theorem~\ref{thm:balanced_separator_dual_rounding}.

\BalancedSeparatorDualRounding*
\begin{proof}
    Let $\{(u_i, v_i, P_i)\}_{i\in[\,\ell\,]}$ be $\ell$ independent samples drawn from the distribution $\mathcal{D}$ over $X\times X\times (\p \cup \{\emptyset\})$. Let $H$ be the subgraph of $G$ induced by the union of all sampled sets and $X$, that is, $H = G[X\cup\bigcup_{i \in [\ell]} P_i]$. We claim that $H$ has the desired properties with good probability.

    \begin{claim}\label{claim:no_balanced_separator}
        With probability at least $\frac{3}{4}$, the following holds: for every subfamily $\mathcal{F}' \subseteq \mathcal{F}$ of size at most $f$, and for every partition $X_1, X_2$ of $X$ such that $\max\{|X_1|, |X_2|\} \leq \frac{2|X|}{3}$, there exists an $X_1$--$X_2$ path in $H$ that does not intersect any set in $\mathcal{F}'$. 
    \end{claim}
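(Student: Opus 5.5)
We fix one pair $(\mathcal{F}',\{X_1,X_2\})$, bound the probability that it fails, and take a union bound over all such pairs. There are at most $2^{|X|}$ partitions of $X$. Since $|\f|\le n$, there are at most $\sum_{j\le f}\binom{|\f|}{j}\le n^{f}+1\le 2^{f\log n+1}$ subfamilies of size at most $f$. So the number of pairs is at most $2^{f\log n+|X|+1}$, and it suffices to show that each fixed pair fails with probability at most $2^{-\ell/7}$. Indeed, $\ell\ge 7(f\log n+|X|+2)$ then makes the total failure probability at most $2^{-1}\cdot 2^{-1}\le \tfrac14$.

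Now fix $\mathcal{F}'$ with $|\mathcal{F}'|\le f$ and a partition $X_1,X_2$ with both parts of size at most $\tfrac{2|X|}{3}$. Call a sample $(u,v,P)$ \emph{good} if $P\neq\emptyset$, $P$ meets no set of $\mathcal{F}'$, and $u,v$ lie on different sides of the partition. A good path is an $X_1$--$X_2$ path inside $H$ avoiding $\bigcup\mathcal{F}'$, because $P\subseteq V(H)$ and $P$ runs from $u$ to $v$. So it suffices to show that each sample is good with probability at least a constant $p$; then the probability that none of the $\ell$ independent samples is good is $(1-p)^\ell$.

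We bound the three bad events separately.

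\emph{Path meets $\mathcal{F}'$.} By Lemma~\ref{lem:probability_bounds_2} and a union bound over $\mathcal{F}'$, $\mathbb{P}[P\cap\bigcup\mathcal{F}'\ne\emptyset]\le f\cdot\frac{1}{10f}=\frac{1}{10}$.

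\emph{No path.} By Lemma~\ref{lem:probability_bounds_1}, $\mathbb{P}[P=\emptyset]\le\frac1{10}$.

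\emph{Same side.} The vertex $v$ is uniform in $X$ and independent of $u$. Whichever side $u$ falls in, that side has at most $\frac{2|X|}{3}$ vertices, so $\mathbb{P}[v\text{ is on the same side as }u]\le\frac23$.

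Hence $p\ge 1-\frac23-\frac1{10}-\frac1{10}=\frac{2}{15}$. Then $(1-p)^\ell\le (13/15)^\ell\le 2^{-\ell/5}\le 2^{-\ell/7}$, since $\log_2(15/13)\approx0.206>1/5$.

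The main obstacle is getting the constants to fit the given threshold $\ell\ge 7(\cdot)$. The generous same-side bound of $\frac23$ leaves per-sample success only about $\frac{2}{15}$, so the per-pair failure bound is about $2^{-0.2\ell}$. This must beat the union-bound count $2^{f\log n+|X|+1}$, and the factor $7$ absorbs this with some slack, as checked above. One must also handle non-integral $f$ by counting subfamilies of size at most $\lfloor f\rfloor$, which the same estimate covers.
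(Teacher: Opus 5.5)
Your argument is the paper's own. Fix a subfamily and a partition, show that each independent sample yields an $X_1$--$X_2$ path of $H$ avoiding $\mathcal{F}'$ with probability at least $1-\frac23-\frac1{10}-\frac1{10}$ (by Lemmas~\ref{lem:probability_bounds_1} and~\ref{lem:probability_bounds_2} and the uniform choice of $v$), and then take a union bound over the $2^{|X|}$ partitions and the subfamilies. Keeping $\frac{2}{15}$ where the paper rounds down to $\frac1{10}$ makes no difference.

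There is one arithmetic slip in your final count. With $2^{f\log n+|X|+1}$ pairs and a per-pair bound of $2^{-\ell/7}$, the total is only $2^{-1}$, not $2^{-1}\cdot 2^{-1}$. Either of two fixes works:
\begin{itemize}
\item Use the $2^{-\ell/5}$ bound you actually proved. The total is then at most $2^{-\frac25(f\log n+|X|)-\frac95}\le 2^{-11/5}<\frac14$, because $|X|\ge 1$ (the dual objective vanishes when $X=\emptyset$, so $f>0$ forces $X\neq\emptyset$).
\item Note that badness is monotone in $\mathcal{F}'$. It then suffices to count subfamilies of size exactly $\min\{\lfloor f\rfloor,|\f|\}$, of which there are at most $n^{f}$. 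This removes the $+1$ and gives exactly $\frac14$.
\end{itemize}
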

    \begin{claimproof}
        We define an {\em eligible triple} to be a triple $(\mathcal{F}', X_1, X_2)$ such that $\mathcal{F}' \subseteq \mathcal{F}$ is a subfamily of size at most $f$, and $X_1$, $X_2$ is a partition of $X$ satisfying $\max\{|X_1|, |X_2|\} \leq \frac{2|X|}{3}$. An eligible triple $(\mathcal{F}', X_1, X_2)$ is {\em bad} if every $X_1$--$X_2$ path in $H$ intersects at least one set in $\mathcal{F}'$. Let $\chi$ be the event that the conclusion of the claim is false, namely that there exists an eligible bad triple. To prove the claim it suffices to show $\mathbb{P}[\chi] \leq \frac{1}{4}$.
        
        For every eligible triple  $(\mathcal{F}', X_1, X_2)$, we define $\chi(\mathcal{F}', X_1, X_2)$ to be the event that this triple is bad. We will prove that for every eligible triple $(\mathcal{F}', X_1, X_2)$ we have $\mathbb{P}[\chi(\mathcal{F}', X_1, X_2)] \leq (\frac{9}{10})^\ell$, then $\mathbb{P}[\chi] \leq \frac{1}{4}$ follows by a simple union bound over all eligible triples. More concretely we have the following.
        $$
            \mathbb{P}\left[\chi\right] \quad \leq \quad \left(\frac{9}{10}\right)^{\ell} \cdot 2^{|X|} \cdot \binom{|\mathcal{F}|}{|\mathcal{F}'|} \quad
            \leq \quad 2^{|X|\ +\ |\mathcal{F}'| \log |\mathcal{F}|\ -\ \ell \log\left(\frac{10}{9}\right)} \quad
            \leq \quad \frac{1}{4}.
        $$
                
        To prove that $\mathbb{P}[\chi(\mathcal{F}', X_1, X_2)] \leq (\frac{9}{10})^\ell$ we observe that the event $\chi(\mathcal{F}', X_1, X_2)$ does not occur if there exists an $i \in [\ell]$ such that $P_i$ is an $X_1$--$X_2$ path in $H$ that is disjoint from every set in $\mathcal{F}'$. For every eligible triple $(\mathcal{F}', X_1, X_2)$ and every $i \in [\ell]$ we define the event $\overline{\chi}(\mathcal{F}', X_1, X_2, i)$ that $P_i$ is an $X_1$--$X_2$ path in $H$ that is disjoint from every set in $\mathcal{F}'$. For every eligible triple $(\mathcal{F}', X_1, X_2)$ the events in $\{\overline{\chi}(\mathcal{F}', X_1, X_2, i) ~|~ i \in [\ell]\}$ are independent. Thus, to prove  $\mathbb{P}[\chi(\mathcal{F}', X_1, X_2)] \leq (\frac{9}{10})^\ell$ it suffices to show that $\mathbb{P}[\overline{\chi}(\mathcal{F}', X_1, X_2, i)] \geq \frac{1}{10}$ for every $i \in [\ell]$.
        
        To lower bound $\mathbb{P}[\overline{\chi}(\mathcal{F}', X_1, X_2, i)]$ we observe that $(u_i, v_i, P_i)$ is sampled according to the distribution $\mathcal{D}$. The event $\overline{\chi}(\mathcal{F}', X_1, X_2, i)$ occurs unless $|\{u_i, v_i\} \cap X_1| \neq 1$, or $P_i = \emptyset$, or there exists $F \in \mathcal{F}'$ such that $P_i \cap F \neq \emptyset$. Since $v_i$ is sampled uniformly from $X$ we have that  $\mathbb{P}[v_i \in X_1 | u_i \in X_1] \leq \frac{2}{3}$ and $\mathbb{P}[v_i \in X_2 | u_i \in X_2] \leq \frac{2}{3}$. The law of conditional probability applied to the event $u_i \in X_1$ now yields $\mathbb{P}[|\{u_i, v_i\} \cap X_1| \neq 1] \leq \frac{2}{3}$. By Lemma~\ref{lem:probability_bounds_1} we have that $\mathbb{P}[P_i = \emptyset] \leq \frac{1}{10}$. For every $F \in \mathcal{F}'$, by Lemma~\ref{lem:probability_bounds_2} we have that $\mathbb{P}[P_i \cap F \neq \emptyset] \leq \frac{1}{10\,f}$. A union bound over all $F \in \mathcal{F}'$ yields that the probability that there exists $F \in \mathcal{F}'$ such that $P_i \cap F \neq \emptyset$ is at most $\frac{|\mathcal{F}'|}{10\,f} \leq \frac{1}{10}$. We conclude that $\mathbb{P}[\overline{\chi}(\mathcal{F}', X_1, X_2, i)] \geq 1 - \frac{2}{3} - \frac{1}{10} - \frac{1}{10} \geq \frac{1}{10}$, giving the desired lower bound on $\mathbb{P}[\overline{\chi}(\mathcal{F}', X_1, X_2, i)]$, and completing the proof of the claim. 
    \end{claimproof}

    \begin{claim}\label{claim:max_deg}
        With probability at least $\frac{3}{4}$, the following holds: For every $F\in\f$, we have that $|F\cap V(H)|\leq 1 + \frac{3\, \ell}{5\, f}\cdot (2k-1)$.
    \end{claim}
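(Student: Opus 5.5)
Fix $F\in\f$. Since $X\subseteq\cf$ and each center of $\f$ is unique to its set, the only vertex of $X$ that can lie in $F$ is its own center: a center $w$ has no parent, so if $w\in F_{w'}$ with $w'\neq w$, then $w$ would be a proper descendant of $w'$ and would have a parent. Hence $|F\cap X|\le 1$, which is where the additive $1$ in the bound comes from.

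For the remaining vertices of $F\cap V(H)$, I will use that every sampled path is upward minimal. Each $P_i\neq\emptyset$ lies in the support of the fixed optimal solution, so by Lemma~\ref{lem:upward_minimal_is_support} it is upward minimal. Lemma~\ref{lem:minimal_paths_are_short} then gives $|P_i\cap F|\le 2k-1$. Consequently
\[
|F\cap V(H)| \;\le\; 1 + (2k-1)\cdot \chi_F,
\]
where $\chi_F$ is the number of indices $i\in[\ell]$ with $P_i\cap F\neq\emptyset$.

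It therefore suffices to show that with probability at least $\tfrac34$ we have $\chi_F\le \frac{3\ell}{5f}$ simultaneously for all $F\in\f$. The variable $\chi_F$ is a sum of $\ell$ independent Bernoulli indicators. By Lemma~\ref{lem:probability_bounds_2}, each has success probability at most $\frac{1}{10f}$, so $\mathbb{E}[\chi_F]\le \frac{\ell}{10f}$. Set $R:=\frac{3\ell}{5f}=6\cdot\frac{\ell}{10f}\ge 6\,\mathbb{E}[\chi_F]$. Proposition~\ref{thm:chernoff} then gives $\mathbb{P}[\chi_F\ge R]\le 2^{-R}$. A union bound over the at most $n$ sets of $\f$ (recall $|\f|\le n$) bounds the failure probability by $n\,2^{-3\ell/(5f)}$.

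The main obstacle is checking that this quantity is at most $\tfrac14$, i.e.\ that $\frac{3\ell}{5f}\ge \log n + 2$. This should follow from $\ell\ge 7(f\log n+|X|+2)$, since it gives $\frac{3\ell}{5f}\ge \frac{21}{5}\bigl(\log n+\frac{2}{f}\bigr)$. This is comfortably at least $\log n+2$ when $f\le 1$; for large $f$ the $\frac{2}{f}$ term shrinks, so the estimate has to be combined with the $\log n$ term and the $|X|/f$ term. If needed, I would use that $f\le \frac{|X|}{10}\rho$ is tied to $|X|$ through the LP (e.g.\ $d_{u,v}\le 1$ forces $f\le |\f|$, and $|X|\le|\f|\le n$) so that $\frac{|X|}{f}$ absorbs the constant. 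The edge case where $\frac{3\ell}{5f}$ is not an integer is harmless, since $\chi_F$ is integral and we may apply Chernoff with $R$ itself.

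Note that strict versus non-strict inequality in the Chernoff event only helps: on the complement of the bad event we get $\chi_F< R$, hence the claimed bound.
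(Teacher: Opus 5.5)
Your proof is the paper's argument. You bound $\mathbb{E}[\chi_F]\le \frac{\ell}{10f}$ by Lemma~\ref{lem:probability_bounds_2} and apply Proposition~\ref{thm:chernoff} with $R=6\cdot\frac{\ell}{10f}$ plus a union bound over $\f$. You then turn $\chi_F<\frac{3\ell}{5f}$ into the size bound using upward minimality of the support (Lemma~\ref{lem:minimal_paths_are_short}) together with $|F\cap X|\le 1$.

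The only unfinished step is the one you call the main obstacle, and it is not actually an obstacle. From $\ell\ge 7(f\log n+|X|+2)$ you get $\frac{3\ell}{5f}\ge \frac{21}{5}\log n$. For $n\ge 2$ this already exceeds $\log n+2$, because $\frac{16}{5}\log n\ge 2$, and this holds no matter how large $f$ is. Hence $|\f|\,2^{-3\ell/(5f)}\le n\,2^{-\log n-2}=\frac14$.

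Your proposed fallback would not have worked as written, since $f\le|\f|$ and $|X|\le|\f|$ give no lower bound on $|X|/f$. If you want such a bound (for instance to cover $n=1$ too), take $x_{F_u}=\frac1{10}$ for $u\in X$, $x_F=0$ otherwise, and $d_{u,v}=\frac1{10}$. This is feasible for LP~\ref{lp:container_cover}, because every $u$--$v$ path contains $u\in F_u$. By duality $f\le\frac{|X|}{10}$, and therefore $\frac{3\ell}{5f}\ge\frac{21|X|}{5f}\ge 42$.
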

    \begin{claimproof}
        Let $F \in \mathcal{F}$, and let $\chi_F$ denote the number of sets $P_i$, out of a total of $\ell$ samples, that intersect $F$. We define $\chi_{\mathcal{F}} := \max\{\chi_F\, |\, F\in\mathcal{F}\}$ and show that with probability at least $\frac{3}{4}$, the event $\chi_{\mathcal{F}} < 6\cdot \frac{\ell}{10\,f}$ occurs. 
        
        Let $F\in\mathcal{F}$, since the probability that a fixed set $P_i$ intersects $F$ is at most $\frac{1}{10\, f}$, it follows from linearity of expectations that the expected value of $\chi_F$ is at most $\frac{\ell}{10\, f}$. Applying union bound over all $F\in\mathcal{F}$, the Chernoff bound from Theorem \ref{thm:chernoff} to $\chi_F$ for every $F\in\mathcal{F}$, and using the lower bound on $\ell$, we get: 
        $$
        \mathbb{P}\left[\chi_{\mathcal{F}} < 6 \cdot \frac{\ell}{10\,f}\right] \quad
        \geq \quad 1\ -\ \sum_{F\in\mathcal{F}}\mathbb{P}\left[\chi_F \geq 6 \cdot \frac{\ell}{10\,f}\right] \quad
        \geq \quad 1\ -\ |\mathcal{F}| \cdot 2^{-6 \cdot \frac{\ell}{10\,f}} \quad
        \geq \quad \frac{3}{4}.
        $$
        Suppose that in our sampled subgraph $H$, the event $\chi_{\mathcal{F}} < \frac{3\, \ell}{5\,f}$ occurs. Since $\mathcal{D}$ only assigns non-zero probability to upward minimal paths, every sampled path is upward minimal. Hence, by Lemma~\ref{lem:minimal_paths_are_short}, each path intersects a set in $\f$ at most $2k-1$ times. Furthermore, since every set contains at most one element of $X$ by definition, we get $|F\cap V(H)|\leq 1 + \frac{3\, \ell}{5\, f}\cdot (2k-1)$.
    \end{claimproof}

Since the probability that the sampled subgraph satisfies both the properties stated in Claim~\ref{claim:max_deg} and Claim~\ref{claim:no_balanced_separator} is at least $\frac{1}{2}$, there exists an induced subgraph $H \subseteq G$ that satisfies both. Let $H$ be such an induced subgraph. Combining Lemma~\ref{lem:separation_vs_separators} with Claim~\ref{claim:no_balanced_separator}, we conclude that $H$ has no $(X,\frac{1}{2})$-balanced separator $S$ with $\mathrm{cov}_{\mathcal{F}}(S) \leq f$, thereby completing the proof of Theorem~\ref{thm:balanced_separator_dual_rounding}.
\end{proof}

\section{Some Results for an Auxiliary Class}\label{sec:weird_class}

Let $\mathcal{C}^c$ denote a family of graphs that is closed under the induced subgraph relation, with the additional property that every graph $G$ in $\mathcal{C}^c$ satisfies $\tw(G) \leq (\Delta(G))^c$. Let $\mathcal{C}^c_{d}$ be a subfamily of $\mathcal{C}^c$ which contains all graphs in $\mathcal{C}^c$ of degeneracy at most $d$. We begin with the following definition. Given a graph $G$ and a positive integer $d$, a layered family is \emph{$d$-neighborhood-witnessing} if  for every vertex $v\in V(G)$, every $F\in \f$ that contains $v$ satisfies $|N[v]\setminus F| \leq d$.  
% \todo[inline] {How about this:}
% Seconded
The next lemma can be easily deduced from Theorem~7.1 of Abrishami, Chudnovsky, Hajebi and Spirkl~\cite{tw3}, but we include a proof here for completeness.
%The proof of the next lemma closely resembles that of a theorem of Abrishami, Chudnovsky, Hajebi and Spirkl~\cite{tw3}.
% \todo[inline]{A lot of the next lemma is re-proving a theorem from \cite{tw3}. We should at least cite it.}
% \todo[inline]{(Ajay) Does the added line work? I can also add the theorem number if you can help me find it.}

\begin{lemma}\label{lem:degeneracy_implies_layered_family}
    Let $d$ be a positive integer and $G$ be a graph of degeneracy $d$. Then there exists a $4d$-neighborhood-witnessing layered family $\f$ of $G$ with thickness at most $\log (4n)$.
\end{lemma}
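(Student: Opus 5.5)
We build the ordered partition $\Pi$ by the standard peeling procedure for degenerate graphs, in the spirit of Theorem~7.1 of~\cite{tw3}. Set $G_1 := G$. Given $G_i$ nonempty, let $L_i$ be the set of vertices of degree at most $4d$ in $G_i$, and set $G_{i+1} := G_i - L_i$. Every induced subgraph of $G$ has at most $d|V(G_i)|$ edges, so at most $|V(G_i)|/2$ of its vertices have degree greater than $4d$, since otherwise the degree sum would exceed $2d|V(G_i)|$. Hence $|V(G_{i+1})| \le |V(G_i)|/2$, so the process stops after at most $\lfloor \log n\rfloor + 1 \le \log(4n)$ rounds. Let $\Pi := (L_1,\dots,L_m)$ with $m \le \log(4n)$, and let $\f$ be the layered family corresponding to $(G,\Pi)$. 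The thickness bound is then immediate: $\Pi$ has at most $\log(4n)$ parts in total, so every $F\in\f$ meets at most $\log(4n)$ parts.

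It remains to check that $\f$ is $4d$-neighborhood-witnessing. Let $v \in L_i$ and let $F \in \f$ contain $v$. Since $F$ is downward closed, every child of $v$ lies in $F$. The children of $v$ are exactly its neighbours in $L_1\cup\dots\cup L_{i-1}$, and $v$ itself is in $F$. So $N[v]\setminus F$ consists only of neighbours of $v$ in $L_i \cup L_{i+1}\cup\dots\cup L_m = V(G_i)$. Because $v\in L_i$, it has degree at most $4d$ in $G_i$, which gives $|N[v]\setminus F| \le 4d$.

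I do not expect a real obstacle here. The only points needing care are the halving computation, which should use the strict inequality counting vertices of degree greater than $4d$ so that we get $|V(G_{i+1})|\le |V(G_i)|/2$, and the orientation convention: edges point from higher-index parts to lower-index parts, so the descendants of $v$ are the vertices removed earlier than $v$. With the $4d$ threshold the halving actually holds with room to spare, which is why $\log(4n)$ is a comfortable bound on the number of parts.
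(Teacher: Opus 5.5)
Your proposal is correct and follows the paper's proof essentially step for step. The paper uses the same construction: peel off the vertices of degree at most $4d$ in the remaining graph, and use the average-degree bound to show each round removes at least half the remaining vertices, which gives at most $\log(4n)$ parts and hence the thickness bound. It then makes the same observation: every neighbour of $v$ in an earlier part is a child of $v$, so it lies in every $F \ni v$, and therefore $|N[v]\setminus F|$ is at most the degree of $v$ in the remaining graph, which is at most $4d$.
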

\begin{proof}
    We iteratively construct an ordered partition of $V(G)$ as follows. Starting with $i=1$, define $V_i := V(G) \setminus \bigcup_{j=1}^{i-1} U_j$ and let $U_i$ be the set of vertices in the induced subgraph $G[V_i]$ whose degree in $G[V_i]$ is at most $4d$. Let $k$ be the last index for which $V_k$ is non-empty. Let $\Pi := (U_1, U_2, \dots, U_k)$. Observe that $\Pi$ is an ordered partition of $V(G)$ and let $\f$ be the layered family corresponding to $(G,\Pi)$.

    First, we claim that $k$ is at most $\log (4n)$ which directly implies that $\f$ can have thickness at most $\log (4n)$. For this claim, observe that any graph of degeneracy $d$ has average degree at most $2d$, and consequently, at least half of its vertices have degree at most $4d$. Therefore, for every $i\in [k]$, the set $U_i$ contains at least half of the vertices of $V_i$. Since $|V_1| = n$ and $|V_i| \leq \lfloor |V_{i-1}|/2 \rfloor$ for all $i \geq 2$, it follows that $|V_{\lceil \log n \rceil + 2}| = 0$ and by definition $k$ is at most $\log (4n)$. 
    For the other property, let $v$ be a vertex in $G$, and $F \in \f$ be such that $v\in F$. Now, let $U_i$ be the part in $\Pi$ containing $v$, and observe that by definition, $v$ has at most $4d$ neighbors in $V_i = \bigcup_{j \geq i} U_j$. Moreover, every neighbor of $v$ in $V(G) \setminus V_i = \bigcup_{j < i} U_j$ is a descendant of $v$ in the layer graph of $(G, \Pi)$. Therefore every $F\in\f$ that contains $v$ satisfies $|N[v] \setminus F| \leq 4d$, which completes the proof of the lemma.
\end{proof}

We note the following propositions, which we need for the proof of the main theorems.

\begin{proposition}[\cite{cygan2015parameterized}]\label{prop:tw_and_balsep}
    Let $G$ be a graph of treewidth at most $k$, and let $X\subseteq V(G)$ be a vertex subset. Then there exists an $(X,\frac{1}{2})$-balanced separator $S$ in $G$ of size at most $k + 1$.
\end{proposition}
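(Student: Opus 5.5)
The plan is to prove this standard fact directly from a tree decomposition $(T,\chi)$ of $G$ of width at most $k$, so every bag has at most $k+1$ vertices. The separator $S$ will be a single bag, found by orienting the edges of $T$ toward the heavy side and taking a sink.

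\emph{Setup.} Assume $X\neq\emptyset$; otherwise $S=\emptyset$ works. For each edge $xy$ of $T$, removing $xy$ splits $T$ into two subtrees $T_x\ni x$ and $T_y\ni y$. Let $V_x$ be the set of vertices of $G$ that lie only in bags of $T_x$ and in no bag of $T_y$, and define $V_y$ symmetrically. By the standard separation property of tree decompositions, $\chi(x)\cap\chi(y)$ separates $V_x$ from $V_y$ in $G$.

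\emph{Orientation.} Orient $xy$ from $x$ to $y$ whenever $|V_y\cap X|>\frac{|X|}{2}$. Since $V_x$ and $V_y$ are disjoint, at most one direction qualifies, so each edge receives at most one orientation. Because $T$ is a finite tree with at most one arc per edge, following arcs must terminate. Hence there is a node $t$ with no outgoing arc.

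\emph{The separator.} Set $S:=\chi(t)$, so $|S|\le k+1$. Let $D$ be a connected component of $G-S$. By connectivity of the bag-subtrees and the edge condition of a tree decomposition, all vertices of $D$ appear only in bags of a single component $T'$ of $T-t$. Let $y$ be the neighbor of $t$ in $T'$. Then $D\subseteq V_y$ for the edge $ty$. Since $ty$ is not oriented from $t$ toward $y$, we have $|V_y\cap X|\le\frac{|X|}{2}$, and therefore $|D\cap X|\le\frac{|X|}{2}$.

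\emph{Expected obstacle.} The only step needing care is the claim that $D$ lies within a single component $T'$. To justify it, note that each vertex $v\in D$ has $v\notin\chi(t)$, so its subtree avoids $t$ and lies in one component of $T-t$. Adjacent vertices of $D$ share a bag, so their subtrees intersect and lie in the same component. Connectivity of $D$ then places all of $D$ in one component.
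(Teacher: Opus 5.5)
Your proof is correct and complete. The paper does not prove this proposition itself but only cites it from the textbook of Cygan et al., and your argument is the standard proof of this fact: orient each tree edge toward the side whose exclusively-owned vertices contain more than half of $X$, then take the bag of a node with no outgoing arc (which exists, as you say, or simply because $T$ has fewer arcs than nodes).
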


\begin{proposition}[\cite{chvatal1979greedy}]\label{prop:packing_covering_LP}
    Let $U$ be a set and let $\f$ be a family of subsets of $U$. If $S\subseteq U$ satisfies $\mathrm{fcov}_{\f}(S)\, \leq\, k$, then it holds that $\mathrm{cov}_{\f}(S)\, \leq\, k\cdot \log|U|$.
\end{proposition}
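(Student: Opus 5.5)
The plan is the classical greedy analysis of set cover against its LP relaxation, which is what the citation to Chv\'atal refers to. Fix an optimal fractional cover $\{x_F\}_{F\in\f}$ of $S$ with $\sum_F x_F = \mathrm{fcov}_{\f}(S)\le k$. Run the greedy algorithm: maintain the set $R\subseteq S$ of still uncovered elements, initially $R=S$. At each step pick the $F\in\f$ maximizing $|F\cap R|$, add it to the cover, and delete $F\cap R$ from $R$.

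\textbf{Per-step progress.} The key step is that every step makes proportional progress. The restriction of $\{x_F\}$ to $R$ is still a fractional cover of $R$, so
\[
|R| \;\le\; \sum_{v\in R}\sum_{F\ni v} x_F \;=\; \sum_{F} x_F\,|F\cap R| \;\le\; k\cdot \max_F |F\cap R| .
\]
Hence the greedy choice covers at least $|R|/k$ elements of $R$. After $t$ steps we therefore have $|R_t|\le |S|(1-1/k)^t\le |S|e^{-t/k}$.

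\textbf{Two-phase count.} We stop the geometric phase once $|R|\le k$. This happens after at most $\lceil k\ln(|S|/k)\rceil$ steps. The remaining at most $k$ elements are then covered by at most one set each. This gives $\mathrm{cov}_{\f}(S)\le k\ln(|S|/k)+k+1$.

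Equivalently, one can use the sharper harmonic-number accounting. Charge each newly covered element $v$ the price $1/|F\cap R|$ of the step that covers it. This price is at most $k/|R|$ at that moment, and summing gives $\mathrm{cov}_{\f}(S)\le k\,H(|S|)\le k(1+\ln|S|)$.

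\textbf{Converting to the stated bound.} It remains to compare this with $k\log|U|$, using $|S|\le|U|$ and base-$2$ logarithms. Since $\log_2 m-\ln m=(1/\ln 2-1)\ln m\ge 1$ once $m\ge 10$, we get $k(1+\ln|S|)\le k\log|U|$ whenever $|U|\ge 10$.

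For small universes we argue directly. Trivially $\mathrm{cov}_{\f}(S)\le|S|\le|U|$, and $\mathrm{cov}_{\f}(S)\le \lceil k\rceil$ for small $|U|$ follows by direct checking. Note also that $\mathrm{fcov}_{\f}(S)\ge 1$ whenever $S\ne\emptyset$.

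The degenerate case $|U|=1$ makes $\log|U|=0$. There the inequality must be read with the convention that $U$ is nontrivial, or as $k\cdot\max\{1,\log|U|\}$. In every application in the paper $|U|=n$ is large, so only the regime $|U|\ge 10$ matters.

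\textbf{Main obstacle.} The only step needing care is this constant bookkeeping between $\ln$ and $\log_2$ for small $|U|$. The combinatorial core is the one-line averaging inequality above and is routine.
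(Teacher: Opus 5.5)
Your core argument is Chv\'atal's greedy-versus-LP analysis, which is what the paper invokes by citation; the paper gives no proof of its own. The averaging inequality and the harmonic charging correctly give $\mathrm{cov}_{\mathcal{F}}(S)\le k\,H(|S|)$. Your conversion via $1+\ln m\le\log m$ for $m\ge 10$ is right. You are also right that the statement as literally written fails when $|U|=1$.

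\textbf{The gap: your patch for $2\le|U|\le 9$.} You flagged this as the one delicate point, and the justification you give is wrong.

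\emph{The claim $\mathrm{cov}_{\mathcal{F}}(S)\le\lceil k\rceil$ is false in this range.} Take $U=\mathbb{F}_2^3\setminus\{0\}$ and $F_y=\{x\in U : x\cdot y=1\}$ for $y\in U$.
Each $F_y$ has four elements and each $x$ lies in four of the sets. So weight $1/4$ on every set gives $\mathrm{fcov}_{\mathcal{F}}(U)=7/4$.
On the other hand, $F_{y_1},\dots,F_{y_c}$ cover $U$ only if $y_1,\dots,y_c$ span $\mathbb{F}_2^3$. Hence $\mathrm{cov}_{\mathcal{F}}(U)=3>\lceil 7/4\rceil$.

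\emph{Where the claim does hold, it is not enough.} Combining $\mathrm{cov}\le\lceil k\rceil$ with $\mathrm{cov}\le|U|$ does not give $k\log|U|$ for non-integral $k$. For example, $|U|=2$ and $k=3/2$ gives $2>3/2$.

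\textbf{The repair is short.}
First, $H(m)\le\log m$ already holds for every $m\ge 5$; check $m=5,\dots,9$ numerically, e.g.\ $H(5)\approx 2.283<\log 5\approx 2.322$. So $\mathrm{cov}_{\mathcal{F}}(S)\le kH(|S|)\le kH(|U|)\le k\log|U|$ whenever $|U|\ge 5$.

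Second, suppose $|U|\in\{2,3,4\}$ and $\mathrm{cov}_{\mathcal{F}}(S)=c$.
Then no set meets $S$ in more than $|S|-c+1$ elements, since such a set plus one set per remaining element would be a smaller cover.
Hence $k\ge\mathrm{fcov}_{\mathcal{F}}(S)\ge|S|/(|S|-c+1)$.
For $2\le|S|\le 4$ one checks directly that $c(|S|-c+1)\le|S|\log|S|$, which gives $c\le k\log|S|\le k\log|U|$.
The case $|S|=1$ follows from $k\ge1$ and $\log|U|\ge1$.
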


With this, we are prepared to state and prove a key lemma which we require for the proof of Theorem~\ref{thm:F_treewidth}.

\begin{lemma}\label{lem:F_balanced_separator}
    Let $c,d$ be positive integers and $G$ be a graph in $\mathcal{C}^c_d$. Let $k$ be a positive integer and $\f$ be a $4d$-neighborhood-witnessing layered family of $G$ with thickness at most $k$. Let $\cf$ be the set of centers of elements in $\f$, and $X\subseteq\cf$ be of size at most 
    $$
    110000k\cdot \log^2(2n)\cdot f\log (4f);\quad 
    \text{ where }
    f = (1000000d\cdot k^2\cdot \log^3 (2n))^{c}.
    $$  
    Then there exists a set $S\subseteq\cf$ such that $\widecheck{S} = \bigcup_{w \in S} F_w$ is an $(X,\frac{95}{100})$-balanced separator in $G$ and $|S|\, \leq\, 5000k\cdot \log^2(2n)\cdot f\log (4f)$.
\end{lemma}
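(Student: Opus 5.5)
The plan is a dichotomy on the optimum value $f^*$ of the balanced separator LP~\ref{lp:container_cover} for the instance $(G,X,\f)$, with $f$ as in the statement. By strong duality, $f^*$ is also the optimum of LP~\ref{lp:balanced_path_packing}. If $X=\emptyset$ the claim is trivial with $S=\emptyset$. Otherwise the first constraint forces $f^*>0$, so Theorems~\ref{thm:balanced_separator_rounding} and~\ref{thm:balanced_separator_dual_rounding} both apply. I will show that $f^*>f$ contradicts $G\in\mathcal{C}^c_d$, and that $f^*\le f$ gives the required $S$.

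\emph{Case $f^*\le f$.} Theorem~\ref{thm:balanced_separator_rounding} gives a downward closed $(X,\frac{95}{100})$-balanced separator $S'$ with $\mathrm{fcov}_{\f}(S')\le 5000kf^*\log(2n)\log(f^*+4)$. Proposition~\ref{prop:packing_covering_LP} with $U=V(G)$ turns this into an integral cover $\f_S\subseteq\f$ of $S'$. Its size is at most $5000k f\log(2n)\log(f+4)\log n\le 5000k\log^2(2n)\, f\log(4f)$, using $f\ge 2$ so that $f+4\le 4f$. Let $S$ be the set of centers of the sets in $\f_S$. Then $\widecheck{S}=\bigcup_{w\in S}F_w\supseteq S'$. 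Every component of $G-\widecheck{S}$ lies inside a component of $G-S'$, so $\widecheck{S}$ is still an $(X,\frac{95}{100})$-balanced separator.

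\emph{Case $f^*>f$.} Apply Theorem~\ref{thm:balanced_separator_dual_rounding} with $\ell:=\lceil 7(f^*\log n+|X|+2)\rceil$. This yields an induced subgraph $H\ni X$ with two properties: every $(X,\frac12)$-balanced separator $S_H$ of $H$ has $\mathrm{cov}_{\f}(S_H)\ge f^*$, and $|F\cap V(H)|\le 1+\frac{3\ell}{5f^*}(2k-1)$ for all $F\in\f$.

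Next I bound $\Delta(H)$. Take $v\in V(H)$ and any $F\ni v$. By the $4d$-neighborhood-witnessing property, $N_H[v]\subseteq (F\cap V(H))\cup(N[v]\setminus F)$, so
\[
\Delta(H)\le 4d+1+\tfrac{6k}{5}\cdot\tfrac{\ell}{f^*}.
\]
Since $f^*>f$ and $|X|\le 110000k\log^2(2n) f\log(4f)$, we get $\frac{\ell}{f^*}\le 7\log n+7\cdot 110000k\log^2(2n)\log(4f)+O(1)$. Hence $\Delta(H)=O(d+k^2\log^2(2n)\log(4f))$.

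Since $\mathcal{C}^c$ is closed under induced subgraphs, $H\in\mathcal{C}^c$, so $\mathrm{tw}(H)\le\Delta(H)^c$. Proposition~\ref{prop:tw_and_balsep} then gives an $(X,\frac12)$-balanced separator of $H$ with at most $\Delta(H)^c+1$ vertices. Covering each vertex by one set of $\f$ gives cover number at most $\Delta(H)^c+1$. This contradicts $\mathrm{cov}_{\f}\ge f^*>f=(10^6dk^2\log^3(2n))^c$ as soon as $\Delta(H)^c+1\le f$.

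The main obstacle is exactly this last inequality. The degree bound contains a $\log(4f)$ term, and $\log(4f)\approx c\log(10^6dk^2\log^3(2n))$ itself grows with $c$. So I must check that $\frac{6k}{5}\cdot 7\cdot 110000k\log^2(2n)\log(4f)+4d+1$ stays below $(10^6dk^2\log^3(2n))-1$. The idea is that one extra factor of $\log(2n)$ in $f$ (the $\log^3$) must absorb $\log(4f)$.

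If this comparison is too tight, I would first sharpen the degree bound. One option is to use $f^*$ rather than $f$ in the $|X|/f^*$ term. Another is to use that $f^*$ could be much larger than $f$, which only helps, since the bound depends on $\ell/f^*$ while the contradiction needs $f^*$ large. Failing that, I would redo the bookkeeping with the precise constants.
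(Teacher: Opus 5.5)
Your structure matches the paper's. When the LP optimum is small you round it with Theorem~\ref{thm:balanced_separator_rounding} and Proposition~\ref{prop:packing_covering_LP}. When it is large you take the subgraph $H$ from Theorem~\ref{thm:balanced_separator_dual_rounding}, bound $\Delta(H)$ via the $4d$-neighborhood-witnessing property, and contradict $\mathrm{tw}(H)\le\Delta(H)^c$. The case $f^*\le f$ is fine, and your check that $\widecheck{S}\supseteq S'$ is still $(X,\tfrac{95}{100})$-balanced is welcome. But the step you flag as the main obstacle is a genuine gap, and neither constant bookkeeping nor exploiting $f^*\gg f$ closes it; the worst case is $f^*$ barely above $f$. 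The required inequality amounts to roughly $924000\,\log(4f)\lesssim 10^6 d\log(2n)$. Here $\log(4f)=2+c\log(10^6dk^2\log^3(2n))$, which is not $O(\log(2n))$ when $c$ or $\log k$ is large relative to $d\log n$. For instance, with $d=k=1$ and $n=2$, the dominant term of your degree bound is about $3.7\cdot 10^6\log(4f)>9\cdot 10^7$, while $10^6dk^2\log^3(2n)=8\cdot 10^6$. The extra factor of $\log(2n)$ in $f$ can absorb $\log(4f)$ only once you know $f$ is at most polynomial in $n$, and nothing in your argument establishes that.

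The missing idea is to compare $f$ with $n$. The paper disposes of the case $f\ge n$ at the outset by taking $S:=\cf$; then $|S|\le n\le f$ and $\widecheck{S}=V(G)$. In your framework this comes essentially for free. Setting $x_F\equiv 1$ and $d_{u,v}\equiv 1$ is feasible for LP~\ref{lp:container_cover}, so $f^*\le|\f|\le n$, and therefore $f^*>f$ forces $f<n$. Since also $f\ge 10^6$, you get $n>10^6$, so $\log(2n)>20$ and $\log(4f)<\log(4n)=\log(2n)+1\le 1.05\log(2n)$. The dominant term of your bound then becomes $\tfrac65\cdot 7\cdot 110000\cdot 1.05\,k^2\log^3(2n)\approx 0.97\cdot 10^6k^2\log^3(2n)$, and the remaining terms are negligible. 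Hence $\Delta(H)\le 0.98\cdot 10^6dk^2\log^3(2n)$, so $\Delta(H)^c+1\le 0.98f+1\le f<f^*$, which gives the contradiction.
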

\begin{proof}
    Note that if $f\geq n$, then the lemma holds trivially by defining $S := \cf$, so let us assume that this is not the case. Consider the optimum value $f'$ of LP~\ref{lp:container_cover} corresponding to $(G,X,\f)$. 
    If $f'\geq f$, then let $\ell := 7\, (\,f'\cdot\log n\, +\, 110000k\cdot \log^2(2n)\cdot f\log (4f)\, +\, 2\,)$ and apply Theorem~\ref{thm:balanced_separator_dual_rounding} on $(G,\f,X,\ell)$ to obtain an induced subgraph $H \subseteq G$. Let $v$ be a vertex of $H$ and $F$ be an element of $\f$ that contains $v$. 
    By the properties of $\f$, we have $|N[v] \setminus F| \leq 4d$. Moreover, since $|F \cap V(H)|$ is upper bounded by the properties of $H$ asserted by Theorem~\ref{thm:balanced_separator_dual_rounding}, it follows that
    \begin{align*}
    |N[v] \cap V(H)| \quad 
    &\leq \quad |(N[v] \cap V(H))\cap F| +\ |(N[v] \cap V(H))\setminus F|\\
    &\leq \quad 1\ +\ \frac{3\, \ell}{5\, f'}\cdot (2k - 1)\ +\ 4d\\
    &\leq \quad 1\ +\ \frac{21}{5}\cdot 2k\cdot (\,\log n\, +\, 110000k\cdot \log^2(2n)\cdot \log (4f)\, +\, 2)\ +\ 4d\\
    &< \quad (\, 1\, +\, \frac{42}{5}\, +\, \frac{42}{5}\cdot 110000\, +\, \frac{84}{5}\, +\, 4\, ) \cdot d\cdot k^2\cdot \log^2(2n)\cdot \log (4f)\\
    &< \quad 1000000d \cdot k^2 \cdot \log^3 (2n).
    \end{align*}
    Therefore, maximum degree of $H$ is less than $1000000d \cdot k^2 \cdot \log^3 (2n)$. 
    Meanwhile, every $(X,\frac{1}{2})$-balanced separator $S$ in $H$ satisfies $\mathrm{cov}_{\f}(S) \geq f$, which implies that $|S| \geq f$. Applying the contrapositive of Proposition~\ref{prop:tw_and_balsep} to $H$, we conclude that $\tw(H) \geq (1000000d\cdot k^2\cdot \log^3 (2n))^{c}$. 
    But, these observations contradict the fact that since $H$ is an induced subgraph of $G$, it must also belong to the family $\mathcal{C}^c_d$. 
    
    Hence $f'$ is at most $f$, and consequently by Theorem~\ref{thm:balanced_separator_rounding}, there exists an $(X,\frac{95}{100})$-balanced separator $S'$ in $G$ satisfying $\mathrm{fcov}_{\f}(S')\, \leq\, 5000k\cdot \log(2n)\cdot f\log(f+4)\, \leq\, 5000k\cdot \log(2n)\cdot f\log (4f)$. Proposition~\ref{prop:packing_covering_LP} implies that $\mathrm{cov}_{\f}(S')\, \leq\, 5000k\cdot \log^2(2n)\cdot f\log (4f)$. 
    Thus, choosing any $S \subseteq \cf$ with $|S| \leq 5000k \cdot \log^2(2n) \cdot f \log (4f)$ such that $\f_S$ covers $S'$ completes the proof of the lemma.
\end{proof}

\begin{theorem}\label{thm:F_treewidth}
    Let $c,d$ be positive integers and $G$ be a graph in $\mathcal{C}^c_d$. Let $k$ be a positive integer and $\f$ be a $4d$-neighborhood-witnessing layered family of $G$ with thickness at most $k$. Let $\cf$ be the set of centers of elements in $\f$, and $X$ be a subset of $\cf$. Then $G$ has a rooted tree decomposition $(T,\chi)$, with root $r\in V(T)$, such that $\chi(r) \supseteq \bigcup_{w \in X} F_w$ and for every $x\in V(T)$, there exists a subset $X'\subseteq \cf$ such that,
    $$
    |X'| \quad
    \leq \quad \max\{\,|X|,\, 110000k\cdot \log^2 (2|V(G)|)\cdot f\log (4f)\,\}\, +\, 5000k\cdot \log^2 (2|V(G)|)\cdot f\log (4f),
    $$
    where $f = (1000000d\cdot k^2\cdot \log^3 (4|V(G)|))^{c}$; and the bag satisfies $\chi(x) = \bigcup_{w \in X'} F_w$.
\end{theorem}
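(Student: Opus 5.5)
We proceed by induction on $|V(G)|$ (and, for fixed $G$, on $|V(G)\setminus \widecheck{X}|$), following the standard recursive construction of tree decompositions from balanced separators. Write $N:=110000k\cdot \log^2(2|V(G)|)\cdot f\log(4f)$ and $M:=5000k\cdot\log^2(2|V(G)|)\cdot f\log(4f)$. The base case is $\widecheck{X}=V(G)$, where a single bag $\chi(r)=V(G)=\bigcup_{w\in X}F_w$ with $X'=X$ suffices. In general, if $|X|<N$, we first enlarge $X$ by adding arbitrary centers from $\cf\setminus X$ until $|X|=N$ or $X=\cf$; this keeps $|X|\le\max\{|X|,N\}$, and the root bag still contains $\bigcup_{w\in X}F_w$ for the original $X$. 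In the case $X=\cf$ we are in the base case again.

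\textbf{Main step.} Assume $|X|\ge N$. Apply Lemma~\ref{lem:F_balanced_separator} to a subset $X_0\subseteq X$ of size exactly $N$; note its hypothesis allows $|X_0|\le N$, and the value of $f$ here, with $\log^3(4|V(G)|)$, dominates the one in the lemma. This gives $S\subseteq \cf$ with $|S|\le M$ such that $\widecheck{S}$ is an $(X_0,\frac{95}{100})$-balanced separator. Set the root bag $\chi(r):=\bigcup_{w\in X_0\cup S\cup X}F_w$? This is too large in general, so instead we follow the classical scheme. The root bag is $\widecheck{X\cup S}$, with witness $X'=X\cup S$. This bag has $|X'|\le |X|+M$, provided that $|X|$ is kept bounded by $\max\{|X|,N\}$ along the recursion. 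For each component $C$ of $G-\widecheck{X\cup S}$, Lemma~\ref{lem:component_is_downward_closure} (with $Z=\cf\cap C$) shows that $C\cup\widecheck{X\cup S}=\widecheck{(X\cup S)\cup Z}$. Moreover, the neighbourhood of $C$ lies in $\widecheck{X\cup S}$. By Lemma~\ref{lem:layered_family_subfamily}, $\f_{(X\cup S)\cup Z}$ is a layered family of $G_C:=G[\widecheck{(X\cup S)\cup Z}]$ of thickness at most $k$. It remains $4d$-neighborhood-witnessing, since $N[v]\cap V(G_C)\setminus F\subseteq N[v]\setminus F$, and $G_C\in\mathcal{C}^c_d$ because the class is hereditary.

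We recurse on $G_C$ with root set $X_C:=$ the centers in $X\cup S$ whose sets meet $N(C)$, or more simply $X_C:=(X\cup S)$ restricted appropriately. The key point is to choose $X_C\subseteq X\cup S$ so that $\bigcup_{w\in X_C}F_w\supseteq N(C)$. Balancedness then controls $|X_C\cap X_0|\le \frac{95}{100}|X_0|$, and one bounds $|X_C|\le \frac{95}{100}|X|+M+(|X|-|X_0|)$. Gluing the child decompositions' roots to $r$ gives a valid tree decomposition: every vertex of $G$ lies in some $G_C$ or in the root, and vertices shared with the root lie in the child roots since $\chi(\text{child root})\supseteq \bigcup_{X_C}F_w\supseteq N(C)$ inside $G_C$. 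Strict progress, needed for the induction, holds because $G_C$ omits $X_0$-vertices outside $C$ or has fewer non-covered vertices. Since $|V(G_C)|\le|V(G)|$, $f$ and $N$ do not increase.

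\textbf{Main obstacle.} The delicate point is the size invariant. The standard argument uses $|X|\le N$ exactly, with $X_0=X$, so that $|X_C|\le\frac{95}{100}N+M\le N$, which holds because $M\le N/20$ ($5000\cdot 20<110000$). I will therefore maintain the invariant $|X|\le N$ after the initial step, or handle an initially large $X$ by one extra level: a first root with bag $\widecheck{X}$ whose single child uses a trimmed $X$. The subtlety is ensuring that the $X$-part carried into each child really stays at most $\frac{95}{100}|X|$. Here centers of $X$ whose sets lie entirely in another component must be dropped, and one must check that this does not break the containment $N(C)\subseteq\bigcup_{X_C}F_w$. This is true, since only sets meeting $\widecheck{X\cup S}$ adjacent to $C$ are needed, and the $X$-centers among those are counted against $C$'s share of $X$ via the balanced-separator property.
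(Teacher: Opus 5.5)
\textbf{There is a genuine gap in how you form the subproblems.} You split along the components $C$ of $G-\widecheck{X\cup S}$, but you recurse on $G_C:=G[C\cup\widecheck{X\cup S}]$. That graph contains all of $\widecheck{X\cup S}$, yet you give it only a trimmed root set $X_C\subsetneq X\cup S$. This breaks both the decomposition and the induction.

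\emph{Running intersection fails.} Take any $v\in\widecheck{X\cup S}\setminus\widecheck{X_C}$. It lies in the root bag and in $V(G_C)$, so it lies in some bag of the child decomposition $T_C$. The induction hypothesis only guarantees that the root of $T_C$ contains $\widecheck{X_C}$, not $v$. Your argument that shared vertices lie in $N(C)$ is valid only when $\chi(r)\cap V(G_C)\subseteq\widecheck{X_C}$.

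\emph{Progress fails.} If $G-\widecheck{X\cup S}$ is connected, then $G_C=G$. Since $X_C$ must drop centers of $X_0$, the quantity $|V(G)\setminus\widecheck{X_C}|$ can exceed $|V(G)\setminus\widecheck X|$; it strictly does when $S\subseteq X$, because $\widecheck{X_C}\cap\cf=X_C$. Your claim that $G_C$ omits $X_0$-vertices outside $C$ is false, because $X_0\subseteq\widecheck X\subseteq V(G_C)$. The same running-intersection failure rules out your fallback of an extra root level whose single child decomposes all of $G$ with a trimmed $X$.

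\textbf{The fix is to split along the components of $G-\widecheck{S}$}, as the paper does.
\begin{enumerate}
\item Recurse on $G_C:=G[C\cup\widecheck S]$ with family $\f_{S\cup(\cf\cap C)}$, which is legitimate by Lemmas~\ref{lem:component_is_downward_closure} and~\ref{lem:layered_family_subfamily}, and with root set $X_C:=S\cup(X\cap C)$.
\item If $v\in C$ is a descendant of $w\in X$, the directed $w$--$v$ path avoids the downward-closed set $\widecheck S$, so $w\in C$. Hence $\chi(r)\cap V(G_C)=\widecheck{X_C}$, which gives running intersection.
\item Apply balancedness to $Y\subseteq X$ with $|Y|=\lfloor N\rfloor$. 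This gives $|X_C|\le |S|+|X|-\tfrac{5}{100}|Y|\le |X|$, since $\tfrac{5}{100}\cdot 110000>5000$. The correct bound uses $\tfrac{95}{100}|X_0|$, not $\tfrac{95}{100}|X|$ as you wrote. With it, a large initial $X$ needs no special treatment.
\item Centers have no parents, so $\widecheck S\cap\cf=S$. Hence at least $\tfrac{5}{100}|Y|-|S|>0$ vertices of $Y$ lie outside $C\cup\widecheck S$. This gives $|V(G_C)|<|V(G)|$, so plain induction on $|V(G)|$ suffices.
\end{enumerate}

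Alternatively, you can keep your components but recurse on $G[C\cup\widecheck{X_C}]$ with $X_C:=S\cup(X\cap C^*)$, where $C^*$ is the component of $G-\widecheck S$ containing $C$. Your choice of ``centers of $X\cup S$ whose sets meet $N(C)$'' is too large. An $X$-center in another component of $G-\widecheck S$ can have its set meet $N(C)$ inside $\widecheck S$, and balancedness does not count it.
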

\begin{proof}
    We prove the theorem using induction on the size of $V(G)$. For the base case, when $G$ has a single vertex, the theorem holds trivially. 
    Assume inductively that the statement holds for all graphs with strictly fewer than $n$ vertices, and consider a graph $G$ on exactly $n$ vertices. Note that if
    $$
    |\cf| \quad
    \leq \quad \max\{\,|X|,\, 110000k \cdot \log^2(2n) \cdot f \log (4f)\,\}
    + 5000k \cdot \log^2(2n) \cdot f \log (4f),
    $$
    then the theorem holds trivially, since a rooted tree decomposition with one node containing all of $V(G)$ satisfies the requirements. Hence, assume that this is not the case. Without loss of generality, also assume that $|X| \geq \lfloor 110000k \cdot \log^2(2n) \cdot f \log (4f) \rfloor$. Indeed, if this were not the case, we may add arbitrary elements of $\cf$ to $X$ until equality holds, since proving the theorem for the resulting larger set immediately implies the statement for the original set.
    Now, if $|X| > 110000k \cdot \log^2(2n) \cdot f \log (4f)$, define $Y \subseteq X$ to be any subset of size $\lfloor 110000k \cdot \log^2(2n) \cdot f \log (4f) \rfloor$; otherwise, let $Y := X$. Let $S \subseteq \cf$ be the set obtained by applying Lemma~\ref{lem:F_balanced_separator} to the instance $(G,\f,Y)$. Recall that $\widecheck{S} = \bigcup_{w \in S} F_w$, and let $C$ be a connected component of $G - \widecheck{S}$. Define $G_C := G[C \cup \widecheck{S}]$, $\f_C := \{\, F_w \in \f \mid w \in S \cup (\cf \cap C) \,\}$, and $X_C := S \cup (X \cap C)$. We wish to apply the inductive assumption to the instance $(G_C, \f_C, X_C)$. 
    
    To justify this step, we need to establish two facts. 
    First, let $C$ be a connected component of $G - \widecheck{S}$. Since $\widecheck{S}$ is a $(Y,\frac{95}{100})$-balanced separator, $C$ contains at most $\frac{95}{100}\,|Y|$ vertices of $Y$. Since $|Y| = \lfloor 110000k \cdot \log^2(2n) \cdot f \log (4f) \rfloor$, at least $5500k \cdot \log^2(2n) \cdot f \log (4f)$ vertices of $Y$ (and consequently of $X$ since $Y \subseteq X$) must lie outside $C$. Finally, as $|S| \leq 5000k \cdot \log^2(2n) \cdot f \log (4f)$, at least $500k \cdot \log^2(2n) \cdot f \log (4f)$ vertices of $X$ must lie outside $G_C$. Thus $G_C$ has strictly fewer vertices than $G$, for every connected component $C$ of $G - \widecheck{S}$. 
    Second, by Lemma~\ref{lem:component_is_downward_closure}, $C \cup \widecheck{S}$ is the downward closure of ${S \cup (\cf \cap C)}$. Hence by Lemma~\ref{lem:layered_family_subfamily}, $\f_C$ is a layered family of $G_C$ with thickness at most $k$.

    Therefore it is safe to apply the theorem inductively to the instance $(G_C, \f'_C, X_C)$. Let $(T_C,\chi_C)$ denote the resulting rooted tree decomposition of $G_C$. We now define a rooted tree decomposition $(T,\chi)$ of $G$ as follows. For the tree $T$ we introduce a new root node $r$, and for each connected component $C$ of $G - \widecheck{S}$, connect $r$ to the root of $T_C$. We define a bag function $\chi$ on $V(T)$ by setting $\chi(r) := \bigcup_{w \in X \cup S} F_w,$ and for every other node $x \in V(T)$, letting $\chi(x) := \chi_C(x)$, where $C$ is the connected component such that $x\in V(T_C)$. 
    A straightforward verification of the definitions shows that the resulting pair $(T,\chi)$ is indeed a rooted tree decomposition of $G$. Furthermore, the construction satisfies $\chi(r) \supseteq \bigcup_{w \in X} F_w$. Now, let $x \in V(T)$. If $x = r$, then $\chi(x) = \bigcup_{w \in X \cup S} F_w$, and 
    $$
    |X \cup S|\quad \leq\quad |X|\, +\, 5000k \cdot \log^2(2|V(G)|) \cdot f \log (4f).
    $$
    Otherwise, $x \in V(T_C)$ for some component $C$ of $G - \widecheck{S}$. Let $f_C := (1000000d\cdot k^2 \log^3 (2|C \cup \widecheck{S}|))^{c}$ and observe that by induction, there exists a set $X'_C \subseteq \mathcal{C}_{\f_C}$ such that $\chi(x) = \bigcup_{w \in X'_C} F_w$ and
    \begin{align*}
        |X'_C| \quad 
        &\leq \quad \max\Bigl\{ |X_C|,\ 110000k \cdot \log^2 \bigl(2|C \cup \widecheck{S}|\bigr)\cdot f_C \log (4f_C) \Bigr\} \\
        &\qquad +\, 5000k \cdot \log^2 \bigl(2|C \cup \widecheck{S}|\bigr)\cdot f_C \log (4f_C) \\
        &\leq \quad \max\Bigl\{ \bigl(|X| - \frac{5}{100}|Y| + 5000k \cdot \log^2(2n)\cdot f \log (4f)\bigr),\ 110000k \cdot \log^2(2n)\cdot f \log (4f)\Bigr\} \\
        &\qquad +\, 5000k \cdot \log^2(2n)\cdot f \log (4f) \\
        &\leq \quad \max\Bigl\{ |X|,\ 110000k \cdot \log^2(2n)\cdot f \log (4f) \Bigr\} \\
        &\qquad +\, 5000k \cdot \log^2(2n)\cdot f \log (4f) .
    \end{align*}
    Since $\mathcal{C}_{\f_C} \subseteq \cf$, the same set $X'_C$ witnesses the required property for the node $x$ as an element of $V(T)$. Consequently, $(T,\chi)$ is a rooted tree decomposition satisfying all the desired properties, which completes the proof of the theorem.
\end{proof}

Recall that for positive integers $k$ and $r$, a subset $S \subseteq V(G)$ is said to be $(k,r)$--coverable if there exists a set $C \subseteq V(G)$ with $|C| \le k$ such that every vertex of $S$ has a path on at most $r$ vertices to some vertex of $C$. Let $c,d$ be positive integers, let $G \in \mathcal{C}^c_d$ be an $n$-vertex graph, and let $\f$ be a $4d$-neighborhood-witnessing layered family in $G$ with thickness at most $\log (4n)$. We apply the above theorem with $X = \emptyset$. Observe that if $F \in \f$ has center $w$, then $F$ is contained in the ball of radius $\log (4n)$ centered at $w$. Combining this observation with the theorem immediately yields the following.

\begin{corollary}\label{thm:coarse_treewidth_aux_class}
    Let $c,d$ be positive integers and let $G$ be a graph in $\mathcal{C}^c_d$. Then $G$ has a tree decomposition $(T,\chi)$ such that for every $x\in V(T)$, the bag $\chi(x)$ is $(k, r)$--coverable in $G$, where $k$ and $r$ satisfy:
    \begin{align*}
        r\quad &\leq\quad \log (4n).\\
        k\quad &\leq\quad 115000 \cdot \log^3 (4n) \cdot f \log (4f);\quad \text{ where } f := (1000000d\cdot \log^5 (4n))^{c}.
    \end{align*}    
\end{corollary}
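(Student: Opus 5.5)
We apply Lemma~\ref{lem:degeneracy_implies_layered_family} and then Theorem~\ref{thm:F_treewidth} with $X=\emptyset$, and translate the resulting bags into coverability statements. Let $G\in\mathcal{C}^c_d$ have $n$ vertices and degeneracy $d'\le d$. Lemma~\ref{lem:degeneracy_implies_layered_family} gives a $4d'$-neighborhood-witnessing layered family $\f$ of $G$ with thickness at most $\log(4n)$. Since $|N[v]\setminus F|\le 4d'\le 4d$, the family $\f$ is also $4d$-neighborhood-witnessing. (If $d$ is meant as an upper bound on degeneracy rather than its exact value, apply the lemma with the actual degeneracy; the conclusion transfers monotonically.) Set $k:=\lceil\log(4n)\rceil$, or more carefully use the integer bound $\lceil\log n\rceil+1\le\log(4n)$ from the proof of the lemma.

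Next, apply Theorem~\ref{thm:F_treewidth} to $(G,\f,\emptyset)$. This yields a tree decomposition $(T,\chi)$ in which every bag has the form $\chi(x)=\bigcup_{w\in X'}F_w$, with
\[
|X'|\le 115000\,k\log^2(2n)\,f'\log(4f'), \qquad f'=(1000000\,d\,k^2\log^3(4n))^c .
\]
The factor $115000$ comes from $110000+5000$, since $|X|=0$. Substituting $k\le\log(4n)$ and $\log(2n)\le\log(4n)$ gives $f'\le(1000000\,d\log^5(4n))^c=f$ and $|X'|\le 115000\log^3(4n)\,f\log(4f)$. Here we use that $x\log(4x)$ is monotone increasing in $x$.

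Finally, take $C:=X'$ as the covering set. Every $v\in F_w$ is a descendant of $w$, so there is a directed path from $w$ to $v$ in the layer graph. Its vertices lie in distinct parts of $\Pi$, and the argument in the proof of Lemma~\ref{lem:ancestral_path} shows it has at most $k$ vertices. This path is a path in $G$, so every vertex of $\chi(x)$ has a path on at most $k\le\log(4n)$ vertices to some vertex of $C$. Hence $\chi(x)$ is $(|X'|,r)$--coverable with $r\le\log(4n)$.

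The only delicate point is the bookkeeping. The thickness must be an integer while $\log(4n)$ need not be, so we take $k=\lfloor\log(4n)\rfloor$, which is valid because the lemma's proof gives at most $\lceil\log n\rceil+1\le\lfloor\log 4n\rfloor$ layers. We must also check that the constants absorb this choice, which they do since every substitution only uses the upper bound $k\le\log(4n)$.
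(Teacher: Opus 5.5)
Your proposal is correct and follows the paper's argument. You build the $4d$-neighborhood-witnessing layered family of thickness at most $\log(4n)$ via Lemma~\ref{lem:degeneracy_implies_layered_family}, apply Theorem~\ref{thm:F_treewidth} with $X=\emptyset$ (so the constant is $110000+5000$), and cover each bag $\bigcup_{w\in X'}F_w$ by the centers $X'$, since every vertex of $F_w$ lies on a directed path from $w$ with at most $\log(4n)$ vertices. The one blemish is your initial suggestion $k:=\lceil\log(4n)\rceil$, which would break $k\le\log(4n)$; the choice $k=\lfloor\log(4n)\rfloor$ you settle on is right, and it is valid simply because thickness is an integer bounded by $\log(4n)$.
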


% \todo[inline]{The corollary also follows from \cite{abrishami2025coarsetreedecompositionscoarse}; let us mention that}
%  Done

Here we remark that the above corollary also follows from Lemma~\ref{lem:F_balanced_separator} combined with a theorem of Abrishami, Czy\.{z}ewska, Kluk, Pilipczuk, Pilipczuk, and Rz\k{a}\.{z}ewski~\cite{abrishami2025coarsetreedecompositionscoarse}.
We also prove the following theorem regarding graphs of bounded degeneracy which we need towards the proof of Theorem~\ref{thm:coarse_induced_menger_our_class}.

\begin{theorem}\label{thm:coarse_induced_menger_aux_class}
    Let $d, f$ be positive integers and let $G$ be an $n$-vertex graph of degeneracy $d$. Let $\f$ be a $4d$-neighborhood-witnessing layered family of $G$ with thickness at most $\lceil \log (2n) \rceil$, and let $A, B \subseteq V(G)$ be vertex subsets. Then, one of the following exists:
    \begin{itemize}
        \item An $A$--$B$ separator $S\subseteq V(G)$ in $G$, such that $\mathrm{fcov}_{\f}(S)\, \leq\, 8 \log^2 (4n) \cdot f$. 
        \item An induced subgraph $H$ of $G$, of maximum degree at most $24 \log^2 (4n) + 4d$, such that every $A_H$--$B_H$ separator $S$ in $H$ must have $\mathrm{cov}_{\f}(S)\, \geq\, \frac{f}{12}$ in $G$, where $A_H := A\cap V(H)$ and $B_H := B\cap V(H)$.
    \end{itemize}
\end{theorem}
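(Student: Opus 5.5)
We split into two cases according to the optimum value $f^*$ of LP~\ref{lp:ab_separator_using_f} for $(G,\f,A,B)$. Throughout, $k := \lceil \log(2n)\rceil \le \log(4n)$ denotes the thickness bound on $\f$.

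\emph{Case $f^* \le \frac{f}{k}$.} We apply Theorem~\ref{thm:ab_separator} to an optimal solution. It gives an $A$--$B$ separator $S$ with
\[
\fcov(S) \le 8k\log(2n)\cdot f^*.
\]
Since $8k\log(2n)\, f^* \le 8\log(2n)\, f \le 8\log^2(4n)\, f$, the first outcome holds. We could instead use the cruder threshold $f^* \le f/\log(4n)$; either choice keeps the constants within the stated bound. We fix the threshold to be $f^* \le f/\log(4n)$.

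\emph{Case $f^* > f/\log(4n)$.} We apply Theorem~\ref{thm:ab_path_packing} with $\ell := \lceil \frac16 \log(4|\f|)\rceil$, which is at most about $\log(4n)$ since $|\f| \le n$. This produces a multiset $\q$ of induced $A$--$B$ paths with the following properties:
\begin{itemize}
\item $|\q| \ge \ell f^*$;
\item every $F\in\f$ meets at most $6(\ell+1)$ paths of $\q$;
\item each path meets each $F$ in at most $2k-1$ vertices, because the paths are upward minimal.
\end{itemize}
Let $H := G\big[\bigcup_{Q\in\q} V(Q)\big]$.

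We first bound the degree in $H$. Take $v\in V(H)$ and some $F\ni v$. The neighborhood-witnessing property gives $|N[v]\setminus F|\le 4d$. Moreover,
\[
|F\cap V(H)| \le 6(\ell+1)(2k-1).
\]
Hence the degree of $v$ in $H$ is at most $6(\ell+1)(2k-1)+4d$. To fit the target $24\log^2(4n)+4d$, we need $\ell+1 \le 2\log(4n)$ and $2k-1 \le 2\log(4n)$, which holds with room to spare. Pinning down these constants, using $\ell \ge \frac16\log(4|\f|)$ with the ceiling, is routine but fiddly.

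Next comes the separator lower bound. Let $S$ be an $A_H$--$B_H$ separator in $H$, and suppose it is covered by a subfamily $\f'\subseteq\f$ with $|\f'| < f/12$. Every path of $\q$ lies in $H$ and runs from $A_H$ to $B_H$, so each path meets $S$, hence meets some member of $\f'$. Each $F\in\f'$ meets at most $6(\ell+1)$ paths, which gives
\[
\ell f^* \le |\q| \le |\f'|\cdot 6(\ell+1) < \tfrac{f}{12}\cdot 6(\ell+1) = \tfrac{f(\ell+1)}{2}.
\]
Since $\ell \ge 1$, we have $(\ell+1)/2 \le \ell$, so this yields $f^* < f$.

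This is the main obstacle: the threshold must be consistent between the two cases. If the first case only covers $f^* \le f/\log(4n)$, then this contradiction does not finish the argument, because $f/\log(4n) < f^* < f$ is not yet excluded. The fix is to choose the threshold in the first case as $f^* \le f$. In that case Theorem~\ref{thm:ab_separator} gives
\[
\fcov(S) \le 8k\log(2n)\, f \le 8\log^2(4n)\, f,
\]
using $k \le \log(4n)$ and $\log(2n) \le \log(4n)$. So we simply split on $f^* \le f$ versus $f^* > f$. In the second case the inequality $f^* < f$ derived above is a contradiction, so every $A_H$--$B_H$ separator has $\mathrm{cov}_\f(S) \ge f/12$. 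We also check that $\ell \ge 1$ so that $(\ell+1)/2 \le \ell$ holds; this is immediate since $\ell$ is a positive integer with $\ell \ge \frac16\log 4 > 0$.
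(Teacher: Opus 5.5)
Your final argument is correct and is essentially the paper's proof. You split on whether the LP optimum $f^*$ is at most $f$ and apply Theorem~\ref{thm:ab_separator} in the first case. In the second case you take $H$ induced on the paths of Theorem~\ref{thm:ab_path_packing}. You bound degrees by $6(\ell+1)(2k-1)+4d$ via the neighborhood-witnessing property, and bound separators via the count $\ell f^* \le |\q| \le 6(\ell+1)\,|\f'|$.

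The differences are cosmetic: you use the smallest admissible $\ell=\lceil\frac16\log(4|\f|)\rceil$ where the paper takes $\ell=\log(2n)$. You should drop the abandoned thresholds $f/k$ and $f/\log(4n)$ from the write-up, since, as you noticed, only the split at $f$ makes the two cases meet.
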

\begin{proof}
    Consider the optimum solution value $f'$ of the LP~\ref{lp:ab_separator_using_f} corresponding to $(G,\f,A,B)$. If $f' \leq f$, then applying Theorem~\ref{thm:ab_separator} on $(G,\f,A,B)$ yields an $A$--$B$ separator $S\subseteq V(G)$ in $G$, such that $\mathrm{fcov}_{\f}(S)\ \leq\ 8\log (4n) \cdot \log(2n) \cdot f' \leq 8 \log^2 (4n) \cdot f$. Otherwise, we apply Theorem~\ref{thm:ab_path_packing} to the instance $(G,\f,A,B)$ with $\ell := \log(2n)$, obtaining a multiset $\q$ of $A$--$B$ paths. 
    Let $H$ be the subgraph of $G$ induced by the vertices that lie on paths in $\q$. We claim that $H$ has the desired properties. First, for every $F \in \f$, at most $6 (\log(2n) + 1) \leq 12\log(2n)$ paths of $\q$ intersect $F$, while $\q$ contains a total of $f \cdot \log(2n)$ paths. Therefore, any $A_H$--$B_H$ separator $S$ in $H$ must satisfy $\mathrm{cov}_{\f}(S) \geq \frac{f}{12}$. Second, let $v$ be a vertex of $H$ and let $F \in \f$. Since at most $12 \log(2n)$ paths of $\q$ intersect $F$, and for every path $P \in \q$ we have $|F \cap P|\, \leq\, 2\log (4n) - 1$, it follows that $|F \cap V(H)|\, \leq\, 24 \log^2 (4n)$. As $\f$ is $4d$-neighborhood-witnessing, we conclude that $|N[v] \cap V(H)|\, \leq\, |F \cap V(H)| + 4d\, \leq\, 24 \log^2 (4n) + 4d$, which completes the proof of the theorem.
\end{proof}

% -----------------------------------------------------------------------------------------------------------------

% Uncomment the below two lines if we are not splitting into sections

% \section{Proof of Theorems for $K_{t,t}$-induced-minor-free Graphs}
% \input{ZZ_Section 9: Proof for our class}

% -----------------------------------------------------------------------------------------------------------------

% Uncomment below eight lines if we are splitting into sections

\section{Vertex partitions of graphs with no $K_{t,t}$-induced-minors}

Let $G$ be a graph, $r$ a positive integer, and $S \subseteq V(G)$ a vertex subset. Recall that $S$ is \emph{$(1,r)$--coverable} if there exists a vertex $v\in V(G)$ such that every vertex of $S$ has a path on at most $r$ vertices to $v$. A partition $(X_1, \dots, X_\ell)$ of $S$ is called an \emph{$r$–radius vertex partition} of $S$ in $G$ if for every $i \in [\ell]$ it holds that every connected component $C$ of $G[X_i]$ is $(1,r)$--coverable in $G$. Notice that if $u,v\in C$ for some $i\in[\ell]$ and some connected component $C$ of $G[X_i]$, then there exists a $u$--$v$ path on at most $2r$ vertices in $G$. We define $\pi_r(S,G)$ as the smallest integer $\ell$ for which there exists an $r$–radius vertex partition of $S$ in $G$ into $\ell$ parts. We simply write $\pi_r(G)$ to denote $\pi_r(V(G),G)$.
We begin with the following lemma.

\begin{lemma}\label{lem:properties_of_pi}
    Let $G$ be a graph and $r$ be a positive integer. Then, for all $A, B \subseteq V(G)$,
    \[
        \max\bigl\{\pi_r(A,G),\,\pi_r(B,G)\bigr\} \
        \leq \ \pi_r(A \cup B,\,G) \
        \leq \ \pi_r(A,G) + \pi_r(B,G).
    \]
    Furthermore, the first inequality holds with equality whenever $A$ and $B$ are anticomplete in $G$.
\end{lemma}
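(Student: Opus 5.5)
We prove the three assertions separately, each directly from the definition of an $r$–radius vertex partition. The key fact used throughout is that $(1,r)$--coverability is measured in $G$ itself, so it passes to subsets: if $C$ is $(1,r)$--coverable in $G$ witnessed by $v$, then so is every $C'\subseteq C$, with the same witness $v$.

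\emph{Lower bound.} Let $(X_1,\dots,X_\ell)$ be an $r$–radius vertex partition of $A\cup B$ with $\ell=\pi_r(A\cup B,G)$, and set $X_i':=X_i\cap A$, discarding empty parts. Every connected component $C'$ of $G[X_i']$ is connected in $G[X_i]$, hence lies inside a single component $C$ of $G[X_i]$. Since $C$ is $(1,r)$--coverable in $G$, so is $C'$. This gives an $r$–radius vertex partition of $A$ into at most $\ell$ parts. The same argument applies to $B$.

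\emph{Upper bound.} Take optimal partitions $(Y_1,\dots,Y_a)$ of $A$ and $(Z_1,\dots,Z_b)$ of $B$, and form the parts $Y_1,\dots,Y_a,Z_1\setminus A,\dots,Z_b\setminus A$, dropping empty ones. These parts partition $A\cup B$. Each $Z_j\setminus A\subseteq Z_j$, so its components are $(1,r)$--coverable by the subset argument above. This gives at most $a+b$ parts.

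\emph{Equality for anticomplete $A,B$.} Anticomplete means $A$ and $B$ are disjoint with no edges between them. Take optimal partitions $(Y_1,\dots,Y_a)$ of $A$ and $(Z_1,\dots,Z_b)$ of $B$, and assume without loss of generality that $a\ge b$. Pad the shorter list with empty sets and define $W_i:=Y_i\cup Z_i$ for $i\in[a]$. Since there are no edges between $Y_i\subseteq A$ and $Z_i\subseteq B$, every component of $G[W_i]$ is either a component of $G[Y_i]$ or a component of $G[Z_i]$, and so is $(1,r)$--coverable. Hence $\pi_r(A\cup B,G)\le a=\max\{\pi_r(A,G),\pi_r(B,G)\}$, which together with the lower bound gives equality.

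The main step to handle with care is the component analysis. For the lower bound, a component of the restricted graph must sit inside a component of the original part. For the anticomplete case, merging parts must create no new components. Both follow immediately from the definitions, so there is no real obstacle.
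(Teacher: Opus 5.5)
Your proposal is correct and follows the paper's proof essentially step for step. Both restrict an optimal partition of $A\cup B$ for the lower bound, combine the parts $Y_i$ with $Z_j\setminus A$ for the subadditive upper bound, and pad and take unions $Y_i\cup Z_i$ in the anticomplete case, each time using that $(1,r)$--coverability in $G$ passes to subsets with the same witness.
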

\begin{proof}
    Set $p := \pi_r(A,G)$, $q := \pi_r(B,G)$, and let $(X_1,\dots,X_p)$, $(Y_1,\dots,Y_q)$ be optimal $r$-radius vertex partitions of $A$ and $B$ in $G$.

    For the first inequality, let $(W_1,\dots,W_\ell)$ be an optimal $r$-radius vertex partition of $A \cup B$ in $G$. For each $j \in [\ell]$, every connected component of $G[W_j \cap A]$ is a subgraph of a connected component of $G[W_j]$, and hence $(1,r)$-coverable. Thus $(W_1 \cap A, \dots, W_\ell \cap A)$ is an $r$-radius vertex partition of $A$, giving $\pi_r(A,G) \leq \ell$. By a symmetric argument, $\pi_r(B,G) \leq \ell$, and therefore $\max\{\pi_r(A,G),\pi_r(B,G)\} \leq \ell = \pi_r(A \cup B, G)$.

    For the second inequality, define $Z_j := X_j$ for $j \in [p]$ and $Z_{p+j} := Y_j \setminus A$ for $j \in [q]$, and observe that $(Z_1,\dots,Z_{p+q})$ is a partition of $A \cup B$ into $p + q$ parts. Furthermore every connected component $C$ of $G[Z_j]$ is a subgraph of a connected component $C'$ of either $G[X_j]$ or $G[Y_{j-p}]$. Since $C'$ is $(1,r)$-coverable in $G$, the same center vertex witnesses $(1,r)$-coverability of $C$. It follows that $(Z_1,\dots,Z_{p+q})$ is an $r$-radius vertex partition of $A \cup B$ in $G$, giving $\pi_r(A \cup B, G) \leq p + q$.
    
    Now, suppose $A$ and $B$ are anticomplete. Set $\ell := \max\{p,q\}$ and, by appending empty parts if necessary, assume both partitions have exactly $\ell$ parts. Define $Z_j := X_j \cup Y_j$ for $j \in [\ell]$. Since $X_j \subseteq A$ and $Y_j \subseteq B$ are anticomplete, every connected component of $G[Z_j]$ lies entirely within $G[X_j]$ or entirely within $G[Y_j]$, and is therefore $(1,r)$-coverable. Hence $\pi_r(A \cup B, G) \leq \ell = \max\{p,q\}$, which completes the proof of the lemma.
\end{proof}

All of our main theorems rely on the following property of graphs that exclude $K_{t,t}$ as an induced minor:

\begin{restatable}{theorem}{KttFreeimpliesBoundedRadiusPartition}\label{thm:ktt_free_implies_bounded_radius_partition}
   Let $G$ be a $K_{t,t}$-induced-minor-free graph for a positive integer $t$. Then $\pi_4(G) \leq t \cdot 2^t$.
\end{restatable}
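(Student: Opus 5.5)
We will prove Theorem~\ref{thm:ktt_free_implies_bounded_radius_partition} by a BFS-layering argument combined with induction on $t$. Fix a connected graph $G$; by Lemma~\ref{lem:properties_of_pi} (the anticomplete case) it suffices to handle each component separately. Pick a root $v_0$ and let $D_0, D_1, \dots$ be its BFS distance layers. Components of $G[D_i]$ with $i$ even are pairwise anticomplete to components of $G[D_j]$ with $j \neq i$ even, and the same holds for odd indices. Hence, again by Lemma~\ref{lem:properties_of_pi}, $\pi_4(G) \le 2\max_i \pi_4(D_i, G)$. So the task reduces to partitioning a single layer $D_i$ into at most $t\cdot 2^{t-1}$ parts with radius-$4$ components, with coverage measured in $G$.

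For a fixed layer $D = D_i$ we exploit the fact that everything in $D$ has a neighbour in $D_{i-1}$. Group the vertices of $D$ according to their parents: choose for each $u \in D$ a parent $p(u)\in D_{i-1}$. Vertices sharing a parent are within distance $2$ of each other, so a connected piece of $D$ that lies entirely among children of a bounded-radius set of parents is $(1,4)$-coverable. The goal is therefore to show the following: if a connected set $C \subseteq D$ is \emph{not} $(1,4)$-coverable, then it contains structure that builds toward a $K_{t,t}$ induced minor. Here the branch sets on one side are subpaths inside $C$, and the branch sets on the other side are BFS-tree branches reaching down from $D_{i-1}$, which are disjoint and anticomplete to the subpaths except through the designated edges because of the layering.

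The induction is on $t$, with the statement strengthened as follows: if $G[D]$ has no ``$K_{s,t}$-type'' configuration (with $s$ branch sets coming from above), then $\pi_4(D,G) \le s\cdot 2^{s-1}$. In the inductive step we peel off one part. We greedily take a maximal collection of vertices $u_1,u_2,\dots \in D$ at pairwise distance more than $4$, grow balls around them, and put the union of radius-$2$ balls (intersected with $D$) into a first class. Each component of this class is $(1,4)$-coverable. The residual vertices form a set in which every component, if it were far from all centres, would let us add one more branch set from the upper layers, i.e.\ would realise an $s+1$ configuration. Each such step at most doubles the count, which produces the factor $2^{t}$, while the factor $t$ comes from the number of peeling rounds.

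The main obstacle is guaranteeing \emph{inducedness} of the $K_{t,t}$: the upper branch sets (tree paths to the root region) must be pairwise anticomplete, and each must be anticomplete to all but the intended subpaths in $D$. I would attempt this first by choosing the upper branch sets as shortest paths to distinct vertices of $D_{i-1}$ that are far apart, using that BFS layers only have edges between consecutive layers. If controlling adjacency directly is too messy, the fallback is to use parity and distance classes modulo $3$, which separate the branches. This distance bookkeeping is where the radius $4$ (rather than $2$) will be needed, and I expect the constants to be tight there.
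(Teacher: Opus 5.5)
Your first step is correct and matches the paper: the even/odd split of BFS layers gives $\pi_4(G)\le 2\max_i\pi_4(D_i,G)$, and the anticomplete case of Lemma~\ref{lem:properties_of_pi} lets you pass to one component of one layer (this is the opening of the proof of Lemma~\ref{lem:obtain_new_layer}). After that there is a genuine gap: your plan never produces $t$ pairwise anticomplete connected branch sets on the ``upper'' side.

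\emph{Why the single layering fails.} One BFS layering gives only one connected upper region, $\bigcup_{j\le i-2}D_j$. Paths from the root region to far-apart vertices of $D_{i-1}$ all converge near $v_0$, so they are neither disjoint nor anticomplete. If you truncate them to make them so, nothing forces each of them to touch all $t$ lower branch sets.

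\emph{Why the induction does not repair this.} The ``$K_{s,t}$-type configuration'' is never defined, and the peeling step is inconsistent. A \emph{maximal} family of centres at pairwise distance more than $4$ leaves every vertex of $D$ within distance $4$ of some centre. So there is no residual component ``far from all centres'' from which to harvest a new branch set. Also, radius-$2$ balls around centres at distance exactly $5$ can be adjacent, so components of your first class need not be $(1,4)$-coverable.

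\emph{Misplaced difficulty.} In $K_{t,t}$ every upper branch set must be adjacent to every lower one. So no anticompleteness is needed between the two sides, only within each side.

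The missing idea is to \emph{iterate} the halving step inside the heavy piece. Lemma~\ref{lem:obtain_new_layer} takes the component $C$ of the heavy layer $D_{i_o}$ with $\pi_4(C,G)\ge\pi_4(Y,G)/2$ and records $T:=\bigcup_{i\le i_o-2}D_i$ and $M:=D_{i_o-1}$. The paper then restarts BFS inside $G[C]$, for $t$ rounds. This gives $V_1\supset\dots\supset V_{t+1}$ with sets $T_j, M_j$ such that:
\begin{itemize}
\item each $T_j$ is connected;
\item $N(T_j)\cap V_j\subseteq M_j$;
\item every vertex of $M_j$ has a neighbour in $T_j$;
\item every vertex of $V_{j+1}$ has a neighbour in $M_j$.
\end{itemize}
The sets $T_1,\dots,T_t$ are pairwise disjoint and anticomplete automatically by the nesting, and they form one side. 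Since $\pi_4(V_{t+1},G)\ge\pi_4(G)/2^t>t$, a maximal subset of $V_{t+1}$ with no connecting path on at most four vertices has $t$ elements $u_1,\dots,u_t$. The other side is the stars $B_i=\{u_i\}\cup\{v_i^j\}_{j\in[t]}$ with $v_i^j\in N(u_i)\cap M_j$. These stars are disjoint and anticomplete exactly because of that distance condition, which is where radius $4$ is used. Thus the factor $2^t$ comes from the $t$ halving rounds and the factor $t$ from the final packing, not from ``peeling rounds'' as in your accounting.
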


For this, we need the following lemma.

\begin{lemma}\label{lem:obtain_new_layer}
    Let $G$ be a graph and let $Y \subseteq V(G)$ be such that $G[Y]$ is connected. Then there exist disjoint subsets $T$, $M$, and $B$ of $Y$ such that:
    \begin{enumerate}
        \item $\pi_4(B,G) \geq \frac{\pi_4(Y,G)}{2}$.
        \item $N(T) \cap Y \subseteq M$.
        \item $G[T]$ and $G[B]$ are connected.
        \item Every vertex $u \in B$ has a neighbor in $M$ and every vertex $u \in M$ has a neighbor in $T$.
    \end{enumerate}
\end{lemma}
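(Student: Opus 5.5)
We use a BFS layering of $G[Y]$ from a single root and pick the layer $B$ carrying at least half of $\pi_4(Y,G)$.

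Fix $y_0 \in Y$ and let $D_i$ be the set of vertices of $Y$ at distance exactly $i$ from $y_0$ in $G[Y]$, for $i \geq 0$. Distinct $D_i$ and $D_j$ with $|i-j|\geq 2$ are anticomplete in $G$, since an edge between them would shorten a BFS distance. Write $E := \bigcup_{i \text{ even}} D_i$ and $O := \bigcup_{i\text{ odd}} D_i$. By Lemma~\ref{lem:properties_of_pi}, $\pi_4(Y,G) \leq \pi_4(E,G)+\pi_4(O,G)$, so one parity class, say the even one, has $\pi_4 \geq \pi_4(Y,G)/2$. The layers of that class are pairwise anticomplete, so by the equality case of Lemma~\ref{lem:properties_of_pi} applied repeatedly, $\pi_4$ of the class equals $\max_i \pi_4(D_i,G)$ over its layers. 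Hence there is a single index $i$ with $\pi_4(D_i,G)\geq \pi_4(Y,G)/2$.

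The natural choice is then $T := \bigcup_{j\le i-2} D_j$ and $M := D_{i-1}$. Here $T$ is connected because it contains $y_0$ and the BFS predecessors of all its vertices, and $N(T)\cap Y\subseteq D_{i-1}$ since $T$ is a union of initial layers. Every vertex of $D_{i-1}$ has a predecessor in $D_{i-2}\subseteq T$, and every vertex of $D_i$ has a neighbour in $D_{i-1}$.

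The difficulty is that $G[D_i]$ need not be connected. I would fix this by contracting $T$ and working in the component structure of $G[Y\setminus (T \cup M)]$. Let $R := Y\setminus(T\cup M)$. The components of $G[R]$ are pairwise anticomplete, so by the equality case of Lemma~\ref{lem:properties_of_pi}, $\pi_4(R,G)=\max_C \pi_4(C,G)$. Take $B$ to be a component $C$ of $G[R]$ attaining this maximum, shrink $M$ to $N(C) \cap M$, and keep $T$; then property~2 must be rechecked, or we re-choose $T$ as $T$ together with the layers that are not in $C$. Two things remain to be shown. First, $\pi_4(C,G)\geq\pi_4(Y,G)/2$, which follows from $D_i \subseteq R$ only if $D_i$ itself lies inside a single component, which fails in general. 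Second, every vertex of $C$ has a neighbour in $M$, which fails for vertices of $C$ in deeper layers.

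For this reason I would instead iterate. Contract $T$ to a vertex $t$, treat $G[\{t\}\cup M\cup C]$ as the new $Y$, and run the BFS argument again from $t$. The new layer-$1$ set is the new $M$, its neighbourhood is the root, and the chosen layer at distance $2$ works if we always take $i=2$. This forces the choice of $B$ to be exactly the distance-$2$ layer, which again may be disconnected.

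The expected main obstacle is reconciling connectivity of $B$ with the requirement that every vertex of $B$ touch $M$, while losing only a factor $2$ in $\pi_4$. My plan for it is the radius-$4$ slack. Let $D := D_i$ and $M := D_{i-1}$. Grow $B$ from $D$ by adding paths through $D_{i-1}$: take $B$ to be a connected subgraph of $G[D_{i-1}\cup D_i]$ containing the parts of $D_i$ that matter, with $M$ the neighbours of $B$ in $D_{i-2}$ and $T$ the earlier layers. Then I would argue that $\pi_4$ of the union of any two adjacent layers still dominates, since components there have radius at most $4$ in terms of coverage. Making this accounting exact is the part I expect to be delicate.
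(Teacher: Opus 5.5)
Your first half matches the paper's proof: the BFS layers $D_i$ of $G[Y]$, the even/odd split via Lemma~\ref{lem:properties_of_pi}, a single layer $D_i$ with $\pi_4(D_i,G)\ge \pi_4(Y,G)/2$, and $T:=\bigcup_{j\le i-2}D_j$, $M:=D_{i-1}$. However, you stop where only one more step is needed, and the workarounds you sketch do not close the gap. \emph{The missing step is to apply the equality case of Lemma~\ref{lem:properties_of_pi} to the components of $G[D_i]$ itself}, not to the components of $G[R]$ with $R=Y\setminus(T\cup M)$. Distinct components of $G[D_i]$ are disjoint and anticomplete in $G$. Hence $\pi_4(D_i,G)=\max_C \pi_4(C,G)$ over the components $C$ of $G[D_i]$, and you can take $B$ to be a maximizing component. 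Then $G[B]$ is connected and $\pi_4(B,G)=\pi_4(D_i,G)\ge \pi_4(Y,G)/2$. Since $B\subseteq D_i$, every vertex of $B$ still has its BFS predecessor in $D_{i-1}=M$. You already checked the remaining properties, and $T,M,B$ are disjoint because they lie in different layers. Your concern that ``$D_i$ must lie inside a single component'' only arises because you passed to $R$, which adds deeper layers whose vertices have no neighbour in $M$.

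Your final plan would fail as stated. If $B$ contains vertices of $D_i$ and $M\subseteq D_{i-2}$, those vertices have no neighbour in $M$, because $D_i$ and $D_{i-2}$ are anticomplete --- the very fact you used at the start. The ``contract $T$ and rerun BFS from $t$'' iteration only reproduces the same possibly disconnected distance-$2$ layer, so it does not resolve anything either. One minor point, which the paper also glosses over: you need $i\ge 2$ so that $T$ and $M$ are nonempty. This holds whenever $\pi_4(Y,G)\ge 3$, since every vertex of $D_0\cup D_1$ is within a path on at most two vertices of $y_0$, giving $\pi_4(D_0,G),\pi_4(D_1,G)\le 1$.
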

\begin{proof}
    Let $\ell := \pi_4(Y,G)$. Within this proof, for every $u, v\in Y$, we define the \emph{distance} between $u$ and $v$ as the length of the shortest $u$--$v$ path in the induced subgraph $G[Y]$. Now fix an arbitrary vertex $v_0 \in Y$, and for every integer $i \geq 0$, let $D_i$ denote the set of vertices in $Y$ whose distance from $v_0$ is exactly $i$. 
    \begin{claim}\label{claim:heavy_index}
        There exists an index $i$ such that $\pi_4(D_i,G) \geq \frac{\ell}{2}$.
    \end{claim}
    \begin{claimproof}
        Suppose for contradiction that $\pi_4(D_i,G) < \frac{\ell}{2}$ for every $i \geq 0$. Note that $D_i$ and $D_{i'}$ are anticomplete in $G$ whenever $|i-i'| \geq 2$. Indeed, any edge $(v,v')$ with $v \in D_i$, $v' \in D_{i'}$, and $i' \geq i+2$ yields a walk of length $i+1$ from $v_0$ to $v'$ in $G[Y]$, contradicting $v' \in D_{i'}$. Define $D_{\sf even} := \bigcup_{i \geq 0} D_{2i}$ and $D_{\sf odd} := \bigcup_{i \geq 0} D_{2i+1}$. Since any two even-indexed layers are anticomplete, the tightness condition of Lemma~\ref{lem:properties_of_pi} gives $\pi_4(D_{\sf even},G) = \max\{\pi_4(D_{2i},G)\mid i\geq 0\}$. The same argument applied to odd-indexed layers gives $\pi_4(D_{\sf odd},G) = \max\{\pi_4(D_{2i+1},G)\mid i \geq 0\}$. The upper bound of Lemma~\ref{lem:properties_of_pi} then yields $\pi_4(Y,G) \leq \pi_4(D_{\sf even},G) + \pi_4(D_{\sf odd},G) < \frac{\ell}{2} + \frac{\ell}{2} = \ell$, a contradiction.
    \end{claimproof}
    Let $i_o$ be such that $\pi_4(D_{i_o},G) \geq \frac{\ell}{2}$, and let $C$ be a component of $G[D_{i_o}]$ with $\pi_4(C, G) = \pi_4(D_{i_o},G)$. Indeed such a component must exist: since distinct connected components of $G[D_{i_o}]$ are anticomplete by definition, the tightness condition of Lemma~\ref{lem:properties_of_pi} implies $\pi_4(D_{i_o},G) = \max_C \pi_4(C,G)$, where the maximum is over all components $C$ of $G[D_{i_o}]$. Hence some component $C$ satisfies $\pi_4(C,G) = \pi_4(D_{i_o},G) \geq \frac{\ell}{2}$.

    We claim that the sets $T := \bigcup_{i \leq i_o - 2} D_i$, $M := D_{i_o - 1}$, and $B := C$ satisfy all required conditions. Condition (1) holds because $B = C$ was chosen so that $\pi_4(C, G) \geq \frac{\ell}{2}$. For (2), suppose $v \in Y \setminus T$ has a neighbor $u \in T$. Since $u$ lies at distance at most $i_o - 2$ from $v_0$, we infer that $v$ lies at distance at most $i_o - 1$, and hence $v \in M$, establishing $N(T) \cap Y \subseteq M$. Condition (3) follows from the fact that $G[B]$ is connected, as $B = C$ is a connected component of $G[D_{i_o}]$; and $G[T]$ is connected since every vertex in $T$ has a path to $v_0$ entirely within $T$. Finally, for (4), observe that by construction every vertex in $D_i$ (for $i \geq 1$) has a neighbor in $D_{i - 1}$ (its predecessor along a shortest $v_0$--$v$ path in $G[Y]$), ensuring the necessary adjacency between $T$, $M$, and $B$. This concludes the proof of the lemma. 
\end{proof}

Now, we are ready to prove Theorem~\ref{thm:ktt_free_implies_bounded_radius_partition}.

\KttFreeimpliesBoundedRadiusPartition*
\begin{proof}
    Suppose for contradiction that $\pi_4(G) > t \cdot 2^t$. We make the following claim.\\[-4pt]
    
    \noindent\textbf{Claim~9.2.1.}\label{claim:recursive_layering}
        \emph{There exists a sequence of vertex subsets $V_1 \supset V_2 \supset \dots \supset V_{t+1}$ in $G$ such that, for every $i \in [t]$, there exist disjoint subsets $T_i$ and $M_i$ of $V_i\setminus V_{i+1}$ such that:
        \begin{enumerate}
            \item $G[T_i]$ is connected.
            \item $N(T_i) \cap V_i \subseteq M_i$.
            \item Every vertex $u \in M_i$ has a neighbor in $T_i$.
            \item Every vertex $u \in V_{i+1}$ has a neighbor in $M_i$.
        \end{enumerate}
        Moreover, there exists a subset $U \subseteq V_{t+1}$ of cardinality $t$ such that for any distinct $u, v \in U$, there is no $u$--$v$ path of length at most four in $G$.}
    % Suppose for contradiction that $\pi_4(G) > t \cdot 2^t$. We make the following claim.
    
    % \begin{claim}\label{claim:recursive_layering}
    %     There exists a sequence of vertex subsets $V_1 \supset V_2 \supset \dots \supset V_{t+1}$ in $G$ such that, for every $i \in [t]$, there exist disjoint subsets $T_i$ and $M_i$ of $V_i\setminus V_{i+1}$ such that:
    %     \begin{enumerate}
    %         \item $G[T_i]$ is connected.
    %         \item $N(T_i) \cap V_i \subseteq M_i$.
    %         \item Every vertex $u \in M_i$ has a neighbor in $T_i$.
    %         \item Every vertex $u \in V_{i+1}$ has a neighbor in $M_i$.
    %     \end{enumerate}
    %     Moreover, there exists a subset $U \subseteq V_{t+1}$ of cardinality $t$ such that for any distinct $u, v \in U$, there is no $u$--$v$ path of length at most four in $G$.
    % \end{claim}
    \begin{claimproof}
        Let $V_1 = V(G)$. Starting with $i = 1$, we iteratively construct the sets $T_i$, $M_i$, and $B_i$ by applying Lemma~\ref{lem:obtain_new_layer} to $V_i$, and then define $V_{i+1} := B_i$. We repeat this process for each $i \in [t]$. This is valid because, by construction, each $V_{i+1}$ is connected and therefore satisfies the conditions of the lemma in the next iteration. Observe that, by construction, $V_i \supset V_{i+1}$ for every $i\in [t]$. Moreover, the four required properties of the sets $T_i$, $M_i$, and $V_{i+1}$ follow directly from the corresponding properties guaranteed by Lemma~\ref{lem:obtain_new_layer}. 

        Let $U = \{u_1,\dots,u_{t'}\}$ be an inclusion-maximal subset such that no two distinct vertices of $U$ are connected by a path of length at most four in $G$. We claim $t' \geq t$. For each $i \in [t']$, let $X_i$ be the set of vertices of $V_{t+1}$ that have a path of length at most four to $u_i$ in $G$. Then $\pi_4(X_i,G) = 1$, since $u_i$ witnesses $(1,4)$-coverability of $X_i$. Furthermore $(X_1,\dots,X_{t'})$ is a partition of $V_{t+1}$ by maximality of $U$. Thus the upper bound of Lemma~\ref{lem:properties_of_pi} gives $\pi_4(V_{t+1},G) \leq t'$. But since the sets $V_i$ satisfy the relation $\pi_4(V_{i+1}, G) \geq \frac{\pi_4(V_i, G)}{2}$, we conclude $\pi_4(V_{t+1}, G) \geq \frac{\pi_4(G)}{2^t} > t$, which completes the proof of the claim. 
        % 
        % It remains to prove the existence of a subset $U \subseteq V_{t+1}$ of cardinality $t$ such that no two distinct vertices in $U$ are connected by a path of length at most four in $G$\todo{a nicer way of sa}. To this end, let $U$ be any inclusion-maximal subset of $V_{t+1}$ with this property. We claim that $|U| \geq t$. Notice that for every $i \in [t]$, the sets $V_i$ satisfy the relation $\pi_4(V_{i+1}, G) \geq \frac{\pi_4(V_i, G)}{2}$, which implies that $\pi_4(V_{t+1}, G) \geq \frac{\pi_4(G)}{2^t} > t$. Suppose, for contradiction, that $|U| = t' < t$. We construct a four–radius vertex partition of $V_{t+1}$ in $G$ as follows. Order the elements of $U$ arbitrarily, and for each $i \in [t']$, starting from $i = 1$, let $X_i$ be the set of vertices in $V_{t+1} \setminus \bigcup_{j=1}^{i-1} X_j$ that have an induced path of length at most four to the $i$th element $u_i$ of $U$. By the maximality of $U$, every vertex in $V_{t+1}$ belongs to some $X_i$, and by construction, the sets in $\{X_i\}_{i\in [t]}$ are pairwise disjoint. Thus, $(X_1, \dots, X_{t'})$ forms a four–radius vertex partition of $V_{t+1}$ in $G$, contradicting the fact that $\pi_4(V_{t+1}, G) > t'$, thereby completing the proof.
    \end{claimproof}
    
    Let $V_i$, $T_i$ and $M_i$ for every $i \in [t]$, together with $V_{t+1}$ and $U = \{u_1,\dots, u_t\} \subseteq V_{t+1}$, denote the sets whose existence is guaranteed by Claim~\hyperref[claim:recursive_layering]{9.2.1}. We construct an induced-minor model of $K_{t,t}$ in $G$, where sets $\{A_i\}_{i \in [t]}$ and $\{B_i\}_{i \in [t]}$ form the two sides of the bipartite graph in the model. For each $i \in [t]$, we define $A_i := T_i$ and $B_i := \{u_i\}\, \cup\, \{v_i^j\}_{j \in [t]}$, where $v_i^j$ is a neighbor of $u_i$ in $M_j$. Such a neighbor always exists because $u_i \in V_j$ for every $j \in [t]$ -- this holds since $u_i \in V_{t+1}$ and, by construction, $V_{i'+1} \subseteq V_{i'}$ for all $i' \in [t]$ -- and, by the properties of the construction, every vertex in $V_{i+1}$ has at least one neighbor in $M_i$.

    Now we need to argue that this is an induced-minor-model of $K_{t,t}$. Observe that by definition, $A_i$ and $B_i$ for all $i\in [t]$ induce connected subgraphs of $G$. Let $v\in A_i$, $v'\in A_{i'}$ for some $i > i'$. Note that $v\in V_{i}$ and consequently in $V_{i'+1}$, while $v'\in T_{i'}$. Therefore, $v$ and $v'$ are distinct as $V_{i'+1}$ and $T_{i'}$ are disjoint, and non-adjacent as $N(T_{i'}) \cap V_{i'}$ is contained within $M_{i'}$ which is disjoint from $V_{i'+1}$. Similarly, let $v\in B_i$, $v'\in B_{i'}$ for some $i \neq i'$. Note that if $(u_i, v, v', u_{i'})$ is a walk in $G$, it would contradict the fact that there is no $u_i$--$u_{i'}$ path on at most four vertices. Hence $v$ and $v'$ must be distinct and non-adjacent. Finally, let $i, j \in [t]$. We claim that there exists a vertex in $A_i$ that is adjacent to $v_j^i \in B_j$. Indeed, by construction, $v_j^i$ lies in $M_i$, and by Claim~\hyperref[claim:recursive_layering]{9.2.1}, every vertex in $M_i$ has a neighbor in $T_i$, which is precisely $A_i$. This proves that $\{A_i, B_i\}_{i \in [t]}$ forms an induced-minor-model of $K_{t,t}$ in $G$, contradicting our assumption that $G$ is $K_{t,t}$-induced-minor-free, and thereby completing the proof.
\end{proof}

\section{Proof of Theorem~\ref{thm:coarse_treewidth_our_class}}
Towards the proof of Theorem~\ref{thm:coarse_treewidth_our_class}, we state the following two propositions: the first is due to Hajebi, and the second due to Gir\~{a}o and Hunter; the latter also follows from a result of Bourneuf, Buci\'{c}, Cook, and Davies. We remark that the first proposition is stated in a weaker form than the result of Hajebi; however, this formulation suffices for our purposes.

\begin{proposition}[Lemma 5.1~\cite{hajebi2025polynomialboundspathwidth}]\label{prop:our_class_deg_tw_bound}
    For every positive integer $t$, there exists a positive integer $c(t)$ such that any $\{K_{t,t}, \boxplus_t\}$-induced-minor-free graph $G$ satisfies $\tw(G)\leq \Delta(G)^{c(t)}$.
\end{proposition}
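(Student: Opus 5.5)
The proposition is imported from Hajebi's work, and within this paper it is used only as a black box. If I had to prove it, I would combine the polynomial grid-minor theorem with a cleaning step that uses both the degree bound and the absence of a $K_{t,t}$ induced minor. I have not checked that this route works, and the cleaning step could well fail as stated.

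\textbf{Step 1: a large wall.} Suppose $\tw(G) > \Delta(G)^{c}$ for a constant $c=c(t)$ to be chosen later. By the polynomial grid-minor theorem~\cite{chekuri2016polynomial,chuzhoy2021towards}, $G$ contains $\boxplus_w$ as a minor with $w \ge \tw(G)^{1/10}$, say. Hence $G$ contains a subdivided $w$-wall $W$ as a subgraph. Since $c$ is ours to choose, $w$ exceeds any fixed polynomial in $\Delta := \Delta(G)$, and every power of $\Delta$ lost below can be absorbed into $c$.

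\textbf{Step 2: cleaning the wall (the main obstacle).} The goal is to turn $W$ into an induced minor model of $\boxplus_t$. The natural approach is to choose rows and columns of $W$ that are spaced far apart, and to contract large blocks of the wall into branch sets. The danger is \emph{chords}: edges of $G$ joining parts of the wall that are far apart in $W$. Each vertex has at most $\Delta$ neighbours, but a long subdivision path can still carry many chords overall. I would handle this with an auxiliary graph $J$. Its vertices are the bricks (or block branch sets) of a coarsened wall, and two blocks are adjacent in $J$ when some edge of $G$ joins them.

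First, I would show that $J$ contains no large biclique-like configuration. The reason is that many pairwise-disjoint connected blocks that are pairwise joined by chords, while separated inside the wall, can be reorganised, using the connectivity of the wall between them, into an induced minor model of $K_{t,t}$. Second, I would combine this sparsity with the degree bound $\Delta$ to find a subwall of size $w^{\Omega(1)}/\mathrm{poly}(\Delta)$ whose blocks receive no chords except those between wall-adjacent blocks. I would get this by a greedy or random selection of rows and columns, deleting every row or column that is touched by a chord.

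This is the step I expect to be hardest. The tool I would try first is the fact that a graph with no $K_{t,t}$ induced minor and no $K_{\Delta+1,\Delta+1}$ subgraph has degeneracy polynomial in $\Delta$~\cite{MatijaPolyDegen,10.1093/imrn/rnaf025}, applied to the minor of $G$ obtained by contracting the blocks. That degeneracy bound would show that $J$ is polynomially sparse, which would make the deletion argument affordable.

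\textbf{Step 3: conclusion.} Once a chord-free subwall of size at least $t$ is obtained, contracting its bricks appropriately yields $\boxplus_t$ as an induced minor, contradicting the hypothesis. Hence $\tw(G) \le \Delta(G)^{c(t)}$, with $c(t)$ chosen large enough that all the polynomial losses from Steps 1 and 2 still leave $w \ge t$ after cleaning.
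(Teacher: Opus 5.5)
\paragraph{Where Step~2 fails.} The paper does not prove this proposition; it imports it from Hajebi's work, so the only question is whether your sketch works as a proof. It does not. Step~2 carries the whole content of the statement, and the mechanism you propose for it fails.

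The degeneracy theorem of \cite{MatijaPolyDegen,10.1093/imrn/rnaf025} would need $J$ to exclude $K_{s,s}$ as a \emph{subgraph} for some $s$ polynomial in $\Delta$. But the degree bound does not pass to $J$: a block with $m$ vertices can send chords to up to $m\Delta$ other blocks. Nothing in your argument prevents $J$ from containing bicliques, or even cliques, on far more than $\Delta+1$ vertices. As an induced minor of $G$, $J$ is again $\{K_{t,t},\boxplus_t\}$-induced-minor-free, but cliques have that property too, so sparsity of $J$ cannot be derived from $J$ alone.

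Your claim that blocks pairwise joined by chords can be reorganised, ``using the connectivity of the wall'', into a $K_{t,t}$ model points the wrong way. Each side of a $K_{t,t}$ model must be anticomplete, and adding wall paths to branch sets only creates adjacencies; it never removes the chords that make two blocks on the same side adjacent. To extract a $K_{t,t}$ model you would have to pass to small sub-pieces of blocks around the chord endpoints and control their mutual adjacencies using the degree bound. That is exactly the argument that is missing.

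The deletion step has a related problem. The subdivided wall from Step~1 may have arbitrarily long subdivision paths, so $|V(W)|$ is not bounded by any function of $w$, while $W$ can carry up to $\Delta|V(W)|$ chords. You give no bound, in terms of $w$, on the number of chords between non-adjacent blocks, so deleting every row or column they touch may destroy the wall. With the degree bound alone, Korhonen's grid induced minor theorem gives a bound exponential in $\Delta$. The exclusion of $K_{t,t}$ is what should bring this down to a polynomial, and in your sketch it enters only through the unjustified sparsity of $J$.

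\paragraph{A route that works.} The following avoids walls and uses results already in the paper.
\begin{enumerate}
\item As in the proof of Theorem~\ref{thm:coarse_treewidth_our_class}, contract the components of a $4$-radius vertex partition with at most $t2^t$ parts (Theorem~\ref{thm:ktt_free_implies_bounded_radius_partition}). Each component lies within distance $3$ of a single vertex, so it has at most $(\Delta+1)^3$ vertices, and the contracted graph $G'$ has maximum degree at most $(\Delta+1)^4$.
\item The clique number of $G'$ is at most $t2^t$. By Proposition~\ref{prop:bounded_degeneracy}, $G'$ has degeneracy bounded in terms of $t$, hence excludes $K_{s,s}$ as a subgraph for some $s=s(t)$. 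It still excludes $\boxplus_t$ as an induced minor.
\item The theorem of Bonnet, Hodor, Korhonen and Masa{\v{r}}{\'\i}k~\cite{bonnet2023treewidth} states that $K_{s,s}$-subgraph-free graphs excluding a fixed planar graph as an induced minor have treewidth polynomial in the maximum degree. It therefore bounds $\mathrm{tw}(G')$ by a polynomial in $\Delta$.
\item Replacing each contracted vertex by its component in every bag gives $\mathrm{tw}(G)+1 \le (\mathrm{tw}(G')+1)(\Delta+1)^3$. Hence $\mathrm{tw}(G)\le \Delta^{c(t)}$ for $\Delta\ge 2$ and a suitable $c(t)$; the cases $\Delta\le 1$ are trivial.
\end{enumerate}
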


% \begin{proposition}[Theorem 1.1~\cite{Bourneuf2024On}]
%     For every graph H, there is a polynomial p, such that every Ks,s-subgraph-free graph with no induced subdivision of H has average degree at most p(s).
% \end{proposition}

\begin{proposition}[Theorem 1.5~\cite{10.1093/imrn/rnaf025}, Theorem 1.1~\cite{MatijaPolyDegen} ]\label{prop:kuhn_osthus}
    For every integer $k,t\geq 2$, there exists a positive integer $d(t,k)$ such that one of the following holds:
    \begin{itemize}
        \item $G$ contains $K_k$.
        \item $G$ contains $K_{t,t}$ as an induced subgraph.
        \item $G$ contains $K_k$ as a proper induced subdivision.
        \item $G$ has average degree at most $d(t,k)$.
    \end{itemize}
\end{proposition}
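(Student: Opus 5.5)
Although this proposition is imported from~\cite{10.1093/imrn/rnaf025,MatijaPolyDegen}, here is how I would derive it. The plan is to reduce to the classical theorem of K\"uhn and Osthus: for every graph $H$ and integer $s$ there is $d$ such that every graph with no $K_{s,s}$ subgraph and average degree greater than $d$ contains an induced subdivision of $H$. The argument has two steps. First, excluding $K_k$ together with induced $K_{t,t}$ excludes a large biclique as a \emph{subgraph}. Second, an induced subdivision of a suitable $H$ is automatically a proper induced subdivision of $K_k$.

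\emph{Step 1 (Ramsey).} Let $s := R(k,t)$ be the Ramsey number. Suppose $G$ contains $K_{s,s}$ as a (not necessarily induced) subgraph, with sides $A$ and $B$ of size $s$. By Ramsey's theorem, $G[A]$ contains either a clique of size $k$ or an independent set $A'$ of size $t$, and likewise for $G[B]$ with an independent set $B'$. If $G$ has no $K_k$, both independent sets exist. Every edge between $A'$ and $B'$ is present, since the biclique is a subgraph, so $G[A'\cup B']$ is an induced $K_{t,t}$. Hence if the first two outcomes fail, $G$ is $K_{s,s}$-subgraph-free.

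\emph{Step 2 (properness).} Let $H$ be the $1$-subdivision of $K_k$, obtained by subdividing every edge once. Any subdivision of $H$ is a subdivision of $K_k$ in which every edge is subdivided at least once, i.e.\ a proper subdivision. An induced copy of it in $G$ is therefore a proper induced subdivision of $K_k$. Applying K\"uhn--Osthus with this $H$ and $s=R(k,t)$ gives the threshold $d(t,k)$: if the average degree exceeds it, the third outcome holds. Otherwise the fourth outcome holds.

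The main obstacle is the K\"uhn--Osthus theorem itself, which I take as a black box here. If it had to be reproved, I would follow their route. First pass to a subgraph of large minimum degree. Then use $K_{s,s}$-freeness with a dependent-random-choice or density-increment argument to find a sparse-looking bipartite structure. Next, embed branch vertices with large, nearly disjoint neighbourhoods and route short induced paths between them, cleaning up stray chords via $K_{s,s}$-freeness. Getting \emph{polynomial} dependence, as in the cited sharper versions, is the genuinely hard part, but only the existence of some $d(t,k)$ is needed for its use in the paper.
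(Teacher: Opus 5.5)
Your derivation is correct, and it differs from the paper only in which black box it invokes: the paper gives no proof and simply imports the statement from Gir\~{a}o--Hunter and from Bourneuf, Buci\'{c}, Cook and Davies, whose results carry polynomial bounds.

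\begin{itemize}
\item Your Ramsey step is sound. With $s=R(k,t)$, a $K_{s,s}$ subgraph in a $K_k$-free graph yields independent $t$-subsets of the two sides, and these induce $K_{t,t}$.
\item Taking $H$ to be the $1$-subdivision of $K_k$ is sound. Every induced subdivision of $H$ is then an induced subdivision of $K_k$ with every edge subdivided at least once. That is exactly the reading of ``proper'' the paper needs downstream, to extract a $K_{t,t}$ induced minor in Proposition~\ref{prop:bounded_degeneracy}.
\item Your reduction is presumably the one behind the paper's remark that the statement ``also follows'' from Bourneuf et al. You use the original qualitative K\"uhn--Osthus theorem where they use their polynomial version.
\end{itemize}

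The only thing you lose is the quantitative bound: K\"uhn--Osthus gives a huge, non-explicit $d(t,k)$. The paper, however, uses this proposition only through Proposition~\ref{prop:bounded_degeneracy}, where mere existence of $d(t)$ is needed. Its value then enters Theorems~\ref{thm:coarse_treewidth_our_class} and~\ref{thm:coarse_induced_menger_our_class} only as a constant depending on $t$. Your closing sketch of how to reprove K\"uhn--Osthus is therefore unnecessary here.
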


Since any graph that contains a large clique as a proper induced subdivision also contains a large $K_{t,t}$ as an induced minor, and since forbidding a graph as a subgraph or as an induced minor is hereditary, Proposition~\ref{prop:kuhn_osthus} directly implies the following.

\begin{proposition}\label{prop:bounded_degeneracy}
    For every integer $t\geq 2$, there exists a positive integer $d(t)$ such that every graph that is $K_t$-free and $K_{t,t}$-induced-minor-free has degeneracy at most $d(t)$
\end{proposition}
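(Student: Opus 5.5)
We derive Proposition~\ref{prop:bounded_degeneracy} from Proposition~\ref{prop:kuhn_osthus} applied with $k := t$, and then pass from average degree to degeneracy using heredity. Fix $t \geq 2$ and set $d(t) := d(t,t)$, the constant provided by Proposition~\ref{prop:kuhn_osthus}. Let $G$ be $K_t$-free and $K_{t,t}$-induced-minor-free. It suffices to show that every induced subgraph $G'$ of $G$ has average degree at most $d(t)$. Then every nonempty induced subgraph has a vertex of degree at most $d(t)$, and repeatedly deleting such a vertex gives a degeneracy ordering witnessing degeneracy at most $d(t)$.

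Fix an induced subgraph $G'$ of $G$. Both hypotheses pass to $G'$. A clique of $G'$ is a clique of $G$, so $G'$ is $K_t$-free. An induced minor model in $G'$, using connected induced subgraphs of $G'$, is also one in $G$, because adjacency between vertex sets of $G'$ is the same in $G'$ and in $G$. Hence $G'$ is also $K_{t,t}$-induced-minor-free. We now rule out the first three outcomes of Proposition~\ref{prop:kuhn_osthus} for $G'$, so that the fourth must hold:
\begin{itemize}
\item[(i)] $G'$ does not contain $K_t$, since it is $K_t$-free.
\item[(ii)] $G'$ does not contain $K_{t,t}$ as an induced subgraph. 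An induced subgraph is in particular an induced minor, taking singleton branch sets.
\item[(iii)] $G'$ does not contain $K_t$ as a proper induced subdivision. This is the only step needing an argument; it is the observation stated before Proposition~\ref{prop:bounded_degeneracy}, and I would verify it as follows.
\end{itemize}

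For (iii), suppose $G'$ contains a proper induced subdivision of $K_m$ with $m$ large relative to $t$. Its branch vertices are $b_1,\dots,b_m$, and each edge $b_ib_j$ is replaced by an induced path $P_{ij}$ whose interiors are pairwise disjoint and anticomplete apart from the prescribed incidences. Since "proper" guarantees only that some edge is subdivided, I would first pass to a large subclique in which every edge is subdivided. Colour each edge $b_ib_j$ by whether $P_{ij}$ has an interior vertex, and apply Ramsey. A monochromatic clique with no subdivided edges is a clique of size $t$, contradicting $K_t$-freeness. So for $m \ge R(t,2t)$ there are $2t$ branch vertices all of whose connecting paths are subdivided.

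Split these $2t$ branch vertices into $\{a_1,\dots,a_t\}$ and $\{c_1,\dots,c_t\}$. Take the branch set of $a_i$ to be $\{a_i\}$. Take the branch set of $c_j$ to be $c_j$ together with the interiors of the paths $P_{c_j a_i}$ for $i \in [t]$, excluding the last interior vertex next to $a_i$, so that $c_j$'s set stops just before $a_i$. Each such set is connected and adjacent to every $a_i$. Distinct $a_i$ are non-adjacent because their connecting edges are subdivided. Distinct $c$-sets are anticomplete because the paths are internally disjoint and induced, and $c_j c_{j'}$ is subdivided. The $a$-side has the same non-adjacency. Thus we obtain a $K_{t,t}$ induced minor, a contradiction. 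With a slightly more careful trimming, $m \ge 2t$ would already suffice, but the Ramsey bound is only needed to make the constant exist. If Proposition~\ref{prop:kuhn_osthus} is to be applied with this larger clique size, take $k := \max\{t, R(t,2t)\}$ and $d(t) := d(t,k)$; outcome (i) is still excluded since $k \ge t$.

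With (i)--(iii) excluded, Proposition~\ref{prop:kuhn_osthus} gives that $G'$ has average degree at most $d(t)$, which completes the argument. The main obstacle is the subdivision-to-$K_{t,t}$ step in (iii). Everything else is the standard hereditary average-degree-to-degeneracy argument.
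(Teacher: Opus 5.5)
Your plan is the paper's own one-line argument with the details filled in. You apply Proposition~\ref{prop:kuhn_osthus} with a large clique size, exclude the first three outcomes, and pass from average degree to degeneracy by heredity. The Ramsey reduction to $2t$ branch vertices whose connecting paths are all subdivided is fine, and so is the heredity step.

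\textbf{The $K_{t,t}$ model fails as written.} This is the one step you single out as needing an argument. Write $P_{c_ja_i}=(c_j,w_1,\dots,w_p,a_i)$ with $p\ge 1$. The subdivision is induced in $G'$, and $c_ja_i$ is subdivided. So the only neighbor of $a_i$ among $c_j$ and the interiors of the paths $P_{c_ja_1},\dots,P_{c_ja_t}$ is $w_p$. You exclude $w_p$, so your branch set for $c_j$ is anticomplete to $a_i$, for every $i$ and $j$. The model you describe therefore has no edges between the two sides. Your claim that each $c_j$-set is ``adjacent to every $a_i$'' is false, and it contradicts your own gloss that the set ``stops just before $a_i$.''

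The fix is to drop the exclusion. Let the branch set of $c_j$ be $c_j$ together with the full interiors of $P_{c_ja_1},\dots,P_{c_ja_t}$. This set is connected and meets $N(a_i)$ at $w_p$. Your anticompleteness checks then go through unchanged: interior vertices of distinct paths of an induced subdivision are non-adjacent, and the edges $a_ia_{i'}$ and $c_jc_{j'}$ are subdivided.

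\textbf{The choice of $k$.} Your initial choice $d(t):=d(t,t)$ does not work; replacing it is not optional. With $k=t$, outcome (iii) gives only a proper induced subdivision of $K_t$, which need not have $K_{t,t}$ as an induced minor; for $t=2$ it can be a path. You must take $k:=R(t,2t)$, which is at least $t$, from the start, as you eventually do. Your side remark that $m\ge 2t$ would suffice with ``more careful trimming'' is unsupported, but the proof does not need it.
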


With this, we are ready to prove Theorem~\ref{thm:coarse_treewidth_our_class}.\\[-5pt]

% \noindent\textbf{Theorem 1.1.}\emph{ Let $t$ be a positive integer and let $G$ be a $K_{t,t}$-induced-minor-free graph. Then $G$ either has a $\boxplus_t$-induced-minor or it has a tree decomposition $(T,\chi)$ such that for every $x\in V(T)$, the largest bag $\chi(x)$ is $(k, r)$--coverable in $G$, with $k$ and $r$ satisfying:
% \begin{align*}
%     k\quad &\leq\quad 136000 \cdot \log^3 (2n) \cdot f \log (f+4), \\
%     r\quad &\leq\quad 8\log (2n).
% \end{align*}    
% where $f$ equals $(5500000\cdot d(2^{t\log t})\cdot \log^5 (2n))^{c(t)}$, and where $c,d$ are the functions described in Proposition~\ref{prop:our_class_deg_tw_bound} and~\ref{prop:bounded_degeneracy} respectively.}

\noindent\textbf{Theorem 1.1.}\emph{ Let $t$ be a positive integer and let $G$ be a $K_{t,t}$-induced-minor-free graph. Then $G$ either has $\, \boxplus_t$ as an induced minor or has a tree decomposition $(T,\chi)$ such that for every $x\in V(T)$, the distance $16\log (4n)$-independence number of the bag $\chi(x)$ is at most
$$
115000 \cdot \log^3 (4n) \cdot f \log (4f);\quad 
\text{ where }
f = (1000000\cdot d(t\cdot 2^t)\cdot \log^5 (4n))^{c(t)}.
$$
where $c,d$ are the functions described in Propositions~\ref{prop:our_class_deg_tw_bound} and~\ref{prop:bounded_degeneracy}, respectively.}

\begin{proof}
    Assume that $G$ has no $\boxplus_t$-induced minor, as otherwise the statement holds. Consider a four-radius vertex partition $(X_1,\dots,X_{\pi_4(G)})$ of $G$ with the minimum possible number of parts. We define $G'$ to be the induced minor of $G$ obtained by contracting, for each $i \in [\pi_4(G)]$ and every connected component $C$ of $G[X_i]$, the edges of $G[C]$. Note that the vertex set of $G'$ is $V(G') = \{ v_{C} \mid C \text{ is a connected component of } G[X_i] \text{ for some } i \in [\pi_4(G)]\ \}$, and two vertices $v_{C_1}$ and $v_{C_2}$ are adjacent in $G'$ whenever there exist vertices $u_1 \in C_1$ and $u_2 \in C_2$ with $(u_1,u_2) \in E(G)$.

    Since $G'$ is obtained by contracting edges of a $\{K_{t,t}, \boxplus_t\}$-induced-minor-free graph, it is itself $\{K_{t,t}, \boxplus_t\}$-induced-minor-free. Therefore, by Proposition~\ref{prop:our_class_deg_tw_bound}, we have $\tw(G') \leq \Delta(G')^{c(t)}$. Furthermore, for every $i\in[\pi_4(G)]$, the set $\{ v_{C}\mid C \text{ is a connected component of } G[X_i]\}$ is an independent set in $G'$. Hence the chromatic number of $G'$, and consequently the size of its largest clique, is at most $\pi_4(G) \leq t \cdot 2^{t}$. By Proposition~\ref{prop:bounded_degeneracy}, it follows that $G'$ has degeneracy at most $d \leq d(t\cdot 2^{t})$. Consequently, $G' \in \mathcal{C}^c_d$ for $c := c(t)$ and $d := d(t\cdot 2^{t})$.
    Applying Corollary~\ref{thm:coarse_treewidth_aux_class} to $G'$ we obtain a tree decomposition $(T',\chi')$ such that for every $x\in V(T')$, the bag $\chi'(x)$ is $(k', r')$--coverable in $G'$, for $k'\, \leq\, 115000 \cdot \log^3 (4n) \cdot f \log (4f)$, and $r'\, \leq\, \log (4n)$ where $f$ equals $(1000000d\cdot \log^5 (4n))^{c}$.
    
    We define a pair $(T,\chi)$ as follows. Let $T := T'$, and for every node $x \in V(T)$ define $\chi(x) := \{\, u \in V(G) \mid \text{$u \in C \subseteq X_i$ and $v_{C} \in \chi'(x)$} \,\}$. A direct application of definitions suffices to verify that $(T,\chi)$ is indeed a tree decomposition of $G$. 
    Now, consider $x\in V(T)$ and let $I$ be a largest $16\log (4n)$-independent set of $G$ contained in $\chi(x)$. Let $I' := \{v_C\in V(G') \mid C\cap I \neq \emptyset\}$. Observe that $|I'| = |I|$. Indeed, every vertex of $I$ belongs to some connected component $C$ of $G[X_i]$ for some $i\in[\pi_4(G)]$. Moreover, no two vertices $u,v \in I$ belong to the same connected component $C$ of $G[X_i]$, since each such component is $(1,4)$-coverable and hence contains a $u$--$v$ path on at most eight vertices, contradicting the $16\log (4n)$-independence of $I$.
    Let $\mathcal{C}' \subseteq V(G')$ be a set witnessing that $\chi'(x)$ is $(k',r')$-coverable, i.e., $|\mathcal{C}'| \leq 115000 \cdot \log^3 (4n) \cdot f \log (4f)$ and for every vertex $v_C$ of $\chi'(x)$, there exists at least one vertex in $\mathcal{C}'$ that has a path on at most $\log (4n)$ vertices to $v_C$. If $|I'| > 115000 \cdot \log^3 (4n) \cdot f \log (4f)$, then by the pigeonhole principle there exists a vertex of $\mathcal{C}'$ that has a path on at most $\log (4n)$ vertices to two distinct vertices $v_{C_1}, v_{C_2} \in I'$.
    This implies the existence of a path on at most $2\log (4n)$ vertices between $v_{C_1}$ and $v_{C_2}$ in $G'$. Since each component $C$ is $(1,4)$-coverable and therefore contains a path on at most eight vertices between any two of its vertices in $G$, we obtain a path on at most $16\log (4n)$ vertices between $u\in C_1\cap I$ and $u'\in C_2\cap I$, contradicting the independence of $I$ and completing the proof of the theorem.
\end{proof}

\section{Proof of Theorem~\ref{thm:coarse_subpoly_treewidth_our_class}}

For the proof of Theorem~\ref{thm:coarse_subpoly_treewidth_our_class}, we need the following theorem of Chudnovsky, Codsi, Fischer and Lokshtanov.

\begin{proposition}[Theorem 1.2~\cite{subpolynomialtreewidth}]\label{prop:subpoly_treewidth}
    Let $t$ be a positive integer and let $G$ be a graph that excludes $K_{t,t}$ and $\boxplus_t$ as induced minors and $K_t$ as a subgraph. Then there exists a positive integer $c'(t)$ and $\epsilon'(t)\in(0,1]$ such that $G$ has a tree decomposition of width at most $2^{c'(t) \log^{1-\epsilon'(t)} n}$.
\end{proposition}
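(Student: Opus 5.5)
The statement is the cited Theorem~1.2 of~\cite{subpolynomialtreewidth}. If I had to prove it rather than cite it, I would combine three ingredients: the polynomial degree bound of Proposition~\ref{prop:our_class_deg_tw_bound}, the bounded degeneracy from Proposition~\ref{prop:bounded_degeneracy}, and a recursion on the few vertices of large degree. Excluding $K_t$ as a subgraph in particular excludes $K_t$, so by Proposition~\ref{prop:bounded_degeneracy} every induced subgraph of $G$ has degeneracy at most $d:=d(t)$. Fix a degree threshold $D$, to be chosen at the end as $D=2^{\log^{1-\epsilon} n}$. Let $H$ be the set of vertices of degree greater than $D$. By degeneracy, $|H|\le 2dn/D$. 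The graph $G-H$ has maximum degree at most $D$ and is still $\{K_{t,t},\boxplus_t\}$-induced-minor-free, so $\tw(G-H)\le D^{c(t)}$.

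It suffices to bound the size of $(X,\tfrac12)$-balanced separators for every weight set $X$, since this bounds treewidth up to a constant factor. I would build such a separator in two stages. First, Proposition~\ref{prop:tw_and_balsep} applied to $G-H$ gives a set $S_1$ of size at most $D^{c}+1$ that balances $X\setminus H$. Second, I would recurse to handle $H$. Let $G^\ast$ be obtained from $G-S_1$ by contracting every component of $G-H-S_1$. Then $G^\ast$ is an induced minor of $G$, so it is still $\{K_{t,t},\boxplus_t\}$-induced-minor-free, and it has at most $|H|+(\text{number of components})$ vertices. I would find a balanced separator $S_2$ in $G^\ast$ recursively and lift it back to $G$. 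Each contracted vertex is then replaced by a bounded-degree piece, which I would cut further using $\tw(G-H)\le D^c$.

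Writing $s(n)$ for the worst separator size on $n$ vertices, this should give a recursion of the form
\[
s(n)\le D^{c'}\cdot s(2dn/D)+D^{c'}.
\]
Its depth is $\log n/\log(D/2d)\approx \log^{\epsilon} n$, so unrolling yields $s(n)\le D^{O(\log^{\epsilon} n)}=2^{O(\log n)}$. With the balance $\log^{1-\epsilon}n\cdot\log^{\epsilon}n=\log n$ this is too large. The parameters therefore have to be chosen so that the multiplicative loss per level is only a constant, or at least $2^{o(\log D)}$, and not $D^{c'}$. That step is the crux.

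The main obstacle is that contraction destroys $K_t$-subgraph-freeness. The graph $G^\ast$ may contain large cliques, so neither the degeneracy bound nor a straightforward induction applies to it. My first attempt would avoid contraction entirely and recurse on $G[N[H]]$, which is an induced subgraph and so stays in the class. Components of $G-H-S_1$ would be charged to their attachment sets in $H$, and I would use $K_{t,t}$-induced-minor-freeness (via Theorem~\ref{thm:ktt_free_implies_bounded_radius_partition}) to show that these attachments have bounded complexity. That bounded complexity is what would keep the per-level loss constant, so the recursion closes with $s(n)\le 2^{c'\log^{1-\epsilon'}n}$.
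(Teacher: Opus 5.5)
The paper does not prove this proposition. It imports it verbatim as Theorem~1.2 of \cite{subpolynomialtreewidth} and uses it only as a black box in the proof of Theorem~\ref{thm:coarse_subpoly_treewidth_our_class}. Your opening sentence, which cites it, is therefore exactly the paper's treatment and is all that is required. The self-contained argument you go on to sketch, however, is not a proof, and offered as one it has a genuine gap. As you yourself compute, the recursion $s(n)\le D^{c'}s(2dn/D)+D^{c'}$ only yields $s(n)\le n^{O(1)}$. The step that is supposed to make the per-level loss small is asserted rather than proved.

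Your proposed repair does not supply that step, for three reasons.

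First, the instance you recurse on does not shrink. You have $|H|\le 2dn/D$, but in general neither $G[N[H]]$ nor $G^\ast$ has $O(dn/D)$ vertices, so even the recursion as written is unjustified. For example, $G=K_{2,n-2}$ belongs to the class for every $t\ge 3$. Here $H$ consists of the two vertices of degree $n-2$ and $N[H]=V(G)$, so $G[N[H]]=G$. Likewise $G^\ast=G-S_1$ (all components of $G-H-S_1$ are single vertices) still has at least $n-D^{c}-1$ vertices.

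Second, $G[N[H]]$ forgets how the components of $G-H-S_1$ connect vertices of $H$ to each other. The set $S_1$ is a separator only in $G-H$, and a balanced separator found in $G[N[H]]$ is a separator only there. Separators found in two different subgraphs do not combine into a balanced separator of $G$.

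Third, nothing supports the claim that Theorem~\ref{thm:ktt_free_implies_bounded_radius_partition} gives attachments of ``bounded complexity''. That theorem partitions $V(G)$ into at most $t\cdot 2^t$ sets whose components have radius four. It says nothing about how many distinct neighborhoods in $H$ the components of $G-H-S_1$ can have, nor does it remove the $D^{c}$ cost of cutting each lifted piece.

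So your sketch does not close, and the right course is simply to cite \cite{subpolynomialtreewidth}, as the paper does.
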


Observe that Theorem~\ref{thm:coarse_subpoly_treewidth_our_class} follows immediately from Proposition~\ref{prop:subpoly_treewidth} and Theorem~\ref{thm:ktt_free_implies_bounded_radius_partition}.

\CoarseSubpolyTreewidthOurClass*
\begin{proof}
    Consider a four-radius vertex partition $(X_1,\dots,X_{\pi_4(G)})$ of $G$ with the minimum possible number of parts. We define $G'$ to be the induced minor of $G$ obtained by contracting, for each $i \in [\pi_4(G)]$ and every connected component $C$ of $G[X_i]$, the edges of $G[C]$. In particular, the vertex set of $G'$ is $V(G') = \{ v_{C} \mid C \text{ is a connected component of } G[X_i] \text{ for some } i \in [\pi_4(G)]\ \}$, and two vertices $v_{C_1}$ and $v_{C_2}$ are adjacent in $G'$ whenever there exist vertices $u_1 \in C_1$ and $u_2 \in C_2$ with $(u_1,u_2) \in E(G)$. 
    
    Observe that for every $i\in[\pi_4(G)]$, the set $\{ v_{C}\mid C \text{ is a connected component of } G[X_i]\}$ is an independent set in $G'$. Hence by Theorem~\ref{thm:ktt_free_implies_bounded_radius_partition}, the chromatic number of $G'$, and consequently the size of its largest clique, is at most $\pi_4(G) \leq t \cdot 2^{t}$. Therefore, by Proposition~\ref{prop:subpoly_treewidth}, it follows that $G'$ has a tree decomposition $(T',\chi')$ such that for every $x\in V(T')$, $|\chi'(x)|\leq 2^{c(t) \log^{1-\epsilon(t)} n}$, where $c(t) := c'(t \cdot 2^{t})$ and $\epsilon(t) := \epsilon'(t \cdot 2^{t})$. 
    
    We define a pair $(T,\chi)$ as follows. Let $T := T'$, and for every node $x \in V(T)$ define $\chi(x) := \{\, u \in V(G) \mid \text{$u \in C \subseteq X_i$ and $v_{C} \in \chi'(x)$} \,\}$. A direct application of definitions suffices to verify that $(T,\chi)$ is indeed a tree decomposition of $G$.
    Now assume that there exists a node $x \in V(T)$ such that the distance-$8$ independence number of $\chi(x)$ is greater than $2^{c(t)\log^{1-\epsilon(t)} n}$. Let $I \subseteq \chi(x)$ be a distance-$8$ independent set of maximum size. By the pigeonhole principle, there exist distinct vertices $u,v \in I$ and a vertex $v_C \in \chi'(x)$ such that $u,v \in C$, where $C$ denotes the component corresponding to $v_C$. Since $C$ is $(1,4)$--coverable, there exists a path between $u$ and $v$ on at most eight vertices. This contradicts the assumption that $I$ is distance $8$-independent, and therefore completes the proof of the theorem.
\end{proof}

\section{Proof of  Theorem~\ref{thm:coarse_induced_menger_our_class}}
For the proof of Theorem~\ref{thm:coarse_induced_menger_our_class} we require the following three propositions: the first follows directly from a theorem of Bonnet, Hodor, Korhonen and Masa{\v{r}}{\'\i}k, the second is the classical theorem of Menger, and the third is due to Gartland, Korhonen, and Lokshtanov.

\begin{proposition}[Theorem 11~\cite{bonnet2023treewidth}]\label{prop:our_class_initial_edge_coloring}
     For every positive integer $t$, there exists a positive integer $\mu(t)$ such that any $K_{t,t}$-induced-minor-free graph $G$ has a partition $\Lambda := (\Lambda_1,\dots, \Lambda_{\mu(t)})$ of $E(G)$ where, for every $i\in [\mu(t)]$, every connected component of the graph $(V(G), \Lambda_i)$ induces a star in $G$.
\end{proposition}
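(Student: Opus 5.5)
We prove the statement in the setting where it is actually applied in the proof of Theorem~\ref{thm:coarse_induced_menger_our_class}, namely for graphs of bounded degeneracy. The key input is Proposition~\ref{prop:bounded_degeneracy}. Once the degeneracy is bounded, the partition can be built by an elementary orient, split and recolour argument.

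This extra hypothesis cannot be dropped. The clique $K_n$ excludes $K_{t,t}$ as an induced minor for every $t\ge 2$. Yet a star that induces a star in $K_n$ has at most one edge, so each class $\Lambda_i$ must be a matching, and at least $n-1$ classes are needed. In the paper the proposition is applied to an induced subgraph $H$ of a graph $G'$ that is $K_{t\cdot 2^t}$-free by Theorem~\ref{thm:ktt_free_implies_bounded_radius_partition}. So I would read Theorem~11 of~\cite{bonnet2023treewidth} with the weak-sparsity (bounded clique) hypothesis, and set $\mu(t)=2d(d+1)$ with $d:=d(t\cdot 2^t)$ from Proposition~\ref{prop:bounded_degeneracy}.

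\textbf{Step 1 (orient and split).} Let $d$ be the degeneracy and fix a degeneracy ordering of $V(G)$. Orient every edge from its later endpoint to its earlier one. This orientation is acyclic and every vertex has out-degree at most $d$. Label the out-edges of each vertex with distinct labels in $[d]$. For each label $j$, the edges labelled $j$ form an acyclic digraph with out-degree at most one, hence a forest of in-trees in which every edge points from child to parent.

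\textbf{Step 2 (star forests).} Within each such forest, root every tree at its sink and split its edges according to the parity of the depth of the parent endpoint. Fix one parity class. Each vertex keeps only its edges to its children, because its edge to its own parent has the opposite parity. Therefore every component of that class is a star centred at a parent, with leaves that are some of its children. Steps 1 and 2 together give $2d$ star forests.

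\textbf{Step 3 (make the stars induced).} Properly colour $V(G)$ with $d+1$ colours, which is possible by degeneracy. Refine each of the $2d$ star forests into $d+1$ classes according to the colour of the leaf endpoint of each edge. In each refined class, every star has a centre $c$ and leaves of a single colour. Same-coloured leaves are pairwise non-adjacent, and every leaf is adjacent to $c$. Hence the vertex set of each component induces exactly a star in $G$. The resulting number of classes is $2d(d+1)$.

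The only delicate point is the one flagged above: the hypothesis of bounded clique number (equivalently, bounded degeneracy via Proposition~\ref{prop:bounded_degeneracy}) must be in force. Beyond that, the argument only needs the leaves of each star to be pairwise anticomplete, which the colour refinement guarantees. No condition between different stars of the same class is required.
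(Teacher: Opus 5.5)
Your counterexample is valid. Every induced minor of $K_n$ is complete, so $K_n$ is $K_{t,t}$-induced-minor-free for all $t\ge 2$. Yet every class whose components induce stars in $K_n$ must be a matching, which forces at least $n-1$ classes. So the proposition as restated here is false without a sparsity hypothesis; the cited paper works in the weakly sparse setting. Your proof of the bounded-degeneracy version is correct. Steps~1--2 are the standard $2d$ bound on star arboricity, and refining by the colour of the leaf in a proper $(d+1)$-colouring makes every component induce a star.

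The paper gives no argument beyond the citation. Your route is self-contained, gives the explicit value $\mu=2d(d+1)$, and uses only the degeneracy bound the paper already proves for $G'$. That bound is hereditary, so it holds for $H'$ too.

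There are two small points. First, chromatic number at most $t\cdot 2^t$ gives clique number at most $t\cdot 2^t$, i.e.\ $G'$ is $K_{t\cdot 2^t+1}$-free, so strictly you need $d(t\cdot 2^t+1)$. The paper has the same slip, and reusing the degeneracy bound of $G'$ sidesteps it. Second, in the downstream use (Proposition~\ref{prop:our_class_edge_coloring}, Lemmas~\ref{lem:induced_menger_cleaning_step} and~\ref{lem:induced_menger_recursive_step}) only two properties are used: the number of parts and the $\Delta+1$ bound on component size. Inducedness of the stars is never used, so Steps~1--2 alone would suffice there with $\mu=2d$. Step~3 is needed only for the literal ``induces a star'' requirement.
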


\begin{proposition}[Menger's Theorem~\cite{Menger}]\label{prop:mengers_theorem}
    Let $G$ be a graph and $A,B \subseteq V(G)$. Then for every positive integer $k$, $G$ either has a collection of $k$ vertex-disjoint $A$--$B$ paths, or an $A$--$B$ separator of size at most $k$.
\end{proposition}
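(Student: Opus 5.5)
This is Menger's Theorem, stated in the form ``either $k$ vertex-disjoint $A$--$B$ paths or an $A$--$B$ separator of size at most $k$''. The paper cites it as a classical result (Proposition~\ref{prop:mengers_theorem}), so I would not reprove it in full. I would only check that this phrasing follows from the textbook statement, which says that the maximum number of vertex-disjoint $A$--$B$ paths equals the minimum size of an $A$--$B$ separator, with $A\cap B$ allowed and single-vertex paths counted.

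The plan is a case split on $m$, the maximum number of vertex-disjoint $A$--$B$ paths.
\begin{itemize}
\item If $m\ge k$, take any $k$ of these paths; this gives the first alternative.
\item If $m<k$, the min--max equality gives an $A$--$B$ separator of size $m<k$, so in particular of size at most $k$. This gives the second alternative.
\end{itemize}
The statement's use of ``at most $k$'' rather than ``fewer than $k$'' only makes it weaker, so no extra care is needed.

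For completeness, I would recall the standard proof of the min--max equality by induction on $|E(G)|$. If $G$ has no edges, the $A$--$B$ paths are exactly the vertices of $A\cap B$, and $A\cap B$ is itself a separator of that size. Otherwise, pick an edge $e=xy$. If $G/e$ has a separator of size less than $k$, then uncontracting the merged vertex gives a separator $Y$ of $G$ of size exactly $k$ containing both $x$ and $y$. Apply induction in $G-e$ to two instances: the pair $(A,Y)$, and the pair $(Y,B)$. In each, a separator of size less than $k$ would also separate $A$ from $B$ in $G$, so each instance has $k$ vertex-disjoint paths. Gluing the two path systems along $Y$ produces $k$ vertex-disjoint $A$--$B$ paths in $G$.

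The only delicate step is this gluing argument, in particular checking that every $A$--$Y$ separator in $G-e$ really is an $A$--$B$ separator in $G$. Since the result is standard and cited, the proof reduces to invoking it.
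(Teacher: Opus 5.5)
Your proposal is correct and follows the paper's route, since the paper only states Menger's theorem as a cited classical result. The case split on the maximum number of vertex-disjoint $A$--$B$ paths is exactly how this weak ``$k$ paths or a separator of size at most $k$'' form follows from the min--max version; the paper's looser notion of an $A$--$B$ path, with no restriction on the interior, gives the same maximum and the same separators, because every such path contains a subpath meeting $A$ only at its first vertex and $B$ only at its last.

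In your optional recollection of the induction you omit two small points. First, the branch where $G/e$ has no separator of size less than $k$: induction then gives $k$ disjoint paths in $G/e$, which lift to $G$. Second, you should note that the contracted vertex must lie in the small separator of $G/e$, since otherwise that set would already separate $A$ from $B$ in $G$.
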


\begin{proposition}[Theorem 1~\cite{gartland2023inducedversionsmengerstheorem}]\label{prop:sparse_induced_menger}
    Let $G$ be a graph of maximum degree $\Delta$ and $A, B \subseteq V (G)$. Then for every positive integer $k$, there either exists a collection of $k$ vertex-disjoint and pairwise anticomplete $A$--$B$ paths, or there exists an $A$--$B$ separator of size at most $k \cdot (\Delta + 1)^{\Delta^2+1}$.
\end{proposition}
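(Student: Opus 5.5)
This proposition is quoted from~\cite{gartland2023inducedversionsmengerstheorem} and is only used as a black box here. Still, this is how I would reprove it, following the shape of the bound $k\cdot(\Delta+1)^{\Delta^2+1}$. The idea is to iterate Menger's Theorem (Proposition~\ref{prop:mengers_theorem}) $\Delta^2+1$ times, losing a factor $\Delta+1$ each time. Each round removes one ``type'' of possible adjacency between paths.

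\textbf{Step 1: split the vertices.} Properly colour the square $G^2$ greedily. A vertex has at most $\Delta^2$ vertices within distance two, so $\Delta^2+1$ colours suffice. This gives a partition $Z_1,\dots,Z_{\Delta^2+1}$ of $V(G)$ in which any two vertices of the same class are at distance at least three. Hence the closed neighbourhoods $N[z]$, $z\in Z_i$, are pairwise disjoint and pairwise nonadjacent except through edges leaving them.

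\textbf{Step 2: a contracted sequence of graphs.} Let $G_0:=G$, and let $G_i$ be obtained from $G_{i-1}$ by contracting the blob $N[z]$ for every $z\in Z_i$ (intersected with what remains uncontracted). Update $A$ and $B$ by declaring a contracted vertex to lie in $A$ (resp.\ $B$) if its blob meets $A$ (resp.\ $B$). Any $A$--$B$ separator of size $s$ in $G_i$ lifts to one in $G_{i-1}$ of size at most $s(\Delta+1)$, by replacing each contracted vertex with its blob. Now suppose $G$ has no $A$--$B$ separator of size $k(\Delta+1)^{\Delta^2+1}$. Then $G_{\Delta^2+1}$ has no $A$--$B$ separator of size $k$, so by Menger's Theorem it contains $k$ vertex-disjoint $A$--$B$ paths.

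\textbf{Step 3: lift the paths.} Take these paths with minimum total length, so each is induced. Lift them back through $G_{\Delta^2+1},\dots,G_0$, at each step rerouting inside blobs along shortest paths through the blob centre. The point of contracting every closed neighbourhood is that any edge of $G$ between two lifted paths must have both endpoints in the blob of some centre $z$. That blob is a single vertex of the final graph, so it lies on at most one path. Hence distinct lifted paths should be anticomplete, and they are vertex-disjoint because their images were.

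\textbf{Main obstacle.} I expect the delicate step to be the iterated contraction. After earlier rounds, a blob $N[z]$ may meet already-contracted vertices, so the blobs of later rounds are really unions of earlier blobs and their radius grows. I would first try to control this with the distance-three property of each colour class measured in the current graph $G_{i-1}$ rather than in $G$. If the growing blob sizes break the factor $\Delta+1$ per round, the fallback is to keep the blobs of size at most $\Delta+1$ in the original graph. One would then instead argue, via the minimality of the chosen path system, that each round only has to destroy the adjacencies of one colour class. That would recover the exponent $\Delta^2+1$.
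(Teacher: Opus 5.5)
There is a genuine gap, located at your ``main obstacle'': contracting all $\Delta^2+1$ colour classes in one nested sequence of minors and then applying Menger once cannot give the bound, in either of the two versions you describe.

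\emph{Version 1: round-$i$ blobs are closed neighbourhoods in $G_{i-1}$.} Every vertex of $G$ lies in some $Z_i$, so each edge $uv$ of $G$ is contracted in the round of $u$'s colour. Hence $G_{\Delta^2+1}$ is edgeless, with one vertex per component of $G$. Your lifting claim (a separator of size $s$ in $G_i$ gives one of size $s(\Delta+1)$ in $G_{i-1}$) fails. Vertices of $G_{i-1}$ can have degree far above $\Delta$ and stand for large subsets of $V(G)$, and $Z_i$ need no longer have pairwise distance at least three in $G_{i-1}$, so blobs can overlap. In the end, a separator of the final graph lifts to a union of entire components of $G$.

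\emph{Version 2: blobs trimmed to size at most $\Delta+1$ in $G$.} The sets $N[z]\setminus(\text{earlier blobs})$ need not contain $z$ or be connected, so contracting them is not a minor operation and you cannot reroute through them. Moreover, an edge $uv$ whose end $v$ was absorbed earlier into a blob avoiding $u$ lies in no single final blob, so Step~3 has nothing to stand on.

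The underlying tension is this. One application of Menger only yields vertex-disjoint paths in the final graph, and an edge of $G$ between two lifted paths projects to an edge or a shared vertex there. So anticompleteness in $G$ would require every edge to be swallowed by a blob, which is exactly what destroys the separator bound.

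The paper does not prove this proposition; it imports it as a black box. Its Lemma~\ref{lem:induced_menger_cleaning_step}, however, shows the pattern you are missing: \emph{interleave} Menger with the contractions, and pass to an induced subgraph after every round.

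Let $s_i:=k(\Delta+1)^{\Delta^2+1-i}$ and $G^{(0)}:=G$, and assume $G^{(i-1)}$ has no $A$--$B$ separator of size at most $s_{i-1}$.

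\begin{enumerate}
\item The stars $N_{G^{(i-1)}}[z]$, $z\in Z_i\cap V(G^{(i-1)})$, are pairwise disjoint, since distances only grow in induced subgraphs. Each has at most $\Delta+1$ vertices.
\item After contracting them there is no separator of size at most $s_i$, so Menger gives more than $s_i$ disjoint paths.
\item Lift each path to a shortest $A$--$B$ path inside the union of its blobs; it is induced and its interior avoids $A\cup B$. Let $G^{(i)}$ be the subgraph induced by these lifted paths. It again has no separator of size at most $s_i$.
\item All of $N_{G^{(i-1)}}[z]$ is one contracted vertex, so every neighbour of $z\in Z_i$ in $G^{(i)}$ lies on $z$'s own lifted path. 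Hence $\deg_{G^{(i)}}(z)\le 2$, with $\deg_{G^{(i)}}(z)\le 1$ if $z\in A\cup B$ and $\deg_{G^{(i)}}(z)=0$ if $z\in A\cap B$.
\item These bounds persist, because later graphs are induced subgraphs.
\end{enumerate}

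After the last round every vertex satisfies these bounds. So each of the more than $k$ final paths is an entire component of $G^{(\Delta^2+1)}$, and they are pairwise anticomplete. Anticompleteness thus comes from this degree argument rather than from contracting every edge. Each round costs only the factor $\Delta+1$, because the blobs are always stars of an induced subgraph of $G$ and never compound.
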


Given a graph $G$ and a positive integer $s$, an \emph{$s$-size edge partition} is defined to be a partition $\Lambda := (\Lambda_1,\dots, \Lambda_{\ell})$ of the edge set of $G$ such that every connected component of the graph $(V(G), \Lambda_i)$ has at most $s$ vertices. Proposition~\ref{prop:our_class_initial_edge_coloring} directly yields the following.

\begin{proposition}\label{prop:our_class_edge_coloring}
     For every positive integer $t$, there exists a positive integer $\mu(t)$ such that any $K_{t,t}$-induced-minor-free graph $G$ of maximum degree $\Delta$ has a $(\Delta+1)$-size edge partition $\Lambda$ with at most $\mu(t)$ parts.
\end{proposition}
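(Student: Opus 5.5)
The plan is to take the partition $\Lambda = (\Lambda_1,\dots,\Lambda_{\mu(t)})$ of $E(G)$ from Proposition~\ref{prop:our_class_initial_edge_coloring}, with the same $\mu(t)$, and check that it is already a $(\Delta+1)$-size edge partition. No refinement or recoloring should be needed.

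Fix $i \in [\mu(t)]$ and a connected component $K$ of the graph $(V(G), \Lambda_i)$. By Proposition~\ref{prop:our_class_initial_edge_coloring}, $G[V(K)]$ is a star. Either $K$ is a single isolated vertex, which has $1 \le \Delta+1$ vertices, or $K$ has a center $c$ and every other vertex of $K$ is a neighbour of $c$ in $G$. In the second case $V(K) \subseteq N[c]$, so
\[
|V(K)| \le |N[c]| = \deg_G(c) + 1 \le \Delta + 1.
\]
The one degenerate point is a star with a single edge. There either endpoint may be taken as the center, and the bound still holds.

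Hence every connected component of each $(V(G),\Lambda_i)$ has at most $\Delta+1$ vertices, so $\Lambda$ is a $(\Delta+1)$-size edge partition with at most $\mu(t)$ parts, as required. I expect no real obstacle. The only care needed is to read ``induces a star'' as allowing trivial single-vertex components and single-edge stars; both satisfy the $\Delta+1$ bound.
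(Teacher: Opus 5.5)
Your proposal is correct and matches the paper's (implicit) argument. The paper simply notes that Proposition~\ref{prop:our_class_initial_edge_coloring} directly yields this statement, since a component inducing a star lies in the closed neighbourhood of its center and so has at most $\Delta+1$ vertices, exactly as you verify.
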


Let $G$ be a graph and $\Lambda := (\Lambda_1,\dots,\Lambda_{\ell})$ be a partition of the edge set of $G$. For $i\in [\ell]$, define the part $\Lambda_i$ to be \emph{clean} if maximum degree of the graph $(V(G), \Lambda_i)$ is two, and \emph{cluttered} otherwise.

\begin{lemma}\label{lem:induced_menger_cleaning_step}
    Let $G$ be a graph and $\Lambda$ be an $s$-size edge partition with $\ell$ parts, of which at least one is cluttered. Let $A,B \subseteq V(G)$ and let $f$ be a positive integer such that $G$ has no $A$--$B$ separator of size at most $f$. Then there exists an induced subgraph $H$ of $G$ with an $s$-size edge partition $\Lambda^H$ with $\ell$ parts and strictly more clean parts than $\Lambda$, and no $A_H$--$B_H$ separator of size at most $f/s$, where $A_H := A \cap V(H)$ and $B_H := B \cap V(H)$.
\end{lemma}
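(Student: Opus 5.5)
The plan is to single out one cluttered part $\Lambda_i$ and pass to an induced subgraph $H$ that is the union of many vertex-disjoint induced $A$--$B$ paths. We choose these paths so that every connected component of $(V(G),\Lambda_i)$ meets at most one of them. Then every $\Lambda_i$-edge surviving in $H$ is an edge of a single induced path, so $\Lambda_i$ becomes clean in $H$. We set $\Lambda^H := (\Lambda_1\cap E(H),\dots,\Lambda_\ell\cap E(H))$. This is an $s$-size edge partition of $H$ with $\ell$ parts, since the components of $(V(H),\Lambda_j\cap E(H))$ are subgraphs of components of $(V(G),\Lambda_j)$. Clean parts stay clean, because maximum degree can only drop; I read ``clean'' as ``maximum degree at most two'', which also covers parts that become empty. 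So the count of clean parts strictly increases.

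To find the paths, let $G'$ be the graph obtained from $G$ by contracting each connected component $K$ of $(V(G),\Lambda_i)$ to a single vertex $v_K$. Each such $K$ has at most $s$ vertices and is connected in $G$. Let $A' := \{v_K \mid K\cap A\neq\emptyset\}$ and $B' := \{v_K \mid K\cap B\neq\emptyset\}$. Suppose $S'$ is an $A'$--$B'$ separator in $G'$ with $|S'|\le f/s$. Then $S := \bigcup_{v_K\in S'} K$ is an $A$--$B$ separator in $G$, because any $A$--$B$ path in $G$ projects to an $A'$--$B'$ walk in $G'$. Moreover $|S|\le s\cdot |S'| \le f$, which contradicts the hypothesis. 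Hence, by Menger's theorem (Proposition~\ref{prop:mengers_theorem}), $G'$ contains $q := \lfloor f/s\rfloor+1$ vertex-disjoint $A'$--$B'$ paths $P'_1,\dots,P'_q$.

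Each $P'_j = (v_{K_1},\dots,v_{K_m})$ lifts to an $A$--$B$ walk in $G[K_1\cup\dots\cup K_m]$. We start at a vertex of $K_1\cap A$, travel inside each $K_r$ (which is connected), cross the edge of $G$ joining $K_r$ to $K_{r+1}$, and end at a vertex of $K_m\cap B$. We then replace this walk by a shortest $A$--$B$ path $P_j$ in $G[V(\text{walk})]$ and, if needed, by a shortest $A$--$B$ path inside it. Taking a shortest path with both endpoints fixed makes $P_j$ an induced path of $G$.

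The paths $P_1,\dots,P_q$ are pairwise vertex-disjoint, and each component $K$ meets at most one of them, since the $P'_j$ are vertex-disjoint in $G'$. Let $H := G[\bigcup_j V(P_j)]$. Any edge $uv\in \Lambda_i\cap E(H)$ has both ends in one component $K$, hence on one path $P_j$. Because $P_j$ is induced in $G$, the edge $uv$ is an edge of $P_j$. So every vertex has $\Lambda_i$-degree at most two in $H$. Finally, $H$ contains $q>f/s$ vertex-disjoint $A_H$--$B_H$ paths, so it has no $A_H$--$B_H$ separator of size at most $f/s$.

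The main point requiring care is the lifting and shortening step. Shortening to an induced path must not leave $\bigcup_r K_r$, which would break the property that each component is used by at most one path. This is guaranteed because we shortcut only inside $G[V(\text{walk})]$. A second check is that the endpoints remain in $A$ and $B$, which we ensure by taking a shortest $A$--$B$ path within that vertex set.
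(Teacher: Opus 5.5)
Your proof is correct and follows essentially the same route as the paper: contract the components of a cluttered part $\Lambda_i$, lift any $A'$--$B'$ separator of size at most $f/s$ in the contracted graph to an $A$--$B$ separator of size at most $f$ in $G$, apply Menger's theorem in the contracted graph, lift the disjoint paths to induced $A$--$B$ paths inside the corresponding unions of components, and use inducedness together with the fact that each component meets at most one path to make $\Lambda_i$ clean in $H$. Your choice of $\lfloor f/s\rfloor+1$ paths is slightly more careful than the paper's $|\mathcal{P}|\ge f/s$; it needs the exact min-separator/max-paths form of Menger's theorem rather than the weaker ``separator of size at most $k$'' version stated as a proposition in the paper, and it is exactly what makes the final ``no $A_H$--$B_H$ separator of size at most $f/s$'' conclusion hold.
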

\begin{proof}
    Let $\Lambda_{i_o}$ be a part in $\Lambda$ which is cluttered. Let $G'$ be the minor of $G$ obtained by contracting the edges in $\Lambda_{i_o}$. Equivalently, $V(G')=\{v_C \mid C \text{ is a connected component of } (V(G),\Lambda_{i_o})\}$, and two vertices $v_{C_1}$ and $v_{C_2}$ are adjacent in $G'$ whenever there exist vertices $u_1 \in C_1$ and $u_2 \in C_2$ with $(u_1,u_2) \in E(G)$. Define $A' := \{v_C \in V(G') \mid C \cap A \neq \emptyset\}$ and $B' := \{v_C \in V(G') \mid C \cap B \neq \emptyset\}$.

    \begin{claim}
        $G'$ has no $A'$--$B'$ separator of size at most $f/s$.
    \end{claim}
    \begin{claimproof}
        Suppose otherwise that $S'$ is such a separator, and define $S := \{v \in V(G) \mid v \in C \text{ for some } v_C \in S'\}$. As $\Lambda$ is an $s$-size edge partition, each such component $C$ has size at most $s$, and consequently $|S| \le s\cdot |S'| \le f$. Therefore $S$ cannot be an $A$--$B$ separator, by the definition of $G$, $A$ and $B$, which implies the existence of an $A$--$B$ path $P$ in $G - S$. 
        We obtain a walk $P'$ in $G'-S'$ by replacing each vertex $v \in V(P)$ with the vertex $v_C$, where $C$ is the connected component of $(V(G),\Lambda_{i_o})$ containing $v$. By construction of $G'$, consecutive vertices of $P$ map to vertices in $G'$ that are the same or adjacent, and since $P$ avoids $S$, the walk $P'$ avoids $S'$. Moreover, as $P$ starts in $A$ and ends in $B$, $P'$ starts in $A'$ and ends in $B'$. But this contradicts the assumption that $S'$ is an $A'$--$B'$ separator, thereby completing the proof of the claim. 
    \end{claimproof}

    Therefore, by Menger's theorem~\ref{prop:mengers_theorem}, there exists a collection $\p'$ of vertex-disjoint $A'$--$B'$ paths in $G'$ with $|\p'| \ge f/s$. We construct from $\p'$ a collection $\p$ of $A$--$B$ paths in $G$ as follows. For every path $P' \in \p'$, let $V_{P'} := \{v \in V(G) \mid v \in C \text{ for some } v_C \in V(P')\}$. Since the first vertex of $P'$ lies in $A'$ and the last lies in $B'$, the set $V_{P'}$ intersects both $A$ and $B$. Hence there exists an induced $A$--$B$ path $P$ in $G[V_{P'}]$, which we add to $\p$. Finally, since the paths in $\p'$ are vertex-disjoint, the sets $V_{P'}$ are pairwise disjoint, and thus the corresponding paths in $\p$ are vertex-disjoint. Consequently, $\p$ is a collection of vertex-disjoint $A$--$B$ paths in $G$ with $|\p| \ge f/s$.

    Let $V_{\p} := \{v \in V(G) \mid v \in P \text{ for some } P \in \p\}$ and let $H := G[V_{\p}]$. For each $i \in [\ell]$, define $\Lambda^H_i := \{(u,v) \in \Lambda_i \mid u,v \in V_{\p}\}$. We claim that $\Lambda^H := (\Lambda^H_1,\dots,\Lambda^H_\ell)$ satisfies the requirements of the lemma.
    By definition, $\Lambda^H$ is a partition of $E(H)$ into $\ell$ parts. Since $\Lambda^H_i \subseteq \Lambda_i$ and $V(H) \subseteq V(G)$, every connected component of $(V(H),\Lambda^H_i)$ is contained in a component of $(V(G),\Lambda_i)$. Hence $\Lambda^H$ is an $s$-size edge partition.
     
    Now fix $i \in [\ell]$ such that $\Lambda_i$ is clean. If a vertex $v \in V(H)$ has at least three neighbors in $(V(H),\Lambda^H_i)$, then it has the same neighbors in $(V(G),\Lambda_i)$, contradicting the definition of $i$. Hence $\Lambda^H_i$ is clean.
    Finally, consider the graph $(V(H),\Lambda^H_{i_o})$ and suppose it contains a vertex $v$ of degree greater than two. Let $P$ be the path in $\p$ containing $v$. Since $P$ is induced, at most two neighbors of $v$ are in $P$ and hence at least one neighbor of $v$ in $(V(H),\Lambda^H_{i_o})$ must belong to a different path $Q$. Thus both $P$ and $Q$ intersect the connected component $C$ of $(V(G),\Lambda_{i_o})$ that contains $v$. But then, $P'$ and $Q'$ must both contain $v_C$, which contradicts the fact that they were vertex-disjoint. Therefore $\Lambda^H_{i_o}$ is clean, completing the proof.
\end{proof}

\begin{lemma}\label{lem:induced_menger_recursive_step}
    Let $G$ be a graph with an $s$-size edge partition $\Lambda$ with $\ell$ parts, of which $z$ are cluttered, for positive integers $s,z,\ell$. Let $k$ be a positive integer and let $A,B \subseteq V(G)$. Then $G$ either has a collection of $k$ vertex-disjoint and pairwise anticomplete $A$--$B$ paths, or has an $A$--$B$ separator of size at most $k \cdot g(s,z,\ell)$, where $g(s,z,\ell) := s^{z} \cdot (2\ell+1)^{4\ell^2+1}$.
\end{lemma}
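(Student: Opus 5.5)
We argue by induction on $z$, the number of cluttered parts, keeping $s$ and $\ell$ fixed throughout. Set $f := k\cdot g(s,z,\ell)$ and assume that $G$ has no $A$--$B$ separator of size at most $f$; the goal is then to produce $k$ vertex-disjoint, pairwise anticomplete $A$--$B$ paths.

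\emph{Inductive step, $z \geq 1$.} Lemma~\ref{lem:induced_menger_cleaning_step} applies, since some part of $\Lambda$ is cluttered. It yields an induced subgraph $H$ of $G$ with an $s$-size edge partition $\Lambda^H$ on $\ell$ parts, with strictly more clean parts, and with no $A_H$--$B_H$ separator of size at most $f/s = k\cdot s^{z-1}(2\ell+1)^{4\ell^2+1}$. The number of cluttered parts of $\Lambda^H$ is some $z' \leq z-1$, and $g$ is monotone in $z$, so $k\cdot g(s,z',\ell) \leq f/s$. If $z' \geq 1$, the induction hypothesis applies to $H$. If $z'=0$, we use the base case below, which is stated without any condition on $s$. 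In either case, because $H$ has no small separator, we obtain $k$ vertex-disjoint, pairwise anticomplete $A_H$--$B_H$ paths in $H$. Since $H$ is an induced subgraph of $G$, these paths are also anticomplete in $G$ and are $A$--$B$ paths there. Formally, we prove the statement for all $z\ge 0$ simultaneously, taking $z=0$ as the base.

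\emph{Base case, $z=0$.} Here every part is clean, so every vertex has degree at most $2$ in each graph $(V(G),\Lambda_i)$. Hence $\Delta(G) \leq 2\ell$. We now apply Proposition~\ref{prop:sparse_induced_menger} with $\Delta = 2\ell$. Either $G$ has $k$ vertex-disjoint, pairwise anticomplete $A$--$B$ paths, or it has a separator of size at most
\[
k\,(2\ell+1)^{4\ell^2+1} = k\cdot g(s,0,\ell).
\]
This is exactly the claimed bound.

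The only delicate point is the bookkeeping in the inductive step. The cleaning lemma may clean more than one part at once, so we must check that the number of cluttered parts never increases when passing to $H$. This holds because clean parts stay clean, as shown in the proof of Lemma~\ref{lem:induced_menger_cleaning_step}. We must also check that the separator threshold divides by $s$ exactly once per step, and that $g$ is monotone in $z$ when $s\ge1$. The lemma as stated assumes $z$ is positive, so the induction is run over the slightly stronger statement that includes $z=0$.
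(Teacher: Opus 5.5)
Your proof is correct and follows the paper's argument: induction on the number of cluttered parts, with the base case $z=0$ handled by Proposition~\ref{prop:sparse_induced_menger} at $\Delta = 2\ell$. In the inductive step, Lemma~\ref{lem:induced_menger_cleaning_step} passes to an induced subgraph $H$ whose separator threshold drops by exactly a factor of $s$. You are slightly more careful than the paper's write-up in two places. You use monotonicity of $g$ in $z$ explicitly to cover the case where several parts are cleaned at once. You also extend the statement to $z=0$ so that the base case is actually covered.
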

\begin{proof}
    We prove the lemma by induction on $z$. For the base case, observe that when $z$ is zero, every part in $\Lambda$ is clean and consequently the maximum degree of $G$ is at most $2\ell$. Applying Proposition~\ref{prop:sparse_induced_menger} to $G$ asserts the lemma for $z = 0$.
    Fix $z_o \ge 1$ and assume that the lemma holds for all instances with $z < z_o$. If $G$ has an $A$--$B$ separator of size at most $k \cdot g(s,z_o,\ell)$, then the lemma holds and we are done. Otherwise, we apply Lemma~\ref{lem:induced_menger_cleaning_step} to $(G,\Lambda,A,B)$ and obtain an induced subgraph $H$ together with an $s$-size edge partition $\Lambda^H$ with $\ell$ parts, of which fewer than $z_o$ are cluttered. Let $A_H := A \cap V(H)$ and $B_H := B \cap V(H)$, and apply the inductive hypothesis to the instance $(H,\Lambda^H,k,A_H,B_H)$. Since $H$ admits no $A_H$--$B_H$ separator of size at most $\frac{k\cdot g(s,z_o,\ell)}{s} \le k\cdot g(s,z_o-1,\ell)$, we conclude that $H$ contains a collection of $k$ vertex-disjoint and pairwise anticomplete paths. As $H$ is an induced subgraph of $G$, these paths remain vertex-disjoint and pairwise anticomplete in $G$, which concludes the proof.
\end{proof}

Proposition~\ref{prop:our_class_edge_coloring}, combined with the fact that the number of cluttered parts is at most the number of parts, immediately yields the following corollary.

\begin{corollary}\label{thm:induced_menger_degree_dependent_our_class}
    Let $\Delta, t$ be positive integers and let $G$ be a $K_{t,t}$-induced-minor-free graph of maximum degree at most $\Delta$. Let $k$ be a positive integer and let $A,B \subseteq V(G)$. Then $G$ either has a collection of $k$ vertex-disjoint and pairwise anticomplete $A$--$B$ paths, or an $A$--$B$ separator of size at most $k \cdot g(\Delta,t)$, where $g(\Delta,t) := (\Delta+1)^{\mu} \cdot (2\mu+1)^{4\mu^2+1}$ and $\mu$ is the constant defined in Proposition~\ref{prop:our_class_edge_coloring}.
\end{corollary}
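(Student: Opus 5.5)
This follows directly from Proposition~\ref{prop:our_class_edge_coloring} combined with Lemma~\ref{lem:induced_menger_recursive_step}; the plan is to plug in the right parameters.

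First, apply Proposition~\ref{prop:our_class_edge_coloring} to $G$. Since $G$ is $K_{t,t}$-induced-minor-free with maximum degree at most $\Delta$, we obtain a $(\Delta+1)$-size edge partition $\Lambda$ of $E(G)$ with $\ell \le \mu$ parts, where $\mu=\mu(t)$. If it has fewer than $\mu$ parts, pad it with empty parts. Empty parts have maximum degree zero, so they count as clean or harmlessly as cluttered depending on convention. We may therefore assume $\ell = \mu$ exactly, or simply keep $\ell\le\mu$ and use monotonicity of $g$ in $\ell$.

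Next, let $z$ be the number of cluttered parts, so $0 \le z \le \ell \le \mu$. Apply Lemma~\ref{lem:induced_menger_recursive_step} with $s := \Delta+1$. It yields either $k$ vertex-disjoint pairwise anticomplete $A$--$B$ paths, or an $A$--$B$ separator of size at most $k\cdot (\Delta+1)^{z}(2\ell+1)^{4\ell^2+1}$. Since $\Delta+1\ge 1$, $z\le\mu$ and $\ell\le\mu$, this is at most $k(\Delta+1)^{\mu}(2\mu+1)^{4\mu^2+1} = k\cdot g(\Delta,t)$.

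The only point needing care is a formality. The lemma as stated requires $z$ and $\ell$ to be positive integers, while here $z=0$ (or an empty edge set) may occur. The lemma's proof handles $z=0$ as its base case through Proposition~\ref{prop:sparse_induced_menger}, so I would note that the argument covers it. If $E(G)=\emptyset$, the base-case argument applies directly, with maximum degree $0$. No genuine obstacle is expected.
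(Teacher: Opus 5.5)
Your proposal is correct and is the paper's argument: apply Proposition~\ref{prop:our_class_edge_coloring} to get a $(\Delta+1)$-size edge partition with $\ell\le\mu$ parts, then apply Lemma~\ref{lem:induced_menger_recursive_step} with $s=\Delta+1$, and use $z\le\ell\le\mu$ and $\Delta+1\ge 1$ to bound $s^{z}(2\ell+1)^{4\ell^2+1}$ by $(\Delta+1)^{\mu}(2\mu+1)^{4\mu^2+1}$. Your note that the case $z=0$ is covered by the lemma's base case (via Proposition~\ref{prop:sparse_induced_menger}), despite the lemma formally asking for positive $z$, is a reasonable tidying of a point the paper leaves implicit.
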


We now proceed to the proof of \Cref{thm:coarse_induced_menger_our_class}, which we restate for convenience.\\[-5pt]

\noindent\textbf{\Cref{thm:coarse_induced_menger_our_class}.}\emph{ Let $t$ be a positive integer and let $G$ be a $K_{t,t}$-induced-minor-free graph. Let $A,B \subseteq$\\ $V(G)$ and $k$ be a positive integer. Then $G$ either has a collection of $k$ vertex-disjoint and pairwise anticomplete $A$--$B$ paths or an $A$--$B$ separator $S\subseteq V(G)$ whose distance $16\log (4n)$-independence number is $100 k \cdot g \cdot \log^3 (4n)$, where $g := ( 24 \log^2 (4n) + 4d + 1)^{\mu} \cdot (2\mu +1)^{4\mu^2+1}$ and $d, \mu$ are the constants defined in Proposition~\ref{prop:bounded_degeneracy} and~\ref{prop:our_class_edge_coloring}, respectively. }

\begin{proof}
    Consider a four-radius vertex partition $(X_1,\dots,X_{\pi_4(G)})$ of $G$ with at most $t\cdot 2^t$ parts, whose existence is asserted by Theorem~\ref{thm:ktt_free_implies_bounded_radius_partition}. We define $G'$ to be the induced minor of $G$ obtained by contracting, for each $i \in [\pi_4(G)]$, all edges inside the connected components of $G[X_i]$. Explicitly, the vertex set of $G'$ is $V(G') = \{ v_{C} \mid C \text{ is a connected component of } G[X_i], \text{ for some }i \in [\pi_4(G)] \}$, and two vertices $v_{C_1}$ and $v_{C_2}$ are adjacent in $G'$ whenever there exist vertices $u_1 \in C_1$ and $u_2 \in C_2$ with $(u_1,u_2) \in E(G)$. We use $n'$ to denote the number of vertices in $G'$.

    Since $G'$ is obtained from a $K_{t,t}$-induced-minor-free graph by contracting edges, it is itself $K_{t,t}$-induced-minor-free. Also, the set $\{\,v_{C}\mid C \text{ is a connected component of } G[X_i]\,\}$ is an independent set in $G'$, for every $i \in [\pi_4(G)]$. So it follows that the chromatic number, and hence the clique number, of $G'$ is at most $\pi_4(G) \leq t \cdot 2^{t}$. Hence, by Proposition~\ref{prop:bounded_degeneracy}, $G'$ has degeneracy at most $d \leq d(t\cdot 2^t)$.
    Define $A' := \{v_{C} \in V(G') \mid C \cap A \neq \emptyset\}$ and $B' := \{v_{C} \in V(G') \mid C \cap B \neq \emptyset\}$. Let $\f$ be the $4d$-neighborhood-witnessing layered family of $G'$ with thickness at most $\log (4n')$, whose existence is guaranteed by Lemma~\ref{lem:degeneracy_implies_layered_family}. Applying Theorem~\ref{thm:coarse_induced_menger_aux_class} to $(G',A',B',\f)$ with $f := 12k\cdot g + 1$ yields either a vertex set $S'$ or an induced subgraph $H'$ of $G'$.  
    
    % $\mathrm{fcov}_{\f}(S)\, \leq\, 8 \log^2 (4n) \cdot f$. 
    % \item An induced subgraph $H$ of $G$, of maximum degree at most $24 \log^2 (4n) + 4d$, such that every $A_H$--$B_H$ separator $S$ in $H$ must have $\mathrm{cov}_{\f}(S)\, \geq\, \frac{f}{12}$ in $G$, where $A_H := A\cap V(H)$ and $B_H := B\cap V(H)$.

    Suppose we obtain an induced subgraph $H'$ of maximum degree at most $24 \log^2 (4n') + 4d$ such that every $A_{H'}$--$B_{H'}$ separator $S$ in $H'$ satisfies $\mathrm{cov}_{\f}(S) > k \cdot g$, where $A_{H'} := A' \cap V(H')$ and $B_{H'} := B' \cap V(H')$. Applying Corollary~\ref{thm:induced_menger_degree_dependent_our_class} to $(H', A_{H'}, B_{H'}, k)$ yields either a collection of $k$ vertex-disjoint and pairwise anticomplete $A_{H'}$--$B_{H'}$ paths, or an $A_{H'}$--$B_{H'}$ separator of size at most $k \cdot g$. The latter contradicts the definition of $H'$, and therefore $H'$ contains a collection $\p'$ of $k$ vertex-disjoint and pairwise anticomplete $A_{H'}$--$B_{H'}$ paths.
    We now lift $\p'$ to a collection $\p$ of $A$--$B$ paths in $G$. For each path $P' \in \p'$, let $V_{P'} := \{v \in V(G) \mid v \in C \text{ for some } v_{C} \in V(P')\}$. Since $P'$ starts in $A_{H'}$ and ends in $B_{H'}$, the set $V_{P'}$ intersects both $A$ and $B$, and hence contains an induced $A$--$B$ path $P$, which we add to $\p$. As the paths in $\p'$ are vertex-disjoint and pairwise anticomplete, the sets $V_{P'}$ are likewise disjoint and anticomplete, and so the corresponding paths in $\p$ also have these properties. Consequently, $\p$ is a collection of $k$ vertex-disjoint and pairwise anticomplete $A$--$B$ paths in $G$.

    Otherwise we obtain an $A'$--$B'$ separator $S'\subseteq V(G')$ in $G'$, such that $\mathrm{fcov}_{\f}(S')\, \leq\, 8 \log^2 (4n') \cdot (12k\cdot g + 1)$. By Proposition~\ref{prop:packing_covering_LP}, we conclude that $\mathrm{cov}_{\f}(S')\, \leq\, 100 k \cdot g \cdot \log^3 (4n')$. Let $\f'\subset \f$ be a cover of $S'$ of size at most $100 k \cdot g \cdot \log^3 (4n')$, and redefine $S' := \bigcup_{F\in\f'} F$. Let $S := \{v\in V(G) \mid v\in C,\text{ for some } v_{C}\in S'\}$. We claim that $S$ is an $A$--$B$ separator with the desired properties.
    First, observe that if it is not an $A$--$B$ separator, then $G-S$ must contain an $A$--$B$ path $P$. We obtain a walk $P'$ in $G'-S'$ by replacing each vertex $v \in V(P)$ with the vertex $v_{C}$, where for some $i\in[\pi_4(G)]$, $C$ is the connected component of $G[X_i]$ containing $v$. By construction of $G'$, consecutive vertices of $P$ map to vertices in $G'$ that are the same or adjacent, and since $P$ avoids $S$, the walk $P'$ avoids $S'$. Moreover, as $P$ starts in $A$ and ends in $B$, $P'$ starts in $A'$ and ends in $B'$. But this contradicts the assumption that $S'$ is an $A'$--$B'$ separator in $G'$. 
    Second, observe that since every connected component $C$ of $G[X_i]$ for each $i \in [\pi_4(G)]$ is $(1,4)$--coverable in $G$, for every pair of vertices $u,v \in C$ there exists a $u$--$v$ path on at most eight vertices in $G$. Furthermore, letting $\mathcal{C}_{\f'}$ denote the set of centers of the elements of $\f'$, every vertex $v_C \in S'$ has a path on at most $\log (4n')$ vertices in $G'$ to some vertex of $\mathcal{C}_{\f'}$. Let $\mathcal{C}\subseteq V(G)$ be a subset that contains one arbitrary vertex of $C$ for every $v_C\in\mathcal{C}_{\f'}$. Combining the above two observations, we conclude that every vertex of $S$ has a path on at most $8\log (4n') \leq 8\log (4n)$ vertices to some vertex of $\mathcal{C}$. Therefore, the maximum size of a distance $16\log (4n)$-independent set in $S$ is $|\mathcal{C}| \leq 100 k \cdot g \cdot \log^3 (4n)$, which completes the proof of the theorem. 
\end{proof} 

% -----------------------------------------------------------------------------------------------------------------

\bibliographystyle{alpha}
\bibliography{ref} 

\end{document}